\title{Monoidal Model structures on filtered chain complexes relating to spectral sequences}
\author[J. A. Brotherston]{James A. Brotherston}
\address{School of Mathematics and Statistics, University of Sheffield, S3 7RH, UK}
\email{brotherston.maths@gmail.com}
\keywords{filtered differential graded algebra, filtered chain complex, spectral sequence, monoidal model category}
\subjclass{
  18N40, 
  18G40, 
  18M05, 
  16W70
}
\thanks{This work was supported by the Engineering and Physical Sciences Research Council.}
\theoremstyle{plain}
\newtheorem{theo}{Theorem}[subsection]
\Crefname{theo}{Theorem}{Theorems}
\newtheorem{lemm}[theo]{Lemma}
\Crefname{lemm}{Lemma}{Lemmas}
\newtheorem{prop}[theo]{Proposition}
\Crefname{prop}{Proposition}{Propositions}
\newtheorem{coro}[theo]{Corollary}
\Crefname{coro}{Corollary}{Corollaries}
\theoremstyle{definition}
\newtheorem{defi}[theo]{Definition}
\Crefname{defi}{Definition}{Definitions}
\newtheorem{exam}[theo]{Example}
\Crefname{exam}{Example}{Examples}
\theoremstyle{remark}
\newtheorem{rema}[theo]{Remark}
\Crefname{rema}{Remark}{Remarks}
\newtheorem{nota}[theo]{Notation}
\Crefname{nota}{Notation}{Notations}
\theoremstyle{plain}
\newtheorem{theoL}{Theorem}
\Crefname{theoL}{Theorem}{Theorems}
\Crefname{lemmL}{Lemma}{Lemmas}
\newtheorem{propL}[theoL]{Proposition}
\Crefname{propL}{Proposition}{Propositions}
\Crefname{coroL}{Corollary}{Corollaries}
\newcommand{\mc}{\mathcal}
\newcommand{\Zbb}{\mathbb{Z}}
\newcommand{\wt}{\widetilde}
\DeclarePairedDelimiter\abs{\lvert}{\rvert}
\let\amsamp=&
\renewcommand{\phi}{\varphi}
\newcommand{\ol}{\overline}
\newcommand{\ul}{\underline}
\newcommand{\id}{\mathrm{id}}
\DeclareMathOperator*{\colim}{colim}
\DeclareMathOperator*{\im}{im}
\newcommand{\eHom}{\underline{\mathrm{Hom}}}
\newcommand{\Hom}{{\mathrm{Hom}}}
\newcommand{\dom}{\mathrm{dom}}
\newcommand{\Quill}{\simeq_{Q}}
\newcommand{\I}{\mathbb{S}}
\newcommand{\Lotimes}{\overset{L}{\otimes}}
\newcommand{\Ch}{\mathcal{C}}
\newcommand{\Fib}{\mathrm{Fib}}
\newcommand{\Cofib}{\mathrm{Cofib}}
\newcommand{\W}{\mathcal{W}}
\newcommand{\Ho}{\mathrm{Ho}}
\newcommand{\mathdash}{\text{-}}
\newcommand{\Inj}{\mathdash\mathrm{Inj}}
\newcommand{\Proj}{\mathdash\mathrm{Proj}}
\newcommand{\Cell}{\mathdash\mathrm{Cell}}
\newcommand{\Cof}{\mathdash\mathrm{Cof}}
\newcommand{\fCh}{f\mathcal{C}}
\newcommand{\fdga}{fdga}
\newcommand{\fdgc}{fdgc}
\newcommand{\fChr}{\left(f\mathcal{C}\right)_r}
\newcommand{\fChrp}{\left(f\mathcal{C}\right)_{r'}}
\newcommand{\fChS}{\left(\fCh\right)_S}
\newcommand{\fChHat}{\tilde{\fCh}}
\newcommand{\fChHatS}{\left(\fChHat\right)_S}
\newcommand{\fChCof}{\fCh^\mathrm{Cof}}
\newcommand{\Er}{\mathcal{E}_r}
\newcommand{\rSusp}{\Sigma^r}
\newcommand{\rLoops}{\Omega^r}
\newcommand{\toplus}{\oplus_\tau}
\newcommand{\Shift}{S}
\newcommand{\Dec}{\mathrm{Dec}}
\newcommand{\Supp}[1]{\mathrm{Supp_{#1}}}
\newcommand{\Z}{\mathcal{Z}}
\newcommand{\B}{\mathcal{B}}
\newcommand{\noloc}{\nobreak\mskip6muplus1mu{:}\nonscript
  \mkern-\thinmuskip\mathpunct{}\mskip2mu\relax}
\newcommand{\smallbot}{%
  \begingroup\setlength\unitlength{.15em}%
  \begin{picture}(1,1)
    \roundcap
    \polyline(0,0)(1,0)
    \polyline(0.5,0)(0.5,1)
  \end{picture}%
  \endgroup
}
\newcommand{\inadj}[4]{
  #1\colon #2%
  \mathrel{\vcenter{%
      \offinterlineskip\m@th
      \ialign{%
        \hfil$##$\hfil\cr
        \longrightarrow\cr
        \noalign{\kern-.3ex}
        \smallbot\cr
        \longleftarrow\cr
      }%
    }}%
  #3 \noloc #4%
}
\newcommand{\inadjarrows}[2]{
  #1%
  \mathrel{\vcenter{%
      \offinterlineskip\m@th
      \ialign{%
        \hfil$##$\hfil\cr
        \longrightarrow\cr
        \noalign{\kern-.3ex}
        \smallbot\cr
        \longleftarrow\cr
      }%
    }}%
  #2%
}
\newcommand{\adj}[4]{
  \begin{tikzcd}[ampersand replacement=\&]
    {#1} \colon {#2} \arrow[r, shift left=.5ex]
    \arrow[r, phantom, "\smallbot"] \& {#3} \noloc {#4}
    \arrow[l, shift left=.5ex]
  \end{tikzcd}%
}
\newlength\stextwidth
\newcommand\makesamewidth[3][c]{%
  \settowidth{\stextwidth}{#2}%
  \makebox[\stextwidth][#1]{#3}%
}
\begin{document}
\maketitle
\begin{abstract}
  We establish monoidal model structures on model categories of filtered chain complexes constructed by Cirici, Egas Santander, Livernet and Whitehouse whose weak equivalences are the quasi-isomorphisms on the $r$-page of the associated spectral sequences.
In doing so we provide a partial classification of cofibrant objects and cofibrations of the model structures involving a boundedness restriction on the filtration.
As a consequence we also obtain, by results of Schwede and Shipley, cofibrantly generated model structures on the categories of filtered differential graded algebras as well as their modules.


\end{abstract}
\section{Introduction}\label{introduction}
Model categories were introduced by Quillen in \cite{Q} as a framework for abstract homotopy theory.
They provide a unified approach in that many homotopy theories can be realised as having the additional structure of a model category, standard examples being: topological spaces, CW-complexes and simplicial sets with $\pi_\ast$-isomorphisms as weak equivalences which carry model category structures yielding equivalent homotopy categories; and chain complexes with quasi-isomorphisms as weak equivalences which have \textit{projective} and \textit{injective} model structures yielding the derived category as its homotopy category.
See \cite{Balchin} for an extensive catalogue of model structures.

Since their introduction model categories have provided a powerful setting in which to carry out homotopical calculations, providing notions of cofibrant and fibrant resolutions of objects which are homotopically well behaved, constructions of path and cylinder objects for constructing homotopy classes of morphisms, and Quillen adjunctions/equivalences for relating model categories to each other. There are further refined notions of model categories providing useful additional structures, e.g.\ cofibrantly generated, left/right proper, stable, monoidal, cellular and simplicial model categories.
See \cite{H} for a standard account.

We will specifically be interested in \textit{monoidal model categories}. These are model categories equipped with a monoidal product which is sufficiently compatible with the homotopy structure so as to descend to a monoidal product on the homotopy category.
This extra structure consists of the \textit{unit} and \textit{pushout-product axioms}.
In \cite{SS} Schwede and Shipley added the \textit{monoid axiom} to these to further obtain model categories of algebra and module objects.
From the projective model structure on chain complexes, which satisfies all three of these axioms, one can deduce a monoidal product on the derived category and a model category of differential graded algebras.

Spectral sequences, the other key object of interest in this paper, are a homological tool used to compute homology groups.
They have become a standard tool in algebraic topology for computations, see e.g.\ the Leray-Serre, Adams, Atiyah-Hirzebruch, Eilenberg-Moore spectral sequences and many more in \cite{M}.

A general setup starts with a chain complex, whose homology one wishes to compute, which is first equipped with a filtration (chosen wisely). This also induces a filtration on the homology giving \textit{graded pieces}. By then approximating the kernel and the image of the differentials using the filtration one forms successive approximations to the graded pieces each being the homology of the previous.
From a computational point of view one generally hopes there is enough information to compute any differentials appearing in these approximations and that the spectral sequence \textit{converges} to the homology -- we do not discuss this notion in any detail here and direct the reader to \cite{Boardman} for information on the subject.

\subsection{Homotopy theories and spectral sequences in the literature}
Model category structures on categories yielding associated spectral sequences are desirable in situations where objects are constructed up to a quasi-isomorphism on some page of an associated spectral sequence.
The introduction of \cite{CELW} lists a few such examples in the contexts of \textit{the mixed Hodge theory of complex algebraic varieties} and \textit{the rational homotopy theory} of Sullivan.
To these we add \cite{Sagave} which constructs minimal derived $A_\infty$-models of a differential graded algebra well defined up to an equivalence on the $2$-page of a spectral sequence.
With regard to a morphism of spectral sequence we will say it is an \textit{$r$-quasi-isomorphism} if it is an isomorphism between the $(r+1)$-pages.

The first work along these lines is that of Halperin and Tanr\'e in \cite{HT} motivated by rational homotopy theory.
Working with a category of filtered differential graded commutative algebras they define notions of \textit{$(R,r)$-extensions} and \textit{$(R,r)$-fibrations} and use them to show any morphism of such algebras has a \textit{model} in the sense there's a factorisation into an $(R,r)$-extension followed by a $r$-quasi-isomorphism.
This is done by constructing a model at the level of the $r$-page of the associated spectral sequences and \textit{perturbing} to obtain a model on the level of algebras.

In their thesis, \cite{Cirici}, Cirici studies the category of filtered differential graded algebras over a field in the context of a \textit{$P$-category with cofibrant models} and with weak equivalences the $r$-quasi-isomorphisms.
We highlight in our paper similarities between our results and theirs regarding cofibrant objects.

Muro and Roitzheim construct two model categories on bicomplexes in \cite{MR} whose weak equivalences are in the first case detected by the totalisation functor and in the second are $1$-quasi-isomorphisms.

Cirici, Egas Santander, Livernet and Whitehouse construct model categories for each $r\geq 0$ on filtered chain complexes and bicomplexes whose weak equivalences are the $r$-quasi-isomorphisms in \cite{CELW}.
Their bicomplex model structures were later generalised to similar model structures on multicomplexes in \cite{FGLW}.
This current paper further studies these model structures on filtered chain complexes.

More recently in \cite{LW} Livernet and Whitehouse began a study of the homotopy theory of spectral sequences without a chosen source of spectral sequences, i.e.\ working with a category of spectral sequences instead of those of filtered chain complexes or multicomplexes for example, and obtain a \textit{Brown category} structure whose weak equivalences are similarly the $r$-quasi-isomorphisms. 

\subsection{The work of Cirici, Egas Santander, Livernet and Whitehouse}
For the following we fix a commutative unital ground ring $R$ to work over.
In \cite{CELW} the authors construct model category structures on the categories of filtered chain complexes and bicomplexes whose weak equivalences are quasi-isomorphisms on the $r$-page of the associated spectral sequences.
We recall their setup here for the case of filtered chain complexes with the major results also stated in \cref{CELWresults}.

In the category $\fCh$ of filtered chain complexes the authors define objects $\Z_r(p,n)$ and $\B_r(p,n)$ for each $r\geq 0$ and $p,n\in\Zbb$ along with morphisms $\phi_r\colon \Z_r(p,n)\rightarrow \B_r(p,n)$.
For an $A\in\fCh$ these are representing objects for the \textit{$r$-cycles} $Z_r^{p,p+n}(A)$ and \textit{$r$-boundaries} $B_r^{p,p+n}(A)$ in that the associated $r$-page in bidegree $(p,p+n)$ is given by:
\begin{equation*}
  E_r^{p,p+n}(A)\coloneqq
  \frac{Z_r^{p,p+n}(A)}{B_r^{p,p+n}(A)}\cong
  \frac{\Hom_{\fCh}\left(\Z_r(p,n),A\right)}{\Hom_{\fCh}\left(\B_r(p,n),A\right)}\;.
\end{equation*}

This is in analogy to the case for (unfiltered) chain complexes where we have sphere and disc objects, and a map from the former to the latter, representing the kernel and boundary of the differentials with homology realised as the quotient of the analogous $R$-modules of homomorphisms.

Using the morphisms $I_r\coloneqq\left\{\phi_{r+1}\colon\Z_{r+1}(p,n)\rightarrow \B_{r+1}(p,n)\right\}_{p,n\in\Zbb}$ as generating cofibrations and the morphisms $J_r\coloneqq\left\{0\rightarrow\Z_r(p,n)\right\}_{p,n\in\Zbb}$ as generating acyclic cofibrations they establish a right proper cofibrantly generated model structure $\fChr$ for each $r\geq 0$ whose weak equivalences are those morphisms of filtered chain complexes which induce an isomorphism between the $(r+1)$-pages of the associated spectral sequences.

The authors also introduce right proper cofibrantly generated model structures $\fChrp$ with generating cofibrations $I_{r'}\coloneqq I_r\cup\bigcup_{k=0}^{r}J_k$ and generating acyclic cofibrations $J_{r'}\coloneqq \bigcup_{k=0}^rJ_r$.
All these model structures are shown to be Quillen equivalent via the shift-d\'ecalage adjunction of Deligne.

In this paper we will build on the model structures of \cite{CELW}.

\subsection{Contents of this paper}

We first show there are further model structures $\fChS$ lying in between those of the $\fChr$ and $\fChrp$, where $S\subseteq\left\{0,1,\ldots,r\right\}$ containing $r$, as follows.
Let $I_S\coloneqq I_r\cup\bigcup_{k\in S}J_k$ and $J_S\coloneqq\bigcup_{k\in S}J_k$.
In \cref{filtChainsS} we show the following result.
\begin{theoL}
  There is a right proper cofibrantly generated model structure on $\fCh$ denoted $\fChS$ with generating cofibrations $I_S$ and generating acyclic cofibrations $J_S$.
\end{theoL}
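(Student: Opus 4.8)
The plan is to apply the recognition theorem for cofibrantly generated model categories (see \cite{H}) to the sets $I_S$ and $J_S$, taking as weak equivalences the class $\W$ of $r$-quasi-isomorphisms, exactly as for $\fChr$. The whole argument will rest on three elementary inclusions of sets of morphisms: $I_r\subseteq I_S$ and $J_r\subseteq J_S$ (the latter precisely because $r\in S$), together with $J_S\subseteq I_S$ (since $I_S=I_r\cup J_S$). These let me deduce each hypothesis of the recognition theorem from the corresponding fact already established for the endpoint structure $\fChr$ in \cite{CELW}, so that $\fChS$ appears as an interpolating structure between $\fChr$ (recovered from $S=\{r\}$) and $\fChrp$ (recovered from $S=\{0,\dots,r\}$).

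First I would dispatch the formal hypotheses. The category $\fCh$ is bicomplete and $\W$ satisfies the two-out-of-three property and is closed under retracts, both being features of $\fChr$ that make no reference to its (acyclic) cofibrations. Smallness of the domains of $I_S$ and $J_S$ holds because these domains are either the initial object or one of the objects $\Z_{r+1}(p,n)$, all of which are already known to be small in $\fCh$. For the injectivity hypothesis $I_S\Inj\subseteq\W\cap J_S\Inj$, note that $J_S\subseteq I_S$ gives $I_S\Inj\subseteq J_S\Inj$ immediately, while $I_r\subseteq I_S$ gives $I_S\Inj\subseteq I_r\Inj$, and $I_r\Inj$ consists of the acyclic fibrations of $\fChr$, hence lies in $\W$.

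Two conditions carry the actual content. The first is that relative $J_S$-cell complexes are weak equivalences. Because every map in $J_S$ has initial domain, a relative $J_S$-cell complex is simply a coproduct inclusion $X\hookrightarrow X\sqcup\coprod_\alpha\Z_{k_\alpha}(p_\alpha,n_\alpha)$ with each $k_\alpha\in S\subseteq\{0,\dots,r\}$. Now for $k\le r$ the object $\Z_k(p,n)$ has $E_{k+1}(\Z_k(p,n))=0$, since $0\to\Z_k(p,n)$ is a generating acyclic cofibration of $(\fCh)_k$ and is therefore a $k$-quasi-isomorphism; as the later pages are subquotients this forces $E_{r+1}(\Z_k(p,n))=0$. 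Since $E_{r+1}$ is additive and commutes with coproducts, the inclusion induces an isomorphism on $(r+1)$-pages, so it lies in $\W$; combined with $J_S\subseteq I_S$ this yields $J_S\Cell\subseteq\W\cap I_S\Cof$.

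The second condition, and the step I expect to be the crux, is the compatibility $\W\cap J_S\Inj\subseteq I_S\Inj$, for this is exactly where the assumption $r\in S$ is used. Given $f\in\W\cap J_S\Inj$, the inclusion $J_r\subseteq J_S$ shows $f\in J_r\Inj$, so $f$ is a fibration of $\fChr$ that is also a weak equivalence; the acyclic-fibration characterisation in the model category $\fChr$ then gives $f\in I_r\Inj$, i.e.\ $f$ has the right lifting property against $I_r$. As $f$ already lifts against $J_S$ by hypothesis, it lifts against $I_r\cup J_S=I_S$, whence $f\in I_S\Inj$. This completes the verification and produces the cofibrantly generated model structure $\fChS$. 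Finally, right properness is inherited: the fibrations of $\fChS$ form the subclass $J_S\Inj\subseteq J_r\Inj$ of the fibrations of $\fChr$, and since the two structures share the weak equivalences $\W$, any pullback of a weak equivalence along an $\fChS$-fibration is a pullback along an $\fChr$-fibration, hence a weak equivalence by right properness of $\fChr$.
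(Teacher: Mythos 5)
Your proposal is correct and takes essentially the same approach as the paper: both verify the hypotheses of Kan's recognition theorem (\cref{KanModCat}), and your crucial step --- deducing $\Er\cap J_S\Inj\subseteq I_S\Inj$ from the characterisation $I_r\Inj=J_r\Inj\cap\Er$ of \cref{IrInj} together with $r\in S$ --- is precisely the content of the paper's \cref{ISInj}, while your direct verification that relative $J_S$-cell complexes are coproduct inclusions by $r$-acyclic objects plays the role of the paper's appeal to \cref{JkCof}. The only cosmetic divergence is right properness, which the paper deduces from every object being fibrant whereas you inherit it from right properness of $\fChr$ via $J_S\Inj\subseteq J_r\Inj$; both arguments are valid.
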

An interpretation of how, for a fixed $r$, the model categories determined by $S$ vary is given by a property of their cofibrant objects: \cref{cofibrantkPageDifferentialIsZero} tells us that for $A\in\fChS$ cofibrant, the $k$-page differential of $A$ is necessarily $0$ when $0\leq k<r $ and $k\notin S$.

Our main result is the following theorem, which is show in \cref{fChSIsMonoidal}.
\begin{theoL}\label{introTheoremMonoidal}
  The model categories $\fChS$ are closed symmetric monoidal model categories.
\end{theoL}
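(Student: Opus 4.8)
The plan is to verify the two compatibility axioms that upgrade the closed symmetric monoidal structure on $\fCh$ to a monoidal model structure on $\fChS$, namely the \emph{pushout-product axiom} and the \emph{unit axiom}. That $\fCh$ is itself closed symmetric monoidal is standard: the tensor product carries the convolution filtration $F_p(A\otimes B)=\sum_{i+j=p}F_iA\otimes F_jB$, the symmetry is the usual Koszul braiding, and the internal hom $\eHom$ supplies the closed structure. All of the work therefore lies in the interaction of $\otimes$ with the model structure $\fChS$, and since $\fChS$ is cofibrantly generated with all objects small, I would appeal to the standard criterion of Hovey \cite{H} and of Schwede--Shipley \cite{SS}: it suffices to verify the pushout-product axiom on generators, i.e.\ that $i\square i'$ is a cofibration for all $i,i'\in I_S$ and that $i\square j$ is an acyclic cofibration for all $i\in I_S$, $j\in J_S$, where $\square$ denotes the pushout-product.

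I would first organise the generator cases. The generators are the maps $\phi_{r+1}\colon\Z_{r+1}(p,n)\to\B_{r+1}(p,n)$ in $I_r$ and the maps $0\to\Z_k(p,n)$ lying in $J_k$. Since a pushout-product with a map of the form $0\to\Z_k(p,n)$ is simply $\Z_k(p,n)\otimes(-)$, and since whenever one factor lies in $J_S$ the result is required only to be an acyclic cofibration, the genuinely new case for the cofibration axiom is $\phi_{r+1}\square\phi_{r+1}$; every remaining case has a factor of the form $0\to\Z_k(p,n)$ and hence belongs to the acyclic verification. Thus the whole problem reduces to understanding the tensor products $\Z_j(q,m)\otimes\Z_k(p,n)$, $\Z_k(p,n)\otimes\B_{r+1}(q,m)$ and $\B_{r+1}\otimes\B_{r+1}$, together with the induced maps.

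The computational core is the tensor product of two cycle objects. Using the description of $\Z_r(p,n)$ as the free filtered complex on a generator $e$ in filtration $p$ together with $de$ in filtration $p-r$, the complex $\Z_j(q,m)\otimes\Z_k(p,n)$ is the four-generator ``square'' complex on $e_j\otimes e_k$, $de_j\otimes e_k$, $e_j\otimes de_k$ and $de_j\otimes de_k$. Filtering by the subcomplex $\langle de_j\rangle\otimes\Z_k(p,n)$, I would check that both this subcomplex and the quotient are, up to shift, isomorphic to $\Z_k(\cdot,\cdot)$; symmetrically, the other evident filtration exhibits shifts of $\Z_j(\cdot,\cdot)$. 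In either case the graded pieces are shifts of $\Z$-objects whose indices lie in $\{j,k\}$. Consequently, when $j,k\in S$, the map $0\to\Z_j\otimes\Z_k$ is a finite $J_S\Cell$ complex, which settles the $J_S\square J_S$ part of the acyclic axiom. The remaining cases, in which the object $\B_{r+1}$ appears, I would handle by invoking the presentation of $\B_{r+1}(p,n)$ as a twisted sum $\toplus$ of $\Z$-objects with $\phi_{r+1}$ realised as a summand inclusion; feeding this into the square-complex analysis above decomposes $\Z_k\square\phi_{r+1}$ and $\phi_{r+1}\square\phi_{r+1}$ into (co)products of shifts of the generating (acyclic) cofibrations.

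I expect the filtration bookkeeping in these last decompositions to be the main obstacle: one must check that in the acyclic cases every index produced by the tensor stays inside $S$ --- which works precisely because the critical index is always the maximum (or minimum) of the two inputs and $r\in S$ bounds the relevant data --- and, in the cofibration case, that $\phi_{r+1}\square\phi_{r+1}$ lands in $I_S\Cof$ rather than outside it. Finally I would dispatch the unit axiom by showing the monoidal unit $\mathbb{1}$ (the ring $R$ in filtration and homological degree $0$) is cofibrant in $\fChS$, using the explicit cell description together with the constraints of \cref{cofibrantkPageDifferentialIsZero}; the unit axiom then holds trivially. Assembling the generator computations with the reduction to generators and the unit axiom yields that $\fChS$ is a closed symmetric monoidal model category.
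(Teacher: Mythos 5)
Your plan fails at the unit axiom, and this is a fatal gap rather than a detail: the monoidal unit $R_{(0)}^0$ is \emph{not} cofibrant in $\fChS$ (this is precisely \cref{UnitIsNotCofibrant} of the paper). The obstruction is not detected by \cref{cofibrantkPageDifferentialIsZero}, which is vacuous for the unit since all its differentials vanish; rather, one exhibits an $S$-acyclic fibration $\pi_r\colon Q_r\I\rightarrow R_{(0)}^0$ from an ``infinite staircase'' complex $Q_r\I$ (\cref{rCofibrantUnit}), and any lift of $\id_{R_{(0)}^0}$ through $\pi_r$ would have to send the generator to an element of degree $0$ with vanishing differential, which forces it to involve infinitely many of the summands $R_{(-i)}^0$ of $Q_r\I$ --- impossible in a coproduct. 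Consequently the unit axiom cannot be ``dispatched trivially''; the paper instead proves that $Q_r\I$ is a cofibrant replacement of the unit (\cref{lemm:QrIIsCofibrant}, itself a nontrivial inductive lifting argument using a filtered change of basis) and then verifies the very strong unit axiom by an explicit identification of the $(r+1)$-cycles and $(r+1)$-boundaries of $Q_r\I\otimes A$ for an \emph{arbitrary} (not necessarily cofibrant) $A$, showing $Q_r\I\otimes A\rightarrow A$ is an $r$-quasi-isomorphism (\cref{VeryStrongUnitAxiomForFCHS}). A substantial part of the theorem lives in exactly the step you declared trivial.

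Your reduction to generators and your treatment of the cases involving $J_S$ do match the paper in outline: the paper proves $\Z_t(q,m)\otimes\Z_s(p,n)\cong\Z_s(p+q,n+m)\oplus\Z_s(p+q-t,n+m+1)$ for $s\leq t$ (\cref{PushoutProductOfJAndJIsAcyclicCofibration}), and similarly decomposes $\phi_{r+1}\boxtimes(0\rightarrow\Z_s)$. Two caveats, though. First, a filtration whose graded pieces are $\Z$-objects is not enough: attaching a cell of the form $0\rightarrow\Z_k(p,n)$ along the (unique) map from $0$ always produces a direct summand, so $J_S$-cellularity genuinely requires the splitting, which the paper establishes by an explicit change of basis; also note both summands carry the \emph{smaller} index $s$, not indices ``in $\{j,k\}$''. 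Second, and more seriously, for $\phi_{r+1}\boxtimes\phi_{r+1}$ your expectation of a decomposition into (co)products of shifts of generating cofibrations is not what the paper finds and is not obviously true: the paper computes this pushout-product explicitly (\cref{pushoutProductOfGeneratingCofibrations} and the appendix) and shows it is a cofibration not by any cellular decomposition, but by verifying it is an $r$-suppressive inclusion with cofibrant cokernel of bounded-below filtration and invoking a partial classification of cofibrations (\cref{subclassOfCofibrations}, via \cref{PushoutProductOfIAndIIsCofibration}). That classification --- developed over a whole section of the paper --- is the second key ingredient missing from your outline; without it, or an explicit cell structure on $\phi_{r+1}\boxtimes\phi_{r+1}$, the cofibration half of the pushout-product axiom remains unproved.
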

As an immediate corollary we establish in \cref{homotopyCategoryfChSIsMonoidal} that their homotopy categories are closed symmetric monoidal categories.
By verifying Schwede and Shipley's monoid axiom we also establish model categories of algebra and module objects in filtered chain complexes in \cref{leftAModulesModCat,AModulesModCat,AAlgebrasModCat}.
\begin{theoL}\label{introTheoremModCatsAlgMod}
  There are cofibrantly generated model categories of:
  \begin{itemize}
  \item left $A$-modules where $A$ is a filtered differential graded algebra,
  \item $A$-modules where $A$ is a filtered differential graded commutative algebra, and
  \item $A$-algebras where $A$ is a filtered differential graded commutative algebra.
  \end{itemize}
\end{theoL}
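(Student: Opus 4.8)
The plan is to deduce all three model structures from the general machinery of Schwede and Shipley \cite{SS}. By \cref{introTheoremMonoidal} each $\fChS$ is a closed symmetric monoidal model category, so in particular the unit and pushout--product axioms hold, and each $\fChS$ is cofibrantly generated with generating cofibrations $I_S$ and generating acyclic cofibrations $J_S$. Moreover, since $\fCh$ is a category of (filtered, graded) $R$-modules it is locally presentable, so every object is small relative to the whole category. Consequently the only hypothesis of \cite{SS} not already available is the \emph{monoid axiom}, and once it is verified the three cases are precisely the conclusions of the corresponding theorems of Schwede and Shipley: left $A$-modules as modules over a monoid, and $A$-modules and $A$-algebras as modules and algebras over a commutative monoid, the latter two cases using the symmetry of $\otimes$.

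So the real content is the monoid axiom: every map built from the class $\{j\otimes\id_X : j \text{ an acyclic cofibration},\ X\in\fCh\}$ by cobase change and transfinite composition is a weak equivalence. Since $-\otimes X$ is a left adjoint it preserves pushouts and transfinite compositions, and every acyclic cofibration is a retract of a $J_S$-cell complex; it therefore suffices to prove that every map in $(J_S\otimes\fCh)\Cell$ is a weak equivalence. The maps attached in such a cell complex are coproduct inclusions $A\to A\oplus(\Z_k(p,n)\otimes X)$ obtained as pushouts of $0\to\Z_k(p,n)\otimes X$ for $k\in S$ and $X\in\fCh$, so it is enough to show that each object $\Z_k(p,n)\otimes X$ is weakly trivial, i.e.\ that $E_{r+1}(\Z_k(p,n)\otimes X)=0$.

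Here I would use the explicit structure of the generators. For $k\le r$ the filtered complex $\Z_k(p,n)$ is contractible via a homotopy compatible with the filtration and of filtration degree $k$: concretely it sends the boundary generator (in filtration $p-k$) to the cycle generator (in filtration $p$). Tensoring this homotopy with $\id_X$ and using that filtration degrees add under $\otimes$ equips $\Z_k(p,n)\otimes X$ with a contracting homotopy of filtration degree $k$, for \emph{every} $X$, cofibrant or not. Such a homotopy descends to a contraction of the page $E_k$ with respect to $d_k$, whence $E_{k+1}(\Z_k(p,n)\otimes X)=0$ and a fortiori $E_{r+1}(\Z_k(p,n)\otimes X)=0$. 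Finally, each pushout $A\to A\oplus(\Z_k(p,n)\otimes X)$ is a weak equivalence because $E_{r+1}$ is additive, and transfinite composites of weak equivalences are weak equivalences because $E_{r+1}$ commutes with the filtered colimits involved (filtered colimits being exact over $R$). This establishes the monoid axiom and hence the theorem.

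I expect the monoid axiom to be the only genuine obstacle, and within it the key point is precisely that the tensor factor $X$ need not be cofibrant: the pushout--product axiom of \cref{introTheoremMonoidal} already handles $0\to\Z_k(p,n)\otimes X$ when $0\to X$ is a cofibration, i.e.\ when $X$ is cofibrant, but the monoid axiom forces us to tensor with arbitrary objects. The filtered contracting homotopy of degree $k$ on the generators is what bridges this gap, and the technical care lies in tracking the filtration degree through the tensor product and in the lemma that a contracting homotopy of filtration degree $k$ collapses the spectral sequence at the $(k+1)$-page.
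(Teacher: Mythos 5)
Your proposal is correct and follows essentially the same route as the paper: both deduce the three statements from Schwede--Shipley's theorem (\cref{SSModelStructures}), reduce the monoid axiom via \cref{MonoidAxiomFromJ} to showing every map in $\left(J_S\otimes\fCh\right)\Cell$ is an $r$-weak equivalence, and for this show that $\Z_s(p,n)\otimes X$ is $r$-acyclic for an \emph{arbitrary} $X\in\fCh$, exactly the point you correctly identify as the crux. The only difference is cosmetic: at that last step the paper identifies $\Z_s(p,n)\otimes X$ (up to shift) with the $s$-cone $C_s(X)$ and cites \cref{rConeIsrAcyclic}, whereas you build the filtration-degree-$s$ contracting homotopy by hand --- the same computation in different packaging.
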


The remainder of this introduction describes results we obtain to establish these results.

In the model structures $\fChr$ of \cite{CELW} we establish some necessary properties of 
the cofibrant objects.
These are similar to the conditions occurring for cofibrant objects in the projective model structure on unbounded chain complexes.
\cref{cofibrantConditions} establishes that for a cofibrant object $A\in\fChr$:
\begin{enumerate}
\item $A^n$ and $A^n/F_pA^n$ are projective for all $p,n\in\Zbb$,
\item the filtration on $A$ is exhaustive,
\item for $a\in F_pA^n$ we have $da\in F_{p-r}A^{n+1}$, and
\item for any morphism $A\rightarrow \rSusp K$ of $A$ into (the $r$-suspension of) an $r$-acyclic $K$ there is a lift against $C_r(K)\rightarrow \rSusp K$.
\end{enumerate}
where $\rSusp$ is an $r$-homotopical suspension functor.

Conversely if one assumes an $A\in\fChr$ satisfies these four conditions along with:
\begin{enumerate}
  \setcounter{enumi}{4}
\item for any cohomological degree $n$ there exists a $p(n)$ such that $F_{p(n)}A^n=0$,
\end{enumerate}
then we have the following result shown in \cref{subclassOfCofibrantObjects}.
\begin{propL}
  If $A\in\fChr$ satisfies conditions 1-5 above then it is cofibrant.
\end{propL}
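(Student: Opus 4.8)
The plan is to verify directly that $0 \to A$ is a cofibration, i.e.\ that it has the left lifting property against every acyclic fibration. Since $\fChr$ is cofibrantly generated with generating cofibrations $I_r$, the acyclic fibrations are exactly the $I_r$-injective maps, so it suffices to show that for every such $p \colon X \to Y$ and every morphism $g \colon A \to Y$ there is a morphism $\tilde g \colon A \to X$ of filtered chain complexes with $p \tilde g = g$. I would construct $\tilde g$ by an inductive procedure organised along the filtration, using conditions 1--5 as the structural input.

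The scaffolding for the induction comes from conditions 1, 2 and 5. Projectivity of $A^n$ and of each quotient $A^n / F_p A^n$ forces every inclusion $F_p A^n \hookrightarrow A^n$ to split, so I may choose a decomposition of each $A^n$ into projective summands adapted to the filtration; condition 5 guarantees that in each cohomological degree this decomposition begins at a finite filtration level $p(n)$, providing the base case, while exhaustiveness (condition 2) ensures that specifying $\tilde g$ on every $F_p A^n$ determines it on all of $A^n$. I would then induct upward on the filtration degree $p$: at each stage the chosen projective generators in $F_p A^n$ are lifted through the surjection $p$ using projectivity, and condition 3 guarantees that the differential of such a generator lands in $F_{p-r} A^{n+1}$, a region already treated at an earlier stage when $r \geq 1$, so that a lift of its differential is available for comparison.

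The main obstacle is differential-compatibility, and this is where conditions 3 and 4 do the real work. Having lifted a generator $a$ to some $\tilde a \in X$ with $p(\tilde a) = g(a)$, the element $d\tilde a$ will generally not agree with the previously constructed lift of $g(da)$; the two become equal after applying $p$, so these discrepancies lie in the kernel $K \coloneqq \ker p$, which one checks is $r$-acyclic since $p$ is an acyclic fibration. Assembling them yields a morphism $A \to \rSusp K$ into the $r$-suspension of $K$, and condition 4 supplies a lift against $C_r(K) \to \rSusp K$, furnishing exactly the correction of $\tilde g$ that turns it into a chain map. I expect the genuine difficulty to lie in carrying out these corrections coherently across the whole construction: arranging that the adjustment made in filtration degree $p$ does not disturb the already-fixed values in lower filtration degrees (guaranteed by condition 3, since differentials drop the filtration) and that the process assembles correctly in each cohomological degree even when $A$ is unbounded above. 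This last point is precisely the role of condition 4, as the $r$-homotopical analogue of the $K$-projectivity condition needed for cofibrancy of unbounded complexes of projectives in the ordinary projective model structure. Once $\tilde g$ is built on all of $A$ it exhibits the required lift, and hence the cofibrancy of $A$.
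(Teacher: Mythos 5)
Your proposal follows essentially the same route as the paper's proof: conditions 1, 2 and 5 are used to split each $A^n$ into projective graded pieces $F_pA^n/F_{p-1}A^n$, these pieces are lifted through the acyclic fibration by projectivity (irrespective of the differential), and condition 4 is then used to correct the resulting discrepancy $d\tilde g - \tilde g d\colon A\rightarrow \rSusp K$ by lifting against $C_r(K)\rightarrow \rSusp K$, exactly as in the projective model structure on chain complexes. The only point to make explicit when writing this up is that, after splitting, condition 3 shows each graded piece consists of $r$-cycles and that these must be lifted to $r$-cycles of the source using the $Z_r$-bidegreewise surjectivity of the acyclic fibration (not mere surjectivity), since this is what makes $\tilde g$ filtration-preserving and places the discrepancy of $a\in F_pA^n$ in $F_{p-r}K^{n+1}$, i.e.\ in $\rSusp K$ rather than merely $\Sigma K$.
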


This only exhibits a subclass of the cofibrant objects however.
The category $\fCh$ is monoidal however the unit is not cofibrant, \cref{UnitIsNotCofibrant}.
A cofibrant replacement of the unit $Q_r\I$ is constructed and shown to be cofibrant (in all the model categories $\fChS$) in \cref{lemm:QrIIsCofibrant} however it does not satisfy condition 5.

We also construct a subclass of the cofibrations of $\fChr$.
A cofibration is necessarily a morphism of filtered chain complexes of the form $i\colon A\rightarrow A\toplus C$ where $C$ is cofibrant in $\fChr$ and $i$ is an inclusion onto the $A$ summand with the $\toplus$ denoting a twisted differential from $C$ to $A$, \cref{defi:twistedDirectSum}.
We say $\tau$ is \textit{$r$-suppressive} if $\tau F_pC\subseteq F_{p-r}A$ for all $p$, see \cref{rSupressive}.

Conversely we have the following result shown in \cref{subclassOfCofibrations}.
\begin{propL}
  Let $i\colon A\rightarrow A\toplus C$ be as above with $C$ cofibrant satisfying condition 5 above and $\tau$ suppressing filtration by $r$, \cref{rSupressive}, then $i$ is a cofibration in $\fChr$.  
\end{propL}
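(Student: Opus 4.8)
The plan is to exhibit $i$ as a relative $I_r$-cell complex, since in the cofibrantly generated structure $\fChr$ the cofibrations are precisely the retracts of transfinite composites of pushouts of (coproducts of) the generating cofibrations $I_r = \left\{\phi_{r+1}\colon \Z_{r+1}(p,n)\to\B_{r+1}(p,n)\right\}$. The object $C$ is cofibrant, so by \cref{cofibrantConditions} it already satisfies conditions 1--4, and together with the hypothesised condition 5 it satisfies all of 1--5; I may therefore invoke the explicit presentation produced in the proof of \cref{subclassOfCofibrantObjects}, which writes $C$ as an $I_r$-cell complex $0 = C_0\to C_1\to\cdots\to C_\lambda = C$ whose cells are attached in order of increasing filtration degree. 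Here condition 5 is exactly what guarantees that in each cohomological degree the filtration starts at some finite $p(n)$, so that the induction on the filtration degree, and hence the transfinite composite, is well-founded.

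The key observation is that I need not build the relativised tower by hand: since $\tau$ maps all of $C$ into $A$ and each $C_\beta$ is a subcomplex of $C$, the twisted sum $A\toplus C_\beta$ is a subcomplex of $A\toplus C$, so setting $X_\beta\coloneqq A\toplus C_\beta$ yields an increasing chain of subcomplexes $A = X_0\to X_1\to\cdots$ with colimit $A\toplus C$, and the composite $A\to A\toplus C$ is exactly $i$. It then remains to recognise each inclusion $X_\beta\to X_{\beta+1}$ as a pushout of the same cell $\phi_{r+1}\colon\Z_{r+1}(p_\beta,n_\beta)\to\B_{r+1}(p_\beta,n_\beta)$ that produces $C_\beta\to C_{\beta+1}$, but now along the \emph{twisted} attaching map $\tilde g_\beta$ whose $C_\beta$-component is the original attaching map $g_\beta$ and whose $A$-component is the correction term $\pm\tau$ of the new cell generator, dictated by the differential $D(a,c) = (d_A a + \tau c,\,d_C c)$ of the twisted sum (up to the Koszul sign of \cref{defi:twistedDirectSum}).

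The verifications at each stage are that $\tilde g_\beta$ is a morphism of $\fCh$ and that the square with $\phi_{r+1}$ is a pushout. The filtration condition is where the hypothesis on $\tau$ enters: the generator of $\Z_{r+1}(p_\beta,n_\beta)$ lies in filtration degree $p_\beta$ and $g_\beta$ lands in $F_{p_\beta}C_\beta$, while its correction term lies in $\tau F_{p_\beta}C \subseteq F_{p_\beta-r}A\subseteq F_{p_\beta}A$ because $\tau$ is $r$-suppressive (\cref{rSupressive}), so $\tilde g_\beta$ preserves the filtration. For the chain-map condition I would compare $D_{X_\beta}\tilde g_\beta$ with $\tilde g_\beta d$ on each generator of $\Z_{r+1}(p_\beta,n_\beta)$: the $C$-part holds because $g_\beta$ is already a chain map into $C_\beta$, and the $A$-part is precisely the identity that the correction terms are chosen to satisfy. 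Since $d_A$, $d_C$ and $\tau$ all drop filtration by at least $r$, the twisted differential on $A\toplus C$ does too, so the result genuinely lies in $\fChr$ and satisfies condition 3.

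The step I expect to be the main obstacle is this compatibility of the relativised attaching maps with the twisted differential across the \emph{whole} of $\Z_{r+1}(p_\beta,n_\beta)$: one must pin down the $A$-components on all of its generators at once so that $\tilde g_\beta$ is a chain map, while using only that $\tau$ suppresses filtration by $r$ rather than by $r+1$ as the index of the generating cofibration might naively suggest; tracking that the extra lower-filtration terms produced by $d_A$ and by $\tau$ on the generator $dz$ land in the correct filtration degrees is the technical heart of the argument. Once the tower is assembled, $i$ is a relative $I_r$-cell complex and hence a cofibration. Finally, should the presentation of $C$ from \cref{subclassOfCofibrantObjects} only realise $C$ as a retract of such a cell complex $C'$, I would extend the twist by $\tau'\coloneqq\tau\circ(C'\to C)$, which is again $r$-suppressive, and transport the retraction over $A$ to exhibit $i$ as a retract of the corresponding relative cell complex $A\to A\oplus_{\tau'}C'$; this is routine.
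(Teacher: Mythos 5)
Your primary route rests on a premise that is false in this paper: the proof of \cref{subclassOfCofibrantObjects} does not produce an $I_r$-cell presentation of $C$, explicit or otherwise. That proof splits each $C^n$ into its filtration-graded pieces (using conditions 1, 2 and 5), lifts those projective pieces through an $r$-acyclic fibration as a map of filtered graded modules only, and then repairs the differential using condition 4; nowhere is $C$ exhibited as a transfinite composite of pushouts of $\Z_{r+1}(p,n)\rightarrow\B_{r+1}(p,n)$, nor do conditions 1--5 obviously yield such a presentation: over a general ring the graded pieces are merely projective, while each $I_r$-cell attaches \emph{free} summands in the specific pattern of \cref{APushoutWithGenCofib}, so the most one can say is that $C$ is a retract of an $I_r$-cell complex. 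As written, the sentence invoking ``the explicit presentation produced in the proof of \cref{subclassOfCofibrantObjects}'' is therefore a genuine gap, and condition 5 does not play the role you assign to it (well-foundedness of a cell induction).

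The good news is that your closing ``fallback'' is not a fallback --- it is the proof, and all of your real mathematical content survives in it. Factor $0\rightarrow C$ by the small object argument (legitimate by \cref{filtChainsAreSmall}) as $0\rightarrow C'\rightarrow C$ with $0\rightarrow C'$ in $I_r\Cell$ and $\rho\colon C'\rightarrow C$ an $r$-acyclic fibration; cofibrancy of $C$ yields a section $s$ of $\rho$. Then $\tau'\coloneqq\tau\rho$ satisfies $d^A\tau'+\tau'd^{C'}=0$ and is $r$-suppressive, and $\id_A\oplus s$, $\id_A\oplus\rho$ exhibit $i$ as a retract of $i'\colon A\rightarrow A\oplus_{\tau'}C'$ in the arrow category. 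Your relativization then applies verbatim to the cells of $C'$: with $z$ the generator of $\Z_{r+1}(p,n)$ and $\gamma,\alpha$ the new generators, the twisted attaching map is $\tilde g(z)=g(z)+\tau'(\gamma)$, and using $d^A\tau'=-\tau'd^{C'}$ one checks the pushout along $\tilde g$ is exactly the next twisted sum. Do note your filtration bookkeeping is slightly off: what is needed is $\tau'(\gamma)\in F_pA$ and $\tau'(\alpha)\in F_{p-r-1}A$, which hold precisely because $\gamma$ and $\alpha$ sit in filtrations $p+r$ and $p-1$ and $\tau'$ suppresses by (at least) $r$; a merely filtration-preserving twist would not suffice. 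Granting this, $i'\in I_r\Cell$ and $i$ is a cofibration. This is a genuinely different argument from the paper's, which never sees cell structures: there one verifies the lifting property against an $r$-acyclic fibration directly, using condition 5 to split $C$ into projective graded pieces consisting of $r$-cycles, lifting these by $Z_r$-bidegreewise surjectivity, and correcting the failure to commute with differentials by a lift against $C_r(\rSusp K)\rightarrow\rSusp K$ (condition 4). Your route also buys more: condition 5 is never used, so it proves that \emph{every} $r$-suppressive inclusion with cofibrant cokernel is an $r$-cofibration, strengthening \cref{subclassOfCofibrations} by removing the boundedness hypothesis.
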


By explicitly computing the pushout-product of two generating cofibrations we can realise the pushout-product as a morphism of this form hence also a cofibration, \cref{PushoutProductWithAJIsCofibration,PushoutProductOfIAndIIsCofibration} -- this computation is mostly left deferred to \cref{monoidal-proof}.
Along with a construction of a cofibrant replacement for the unit in \cref{rCofibrantUnit} and a demonstration that it satisfies the unit axiom \cref{VeryStrongUnitAxiomForFCHS} these results prove \cref{introTheoremMonoidal}, that the $\fChS$ are monoidal model categories.

\cref{introTheoremModCatsAlgMod} constructing further model categories of modules and algebras follows immediately from the monoid axiom demonstrated in \cref{MonoidAxiomInfChS}.

Using work of Muro, \cite{Muro}, on cofibrantly generated model categories we construct some morphisms which when added to the set of generating (acyclic) cofibrations make the unit a cofibrant object in a new model category Quillen equivalent to $\fChS$.

This paper is based on work from the authors's Ph.D.\ thesis, \cite{BThesis}.

\subsection{Structure of the paper}
\begin{itemize}
\item \cref{preliminaries} collects the required preliminaries for this paper including those on filtered chain complexes, spectral sequences, Deligne's d\'ecalage functor, (monoidal) model categories, Muro's procedure for making the unit cofibrant in a monoidal model category and a summary of the main results of \cite{CELW}.
\item \cref{construction} generalises the model categories of \cite{CELW} and constructs $\fChS$ for any finite non-empty subset of $\mathbb{N}$.
\item \cref{cofibrations} establishes necessary conditions for an object of $\fChr$ to be cofibrant and provides a partial converse where we assume a bounded filtration assumption.
  We also study cofibrations in $\fChr$ and show that the shift and d\'ecalage functors preserve the property of being cofibrant.
\item \cref{monoidal} constructs a cofibrant replacement for the unit in $\fChS$ and verifies the (very strong) unit axiom and pushout-product axioms required of a monoidal model category.
\item \cref{algebras} establishes the monoid axiom for the monoidal model categories $\fChS$ and obtains model categories of filtered differential graded algebras and their modules.
\item \cref{cofibrant-unit} uses the results of Muro to modify the model categories to have cofibrant unit.
\item \cref{monoidal-proof} contains a detailed construction of the pushout-product of generating cofibrations in $\fChS$.
\end{itemize}

\section*{Acknowledgements}

I would like to express my deep gratitude to my Ph.D.\ supervisor Sarah Whitehouse whose support and guidance this research was carried out under.
I'm also very grateful to Daniel Graves and Jordan Williamson for many helpful conversations.


\section{Preliminaries}\label{preliminaries}
We work over a fixed commutative unital ring $R$.
Our chain complexes are cohomologically graded and unbounded.
We take $d$ to be the differential of any chain complex considered and adorn with a superscript to distinguish when necessary as in $d^A$ and $d^B$ for chain complexes $A$ and $B$.
The category of chain complexes will be denoted $\Ch$.
\subsection{Filtered chain complexes}
\begin{defi}\label{filtChainsObjsMorphs}
  A \textit{filtered chain complex} $A$ is a chain complex equipped with an \textit{increasing filtration}, i.e.\ subcomplexes $F_pA$ with $F_pA\subseteq F_{p+1}A\subseteq A$ for each $p\in\Zbb$. A \textit{morphism of filtered chain complexes} $f\colon A\rightarrow B$ is a morphism of the underlying chain complexes which preserves the filtration, i.e.\ $f(F_pA)\subseteq F_pB$.
\end{defi}
\begin{defi}\label{filtChainsCat}
  The \textit{category of filtered chain complexes} with objects and morphisms given as in \cref{filtChainsObjsMorphs} will be denoted $\fCh$.
\end{defi}
\begin{defi}
  The filtration on an $A\in\fCh$ is said to be \textit{exhaustive} if $A=\cup_pF_pA$, i.e.\ any element appears in a finite filtration degree.
\end{defi}
\begin{rema}
  The category $\fCh$ is not an Abelian category, \cite[Chapter II \S5.17]{GM}.
\end{rema}
For computations of colimits in $\fCh$ we recall the construction of \cite{CELW}. The category denoted $\Zbb_\infty$ has objects the integers and precisely one morphism $n\rightarrow m$ whenever $n\leq m$ with a terminal object $\infty$ adjoined.
Note then there is an adjunction $\inadj{\rho}{\Ch^{\Zbb_\infty}}{\fCh}{i}$ where $\Ch^{\Zbb_\infty}$ is the functor category from $\Zbb_\infty$ to $\Ch$. The functor $i$ is the inclusion functor with $(iA)(p) = F_pA$ and $(iA)(\infty) = A$.
The functor $\rho$ is given on a functor $X\colon\Zbb_\infty\rightarrow\fCh$ by $F_p\rho X \coloneqq \im\left(X(p)\rightarrow X(\infty)\right)$.
\begin{lemm}[{\cite[Remark 2.6]{CELW}}]\label{filteredChainsLimits}
  The category $\fCh$ is complete and cocomplete. Limits are computed homological degreewise and filtration degreewise. The colimit of a diagram $X\colon \mc{J}\rightarrow\fCh$ is computed by $\rho\colim_{\mc{J}}iX$.\qed
\end{lemm}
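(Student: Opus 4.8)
The plan is to exhibit $\fCh$ as a reflective subcategory of the functor category $\Ch^{\Zbb_\infty}$ and to read off completeness and cocompleteness from the pointwise bicompleteness of $\Ch^{\Zbb_\infty}$ together with the adjunction $\rho\dashv i$ recorded above. Since $\Ch$ is abelian, hence bicomplete with all (co)limits formed degreewise, the functor category $\Ch^{\Zbb_\infty}$ is bicomplete with all (co)limits formed pointwise over $\Zbb_\infty$; combined with the degreewise formation in $\Ch$ this is exactly the ``homological degreewise and filtration degreewise'' prescription. I would first record that $i$ is fully faithful, identifying $\fCh$ with the full subcategory of those $X\colon\Zbb_\infty\to\Ch$ whose structure maps $X(p)\to X(\infty)$ are monomorphisms, and that the counit $\rho i\Rightarrow\id_{\fCh}$ is an isomorphism, since $\im(F_pA\to A)=F_pA$ recovers the filtration of a genuine filtered complex.

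For limits the key step is that $\fCh$ is closed under limits formed in $\Ch^{\Zbb_\infty}$. Given $X\colon\mc{J}\to\fCh$, the pointwise limit $L=\lim_{\mc{J}}iX$ has structure map $L(p)\to L(\infty)$ equal to $\lim_{\mc{J}}$ of the inclusions $F_pX_j\hookrightarrow X_j$. Each of these is a monomorphism, and in the abelian category $\Ch$ a limit of monomorphisms is again a monomorphism (its kernel is $\lim_{\mc{J}}$ of the vanishing kernels, as $\lim$ is left exact). Hence $L$ again lies in $\fCh$ and is its limit there, which is precisely the claim that limits in $\fCh$ are computed homological degreewise and filtration degreewise.

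For colimits I would apply the standard fact about reflective subcategories to $\rho\dashv i$: for $X\colon\mc{J}\to\fCh$ and any $B\in\fCh$ the natural isomorphisms
\[
  \Hom_{\fCh}\!\left(\rho\colim_{\mc{J}}iX,\,B\right)
  \cong \Hom_{\Ch^{\Zbb_\infty}}\!\left(\colim_{\mc{J}}iX,\,iB\right)
  \cong \lim_{\mc{J}}\Hom_{\Ch^{\Zbb_\infty}}\!\left(iX,\,iB\right)
  \cong \lim_{\mc{J}}\Hom_{\fCh}\!\left(X,\,B\right),
\]
coming from the adjunction, the universal property of the colimit in $\Ch^{\Zbb_\infty}$, and the full faithfulness of $i$, exhibit $\rho\colim_{\mc{J}}iX$ as the colimit of $X$ in $\fCh$. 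This simultaneously proves cocompleteness and yields the stated formula $\colim_{\mc{J}}X\cong\rho\colim_{\mc{J}}iX$.

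The main obstacle is conceptual rather than computational, and it is the asymmetry between the two halves: unlike limits, $\fCh$ is \emph{not} closed under colimits in $\Ch^{\Zbb_\infty}$, because a colimit of monomorphic diagrams need not remain monomorphic, so the reflector $\rho$ is genuinely needed to reimpose the subobject condition by passing to images. The one point demanding real care is the verification that $\rho$ is left adjoint to $i$ — that the image filtration $F_p\rho X=\im(X(p)\to X(\infty))$ enjoys the required universal property — but this is already granted in the preamble above, so here it only needs to be invoked.
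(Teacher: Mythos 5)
Your proposal is correct and takes essentially the same route as the paper: the lemma is quoted from \cite[Remark 2.6]{CELW}, and both that reference and the paper's own preamble (which sets up the adjunction $\inadjarrows{\rho\colon\Ch^{\Zbb_\infty}}{\fCh\noloc i}$ and the colimit formula $\rho\colim_{\mc{J}}iX$) rest on exactly the reflective-subcategory argument you spell out. Your two verifications — that $\fCh$ is closed under pointwise limits in $\Ch^{\Zbb_\infty}$ because a limit of monomorphisms is a monomorphism, and the Hom-isomorphism chain exhibiting $\rho\colim_{\mc{J}}iX$ as the colimit — are both sound and fill in precisely what the citation leaves implicit.
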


\begin{nota}
  The filtered chain complex $A$ which we denote by $R_{(p)}^n$ has as its underlying chain complex a copy of the $R$-module $R$ in cohomological degree $n$ and is otherwise $0$, with filtration such that $F_qA^n=0$ for all $q<p$ and $F_qA^n=A^n=R$ for all $q\geq p$.
  For such an $A$ we say that $A$ is of pure weight $p$ and concentrated in degree $n$.
  This notation is in keeping with that used in \cite{CELW}.
  We will also give a name to the generator of $R_{(p)}^n$ either by writing $1_{(p)}^n$ or in diagrams by writing the filtered graded $R$-module as $R_{(p)}^n\left\{a\right\}$ with $a$ a generator.

  We use this notation to build further objects of $\fCh$ later, e.g. $R_{(p)}^n\rightarrow R_{(p-r)}^{n+1}$ will denote a direct sum of two such objects with an identity differential appearing in filtration degrees p and above.
\end{nota}

The category of filtered chain complexes has a tensor product and internal hom object which after forgetting filtration are those of the category of chain complexes.
For an element $a\in A$ of a (filtered) chain complex we denote by $\abs{a}$ its cohomological degree.
\begin{defi}\label{tensor}
  The \textit{tensor product of filtered chain complexes} $A$ and $B$ has underlying chain complex given in degree $n$ by:
  \begin{equation*}
    \left(A\otimes B\right)^n\coloneqq \bigoplus_{m}A^m\otimes B^{n-m}
  \end{equation*}
  with differential given by $d(a\otimes b) = da\otimes b +(-1)^{\abs{a}}a\otimes db$ for elements $a\in A^m$ and $b\in B^{n-m}$.
  The filtration is given by:
  \begin{equation*}
    F_p\left(A\otimes B\right)\coloneqq
    \sum_i\im\left(F_iA\otimes F_{p-i}B\hookrightarrow A\otimes B\right)\; .
  \end{equation*}
\end{defi}
Since the differential preserves the filtrations of $A$ and $B$ so too does the differential preserve the filtration in the tensor product.
\begin{nota}\label{tensorProductBracketingNotation}
  We will denote by $R_{(p)+(q)}^{(n)+(m)}$ the tensor product of the filtered chain complexes $R_{(p)}^n$ and $R_{(q)}^m$ making judicious use of bracketing to keep the data of the underlying tensor components.
  This tensor product is isomorphic to $R_{(p+q)}^{n+m}$.
\end{nota}
The monoidal product in $\fCh$ yields a definition of filtered differential graded algebras, i.e.\ monoid objects in $\fCh$ as well as their module objects.
\begin{defi}\label{fdgas}
  A \textit{filtered differential graded algebra} $A$ is a filtered chain complex $A$ with a morphism $m\colon A\otimes A\rightarrow A$ of filtered chain complexes satisfying the usual associativity axioms and a morphism $u\colon R_{(0)}^0\rightarrow A$ of filtered chain complexes satisfying the usual left and right unit laws with respect to the multiplication $m$.

  A \textit{filtered differential graded commutative algebra} is a filtered differential graded algebra which in addition satisfies the usual identity $m\circ t= m$ for $t$ the evident twist morphism.
\end{defi}
\begin{defi}\label{categoryOfFDGAS}
  The \textit{category of filtered differential graded algebras} denoted $\fdga$ has objects the filtered differential graded algebras and morphisms those morphisms of $\fCh$ compatible with the multiplication and unit maps in the usual way.

  Similarly the \textit{category of differential graded commutative algebras} denoted $\fdgc$ is the full subcategory of $\fdga$ whose objects have a graded commutative multiplication.
\end{defi}
In particular then given a filtered differential graded algebra $A$ and elements $a\in F_pA^n$ and $b\in F_qA^m$ the product $a\cdot b\coloneqq m(a\otimes b)\in F_{p+q}A^{n+m}$ for finite indices $n,m\in\Zbb$.
If the filtration is not exhaustive and $a\in A^n\setminus\cup_pF_pA^n$ then $a\otimes b\in A^{n+m}\setminus\cup_qF_qA^{n+m}$.
We have the Koszul sign rule $a\cdot b = (-1)^{\abs{a}\abs{b}}b\cdot a$ and the Leibniz rule $d(a\cdot b) = d(a)\cdot b +(-1)^{\abs{a}}a\cdot d(b)$.
Given a filtered differential graded algebra $A$ the $A$-modules are defined similarly.
\begin{defi}\label{filteredDGModules}
  Let $A$ be a filtered differential graded algebra $A$. A \textit{left $A$-module} is a filtered chain complex $M$ and morphism of filtered chain complexes $A\otimes M\rightarrow M$ satisfying the usual module morphism laws.
\end{defi}

We define a filtration on the $R$-module of homomorphisms $\Hom(A^m, B^{m+n})$ by $F_p\Hom(A^m,B^{m+n})\coloneqq \left\{f\;|\: f(F_qA^m)\subseteq F_{p+q}B^{m+n}\right\}$, i.e.\ those morphisms of $R$-modules that at most increase filtration by $p$.
\begin{defi}\label{internalHom}
  The \textit{internal hom object of filtered chain complexes} $A$ and $B$, denoted $\eHom\left(A,B\right)$ has underlying chain complex given in degree $n$ by:
  \begin{equation*}
    \eHom\left(A,B\right)^n\coloneqq
      \prod_{m\in\Zbb}\Hom\left(A^m, B^{m+n}\right)
  \end{equation*}
  with differential given on an element $(f_m)_m$ by:
  \begin{equation*}
    d\colon (f_m)_m\mapsto (d^B\circ f_m - (-1)^nf_{m+1}\circ d^A)_m\,.
  \end{equation*}
  The filtration is given by:
  \begin{equation*}
    F_p\eHom\left(A,B\right)^n\coloneqq
    \prod_{m\in\Zbb}F_p\Hom\left(A^m,B^{m+n}\right)\,.
  \end{equation*}
\end{defi}
\begin{lemm}
  For any $A\in\fCh$ there is an adjunction pair $\left(A\otimes-\right) \dashv\eHom(A,-)$ on the category $\fCh$.
\end{lemm}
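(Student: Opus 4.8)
The plan is to exhibit, for fixed $A\in\fCh$, a bijection
\[
  \Phi\colon \Hom_{\fCh}(A\otimes B,C)\longrightarrow \Hom_{\fCh}(B,\eHom(A,C))
\]
natural in $B$ and $C$, by restricting the classical tensor--hom adjunction on the underlying category $\Ch$ of chain complexes. First I would note that, after forgetting the filtrations, the functors $A\otimes-$ and $\eHom(A,-)$ agree with the ordinary tensor product and internal hom of chain complexes, and that the differential on $\eHom(A,C)$ in \cref{internalHom} is exactly the one making the standard assignment $g\mapsto\hat g$, where $\hat g(b)=\bigl(a\mapsto g(a\otimes b)\bigr)$, a bijection between chain maps $A\otimes B\to C$ and chain maps $B\to\eHom(A,C)$. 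Granting this, the entire content of the lemma is that $\Phi$ and $\Phi^{-1}$ send filtration-preserving morphisms to filtration-preserving morphisms; naturality is then inherited verbatim from the unfiltered adjunction.

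For the forward direction, suppose $g\colon A\otimes B\to C$ preserves filtrations. To see $\hat g$ does too, take $b\in F_pB$; I must check $\hat g(b)\in F_p\eHom(A,C)$, i.e.\ that the $R$-module map $a\mapsto g(a\otimes b)$ carries $F_qA$ into $F_{p+q}C$ for every $q$. This is immediate from \cref{tensor}: if $a\in F_qA$ and $b\in F_pB$ then $a\otimes b\in F_{q+p}(A\otimes B)$, so $g(a\otimes b)\in F_{q+p}C$ since $g$ is filtered. Hence $\hat g(b)\in F_p\eHom(A,C)$ as required.

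Conversely, suppose $\hat g\colon B\to\eHom(A,C)$ preserves filtrations; I would show the adjunct $g$, determined by $g(a\otimes b)=\hat g(b)(a)$, is filtered. By \cref{tensor} the submodule $F_p(A\otimes B)$ is the sum of the images of $F_iA\otimes F_{p-i}B$ over all $i$, so by $R$-linearity it suffices to check $g$ on a simple tensor $a\otimes b$ with $a\in F_iA$ and $b\in F_{p-i}B$. Here $b\in F_{p-i}B$ gives $\hat g(b)\in F_{p-i}\eHom(A,C)$, which by definition means $\hat g(b)$ increases filtration by at most $p-i$; applying it to $a\in F_iA$ therefore lands in $F_{(p-i)+i}C=F_pC$. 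Thus $g\bigl(F_p(A\otimes B)\bigr)\subseteq F_pC$.

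The two verifications are mutually inverse because the underlying assignments already are, so $\Phi$ is a bijection and the lemma follows. The only point demanding care is the bookkeeping of filtration degrees, and in particular the observation that the additive generation of $F_p(A\otimes B)$ by the bilinear pieces $F_iA\otimes F_{p-i}B$ is precisely dual to the defining condition of $F_{p-i}\eHom(A,C)$; this is where the two filtration conventions of \cref{tensor} and \cref{internalHom} are seen to be adjoint to one another, and I expect it to be the crux of the argument.
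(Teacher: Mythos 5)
Your proof is correct and takes essentially the same approach as the paper: the paper's proof is a one-liner stating that this is the usual tensor--hom adjunction after forgetting filtration and that one need only check compatibility with the filtration. The two filtration-degree verifications you carry out (that $\hat g$ lands in $F_p\eHom(A,C)$ when $b\in F_pB$, and conversely that $g$ sends $F_p(A\otimes B)$ into $F_pC$) are exactly the check the paper leaves to the reader.
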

\begin{proof}
  This is the usual adjunction after forgetting filtration.
  One need only check it's compatible with filtration.
\end{proof}
\begin{lemm}
  The bifunctors \cref{tensor,internalHom} equip the category of filtered chain complexes with the structure of a closed symmetric monoidal category whose tensor unit is given by $R_{(0)}^0$.\qed
\end{lemm}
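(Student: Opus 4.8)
The plan is to bootstrap the entire monoidal structure from the well-known closed symmetric monoidal structure on the category of (unbounded, cohomologically graded) chain complexes $\Ch$. The forgetful functor $U\colon \fCh \to \Ch$ that discards filtration is faithful, and by \cref{tensor,internalHom} it carries $A \otimes B$ and $\eHom(A,B)$ to the corresponding chain-complex constructions. I would therefore take as candidate associator, left and right unitors, and symmetry the underlying coherence isomorphisms of $\Ch$. Since $U$ is faithful, the pentagon, triangle and hexagon identities in $\fCh$ follow at once from the corresponding identities in $\Ch$; the entire burden of the proof is thus to check that each structure isomorphism is an isomorphism \emph{in} $\fCh$, i.e.\ that both it and its inverse preserve the filtration.

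The key computation is an associativity statement for the filtration formula of \cref{tensor}. Unwinding the definition twice gives
\[
F_p\bigl((A\otimes B)\otimes C\bigr) = \sum_{i+j+k = p} \im\bigl(F_iA \otimes F_jB \otimes F_kC \hookrightarrow A\otimes B\otimes C\bigr),
\]
and the identical expression results from $F_p\bigl(A\otimes(B\otimes C)\bigr)$; since the filtrations are increasing this sum also absorbs every triple with $i+j+k \le p$. As the right-hand side is manifestly symmetric in the three tensor factors and independent of the bracketing, the underlying associator restricts to an equality of filtration submodules in each degree, so it and its inverse are both filtered. The same symmetry shows the Koszul twist $a\otimes b \mapsto (-1)^{\abs{a}\abs{b}} b \otimes a$ sends $F_p(A\otimes B)$ bijectively onto $F_p(B\otimes A)$, hence is a filtered isomorphism.

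For the unit I would verify directly that $R_{(0)}^0$ works. In each cohomological degree only the $m=0$ summand of $R_{(0)}^0 \otimes A$ is nonzero, recovering $A$ on underlying complexes. For the filtration, $F_iR_{(0)}^0$ is $0$ for $i<0$ and all of $R$ for $i \ge 0$, so
\[
F_p\bigl(R_{(0)}^0 \otimes A\bigr) = \sum_{i \ge 0} \im\bigl(R \otimes F_{p-i}A\bigr) = \im\bigl(R \otimes F_pA\bigr) \cong F_pA,
\]
the $i=0$ term dominating precisely because the filtration increases. Thus the left unitor, and symmetrically the right unitor, is a filtered isomorphism.

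Finally, closedness is already in hand: the preceding lemma supplies the adjunction $\bigl(A \otimes -\bigr) \dashv \eHom(A,-)$ on $\fCh$, so the symmetric monoidal structure just established is automatically closed. The only place demanding genuine care is confirming that the filtration submodules coincide under each coherence isomorphism — equivalently, that the inverse maps are filtered — and this is exactly what the symmetric triple-image formula above guarantees; everything else is inherited formally from $\Ch$ through the faithful forgetful functor.
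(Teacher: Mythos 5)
Your proof is correct. The paper states this lemma without proof, treating it as a routine verification, and your argument --- transporting the coherence isomorphisms of $\Ch$ along the faithful forgetful functor $U\colon\fCh\rightarrow\Ch$, checking via the bracketing-independent formula for the filtration on a triple tensor product that each structure map and its inverse preserve filtration, verifying the unit isomorphism for $R_{(0)}^0$ directly, and citing the preceding adjunction lemma for closedness --- is precisely the standard verification the paper is taking for granted.
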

The following definitions of the $r$-suspension and $r$-cone can be found in \cite[Definition 3.5]{CELW}.
The former there is referred to as the \textit{$r$-translation} and the sign conventions differ slightly from ours.
\begin{defi}\label{rSuspension}
  For an $A\in\fCh$ the \textit{$r$-suspension} of $A$ denoted $\rSusp A$ is the filtered chain complex with underlying chain complex that of $\Sigma A$ and filtration given by $F_p\rSusp A^n\coloneqq F_{p-r}A^{n+1}$.
  The differential is given by $d^{\rSusp A}=-d^A$.
 Similarly we denote by $\rLoops A$ the \textit{$r$-suspension of $A$} with underlying chain complex $\Omega A$, filtration given by $F_p\rLoops A^n\coloneqq F_{p+r}A^{n-1}$ and differential $d^{\rLoops A}=-d^A$.
\end{defi}

These can be written in terms of the tensor product as:
\begin{align*}
  \rSusp A&\coloneqq R_{(r)}^{-1}\otimes A,       
  &\rLoops A&\coloneqq R_{(-r)}^{1}\otimes A.
\end{align*}

\begin{defi}\label{rCone}
  For a morphism $f\colon A\rightarrow B$ of filtered chain complexes the \textit{$r$-cone} of $f$ denoted $C_r(f)$ is the filtered chain complex with underlying filtered graded module given by $\rSusp A\oplus B$, in particular $F_pC_r(f)\coloneqq F_{p-r}A^{n+1}\oplus F_pB^n$, and differential given by $d\colon (a,b) \mapsto (-da,fa+db)$.
  We denote by $C_r(A)$ the $r$-cone of the identity morphism $\id\colon A\rightarrow A$.
\end{defi}

Lastly for the construction of cofibrantly generated model categories via Quillen's small object argument we will need the following lemma.
\begin{lemm}\label{filtChainsAreSmall}
  All objects of $\fCh$ are small relative to $\fCh$.
\end{lemm}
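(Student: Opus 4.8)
The plan is to reduce smallness in $\fCh$ to a counting argument governed by the explicit description of colimits in \cref{filteredChainsLimits}. Fix $A\in\fCh$ and let $\kappa$ be any regular cardinal exceeding $\aleph_0$, $\abs{R}$, and the cardinality of the underlying set $\coprod_n A^n$ of $A$; I claim $A$ is $\kappa$-small relative to $\fCh$. This is the filtered analogue of the standard fact that every object of the category $\Ch$ of unbounded chain complexes is small, and the point is to transport that smallness across the formula $\colim_{\mc{J}} X=\rho\colim_{\mc{J}} iX$.

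First I would analyse $\kappa$-filtered colimits in $\fCh$ explicitly. By \cref{filteredChainsLimits} the underlying chain complex of $\colim_{\mc{J}} X$ is $\colim_{\mc{J}} X(\infty)$, computed degreewise in $R$-modules, while $F_p(\colim X)=\im\!\big(\colim_{\mc{J}} F_pX\to\colim_{\mc{J}} X\big)$. When $\mc{J}$ is $\kappa$-filtered the colimit over $\mc{J}$ in $R$-modules is exact, so each inclusion $F_pX\hookrightarrow X$ is carried to an injection and the image above is simply $\colim_{\mc{J}} F_pX$. Hence $\kappa$-filtered colimits in $\fCh$ are computed both in each cohomological degree and in each filtration degree as the corresponding filtered colimit of $R$-modules; equivalently, the fully faithful right adjoint $i$ preserves $\kappa$-filtered colimits, even though it does not preserve colimits in general.

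With this in hand I would verify smallness directly. A morphism $A\to\colim_{\mc{J}} X$ assigns to each $a\in A^n$ an image in $(\colim X)^n=\colim_{\mc{J}} X^n$, subject to compatibility with $d$ and to the filtration constraints $f(F_pA^n)\subseteq F_p(\colim X)^n=\colim_{\mc{J}} F_pX^n$. Since $A$ has fewer than $\kappa$ elements in total and the constraints are indexed by the countable sets of cohomological and filtration degrees, there are fewer than $\kappa$ conditions to meet. The $\kappa$-filteredness of $\mc{J}$ then lets me lift every image, together with all differential and filtration conditions, to a single stage $X_\beta$, and to witness the coincidence of any two such lifts at a later stage. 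This is exactly the assertion that the canonical map $\colim_{\mc{J}}\Hom_{\fCh}(A,X_\beta)\to\Hom_{\fCh}(A,\colim_{\mc{J}} X)$ is a bijection, so $A$ is $\kappa$-small relative to $\fCh$.

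The step I expect to be the crux is the identification $F_p(\colim X)\cong\colim_{\mc{J}} F_pX$ for $\kappa$-filtered diagrams: it is precisely here that the image appearing in the definition of $\rho$ must be shown to introduce no collapsing, which rests on exactness of filtered colimits of $R$-modules, and it is here that taking $\kappa>\aleph_0$ matters, so that the countably many filtration degrees indexing the constraints cannot outrun the filtration level of $\mc{J}$. Once this is in place the remainder is the routine counting argument, and the unbounded grading causes no difficulty since the chosen $\kappa$ already dominates the total cardinality of $A$.
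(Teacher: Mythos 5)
Your proposal is correct and follows essentially the same route as the paper: the paper's proof simply invokes Hovey's smallness argument for chain complexes (\cite[Lemma 2.3.2]{H}) ``accounting for the addition of the filtration,'' which is precisely what you do — the standard element-counting argument, plus the identification $F_p(\colim X)\cong\colim F_pX$ for filtered diagrams via exactness of filtered colimits of $R$-modules, which makes the image in the definition of $\rho$ harmless. Your write-up is a fleshed-out version of the paper's one-line proof, and the only cosmetic difference is that you phrase smallness for $\kappa$-filtered diagrams rather than Hovey's $\lambda$-sequences, which is if anything slightly stronger.
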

\begin{proof}
  The proof is similar to that of \cite[Lemma 2.3.2]{H} accounting for the addition of the filtration.
\end{proof}
\subsection{Spectral sequences from filtered chain complexes}
\begin{defi}
  A \textit{spectral sequence} is a sequence of $(\Zbb,\Zbb)$-bigraded differential modules $\left\{E_r^{\bullet,\bullet}, d_r\right\}$ for $r\geq 0$ such that the differentials $d_r$ are of bidegree $(-r,1-r)$, $d_r\colon E_r^{p,q}\rightarrow E_r^{p-r,q+r-1}$, and further that:
  \begin{equation*}
    E_{r+1}^{p,q}\cong H^{p,q}(E_{r}^{\bullet,\bullet},d_R)
    =\frac{\ker\left(d_r\colon E_r^{p,q}\rightarrow E_r^{p-r,q+1-r}\right)}
    {\im\left(d_r\colon E_r^{p+r,q+1-r}\rightarrow E_r^{p,q}\right)}
  \end{equation*}
  for each $r\geq 0$. We refer to the $E_r^{\bullet,\bullet}$ as the \textit{$r$-page} of the spectral sequence.
\end{defi}
From a filtered chain complex one can extract an \textit{associated spectral sequence}.
The following defines the $r$-cycles and $r$-boundary objects whose quotient will give the $r$-page of a spectral sequence.
This definition can also be found in \cite{CELW} and a similar one in \cite{M} although note McCleary's $r$-boundary elements differ.
\begin{defi}\label{associatedSS}
  Let $A$ be a filtered chain complex. We define in bidegree $(p,p+n)$:
  \begin{itemize}
  \item for $r\geq 0$ the \textit{$r$-cycles} of $A$ as:
    \begin{equation*}
      Z_r^{p,p+n}(A)\coloneqq F_pA^n\cap d^{-1}F_{p-r}A^{n+1}\,,
    \end{equation*}
  \item for $r=0$ the \textit{$r$-boundaries} of $A$ as:
    \begin{equation*}
      B_0^{p,p+n}(A)\coloneqq Z_0^{p-1,p-1+n}(A) = F_{p-1}A^n\,,
    \end{equation*}
  \item and for $r\geq 1$ \textit{$r$-boundaries} of $A$ in bidegree $(p,p+n)$ as:
    \begin{equation*}
      B_r^{p,p+n}(A)\coloneqq dZ_{r-1}^{p+r-1,p+r-1+n+1}(A)+Z_{r-1}^{p-1,p-1+n}(A)\,.
    \end{equation*}
  \end{itemize}
  The \textit{$r$-page of the associated spectral sequence} denoted $E_r^{p,p+n}(A)$ is then given by:
  \begin{equation*}
    E_r^{p,p+n}(A)\coloneqq \frac{Z_r^{p,p+n}(A)}{B_r^{p,p+n}(A)}\,.
  \end{equation*}
  The differential $d$ restricts to $r$-cycles and there is then an induced differential on the $r$-page sending a class $[a]$ to $[d_ra]$.
\end{defi}
\begin{lemm}[{\cite[Theorem 2.6]{M}}]
  The $\left\{E_r^{\bullet,\bullet},d_r\right\}$ of \cref{associatedSS} have the structure of a spectral sequence.\qed
\end{lemm}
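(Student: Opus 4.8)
The plan is to confirm directly from \cref{associatedSS} the three things required of a spectral sequence: that each induced differential $d_r$ has bidegree $(-r,1-r)$, that $d_r$ is a well-defined differential on the bigraded module $E_r^{\bullet,\bullet}(A)$, and that $E_{r+1}^{\bullet,\bullet}(A)$ is naturally isomorphic to the homology of $(E_r^{\bullet,\bullet}(A),d_r)$. The first two are short formal checks; the third is the substance.

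For the bidegree, given $a\in Z_r^{p,p+n}(A)=F_pA^n\cap d^{-1}F_{p-r}A^{n+1}$ one has $da\in F_{p-r}A^{n+1}$ with $d(da)=0$, so $da\in Z_r^{p-r,(p-r)+(n+1)}(A)$; thus $d$ shifts bidegree from $(p,p+n)$ to $(p-r,p+n+1-r)$, exactly $(-r,1-r)$. That $d$ descends to $E_r=Z_r/B_r$ I would obtain by checking $d\bigl(B_r^{p,p+n}(A)\bigr)\subseteq B_r^{p-r,(p-r)+(n+1)}(A)$: the $dZ_{r-1}$ summand of $B_r$ is annihilated by $d$, while $d$ applied to the remaining summand $Z_{r-1}^{p-1,\dots}$ is, by the defining formula, precisely one of the two summands of $B_r$ in the target bidegree. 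Since $d^2=0$ gives $d_r^2=0$ at once, this makes $(E_r^{\bullet,\bullet}(A),d_r)$ a bigraded differential module.

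For the homology isomorphism I would build the comparison map from the inclusion $Z_{r+1}^{p,p+n}(A)\hookrightarrow Z_r^{p,p+n}(A)$, valid because the condition $da\in F_{p-r-1}A^{n+1}$ cutting out $Z_{r+1}$ is stronger than the condition $da\in F_{p-r}A^{n+1}$ cutting out $Z_r$, composed with the quotient onto $E_r^{p,p+n}(A)$. For $a\in Z_{r+1}$ the element $da$ lands in $Z_{r-1}^{p-r-1,\dots}\subseteq B_r^{p-r,\dots}$, so the composite takes values in $\ker d_r$ and descends to a map $\theta\colon Z_{r+1}^{p,p+n}(A)\to H^{p,p+n}(E_r,d_r)$. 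The claim is that $\theta$ is surjective with kernel exactly $B_{r+1}^{p,p+n}(A)$, whence $E_{r+1}=Z_{r+1}/B_{r+1}\cong H(E_r,d_r)$. Surjectivity uses a correction trick: a $d_r$-cycle is $[a]$ with $a\in Z_r$ and $da\in B_r^{p-r,\dots}=dZ_{r-1}^{p-1,\dots}+Z_{r-1}^{p-r-1,\dots}$, so writing $da=dc+z$ and replacing $a$ by $a-c$ (legitimate since $c\in Z_{r-1}^{p-1,\dots}\subseteq B_r^{p,p+n}$, leaving the class unchanged) yields a representative lying in $Z_{r+1}$. For the kernel, $\theta[a]=0$ means $a\in B_r^{p,p+n}(A)+dZ_r^{p+r,\dots}(A)$; unwinding the two summands of $B_r$, absorbing a $Z_{r-1}$-term into the $Z_r$ appearing in $B_{r+1}$, and---crucially---using the stronger condition $da\in F_{p-r-1}A^{n+1}$ coming from $a\in Z_{r+1}$ to promote the leftover term from $Z_{r-1}^{p-1,\dots}$ to $Z_r^{p-1,\dots}$, identifies this submodule with $B_{r+1}^{p,p+n}(A)$.

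The main obstacle is this last paragraph's bookkeeping: every step is elementary, but it hinges on keeping the filtration indices aligned across the two-term shape $B_s=dZ_{s-1}+Z_{s-1}$ and on noticing exactly where the extra filtration drop enjoyed by $Z_{r+1}$, as opposed to $Z_r$, must be invoked. As this is the classical identification for the spectral sequence of a filtered complex, I would in practice simply cite \cite[Theorem 2.6]{M} rather than reproduce the full index-chase.
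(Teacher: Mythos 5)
Your proposal is correct, and it in fact does strictly more than the paper does: the paper contains no argument for this lemma at all, stating it with a citation to \cite[Theorem 2.6]{M} in place of a proof---precisely the fallback you name in your closing paragraph. What your sketch supplies is the classical verification, and its two substantive steps are right as stated: surjectivity of $\theta$ via the correction $a\mapsto a-c$, where $c$ comes from the $Z_{r-1}^{p-1,\dots}$ summand of $B_r^{p-r,\dots}$ and is harmless because that same module is also a summand of $B_r^{p,p+n}$; and the kernel computation, where the $dZ_{r-1}^{p+r-1,\dots}$ part of $B_r^{p,p+n}$ is absorbed into $dZ_r^{p+r,\dots}$ (legitimate since $Z_{r-1}^{p+r-1,\dots}\subseteq Z_r^{p+r,\dots}$, both being cut out by the same condition $d(\cdot)\in F_pA$), and the leftover $Z_{r-1}^{p-1,\dots}$-term is promoted to $Z_r^{p-1,\dots}$ using $da\in F_{p-r-1}A^{n+1}$. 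Doing this in-house rather than citing is defensible here: the paper itself remarks that McCleary's $r$-boundaries differ from those of \cref{associatedSS}, so the citation strictly requires a translation of conventions, which your argument replaces by a self-contained check against the paper's own $Z_r$ and $B_r$. Two small points to settle if you write it out in full: the case $r=0$ needs separate (trivial) treatment, since $B_0^{p,p+n}=F_{p-1}A^n$ is defined without a $dZ$ summand, so your two-summand bookkeeping only applies for $r\geq 1$; and you must commit to the second superscripts you elide with ``$\dots$''---the paper's displayed formula for $B_r^{p,p+n}$ carries an index typo there, and the reading consistent with the representing objects $\B_{r+1}(p,n)=\Z_r(p+r,n-1)\oplus\Z_r(p-1,n)$ is the one your argument actually uses.
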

\begin{defi}
  Those morphisms $f\colon A\rightarrow B$ of filtered chain complexes inducing an isomorphism between the $(r+1)$-pages of the associated spectral sequences will be called \textit{$r$-quasi-isomorphisms} or \textit{$r$-weak equivalences} and the class of such morphisms denoted by $\Er$.
\end{defi}
The $r$-cycles can be viewed as functors from $\fCh$ to $R$-modules and are representable as shown in  \cite[\S 3.1]{CELW}.
The $r$-boundaries are not quite representable as there is a choice of how to decompose an $r$-boundary in the defining sum of the $R$-modules $dZ^{\ast,\ast}_{\ast}(A)+Z^{\ast,\ast}_\ast(A)$, however their is still a representing object for these choices.
We recall these representing objects here.
\begin{defi}
  The representing object for the functor $Z_r^{p,p+n}(-)$ will be denoted $\Z_r(p,n)$ and is given by:
  \begin{equation*}
    \Z_r(p,n)\coloneqq \left(R_{(p)}^n\overset{1}{\longrightarrow} R_{(p-r)}^{n+1}\right)\,.
  \end{equation*}
  The representing object for the functor $B_r^{p,p+n}(-)$, with $r\geq 1$, is denoted $\B_r(p,n)$ and given by
  \begin{equation*}
    \B_r(p,n)\coloneqq \Z_{r-1}(p+r-1,n-1)\oplus \Z_{r-1}(p-1,n)
  \end{equation*}
  or more explicitly by
  \begin{equation*}
    \B_r(p,n)\coloneqq\left(
      R_{(p-r+1)}^{n-1}      \overset{\begin{pmatrix}1\\0\end{pmatrix}}{\longrightarrow} R_{(p)}^n\oplus R_{(p-1)}^n      \overset{\begin{pmatrix}0&1\end{pmatrix}}{\longrightarrow} 
      R_{(p-r)}^{n+1}\right)\,.
  \end{equation*}
\end{defi}
The authors of \cite{CELW} also give a representing morphism exhibiting any $r$-boundary as an $r$-cycle.
\begin{defi}
  The morphism $\phi_r\colon \Z_r(p,n)\rightarrow\B_r(p,n)$ of filtered chain complexes for $r\geq 1$ is depicted (in the vertical direction) by:
  \begin{equation*}
    \begin{tikzcd}
      &\left(R_{(p)}^n\right.\arrow[d,"\Delta"]\arrow[r,"1"]
      &\left.R_{(p-r)}^{n+1}\right)\arrow[d,"1"]\\
      \left(R_{(p+r-1)}^{n-1}\right.
        \arrow[r,"\begin{pmatrix}1\\0\end{pmatrix}"]
      &R_{(p)}^n\oplus R_{(p-1)}^n
      \arrow[r,"{\begin{pmatrix}0\amsamp 1\end{pmatrix}}"]
      & \left.R_{(p-r)}^{n+1}\right)
    \end{tikzcd}\,.
  \end{equation*}
\end{defi}
Recall the $r$-cone construction of \cref{rCone}.
\begin{lemm}[{\cite[Remark 3.6]{CELW}}]\label{rConeIsrAcyclic}
  Let $f\colon A\rightarrow B$ be a morphism of filtered chain complexes.
  Then $f\in\Er$ if and only if $C_r(f)$ is $r$-acyclic.\qed
\end{lemm}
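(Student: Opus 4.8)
The plan is to reduce the statement to the classical mapping cone lemma for ordinary chain complexes, after analysing how the spectral sequence of $C_r(f)$ is assembled from those of $A$ and $B$. First I would record the evident short exact sequence of filtered chain complexes
\begin{equation*}
  0 \longrightarrow B \overset{\iota}{\longrightarrow} C_r(f) \overset{\pi}{\longrightarrow} \rSusp A \longrightarrow 0,
\end{equation*}
where $\iota(b)=(0,b)$ and $\pi(a,b)=a$; both maps preserve filtration by the very definition of $F_\bullet C_r(f)$ in \cref{rCone}, and the sequence is split in each filtration and each homological degree. Equivalently, $C_r(f)$ is the twisted direct sum $\rSusp A \toplus B$ whose only twisting term is the perturbation $f$ of the differential.

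The crucial point is a bookkeeping observation about filtration degrees. Writing an element of $C_r(f)$ in filtration degree $p$ as $(a,b)$ with $a\in F_{p-r}A^{\bullet+1}$ and $b\in F_pB^\bullet$, the differential is $d(a,b)=(-da,\,fa+db)$, and since $f$ preserves filtration the cross term $fa$ lands in $F_{p-r}B$, i.e.\ it drops the filtration degree by exactly $r$ relative to $p$. A perturbation that drops filtration by $r$ cannot influence the pages $E_0,\dots,E_{r-1}$, so on these pages the spectral sequence of $C_r(f)$ splits as a direct sum
\begin{equation*}
  E_s(C_r(f)) \cong E_s(\rSusp A)\oplus E_s(B),\qquad d_s = d_s^{\rSusp A}\oplus d_s^{B},\qquad 0\le s<r,
\end{equation*}
which I would verify by induction on $s$ using the explicit $Z_s/B_s$ description of \cref{associatedSS}. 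On the $r$-page the cross term $fa$ first becomes visible and contributes off the diagonal, so that on representatives
\begin{equation*}
  d_r^{C_r(f)}[(a,b)] = \bigl(d_r^{\rSusp A}[a],\; f_\ast[a] + d_r^{B}[b]\bigr).
\end{equation*}
Since $\rSusp A = R_{(r)}^{-1}\otimes A$ is an invertible tensor factor it merely shifts bidegrees, giving a natural identification $E_r(\rSusp A)\cong E_r(A)$ under which the off-diagonal term is exactly the map $f_\ast$ induced by $f$ on $r$-pages.

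Under these identifications $\bigl(E_r(C_r(f)),d_r\bigr)$ is precisely the algebraic mapping cone of the chain map $f_\ast\colon (E_r(A),d_r^A)\to (E_r(B),d_r^B)$, whose homology is $E_{r+1}$ by definition. The classical mapping cone lemma then gives that $E_{r+1}(C_r(f)) = H(\mathrm{Cone}(f_\ast))$ vanishes in every bidegree if and only if $f_\ast$ induces an isomorphism on homology, that is on the $(r+1)$-pages $E_{r+1}(A)\to E_{r+1}(B)$; by definition this is exactly the condition $f\in\Er$, while $E_{r+1}(C_r(f))=0$ is the statement that $C_r(f)$ is $r$-acyclic. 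I expect the main obstacle to be the second paragraph: making the ``filtration-degree-$r$ perturbation'' heuristic rigorous, i.e.\ proving cleanly that $f$ leaves the pages below $r$ unchanged and contributes precisely $f_\ast$ to $d_r$. This is most safely handled by a direct computation of $Z_s^{p,p+n}(C_r(f))$ and $B_s^{p,p+n}(C_r(f))$ from \cref{associatedSS}, tracking the filtration degree of the cross term throughout; everything else is then formal.
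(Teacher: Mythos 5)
Your proof is correct, but there is nothing in this paper to compare it against line by line: the lemma is quoted from \cite[Remark 3.6]{CELW} and stated without proof, so your argument supplies a complete, self-contained justification where the paper gives only a citation. Your route is the natural one and the filtration bookkeeping checks out against \cref{associatedSS}: writing $d(a,b)=(-da,fa+db)$ for $(a,b)\in F_pC_r(f)^n$, so $a\in F_{p-r}A^{n+1}$, one finds that for every $s\le r$ both cycles and boundaries split, $Z_s(C_r(f))=Z_s(\rSusp A)\oplus Z_s(B)$ and $B_s(C_r(f))=B_s(\rSusp A)\oplus B_s(B)$, because the cross term $fa$ arising from an $(s-1)$-cycle of filtration $p+s-1$ lies in $Z_{s-1}^{p+s-1-r,\ast}(B)\subseteq Z_{s-1}^{p-1,\ast}(B)\subseteq B_s^{p,\ast}(B)$ (using $s\le r$); for $s<r$ the further containment $Z_s^{p-r,\ast}(B)\subseteq Z_{s-1}^{p-s-1,\ast}(B)$ (this is exactly where $p-r\le p-s-1$, i.e.\ $s<r$, enters) makes the cross term vanish on $E_s$, so the pages below $r$ split as complexes, while on the $r$-page it survives and is precisely $f_\ast$. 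The one point you should make explicit in a final write-up: your displayed splitting is claimed for $0\le s<r$ only, but the cone description of the $r$-page also requires the splitting of $E_r(C_r(f))$ as a bigraded module; this holds by the same computation (the absorption $Z_{r-1}^{p-1,\ast}(B)\subseteq B_r^{p,\ast}(B)$ is immediate from the definition of $B_r$), so it is a presentational gap rather than a mathematical one. Granting it, $(E_r(C_r(f)),d_r)$ is the bigraded mapping cone of $f_\ast\colon(E_r(A),d_r)\to(E_r(B),d_r)$, and the classical cone lemma, valid over any ring via the long exact sequence of this degreewise split extension, gives both directions at once, since $E_{r+1}$ is the homology of the $r$-page and $f\in\Er$ means exactly that $H(f_\ast)$ is an isomorphism.
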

In particular note that for $A\in\fCh$ that $C_r(A)\coloneqq C_r(\id_A)$ is always $r$-acyclic.
\subsection{The shift-d\'ecalage adjunction of Deligne}
The d\'ecalage functor $\Dec$ of the following definition was introduced by Deligne in \cite{Deligne}.
\begin{defi}\label{shiftDecalage}
  Let $r\geq 0$ and $A\in\fCh$. The \textit{shift} and \textit{d\'ecalage} endofunctors $\Shift^r$ and $\Dec^r$ on $\fCh$ are the identity on the underlying chain complex and modify the filtration by
  \begin{align*}
    F_p(\Shift^rA)^n&\coloneqq F_{p+rn}A^n\,,\\
    F_p(\Dec^rA)^n&\coloneqq Z_r^{p-rn,p-rn+n}(A)\,,
  \end{align*}
  and are such that $\Shift^r$ is equal to r composites of $\Shift\coloneqq\Shift^1$ and $\Dec^r$ is r composites of $\Dec\coloneqq\Dec^1$.
\end{defi}
\begin{lemm}[{\cite[\S2.3]{CiriciGuillen}}]\label{shiftDecalageAdjunction}
  For each $r\geq 0$ there is an adjunction $\Shift^r\dashv\Dec^r$ for which the unit $\id \Rightarrow \Dec^r\circ\Shift^r$ is the identity natural transformation.\qed
\end{lemm}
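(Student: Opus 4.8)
The plan is to exhibit the adjunction directly by a bijection on hom-sets that is the identity on underlying chain maps; this will simultaneously make the claim about the unit transparent. Since both $\Shift^r$ and $\Dec^r$ act as the identity on underlying chain complexes, a morphism $\Shift^r A\to B$ in $\fCh$ is the same datum as a chain map $f\colon A\to B$ respecting the shifted filtration on $A$, namely $f(F_{p+rn}A^n)\subseteq F_pB^n$ for all $p,n$, or equivalently after reindexing $f(F_qA^n)\subseteq F_{q-rn}B^n$ for all $q,n$. Likewise a morphism $A\to\Dec^r B$ is a chain map $f$ with $f(F_pA^n)\subseteq F_p(\Dec^r B)^n = Z_r^{p-rn,p-rn+n}(B) = F_{p-rn}B^n\cap d^{-1}F_{p-rn-r}B^{n+1}$.

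The key step is to check that these two filtration conditions on a fixed underlying chain map $f$ are equivalent. The second condition has $f(F_pA^n)\subseteq F_{p-rn}B^n$ as one of its two clauses, which is exactly the first condition (with $q=p$), so that implication is immediate. For the converse I would use that $f$ is a chain map together with the fact that the differential preserves the filtration: given $a\in F_pA^n$ we have $da\in F_pA^{n+1}$, so $d(f(a))=f(da)\in f(F_pA^{n+1})\subseteq F_{p-r(n+1)}B^{n+1}=F_{p-rn-r}B^{n+1}$ by the first condition applied in degree $n+1$, which is precisely the $d^{-1}$-clause. Hence the identity on underlying chain maps furnishes a natural bijection $\Hom_{\fCh}(\Shift^r A,B)\cong\Hom_{\fCh}(A,\Dec^r B)$, establishing the adjunction.

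For the unit I would compute $\Dec^r\Shift^r$ on filtrations directly: $F_p(\Dec^r\Shift^r A)^n = F_{p-rn}(\Shift^r A)^n\cap d^{-1}F_{p-rn-r}(\Shift^r A)^{n+1}$, where $F_{p-rn}(\Shift^r A)^n = F_pA^n$ and $F_{p-rn-r}(\Shift^r A)^{n+1}=F_pA^{n+1}$, so this reduces to $F_pA^n\cap d^{-1}F_pA^{n+1}=F_pA^n$ because $d$ preserves the filtration. Thus $\Dec^r\circ\Shift^r=\id_{\fCh}$ on the nose, and since the hom-set bijection above is the identity on underlying maps, the unit $\eta_A$ is the underlying identity map $A\to\Dec^r\Shift^r A = A$, i.e.\ the identity natural transformation.

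I do not anticipate a serious obstacle; the only point requiring care is the bookkeeping of the index shifts, in particular verifying that the $d^{-1}$-clause of the décalage filtration is satisfied automatically once the plain filtration-shift condition holds — this is exactly where the chain-map hypothesis does the work. As an alternative one could prove the case $r=1$ and then compose the adjunction with itself $r$ times, using that $\Shift^r$ and $\Dec^r$ are the $r$-fold composites and that $\Dec\circ\Shift=\id$ forces $\Dec^r\circ\Shift^r=\id$ by induction; but the direct computation above seems cleaner and makes the unit claim immediate.
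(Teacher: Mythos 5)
Your proof is correct. There is nothing in the paper to compare it against: the lemma is stated as a citation to \cite[\S 2.3]{CiriciGuillen} with no internal proof, so your direct verification stands as a complete, self-contained argument. The two places where real content is needed are both handled correctly: (i) the $d^{-1}$-clause of the d\'ecalage filtration follows from the plain shift condition precisely because $f$ is a chain map and $da\in F_pA^{n+1}$ whenever $a\in F_pA^n$ (the filtration is by subcomplexes, \cref{filtChainsObjsMorphs}); and (ii) the same observation, in the form $F_pA^n\cap d^{-1}F_pA^{n+1}=F_pA^n$, collapses $\Dec^r\Shift^r A$ to $A$ on the nose, so the unit, being the transpose of $\id_{\Shift^r A}$ under a bijection that is the identity on underlying maps, is the identity natural transformation. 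One point you leave implicit but which is genuinely immediate: naturality of the hom-set bijection, which holds because $\Shift^r$ and $\Dec^r$ are also the identity on underlying morphisms, so composition on either side of the bijection agrees with composition of underlying chain maps. Your alternative route (prove $r=1$ and compose adjunctions $r$ times) would also work and is consistent with \cref{shiftDecalage}, where $\Shift^r$ and $\Dec^r$ are defined as $r$-fold composites, but as you say the direct computation is cleaner since it makes the unit claim visible at once.
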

The main property of the shift and d\'ecalage functors is (up to a shift of indexing) there are isomorphisms $E_{r+1}(\Shift A)\cong E_r(A)$ and $E_{r}(\Dec A)\cong E_{r+1}(A)$ for all $r\geq 0$, see \cite[Proposition 1.3.4]{Deligne}.
In the context of the model structures $\fChr$ and $\fChrp$ of \cite{CELW} the authors used this property and the fact that $\Dec$ sends $(r+1)$-fibrations to $r$-fibrations to demonstrate Quillen equivalences $\fChr\Quill\left(\fCh\right)_{r+l}$ and $\fChrp\Quill\left(\fCh\right)_{(r+l)'}$ for all $r,l\geq0$, see \cite[Theorem 3.22]{CELW}.
\subsection{Monoidal model categories}
We take \cite[Definition 1.1.3]{H} as our definition of a \textit{model category} and \cite[Definition 2.1.17]{H} for that of a \textit{cofibrantly generated model category}.
For a cofibrantly generated model category $\mc{M}$ we denote by $I$ the \textit{generating cofibrations} and $J$ the \textit{generating acyclic cofibrations}.
Diagrammatically cofibrations and fibrations may be shown as $\rightarrowtail$ and $\twoheadrightarrow$, and arrows adorned with $\sim$ are weak equivalences.

For a class of morphisms $K$ we write $K\Inj$ (resp.\ $K\Proj$) for the \textit{$K$-injectives} (resp.\ \textit{$K$-projectives}), i.e.\ those morphisms with the right (resp.\ left) lifting property with respect to all morphisms of $K$.
We also write $I\Cof\coloneqq(I\Inj)\Proj$.
Recall then that in the context of a cofibrantly generated model category that the \textit{cofibrations} $\Cofib$ of $\mc{M}$ are given by $I\Cof$ and the \textit{fibrations} $\Fib$ by $J\Inj$.

We further denote by $K\Cell$ the \textit{$K$-cellular} morphisms that are formed by taking a transfinite composition of iterated pushouts of elements of $K$, \cite[Definition 2.1.9]{H}.

The following definitions can be found in the literature as \cite[Definitions 4.2.1 and 4.2.6]{H}.
By a \textit{monoidal category} we mean symmetric monoidal and we write $\otimes$ for the monoidal product and $\I$ for its unit.
\begin{defi}
  Let $\mc{C}$ be a monoidal category. The \textit{pushout-product} of two morphisms $f\colon A\rightarrow B$ and $g\colon C\rightarrow D$ is denoted $f\boxtimes g$ and given by:
  \begin{equation*}
    f\boxtimes g \colon A\otimes D\coprod_{A\otimes C} B\otimes C \longrightarrow B\otimes D\;.
  \end{equation*}
  For classes of morphisms $S$ and $T$ we also write $S\boxtimes T$ for the class of those morphisms which are pushout-products of a morphism of $S$ by one of $T$.
\end{defi}
\begin{defi}\label{monoidalModelCategory}
  A \textit{monoidal model category} $\mc{M}$ is a model category with a symmetric monoidal structure that is closed, i.e.\ $A\otimes -$ has a right adjoint, and satisfies the following axioms:
  \begin{itemize}
  \item The \textit{pushout-product axiom}: that $f\boxtimes g$ is a cofibration whenever both $f$ and $g$ are, and is further an acyclic cofibration if additionally either $f$ or $g$ is a weak equivalence.
  \item The \textit{unit axiom}: that there is a weak equivalence $Q\I\rightarrow \I$ with $Q\I$ cofibrant such that the induced map $Q\I\otimes X \rightarrow \I\otimes X \rightarrow X$ is a weak equivalence for all cofibrant $X$.
  \end{itemize}
\end{defi}
In fact if the unit axiom is satisfied for some cofibrant replacement of $\I$ then it is satisfied for any cofibrant replacement of the unit, \cite[Lemma 7]{Muro}.

In a cofibrantly generated model category one can verify the the monoidal model category axioms on the set of generating (acyclic) cofibrations.
\begin{lemm}[{\cite[Lemma 2.4.2]{H}}]\label{PushoutProductOnGeneratingCofibrations}
  Let $\mc{M}$ be a cofibrantly generated model category with a closed symmetric monoidal structure.
  If $I\boxtimes I\subseteq I\Cof$, $I\boxtimes J\subseteq J\Cof$ and $J\boxtimes J\subseteq J\Cof$ then $\mc{M}$ satisfies the pushout-product axiom.\qed
\end{lemm}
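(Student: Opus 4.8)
The plan is to promote the three stated conditions on generators to the full pushout-product axiom of \cref{monoidalModelCategory} using the two-variable adjunction carried by the closed symmetric monoidal structure on $\mc{M}$. For morphisms $g\colon C\rightarrow D$ and $h\colon X\rightarrow Y$ the closed structure produces a \emph{pullback-hom}
\begin{equation*}
  \eHom_\boxtimes(g,h)\colon \eHom(D,X)\longrightarrow \eHom(C,X)\times_{\eHom(C,Y)}\eHom(D,Y),
\end{equation*}
and, writing $f\perp h$ to mean that $f$ has the left lifting property against $h$ (equivalently, $p\in K\Inj$ iff $f\perp p$ for every $f\in K$), the pushout-product is left adjoint to the pullback-hom in each variable:
\begin{equation*}
  (f\boxtimes g)\perp h\iff f\perp\eHom_\boxtimes(g,h)\iff g\perp\eHom_\boxtimes(f,h).
\end{equation*}
This is a purely formal consequence of $\mc{M}$ being closed symmetric monoidal together with the finite pushouts and pullbacks present in any model category, so it needs no homotopical input. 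I would also record, for an arbitrary class $K$, the saturation identity $(K\Cof)\Inj=K\Inj$: the inclusion $\subseteq$ is immediate from $K\subseteq K\Cof$, while $\supseteq$ holds because $K\Cof=(K\Inj)\Proj$ forces every map of $K\Cof$ to lift on the left against every map of $K\Inj$, so every map of $K\Inj$ lifts on the right against all of $K\Cof$.

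With these two tools the cofibration half is a short ping-pong between the variables. Under cofibrant generation $\Cofib=I\Cof$ and the acyclic fibrations are exactly $I\Inj$. Fix cofibrations $f,g\in I\Cof$; to see $f\boxtimes g\in I\Cof$ it suffices to show $(f\boxtimes g)\perp h$ for every $h\in I\Inj$. By the adjunction this is $f\perp\eHom_\boxtimes(g,h)$, which holds for \emph{all} cofibrations $f$ precisely when $\eHom_\boxtimes(g,h)\in(I\Cof)\Inj=I\Inj$. Transposing in the remaining variable, $\eHom_\boxtimes(g,h)\in I\Inj$ says $g\perp\eHom_\boxtimes(i,h)$ for every $i\in I$, and this holds for all cofibrations $g$ exactly when $\eHom_\boxtimes(i,h)\in(I\Cof)\Inj=I\Inj$; transposing once more, the latter is precisely $i'\boxtimes i\perp h$ for all $i',i\in I$, i.e.\ the hypothesis $I\boxtimes I\subseteq I\Cof$. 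Hence $f\boxtimes g$ is a cofibration.

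The acyclic half runs identically with the target class switched. Now $\Fib=J\Inj$ and the acyclic cofibrations are $J\Cof$, while $(I\Cof)\Inj=I\Inj$ still records the acyclic fibrations. To obtain $(I\Cof)\boxtimes(J\Cof)\subseteq J\Cof$ -- which by symmetry of $\boxtimes$ also covers the case in which the acyclic factor is the first one -- I place the outer lifting problem against $h\in J\Inj$ and run the same two transpositions: the first reduces to $\eHom_\boxtimes(g,h)\in(I\Cof)\Inj=I\Inj$ for $g\in J\Cof$, the second (using $(J\Cof)\Inj=J\Inj$) to $\eHom_\boxtimes(i,h)\in J\Inj$ for $i\in I$, and the third to $i\boxtimes j\perp h$ for all $i\in I$, $j\in J$, which is the hypothesis $I\boxtimes J\subseteq J\Cof$. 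The remaining hypothesis $J\boxtimes J\subseteq J\Cof$ is then automatic, since $J\subseteq I\Cof$ means it is subsumed by the inclusion just proved; it may be read as the symmetric completion of the generating acyclic check. Together the two halves give exactly the pushout-product axiom.

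The step I expect to be the genuine obstacle is not any of this ping-pong but the clean setup and justification of the pullback-hom and its two-variable adjunction in the stated generality, together with scrupulous bookkeeping of which saturated class ($I\Cof$, $J\Cof$, $I\Inj$, $J\Inj$) is invoked at each transposition. Once that calculus is in place the argument carries no further homotopical content: the only model-categorical inputs are the cofibrant-generation identifications $\Cofib=I\Cof$ and $\Fib=J\Inj$, together with the facts that the acyclic fibrations coincide with $I\Inj$ and the acyclic cofibrations with $J\Cof$.
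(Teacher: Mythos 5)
Your proof is correct, and it coincides with the approach of the paper's source: the paper gives no argument of its own for this lemma (it is quoted from Hovey with only a citation), and Hovey's proof is exactly your two-variable-adjunction ping-pong between the pushout-product and the pullback-hom, combined with the saturation identity $(K\Cof)\Inj = K\Inj$ and the cofibrant-generation identifications $\Cofib = I\Cof$, $\Fib = J\Inj$, acyclic fibrations $= I\Inj$, acyclic cofibrations $= J\Cof$. Your side remark that the hypothesis $J\boxtimes J\subseteq J\Cof$ is redundant given $I\boxtimes J\subseteq J\Cof$ (because $J\Cof\subseteq I\Cof$, so the mixed case subsumes it) is also correct.
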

The major consequence of a model category being a monoidal model category is that its homotopy category has an induced structure of a monoidal category with unit, \cite[Theorem 4.3.2]{H}.
The following is Schwede and Shipley's definition, \cite[Definition 3.3]{SS}, of an axiom yielding extra model category structures of algebras and modules given a monoidal model category.

For a class of morphisms $\mc{G}$ of a category $\mc{M}$ denote by $\mc{G}\otimes\mc{M}$ the class of those morphisms of the form $g\otimes \id_M$ for some $g\in\mc{G}$ and $ M$ an object of $\mc{M}$.
\begin{defi}\label{MonoidAxiom}
  \sloppy A monoidal model category $\mc{M}$ satisfies the \textit{monoid axiom} if every morphism of $\left(\left(\mc{W}\cap\Cofib\right)\otimes\mc{M}\right)\Cell$ is a weak equivalences.
\end{defi}
\begin{lemm}[{\cite[Lemma 3.5 (2)]{SS}}]\label{MonoidAxiomFromJ}
  Let $\mc{M}$ be a cofibrantly generated model category with a closed symmetric monoidal structure.
  If every morphism of $\left(J\otimes \mc{M}\right)\Cell$ is a weak equivalence then the monoid axiom holds.\qed 
\end{lemm}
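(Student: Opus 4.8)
The plan is to deduce the monoid axiom --- that every morphism of $\left(\left(\mc{W}\cap\Cofib\right)\otimes\mc{M}\right)\Cell$ is a weak equivalence --- from the hypothesis on $\left(J\otimes\mc{M}\right)\Cell$ by comparing the two cellular classes via retracts. First I would set $\mc{K}\coloneqq\left(J\otimes\mc{M}\right)\Cell$ and let $\mc{R}$ denote the class of retracts of morphisms in $\mc{K}$. The hypothesis says precisely that $\mc{K}\subseteq\mc{W}$, and since the weak equivalences of a model category are closed under retracts (the retract axiom), this immediately upgrades to $\mc{R}\subseteq\mc{W}$.

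The first substantive step is to show $\left(\mc{W}\cap\Cofib\right)\otimes\mc{M}\subseteq\mc{R}$. Let $w$ be an acyclic cofibration. Because $\mc{M}$ is cofibrantly generated, the small object argument exhibits $w$ as a retract of a morphism $\tilde w\in J\Cell$. Now fix an object $M$ and apply $-\otimes M$; by symmetry this is naturally isomorphic to $M\otimes-$, which is a left adjoint by the closed monoidal structure and hence preserves all colimits. It therefore sends the $J$-cellular morphism $\tilde w$ to a $\left(J\otimes M\right)$-cellular morphism, an element of $\mc{K}$. A left adjoint also preserves retract diagrams in the arrow category, so $w\otimes\id_M$ is a retract of $\tilde w\otimes\id_M\in\mc{K}$, giving $w\otimes\id_M\in\mc{R}$ as wanted.

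The second step is the formal observation that $\mc{R}$ is closed under pushout and transfinite composition: a pushout (respectively transfinite composite) of a retract is a retract of the corresponding pushout (respectively transfinite composite) of $\mc{K}$-cellular morphisms, which is itself $\mc{K}$-cellular since $\mc{K}$ is closed under those operations. This is purely categorical and needs no smallness hypothesis, and it yields $\mc{R}\Cell=\mc{R}$. Combining this with the inclusion of the previous paragraph and the monotonicity of the cell construction gives
\begin{equation*}
  \left(\left(\mc{W}\cap\Cofib\right)\otimes\mc{M}\right)\Cell
  \subseteq \mc{R}\Cell = \mc{R} \subseteq \mc{W},
\end{equation*}
which is exactly the statement of the monoid axiom.

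The only real obstacle I expect is the bookkeeping in the second step --- checking at the level of the arrow category that retracts of cellular morphisms are stable under pushout and transfinite composition --- together with the interplay in the first step between the retract presentation of an acyclic cofibration as a $J$-cellular morphism and the colimit-preservation of $-\otimes M$. Everything else reduces to cofibrant generation and the retract-closure of $\mc{W}$, so no genuinely homotopical input beyond the hypothesis $\mc{K}\subseteq\mc{W}$ is required.
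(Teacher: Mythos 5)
Your proposal is correct and follows essentially the same route as the source this paper cites for the statement (the paper gives no proof of its own, only the reference to \cite[Lemma 3.5 (2)]{SS}): acyclic cofibrations are domain-fixed retracts of $J$-cellular maps by the small object argument, the left adjoint $-\otimes M$ carries these to retracts of morphisms in $\left(J\otimes\mc{M}\right)\Cell$, and one concludes from closure of weak equivalences under retracts. The only step genuinely requiring care is the one you flag: for transfinite composites the $\mc{K}$-cellular morphisms being retracted are not composable, so the comparison cellular sequence must be built inductively by pushing out along the section maps of the (domain-fixed) retract diagrams --- legitimate because cellular classes are closed under cobase change --- after which the ladder of sections and retractions passes to the colimit.
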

\begin{theo}[{\cite[Theorem 4.1]{SS}}]\label{SSModelStructures}
  Let $\mc{M}$ be a cofibrantly generated monoidal model category such that every object is small relative to $\mc{M}$ and that $\mc{M}$ satisfies the monoid axiom. let $I$ and $J$ be the generating cofibrations and acyclic cofibrations respectively.
  \begin{enumerate}
  \item If $R$ is monoid in $\mc{M}$ there is a cofibrantly generated model category of left $R$-modules in $\mc{M}$.
  \item If $R$ is a commutative monoid in $\mc{M}$ there is a cofibrantly generated monoidal model category of $R$-modules in $\mc{M}$ satisfying the monoid axiom.
  \item If $R$ is a commutative monoid of $\mc{M}$ there is a cofibrantly generated model category of $R$-algebras in $\mc{M}$.
  \end{enumerate}
  Furthermore the weak equivalences (resp.\ fibrations) are those morphisms that are the weak equivalences (resp.\ fibrations) of $\mc{M}$ after forgetting the monoid or module structure.
  The generating cofibrations and acyclic cofibrations in the first two cases are given by $R\otimes I$ and $R\otimes J$ and in the third case by $T_RI$ and $T_RJ$ where $T_R$ is the free $T$-algebra functor.\qed
\end{theo}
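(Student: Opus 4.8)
The plan is to obtain all three model structures by \emph{transferring} the cofibrantly generated structure of $\mc{M}$ across the relevant free--forgetful adjunction $F\dashv U$: in the module cases $F=R\otimes-$ with $U$ the forgetful functor, and in the algebra case $F=T_R$ the free $R$-algebra functor. The engine is the standard lifting criterion for cofibrantly generated model categories, an application of Quillen's small object argument (the foundations of which are in \cite{H}): if the target $\mc{D}$ is bicomplete, the domains of $FI$ and $FJ$ are small relative to the respective cellular classes, and $U$ carries every relative $FJ$-cell complex to a weak equivalence of $\mc{M}$, then $\mc{D}$ carries a cofibrantly generated model structure with generating (acyclic) cofibrations $FI$ (resp.\ $FJ$), in which $f$ is a weak equivalence or fibration precisely when $Uf$ is. I would reduce each of the three cases to checking the three hypotheses of this criterion, after which the retract, two-out-of-three, and lifting axioms are automatic.

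The bicompleteness and smallness hypotheses are the routine ones. For modules, $U\colon R\text{-Mod}\to\mc{M}$ creates all colimits because $R\otimes-$ is a left adjoint (the monoidal structure being closed) and hence preserves them; limits are created in the obvious way, so $R\text{-Mod}$ is bicomplete. Since every object of $\mc{M}$ is small relative to $\mc{M}$ and $F$ preserves the relevant colimits, the domains of $FI$ and $FJ$ are again small. The algebra case is analogous once one has the free functor $T_R$ and the bicompleteness of $R\text{-Alg}$, which follow from the monadicity of $U$.

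The heart of the argument is that $U$ sends relative $FJ$-cell complexes to weak equivalences, and this is exactly what the monoid axiom is engineered to supply. In the module case I would analyse a single pushout of $R\otimes j$, with $j\colon C\to D$ in $J$, along an $R$-module map $R\otimes C\to X$. Factoring the attaching map through the action $\alpha\colon R\otimes X\to X$ and using that $R\otimes-$ preserves pushouts, the pasting law for pushouts exhibits the underlying map $X\to Y$ in $\mc{M}$ as a cobase change along $\alpha$ of the map $R\otimes\left(X\to X\amalg_C D\right)$. Here $X\to X\amalg_C D$ is a cobase change of $j$, hence an acyclic cofibration, so by symmetry of $\otimes$ this map lies in $\left(\left(\W\cap\Cofib\right)\otimes\mc{M}\right)$. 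Passing to transfinite composites shows every relative $FJ$-cell complex forgets to a morphism of $\left(\left(\W\cap\Cofib\right)\otimes\mc{M}\right)\Cell$, which is a weak equivalence by the monoid axiom, \cref{MonoidAxiom}. This yields part (1); part (2) then follows by checking that $\otimes_R$ makes $R\text{-Mod}$ a closed symmetric monoidal category (using commutativity of $R$) whose pushout-product and monoid axioms are inherited from those of $\mc{M}$.

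The main obstacle is the algebra case, part (3), where $F=T_R$ involves all iterated tensor powers, so the clean cobase-change description above is unavailable. The plan is to filter a pushout of $T_R(j)$ along $T_RC\to X$ by word length, i.e.\ by the number of tensor factors drawn from $D$ rather than $C$, and to identify each successive subquotient inclusion, after forgetting to $\mc{M}$, as a cobase change of a map assembled from $j$ by iterated pushout-products and tensoring with fixed objects. Each such building block lies in $\left(\W\cap\Cofib\right)\otimes\mc{M}$ once one knows that pushout-products of the acyclic cofibration $j$ remain acyclic cofibrations, so the whole filtration assembles into a morphism of $\left(\left(\W\cap\Cofib\right)\otimes\mc{M}\right)\Cell$ and the monoid axiom again forces a weak equivalence. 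Controlling this filtration and verifying that the subquotients genuinely take the stated form is the delicate part; once it is in hand, the lifting criterion delivers the model structure with generating maps $T_RI$ and $T_RJ$, completing the theorem.
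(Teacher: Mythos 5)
The paper itself gives no proof of this statement --- it is imported verbatim from Schwede and Shipley as \cite[Theorem 4.1]{SS} with a \verb|\qed| --- and your proposal is essentially a faithful reconstruction of their argument: transfer along the free--forgetful adjunctions via the standard lifting criterion, with the monoid axiom handling relative $FJ$-cell complexes (for modules, because the forgetful functor creates colimits so such cell complexes forget to morphisms of $\left(\left(\W\cap\Cofib\right)\otimes\mc{M}\right)\Cell$; for algebras, via the word-length filtration whose subquotient maps are cobase changes of iterated pushout-products of $j$ tensored with fixed objects, exactly as in Schwede and Shipley's Lemma 6.2). Your outline is correct and does not deviate from the cited proof in any substantive way.
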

Schwede and Shipley note that their result does not obtain a model structure on the commutative $R$-algebra objects and in fact give an example in \cite[Remark 4.5]{SS} where there is no model category structure whose weak equivalences and fibrations are those of $\mc{M}$ after forgetting the monoid structure.
\subsection{Muro's results}\label{MurosResults}
In \cite{Muro} Muro considers a cofibrantly generated monoidal model category $\mc{M}$ and defines a new cofibrantly generated monoidal model category $\wt{\mc{M}}$ with the same underlying category, monoidal structure and weak equivalences in which the unit of the tensor product is now cofibrant.
In addition there is a \textit{monoidal Quillen equivalence} $\inadj{\id}{\mc{M}}{\wt{\mc{M}}}{\id}$ and $\wt{\mc{M}}$ satisfies similar model category properties as $\mc{M}$, outlined in \cref{MurosTheorem}.
New generating (acyclic) cofibrations are added to those of $\mc{M}$ in the construction.

We recall the relevant definitions and results from \cite{Muro}.

\begin{defi}\label{veryStrongUnitAxiom}
  A monoidal model category $\mc{M}$ satisfies the \textit{very strong unit axiom} if it satisfies the unit axiom for any $X$, i.e.\ we drop the requirement that $X$ be cofibrant.
\end{defi}
Muro makes the following definitions as part of \cite[Theorem 3]{Muro}.
\begin{defi}\label{MurosFactorisation}
  Let $q\colon Q{\I}\overset{\sim}{\rightarrow}\I$ be a cofibrant replacement of the unit satisfying the very strong unit axiom.
  Then write $Q\I\coprod\I\overset{j}{\rightarrowtail} C \underset{\sim}{\overset{p}{\rightarrow}} \I$ for a factorisation of $(q,id)\colon Q\I\coprod\I \rightarrow\I$ into a cofibration followed by a weak equivalence.
  Also let $i_1\colon Q\I\rightarrow Q\I\coprod\I$ be the inclusion of the first factor.
\end{defi}
\begin{defi}
  Let $\mc{M}$ be a cofibrantly generated monoidal model category. Define $\wt{I}$ and $\wt{J}$ to be the sets of morphisms given by:
  \begin{align*}
    \wt{I}&\coloneqq I\cup\left\{\emptyset\rightarrow\I\right\},\\
    \wt{J}&\coloneqq J\cup\left\{j\circ i_1\colon Q\I\rightarrow C\right\}
  \end{align*}
  for $\emptyset$ the initial object of $\mc{M}$, $i_1$ and $j$ as in \cref{MurosFactorisation} and $I$ and $J$ the generating (acyclic) cofibrations of $\mc{M}$.
\end{defi}
\begin{theo}[{\cite[Theorem 3]{Muro}}]\label{MurosTheorem}
  Let $\mc{M}$ be a cofibrantly generated monoidal model category with $q\colon Q\I\overset{\sim}{\rightarrow}\I$ satisfying the very strong unit axiom.
  Then if all objects of $\mc{M}$ are small relative to $\mc{M}$ there is a cofibrantly generated monoidal model category $\wt{\mc{M}}$ with the same underlying category, weak equivalences and monoidal structure as $\mc{M}$ with sets of generating cofibrations $\wt{I}$ and generating acyclic cofibrations $\wt{J}$.
  Furthermore if $\mc{M}$ is right (resp.\ left) proper then so too is $\wt{\mc{M}}$. If $\mc{M}$ is symmetric monoidal satisfying the monoid axiom so too does $\wt{\mc{M}}$.\qed
\end{theo}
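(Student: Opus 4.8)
The plan is to obtain $\wt{\mc{M}}$ from the recognition theorem for cofibrantly generated model categories, \cite[Theorem 2.1.19]{H}, applied to the same underlying category with the same class $\W$ of weak equivalences but with the enlarged generating sets $\wt{I}$ and $\wt{J}$. Since $\wt{\mc{M}}$ shares its underlying category with $\mc{M}$ it is complete and cocomplete, the class $\W$ automatically retains the two-out-of-three property and closure under retracts, and by hypothesis every object is small, so in particular the new domains $\emptyset$ and $Q\I$ are small relative to $\wt{I}\Cell$ and $\wt{J}\Cell$. It therefore remains to verify the compatibility conditions relating $\wt{I}$ and $\wt{J}$: that $\wt{J}\Cell\subseteq\W\cap\wt{I}\Cof$, that $\wt{I}\Inj\subseteq\W\cap\wt{J}\Inj$, and the disjunctive condition, for which I would take $\W\cap\wt{J}\Inj\subseteq\wt{I}\Inj$.

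First I would record the formal consequences of $I\subseteq\wt{I}$ and $J\subseteq\wt{J}$: dually these give $\wt{I}\Inj\subseteq I\Inj$ and $\wt{J}\Inj\subseteq J\Inj$, so the new (acyclic) fibrations sit among the old ones, while $\wt{I}\Cof\supseteq I\Cof$, so the new cofibrations include the old ones; in particular $\emptyset\to\I$ exhibits $\I$ as cofibrant in $\wt{\mc{M}}$. Next, the new generator $j\circ i_1\colon Q\I\to C$ of \cref{MurosFactorisation} lies in $\W\cap\wt{I}\Cof$: the summand inclusion $i_1\colon Q\I\to Q\I\coprod\I$ is a pushout of $\emptyset\to\I$ and $j$ is a cofibration of $\mc{M}$, so $j\circ i_1\in\wt{I}\Cof$, while $p\circ(j\circ i_1)=q$ with $p,q\in\W$ forces $j\circ i_1\in\W$ by two-out-of-three. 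Hence $\wt{J}\subseteq\W\cap\wt{I}\Cof$, and as $\wt{I}\Cof$ is closed under cellular constructions we get $\wt{J}\Cell\subseteq\wt{I}\Cof$. The inclusions making up $\wt{I}\Inj\subseteq\W\cap\wt{J}\Inj$ are then formal: $\wt{I}\Inj\subseteq I\Inj\subseteq\W$ since $I\Inj$ are the acyclic fibrations of $\mc{M}$, and $\wt{I}\Inj\subseteq\wt{J}\Inj$ follows from $\wt{J}\subseteq\wt{I}\Cof$ by the lifting calculus. For the disjunctive condition, a map $f\colon X\to Y$ in $\W\cap\wt{J}\Inj$ already lies in $\W\cap J\Inj=I\Inj$; given $y\colon\I\to Y$, the composite $y\circ q$ lifts through the acyclic fibration $f$ to some $\tilde{x}\colon Q\I\to X$, and feeding the resulting commuting square into the lifting property of $f$ against $j\circ i_1$ and restricting the lift along the inclusion $\I\to C$ of the second factor produces a lift $\I\to X$ of $y$; hence $f\in\wt{I}\Inj$.

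The one genuine obstacle is the remaining half of the first condition, namely $\wt{J}\Cell\subseteq\W$: that pushouts and transfinite composites of the new generator $j\circ i_1$ remain weak equivalences. This is precisely where the very strong unit axiom of \cref{veryStrongUnitAxiom} is indispensable. I would analyse a single pushout of $j\circ i_1$ along a map $Q\I\to X$ by splitting it through $Q\I\xrightarrow{i_1}Q\I\coprod\I\xrightarrow{j}C$: the first stage pushes out to the summand inclusion $X\to X\coprod\I$ and the second to a pushout of the cofibration $j$, and the point is to show the total map $X\to X\coprod_{Q\I}C$ is a weak equivalence. Heuristically $Q\I\simeq\I\simeq C$, so this pushout ought to be homotopically $X$; making this rigorous at the level of an honest pushout is exactly the flatness-type statement guaranteed by the very strong unit axiom, which controls $Q\I\otimes(-)$ and $C\otimes(-)$ with no cofibrancy hypothesis on the second variable. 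A transfinite-composition argument then propagates weak equivalence along all of $\wt{J}\Cell$. With this in hand, \cite[Theorem 2.1.19]{H} yields the cofibrantly generated model structure $\wt{\mc{M}}$ with $\Fib=\wt{J}\Inj$ and $\Cofib=\wt{I}\Cof$, the same weak equivalences as $\mc{M}$.

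It then remains to transport the extra structure. For the pushout-product axiom of \cref{monoidalModelCategory} I would invoke \cref{PushoutProductOnGeneratingCofibrations} and check the pairings on $\wt{I}$ and $\wt{J}$: those not involving the new generators hold because they held in $\mc{M}$, the pairings with $\emptyset\to\I$ collapse by the unit isomorphism of $\boxtimes$, and the pairings with $j\circ i_1$ reduce once more to the pushout analysis above. Right properness is immediate since the fibrations of $\wt{\mc{M}}$ lie among those of $\mc{M}$, so every pullback controlled by $\wt{\mc{M}}$ is already controlled by $\mc{M}$; left properness requires only the additional stability of $\W$ under the functor $(-)\coprod\I$ arising from the new cofibrations, again supplied by the unit axiom. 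Finally, for the monoid axiom I would use \cref{MonoidAxiomFromJ}: a map of $(\wt{J}\otimes\mc{M})\Cell$ is assembled from the old part $(J\otimes\mc{M})\Cell$, governed by the monoid axiom in $\mc{M}$, together with cells of the maps $(j\circ i_1)\otimes\id_N$, which are weak equivalences by the same analysis powered by the very strong unit axiom. Thus every appearance of the two new generators is ultimately reduced to the single pushout analysis of $j\circ i_1$, which is where I expect the real work to lie.
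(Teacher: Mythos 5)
First, a remark on the comparison itself: the paper does not prove this statement at all, it imports it verbatim as \cite[Theorem 3]{Muro} (hence the \qed on the statement), so your attempt can only be measured against Muro's actual argument. Your skeleton is the right one and much of it is executed correctly: invoking the recognition theorem \cref{KanModCat} with the same class $\W$, the formal inclusions coming from $I\subseteq\wt{I}$ and $J\subseteq\wt{J}$, the observation that $i_1$ is a pushout of $\emptyset\rightarrow\I$ so that $j\circ i_1\in\W\cap\wt{I}\Cof$ by two-out-of-three, and the lifting argument for $\W\cap\wt{J}\Inj\subseteq\wt{I}\Inj$ (lift $y\circ q$ through $f$ using cofibrancy of $Q\I$, lift the resulting square against $j\circ i_1$, restrict along $\I\rightarrow C$) are all sound.

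The genuine gap is exactly at the step you flag as ``the one genuine obstacle'', namely $\wt{J}\Cell\subseteq\W$, and it is not closed by what you write. The very strong unit axiom of \cref{veryStrongUnitAxiom} says only that $Q\I\otimes X\rightarrow X$ is a weak equivalence for every $X$; it is not a ``flatness-type statement'' about pushouts, and the claim that it ``controls $Q\I\otimes(-)$ and $C\otimes(-)$'' does not address a pushout $X\coprod_{Q\I}C$ along an arbitrary map $Q\I\rightarrow X$, which is not of the form $Q\I\otimes(-)$ or $C\otimes(-)$ of anything. Your two-stage splitting cannot work stage by stage either: $X\rightarrow X\coprod\I$ is essentially never a weak equivalence, and $j\circ i_1$, although a weak equivalence, is \emph{not} a cofibration of $\mc{M}$ (that would force $\I$ to be cofibrant, which fails in the motivating example by \cref{UnitIsNotCofibrant}), so no model category axiom and no properness hypothesis makes its pushouts weak equivalences. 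The missing idea, which is the real content of Muro's proof, is to tensor the whole cell diagram with $Q\I$: by the pushout-product axiom of $\mc{M}$, $Q\I\otimes i_1$ and $Q\I\otimes j$ are cofibrations; by the very strong unit axiom applied to $Q\I$ and to the possibly non-cofibrant $C$, together with two-out-of-three in the naturality squares, $Q\I\otimes(j\circ i_1)$ is an \emph{acyclic cofibration of $\mc{M}$}; since $Q\I\otimes(-)$ is a left adjoint it preserves pushouts and transfinite compositions, so $Q\I\otimes X\rightarrow Q\I\otimes Y$ is an acyclic cofibration for every relative $\wt{J}$-cell complex $X\rightarrow Y$; finally the naturality square of $Q\I\otimes(-)\rightarrow(-)$ and two-out-of-three descend this to $X\rightarrow Y\in\W$. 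The same device is what is needed, and is likewise missing, in the acyclicity half of the pushout-product pairings involving $j\circ i_1$, in the monoid axiom via \cref{MonoidAxiomFromJ} (cells of $(j\circ i_1)\otimes\id_N$), and in left properness (pushouts along the new cofibrations such as $X\rightarrow X\coprod\I$); as written, each of these is deferred to ``the pushout analysis above'', but that analysis was never actually carried out.
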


\subsection{Existing model structures}\label{CELWresults}
We recall the model structures on filtered chain complexes of \cite{CELW}.
For each $r\geq 0$ they give two model structures denoted $\fChr$ and $\fChrp$ (which agree for $r=0$).
They define the following sets of morphisms:
\begin{defi}
  For $r\geq 0$ the sets $I_r$ and $J_r$ of morphisms of $\fCh$ are given by:
  \begin{align*}
    I_r&\coloneqq \left\{\Z_{r+1}(p,n)\longrightarrow
         \B_{r+1}(p,n)\right\}_{p,n\in\Zbb}\;,\\
    J_r&\coloneqq \left\{0\longrightarrow \Z_r(p,n)
         \right\}_{p,n\in\Zbb}\;.
  \end{align*}
\end{defi}
\begin{theo}[{\cite[Theorem 3.14]{CELW}}]
  \label{filtChainsr}
  For each $r\geq 0$ there is a right proper cofibrantly generated model structure on $\fCh$ whose weak equivalences are $\Er$ with generating cofibrations $I_r$ and generating acyclic cofibrations $J_r$.
  The fibrations are those morphisms that are $Z_r$-bidegreewise surjective.\qed
\end{theo}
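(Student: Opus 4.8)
The plan is to apply Kan's recognition theorem for cofibrantly generated model categories, \cite[Theorem 2.1.19]{H}, taking the weak equivalences to be $\Er$, the generating cofibrations $I_r$ and the generating acyclic cofibrations $J_r$. The category $\fCh$ is bicomplete by \cref{filteredChainsLimits}, and since every object is small relative to $\fCh$ by \cref{filtChainsAreSmall}, the smallness hypotheses on the domains of $I_r$ and $J_r$ are automatic. As $\Er$ consists of exactly those maps inducing an isomorphism on the functorially defined page $E_{r+1}$, it has the two-out-of-three property and is closed under retracts because the class of isomorphisms does. It then remains to verify the lifting-theoretic conditions $J_r\Cell\subseteq\Er\cap I_r\Cof$, $I_r\Inj\subseteq\Er\cap J_r\Inj$, and one of the two comparison inclusions.

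First I would identify the two relevant right-lifting classes. Since $\Z_r(p,n)$ represents the functor $Z_r^{p,p+n}(-)$, a map $f\colon A\rightarrow B$ lies in $J_r\Inj$ precisely when each $Z_r^{p,p+n}(A)\rightarrow Z_r^{p,p+n}(B)$ is surjective, that is, when $f$ is $Z_r$-bidegreewise surjective; this yields the asserted description of the fibrations. In particular, as the terminal object of $\fCh$ is the zero complex and $Z_r^{p,p+n}(0)=0$, every object of $\fCh$ is fibrant. For $I_r\Inj$, a lifting problem against $\phi_{r+1}\colon\Z_{r+1}(p,n)\rightarrow\B_{r+1}(p,n)$ translates, through the representability of cycles and of decomposed boundaries, into the requirement that every $(r+1)$-cycle of $A$ whose image in $B$ is an $(r+1)$-boundary is already an $(r+1)$-boundary, together with an appropriate surjectivity on cycles. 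The central computational claim is then the identity
\[
  I_r\Inj=\Er\cap J_r\Inj ,
\]
expressing that the $I_r$-injectives are exactly the trivial fibrations, i.e.\ the $Z_r$-surjective maps that are also $r$-quasi-isomorphisms.

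Granting this identity, the recognition-theorem conditions follow cleanly. It gives $I_r\Inj\subseteq\Er\cap J_r\Inj$ directly, and its reverse inclusion $\Er\cap J_r\Inj\subseteq I_r\Inj$ is exactly the comparison condition. For $J_r\Cell\subseteq\Er\cap I_r\Cof$, the inclusion $J_r\subseteq I_r\Cof$ holds because each map $0\rightarrow\Z_r(p,n)$ lifts against any trivial fibration by the $Z_r$-surjectivity just established, and $I_r\Cof$ is closed under the cellular constructions; the weak-equivalence half uses that each $\Z_r(p,n)$ is $r$-acyclic, so that $0\rightarrow\Z_r(p,n)\in\Er$, together with the fact that $E_{r+1}$ commutes with the coproducts and filtered colimits appearing in a $J_r$-cell complex. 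Indeed each attaching map has domain $0$, so such a complex is, up to isomorphism, the starting object with a direct sum of $r$-acyclic summands adjoined, on which $E_{r+1}$ is unchanged.

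The hard part will be establishing the identity $I_r\Inj=\Er\cap J_r\Inj$. The inclusion $\subseteq$ requires showing that $I_r$-injectivity forces both surjectivity on $r$-cycles and bijectivity on $E_{r+1}$, which is a careful diagram chase relating lifts against $\phi_{r+1}$ to the kernel and image data defining the $(r+1)$-page; the inclusion $\supseteq$ requires producing, from a $Z_r$-surjective $r$-quasi-isomorphism, an explicit solution of each $\phi_{r+1}$-lifting problem, choosing representatives compatibly with the boundary decomposition encoded in $\B_{r+1}(p,n)$. Once this is in place, right properness is immediate: by the fibration characterization every object of $\fCh$ is fibrant, and in a model category all of whose objects are fibrant the pullback of a weak equivalence along a fibration is again a weak equivalence.
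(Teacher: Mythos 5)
Your proposal is correct and follows essentially the same route as the paper: the paper deduces this theorem (imported from \cite{CELW}) from Kan's recognition theorem (\cref{KanModCat}) together with the cited identities $I_r\Inj=J_r\Inj\cap\Er$ (\cref{IrInj}) and $J_r\Cof\subseteq\Er$ (\cref{JkCof}), plus right properness from all objects being fibrant — exactly the skeleton you describe, with your "hard part" being precisely the content of \cref{IrInj}. Your direct verification that $J_r$-cell complexes are $r$-weak equivalences (via $r$-acyclicity of the $\Z_r(p,n)$, which are $r$-cones) is just an unwound version of the cited $J_r\Cof\subseteq\Er$, so there is no substantive difference in method.
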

\begin{theo}[{\cite[Theorem 3.16]{CELW}}]
  \label{filtChainsrp}
    For each $r\geq 0$ there is a right proper cofibrantly generated model structure on $\fCh$ whose weak equivalences are $\Er$ with generating cofibrations $I_r\cup\left(\bigcup_{k=0}^{r-1}J_k\right)$ and generating acyclic cofibrations $\bigcup_{k=0}^rJ_k$.
  The fibrations are those morphisms that are $Z_k$-bidegreewise surjective for all $0\leq k\leq r$.\qed
\end{theo}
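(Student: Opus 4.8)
The plan is to apply the recognition theorem for cofibrantly generated model categories, \cite[Theorem 2.1.19]{H}, with weak equivalences $\W\coloneqq\Er$, generating cofibrations $I'\coloneqq I_r\cup\bigcup_{k=0}^{r-1}J_k$ and generating acyclic cofibrations $J'\coloneqq\bigcup_{k=0}^{r}J_k$, exploiting at every turn that the model structure $\fChr$ of \cref{filtChainsr} already exists with this \emph{same} class $\Er$ of weak equivalences. Because $\W$ is unchanged, its two-out-of-three property and closure under retracts come for free, and the smallness of the domains of $I'$ and $J'$ is immediate from \cref{filtChainsAreSmall}. What remains are the lifting and generation conditions tying $\W$ to the classes $(I')\Cof$, $(I')\Inj$, $(J')\Cof$ and $(J')\Inj$.

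I would begin with two preliminary observations. First, each $\Z_k(p,n)$ with $0\le k\le r$ is $r$-acyclic: in the two-term complex $R_{(p)}^{n}\overset{1}{\to}R_{(p-k)}^{n+1}$ the two generators sit in filtration degrees differing by $k$, so every page-$j$ differential with $j<k$ vanishes on them for filtration reasons and both classes survive to the page $E_k$, where $d_k$ sends one isomorphically to the other; hence $E_{k+1}(\Z_k(p,n))=0$ and therefore $E_{r+1}(\Z_k(p,n))=0$ for all $r\ge k$. Second, since every map of $J'$ has domain $0$, each pushout of such a map is a coproduct inclusion, so every map in $(J')\Cell$ has the form $A\to A\oplus Z$ with $Z$ a direct sum of objects $\Z_k(p,n)$, $k\le r$; as cycles, boundaries and the pages $E_{r+1}$ commute with direct sums we get $E_{r+1}(A\oplus Z)\cong E_{r+1}(A)$ and the inclusion is an $r$-weak equivalence, so $(J')\Cell\subseteq\W$. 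This second point is the crux: one cannot simply declare the $J_k$ with $k<r$ to be acyclic cofibrations of $\fChr$ and invoke stability of acyclic cofibrations under pushout, since they are \emph{not} cofibrations there (their $0$-cells would force filtration-degreewise surjectivity of acyclic fibrations), so the explicit coproduct description is what makes the argument go through.

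With these in hand the recognition-theorem conditions become bookkeeping. The generation condition $(J')\Cell\subseteq\W\cap(I')\Cof$ follows by combining $(J')\Cell\subseteq\W$ with $J'\subseteq(I')\Cof$, the latter holding because the $J_k$ with $k<r$ lie in $I'$ by definition while $J_r\subseteq I_r\Cof\subseteq(I')\Cof$, as the $J_r$ are acyclic cofibrations of $\fChr$. For the two injective conditions I decompose $(I')\Inj=I_r\Inj\cap\bigl(\bigcup_{k<r}J_k\bigr)\Inj$ and $(J')\Inj=J_r\Inj\cap\bigl(\bigcup_{k<r}J_k\bigr)\Inj$. Here $I_r\Inj$ is precisely the class of acyclic fibrations of $\fChr$, equivalently $\W\cap J_r\Inj=I_r\Inj$, so it lies in both $\W$ and $J_r\Inj$; feeding this into the two decompositions yields both $(I')\Inj\subseteq\W\cap(J')\Inj$ and the compatibility clause $\W\cap(J')\Inj\subseteq I_r\Inj\cap\bigl(\bigcup_{k<r}J_k\bigr)\Inj=(I')\Inj$.

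Finally, the fibrations are $(J')\Inj$; as $\Z_k(p,n)$ represents $Z_k^{p,p+n}$, the right lifting property against $0\to\Z_k(p,n)$ is exactly $Z_k$-bidegreewise surjectivity, so the fibrations are those morphisms that are $Z_k$-bidegreewise surjective for every $0\le k\le r$, as claimed. Right properness transfers directly from $\fChr$: the weak equivalences agree and every fibration here is in particular a fibration of $\fChr$, so the pullback of an $r$-weak equivalence along one is again an $r$-weak equivalence. The only genuinely computational input is the spectral-sequence calculation of the first observation, and the single point demanding care is the commutation of the pages $E_{r+1}$ with the possibly infinite direct sums appearing in the second; once $\fChr$ is available, everything else is formal.
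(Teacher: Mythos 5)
Your proposal is correct and takes essentially the same route as the paper: Kan's recognition theorem (\cref{KanModCat}) together with the identity $\left(I_r\cup\bigcup_{k=0}^{r-1}J_k\right)\Inj=\Er\cap\left(\bigcup_{k=0}^{r}J_k\right)\Inj$ deduced from \cref{IrInj}, which is precisely the paper's \cref{ISInj} computation specialised to $S=\{0,1,\ldots,r\}$, plus the fact that relative cell complexes on the acyclic generators are coproduct inclusions $A\to A\oplus\bigoplus_\alpha\Z_{k_\alpha}(p_\alpha,n_\alpha)$ with $r$-acyclic cokernel (the content of \cref{JkCof}, and the same coproduct argument the paper reuses in \cref{algebras}). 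Your explicit verification that these cell complexes are $r$-weak equivalences, instead of appealing to pushout-stability of acyclic cofibrations of $\fChr$ (which would indeed fail, since the $J_k$ with $k<r$ are not $\fChr$-cofibrations by \cref{cofibrantkPageDifferentialIsZero}), is exactly the care the cited propositions encode, so the two proofs coincide in substance.
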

Via \cite[Lemma 2.8]{CELW} they also show the fibration condition of $Z_k$-bidegreewise surjectivity for all $0\leq k\leq r$ is equivalent to $E_k$-bidegreewise surjectivity for the same $k$.

We state the propositions shown to prove existence of these model categories; we use the same lemmas to show existence of further model structures.
\begin{prop}[{\cite[Proposition 3.12]{CELW}}]
  \label{IrInj}
  For each $r\geq0$ we have $I_r\Inj=J_r\Inj\cap\Er$.\qed
\end{prop}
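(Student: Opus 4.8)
The plan is to prove the two inclusions separately, exploiting the fact that both sides are characterised by lifting properties against the generating morphisms, together with the representability of \(r\)-cycles and \(r\)-boundaries. First I would unpack what it means for a morphism \(f\colon A \to B\) to lie in each set. By adjunction/representability, a lift of \(\mathcal{Z}_r(p,n) \to \mathcal{B}_r(p,n)\) (resp.\ of \(0 \to \mathcal{Z}_r(p,n)\)) against \(f\) translates into a surjectivity-type statement about the induced maps on the \(R\)-modules \(Z_r^{p,p+n}(-)\) and \(B_r^{p,p+n}(-)\). Concretely, \(f \in J_r\text{-Inj}\) should say exactly that \(f\) is \(Z_r\)-bidegreewise surjective, since maps out of \(\mathcal{Z}_r(p,n)\) classify \(r\)-cycles and the empty lifting condition becomes a lifting of a cycle in \(B\) to a cycle in \(A\). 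Meanwhile \(f \in I_r\text{-Inj} = I_{r-1}'\)-type condition encodes that \(f\) induces an isomorphism on \(E_{r+1}\)-pages restricted appropriately, i.e.\ it simultaneously controls cycles and boundaries.

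For the inclusion \(I_r\text{-Inj} \subseteq J_r\text{-Inj} \cap \mathcal{E}_r\), I would argue as follows. Since \(\phi_{r+1}\colon \mathcal{Z}_{r+1}(p,n) \to \mathcal{B}_{r+1}(p,n)\) is built from the \(\mathcal{Z}_r\)'s (recall \(\mathcal{B}_{r+1}(p,n) = \mathcal{Z}_r(p+r,n-1) \oplus \mathcal{Z}_r(p-1,n)\)), having the right lifting property against \(I_r\) should formally imply the right lifting property against \(J_r\); this is a diagram-chase using that \(0 \to \mathcal{Z}_r(p,n)\) factors through the relevant components of some \(\phi_{r+1}\). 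Separately, \(f \in I_r\text{-Inj}\) means \(f\) has the right lifting property against all generating cofibrations, so \(f\) is in particular an acyclic fibration of the model structure \(\fChr\) once that structure is known; but here we are proving an ingredient \emph{towards} that structure, so instead I would show directly that the lifting conditions force both \(Z_{r+1}\)-surjectivity and injectivity on \(E_{r+1}\)-classes, hence \(f\) is an \(r\)-quasi-isomorphism, i.e.\ \(f \in \mathcal{E}_r\).

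For the reverse inclusion \(J_r\text{-Inj} \cap \mathcal{E}_r \subseteq I_r\text{-Inj}\), I would take \(f\) that is \(Z_r\)-surjective and an \(r\)-quasi-isomorphism, and construct a lift in any square with left edge \(\phi_{r+1}\). Given a commutative square, the \(\mathcal{E}_r\) hypothesis provides an isomorphism on \(E_{r+1}\)-pages which lets me lift the class represented by the top map; the \(Z_r\)-surjectivity (equivalently, the \(J_r\)-injectivity) then lets me rectify this class-level lift to an honest lift of the representing cycle, correcting by a boundary. This is the standard pattern by which ``weak equivalence plus the weaker surjectivity condition'' assembles into ``trivial fibration.'' I expect this direction to be the main obstacle: the delicate point is the bookkeeping of \emph{which} filtration degrees the corrections land in (the \(r\)-boundary \(B_{r+1}\) mixes data from bidegrees \((p+r,\ast)\) and \((p-1,\ast)\)), and ensuring that the chosen lift is compatible simultaneously with the cycle condition, the boundary condition, and filtration-preservation. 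Carefully tracking the representing objects \(\mathcal{Z}_{r+1}\) and \(\mathcal{B}_{r+1}\) as built out of the \(\mathcal{Z}_r\)'s, rather than working elementwise from scratch, should keep this manageable.
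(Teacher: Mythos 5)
The paper itself contains no proof of this proposition: it is imported verbatim from \cite[Proposition 3.12]{CELW} and closed immediately, so your attempt can only be measured against the argument that reference (and any complete proof) must contain. Your overall architecture is the standard one and most of it is sound: identifying $J_r\Inj$ with $Z_r$-bidegreewise surjectivity via representability; reading a square against $\phi_{r+1}\colon\Z_{r+1}(p,n)\to\B_{r+1}(p,n)$ as an $(r+1)$-cycle $a\in Z_{r+1}^{p,p+n}(A)$ together with witnesses $(x,y)\in Z_r^{p+r,p+r+n-1}(B)\oplus Z_r^{p-1,p-1+n}(B)$ for $f(a)=dx+y$; and proving the two inclusions separately. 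Your $\supseteq$ direction in particular goes through as sketched: $E_{r+1}$-injectivity writes $a=du+v$ in $A$, the discrepancy $x'=x-f(u)$ satisfies $y-f(v)=-dx'$ and hence $x'\in Z_{r+1}^{p+r,p+r+n-1}(B)$, and the correction you describe works once one proves the intermediate fact that $Z_r$-surjectivity together with $E_{r+1}$-surjectivity implies $Z_{r+1}$-bidegreewise surjectivity; the filtration bookkeeping you flag as the delicate point is exactly where that intermediate statement lives.

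The genuine gap is in the other inclusion, at the step ``$I_r\Inj\subseteq J_r\Inj$ by a diagram-chase using that $0\to\Z_r(p,n)$ factors through the relevant components of some $\phi_{r+1}$.'' As stated this fails. Given an $r$-cycle $z$ of $B$, to invoke lifting against $\phi_{r+1}$ you must first produce a commutative square, and there is no candidate top arrow: taking it to be $0$ forces the bottom map to have the form $(x,-dx)$, and for $x=z\in Z_r^{p+r,p+r+n-1}(B)$ one only knows $dz\in F_pB^n$, whereas membership of $-dz$ in $Z_r^{p-1,p-1+n}(B)$ requires $dz\in F_{p-1}B^n$. (The same square \emph{does} commute when $z$ is an $(r+1)$-cycle --- this is why $I_r$-injectivity sees $(r+1)$-cycles directly, but not $r$-cycles; note the index shift between $I_r$, built from $\Z_{r+1}$ and $\B_{r+1}$, and $J_r$, built from $\Z_r$.) Nor does the summand inclusion $\Z_r(p-1,n)\hookrightarrow\B_{r+1}(p,n)$ alone help, since RLP against $\phi_{r+1}$ does not give RLP against $0\to\B_{r+1}(p,n)$ by itself. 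The missing ingredient is the identification of the cokernel of $\phi_{r+1}(p,n)$ with $\Z_{r+1}(p+r,n-1)$ (this is \cite[Lemma 3.2]{CELW}, used elsewhere in this paper in the proof of \cref{lemm:QrIIsCofibrant}). It shows that each $0\to\Z_{r+1}(q,m)$ is a pushout of a generating cofibration, hence any $f\in I_r\Inj$ has the right lifting property against it, i.e.\ $f$ is $Z_{r+1}$-bidegreewise surjective; composing $0\to\Z_{r+1}(p,n)$ with $\phi_{r+1}$ then gives the right lifting property against $0\to\B_{r+1}(p,n)$, and retracting onto either direct summand finally yields the right lifting property against $J_r$, i.e.\ $Z_r$-surjectivity. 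This composite-plus-retract step (or an equivalent element-wise substitute) is the real content of $I_r\Inj\subseteq J_r\Inj$; once supplied, it also furnishes the $Z_{r+1}$-surjectivity you invoke for $E_{r+1}$-surjectivity in showing $I_r\Inj\subseteq\Er$, and your plan assembles into a complete proof.
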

\begin{prop}[{\cite[Proposition 3.13]{CELW}}]
  \label{JkCof}
  For each $r\geq0$ and $0\leq k\leq r$ we have $J_k\Cof\subseteq\Er$.\qed
\end{prop}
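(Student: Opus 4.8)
The plan is to use the standard description of $J_k\Cof$ as the class of retracts of relative $J_k$-cell complexes and to reduce the claim to showing $J_k\Cell\subseteq\Er$. Since all objects of $\fCh$ are small relative to $\fCh$ (\cref{filtChainsAreSmall}), the small object argument applies to the set $J_k$ and exhibits $J_k\Cof$ as the retracts of $J_k\Cell$. Moreover $\Er$ is closed under retracts: the assignment $A\mapsto E_{r+1}(A)$ is functorial, and a retract of an isomorphism of bigraded modules is again an isomorphism. Hence it suffices to prove that every relative $J_k$-cell complex is an $r$-weak equivalence.

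The main structural point I would exploit is that the domain of each map in $J_k$ is the zero object. Thus the pushout of $0\rightarrow\Z_k(p,n)$ along the (unique) map out of $0$ is just the summand inclusion $X\rightarrow X\oplus\Z_k(p,n)$, and a coproduct of such generators is again of this form. Consequently every relative $J_k$-cell complex on an object $A$ is, up to isomorphism, a summand inclusion $A\rightarrow A\oplus C$ where $C$ is a (possibly infinite) direct sum of objects $\Z_k(p,n)$. Here I would use \cref{filteredChainsLimits} to check that the transfinite composite of successive summand inclusions is again such an inclusion and that the filtration on the direct sum is the direct sum of the filtrations.

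It then remains to see that such a summand inclusion lies in $\Er$. First I would identify $\Z_k(p,n)$ with a $k$-cone: comparing \cref{rCone,rSuspension} with the defining diagram of $\Z_k(p,n)$ shows $\Z_k(p,n)\cong C_k\bigl(R_{(p-k)}^{n+1}\bigr)$, the $k$-cone of the identity of a pure-weight object. By the remark following \cref{rConeIsrAcyclic} every such $k$-cone is $k$-acyclic, and since $k\leq r$ the vanishing of $E_{k+1}\bigl(\Z_k(p,n)\bigr)$ forces that of $E_{r+1}\bigl(\Z_k(p,n)\bigr)$; so each $\Z_k(p,n)$ is $r$-acyclic. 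As $E_{r+1}$ is assembled degreewise and filtration-degreewise from kernels, images and preimages under $d$, it commutes with direct sums, whence $C=\bigoplus\Z_k(p,n)$ is $r$-acyclic. Finally $E_{r+1}(A\oplus C)\cong E_{r+1}(A)\oplus E_{r+1}(C)=E_{r+1}(A)$, under which the summand inclusion induces the identity, so $A\rightarrow A\oplus C$ lies in $\Er$.

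The only genuinely non-formal step, and the one I expect to be the main obstacle, is the $r$-acyclicity of $\Z_k(p,n)$ for $k\leq r$. The slick route above hinges on recognising $\Z_k(p,n)$ as a $k$-cone; failing that, one computes its associated spectral sequence directly, where the two generators persist unchanged up to the $k$-page and are then cancelled by an isomorphism $d_k$, giving $E_{k+1}=0$. Everything else—the cell-complex description, the behaviour of direct sums, and closure under retracts—is routine bookkeeping for cofibrantly generated model categories.
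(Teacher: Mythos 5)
Your proof is correct. The paper gives no argument of its own for this proposition---it is quoted from \cite{CELW} with a \qedsymbol---but your route (reduce via the retract argument to relative $J_k$-cell complexes, identify these as summand inclusions $A\rightarrow A\oplus\bigoplus_\alpha\Z_k(p_\alpha,n_\alpha)$ because the domains of $J_k$ are zero, and observe that each $\Z_k(p,n)\cong C_k\bigl(R_{(p-k)}^{n+1}\bigr)$ is $k$-acyclic and hence $r$-acyclic for $k\leq r$) is essentially the same argument as the cited one, and every step, including the compatibility of $E_{r+1}$ with direct sums and closure of $\Er$ under retracts, checks out.
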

The model structures of \cref{filtChainsr,filtChainsrp} now follow from these results and the following result due to Kan, \cite[Theorem 2.1.19]{H}.
\begin{theo}\label{KanModCat}
  Let $\mc{C}$ be a category closed under all small (co)limits, with $\W$ a subclass of the morphisms, and $I$ and $J$ subsets of the morphisms.
  Then $I$ and $J$ determine a cofibrantly generated model category with weak equivalences $\W$ if and only if:
  \begin{enumerate}
  \item $\W$ satisfies the two-out-of three property,
  \item the domains of $I$ (resp.\ $J$) are small relative to $I\Cell$ (resp.\ $J\Cell$),
  \item $J\Cell\subseteq\W\cap I\Cof$,
  \item $I\Inj\subseteq\W\cap J\Inj$, and
  \item either $\W\cap I\Cof\subseteq J\Cof$ or $\W\cap J\Inj\subseteq I\Inj$.\qed
  \end{enumerate}
\end{theo}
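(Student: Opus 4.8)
The plan is to verify the model-category axioms of \cite[Definition 1.1.3]{H} for the three classes $\Cofib\coloneqq I\Cof$, $\Fib\coloneqq J\Inj$ and weak equivalences $\W$. Bicompleteness (MC1) is hypothesised; the two-out-of-three axiom (MC2) is condition 1; and closure under retracts (MC3) is automatic for $\Cofib$ and $\Fib$, since classes cut out by the lifting-property operations $\Inj$ and $\Proj$ are always retract-closed, while closure of $\W$ under retracts I take as part of the standing hypotheses on $\W$ (as in the usual formulation of this result). Everything then rests on two identifications, which I will call (A) $\W\cap\Fib=I\Inj$ and (B) $\W\cap\Cofib=J\Cof$: granting these, the trivial fibrations are exactly $I\Inj$ and the trivial cofibrations exactly $J\Cof$, so the lifting axiom (MC4) is immediate from the defining lifting properties of $I\Cof$ and $J\Cof$, and the factorisation axiom (MC5) follows by feeding (A) and (B) into the factorisations constructed next.

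First I would produce the factorisations. Condition 2 ensures the domains of $I$ and of $J$ are small relative to $I\Cell$ and $J\Cell$, so Quillen's small object argument applies to each set. It yields, for any morphism $f$, a factorisation $f=p\circ i$ with $i\in I\Cell\subseteq I\Cof=\Cofib$ and $p\in I\Inj$, and a factorisation $f=q\circ j$ with $j\in J\Cell$ and $q\in J\Inj=\Fib$. By condition 4, $J\Cell\subseteq\W\cap I\Cof$, so the map $j$ in the second factorisation is already a trivial cofibration; and once (A) is in hand the map $p$ in the first factorisation is a trivial fibration. Thus MC5 holds as soon as (A) and (B) are established. This is also where I would record the standard consequence that every map of $I\Cof$ (resp.\ $J\Cof$) is a retract of a map of $I\Cell$ (resp.\ $J\Cell$), via the retract argument applied to these factorisations.

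The core of the argument, and the step I expect to be the main obstacle, is establishing (A) and (B), because here one must play the two factorisations against the two alternatives of condition 5 through the retract argument. For (A), condition 4 gives $I\Inj\subseteq\W\cap J\Inj=\W\cap\Fib$; for the reverse inclusion take $f\in\W\cap\Fib$ and factor $f=p\circ i$ as above. Since $p\in I\Inj\subseteq\W$ and $f\in\W$, two-out-of-three forces $i\in\W$, so $i\in\W\cap\Cofib$. If the first alternative of condition 5 holds then $i\in J\Cof$, whence $i$ lifts against $f\in J\Inj$, and the resulting lift exhibits $f$ as a retract of $p\in I\Inj$, so $f\in I\Inj$ by retract-closure. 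If instead the second alternative holds, then $f\in\W\cap J\Inj\subseteq I\Inj$ directly. The argument for (B) is dual: the inclusion $J\Cof\subseteq\W\cap\Cofib$ follows from $J\Cof$ being retracts of $J\Cell$ maps together with condition 4 and retract-closure of $\W$ and of $I\Cof$; for the reverse, given $f\in\W\cap\Cofib$ factor $f=q\circ j$ with $j\in J\Cell$ and $q\in J\Inj$, deduce $q\in\W$ by two-out-of-three, and then either invoke the first alternative directly ($f\in\W\cap I\Cof\subseteq J\Cof$) or, under the second alternative, use $q\in\W\cap J\Inj\subseteq I\Inj$ to lift $f\in I\Cof$ against $q$ and exhibit $f$ as a retract of $j\in J\Cof$.

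Finally, the necessity (``only if'') direction is routine and I would treat it briefly: in any cofibrantly generated model category with these generating sets, $\W$ satisfies two-out-of-three and is retract-closed, the smallness hypothesis is built into the definition, $J\Cell\subseteq\W\cap I\Cof$ and $I\Inj\subseteq\W\cap J\Inj$ hold because generating (acyclic) cofibrations are (acyclic) cofibrations and the trivial fibrations coincide with $I\Inj$, and condition 5 follows from the characterisation of the acyclic cofibrations as $J\Cof$. The substance of the proof therefore lies entirely in the sufficiency direction, and the delicate bookkeeping is ensuring that \emph{either} alternative in condition 5 is by itself enough to close \emph{both} (A) and (B).
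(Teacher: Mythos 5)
Your proposal is correct, and it coincides with the paper's ``proof'' in the only sense available: the paper offers no argument for this statement, quoting it as Kan's recognition theorem from \cite[Theorem 2.1.19]{H}, and your reconstruction is precisely the standard proof given there --- the small object argument for the two factorisations, the retract argument, and the key bookkeeping that either alternative of condition 5 closes both identifications $\W\cap J\Inj = I\Inj$ and $\W\cap I\Cof = J\Cof$. Two minor points: your appeals to ``condition 4'' for the inclusion $J\Cell\subseteq\W\cap I\Cof$ should read ``condition 3''; and you are right to add retract-closure of $\W$ to the hypotheses, since Hovey's formulation includes it while the paper's condition 1 silently drops it, so what you prove (and what is in fact true) is the statement as it appears in \cite{H}.
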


\subsection{Projective model structure on chain complexes}\label{Projective}
We recall some facts about the projective model structure on (unbounded cohomologically graded) chain complexes.
These are known from e.g.\ \cite{H}.
Write $S^n$ for the chain complex with a copy of $R$ in degree $n$ and $0$ otherwise, and similarly $D^n$ for the complex with a copy of $R$ in degrees $n$ and $n-1$ with an identity differential between them.
The projective model structure on chain complexes has weak equivalences the homology isomorphisms (quasi-isomorphisms) and generating cofibrations $I$ and $J$ given by:
\begin{align*}
  I&\coloneqq \left\{S^{n-1}\rightarrow D^n\right\}_{n\in\Zbb}\,,\\
  J&\coloneqq \left\{0\rightarrow D^n\right\}_{n\in\Zbb}\,.
\end{align*}
Fibrations are then those morphisms that are degreewise surjective.

Cofibrations are not quite as easily described.
For a bounded below degree-wise projective objects $X$ one can show via an inductive argument that $X$ is cofibrant.
In general a cofibrant object $X$ is such that $X$ is degree-wise projective and lifts exist in any lifting problem of the form:
\begin{equation*}
  \begin{tikzcd}
    &C(K)\arrow[d,"\pi", two heads, "\sim"']\\
    X\arrow[r]\arrow[ur, dashed]&\Sigma K
  \end{tikzcd}
\end{equation*}
whenever $K$ is an acyclic object and $C(K)\rightarrow \Sigma K$ is the usual fibration from the cone onto the suspension of $K$.

Such an $X$ satisfying these conditions can be shown to be cofibrant as follows: given a lifting problem against an acyclic fibration $E\overset{\sim}{\twoheadrightarrow}B$ one can find a lift of graded modules using degree-wise projectivity of $X$.
This lift $g$ is not compatible with the differential, however $gd-dg\colon X\rightarrow \Sigma E$ is a morphism of chain complexes and moreover the image is in $\Sigma K$ where $K = \ker \left(E\rightarrow B\right)$ and note $K$ is acyclic.
One can then use the second condition, lifts of $X$ against $C(K)\rightarrow K$, to correct for the discrepancy with the differential yielding a lift of $X$ against $E\rightarrow B$ as chain complexes.
Cofibrations can then be shown to be degreewise-split monomorphisms with cofibrant cokernel.

We provide this classification of the cofibrations here as it will be used as a basis for giving a subclass of the cofibrations in $\fChr$.


\section{Construction of new model structures}\label{construction}
We now show existence of further model structures ``lying in between those'' of $\fChr$ and $\fChrp$.
The set $S$ for the remainder of the paper shall denote a subset of $S\subseteq\{0,1,2,\ldots,r\}$ which must include $r$.
\begin{defi}\label{ISandJS}
    The sets $I_S$ and $J_S$ of morphisms of $\fCh$ are given by:
  \begin{align*}
    I_S&\coloneqq I_r\cup\bigcup_{s\in S}J_s\,,\\
    J_S&\coloneqq \bigcup_{s\in S}J_s\,.
  \end{align*}
\end{defi}
Restricting to either $S=\{r\}$ or $S=\{0,1,2,\ldots,r\}$ gives the generating cofibrations and generating acyclic cofibrations for the model structures of \cref{filtChainsr,filtChainsrp} respectively.
\begin{prop}
  \label{ISInj}
  For each $S$ we have $I_S\Inj=J_S\Inj\cap\Er$.
\end{prop}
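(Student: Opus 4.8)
The plan is to reduce the claim to a purely formal manipulation of the classes defined by right lifting properties, drawing on the already-established \cref{IrInj}. The governing observation is that for any family of morphism-classes the maps with the right lifting property against their union are exactly those with the right lifting property against each member; that is, taking $(-)\Inj$ turns unions into intersections. Since $I_S = I_r\cup J_S$ by \cref{ISandJS} (the acyclic generators $\bigcup_{s\in S}J_s$ being precisely $J_S$), this at once gives
\begin{equation*}
  I_S\Inj = I_r\Inj\cap J_S\Inj.
\end{equation*}

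Next I would substitute \cref{IrInj}, namely $I_r\Inj = J_r\Inj\cap\Er$, to obtain $I_S\Inj = J_r\Inj\cap\Er\cap J_S\Inj$. The one remaining task is to absorb the factor $J_r\Inj$, and this is exactly where the standing hypothesis $r\in S$ is used. Because $r\in S$ we have $J_r\subseteq J_S$, so any morphism with the right lifting property against all of $J_S$ certainly has it against the subset $J_r$; in other words $J_S\Inj\subseteq J_r\Inj$. Hence $J_r\Inj\cap J_S\Inj = J_S\Inj$, and the expression collapses to $I_S\Inj = J_S\Inj\cap\Er$, as required.

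The argument is essentially formal, so I do not anticipate any genuine obstacle; the only point demanding care is the role of $r\in S$. It is precisely this containment that allows the weak-equivalence condition $\Er$, which is introduced through the page-$r$ generators $I_r$, to persist after enlarging the set of acyclic generators to $J_S$. Were $r$ absent from $S$ one could not discard the $J_r\Inj$ factor, and the clean characterisation would break down; so while every other step is a routine application of the union-to-intersection principle for lifting classes, I would flag this dependence on $r\in S$ explicitly.
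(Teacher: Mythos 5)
Your proposal is correct and follows essentially the same argument as the paper: convert the union $I_S = I_r\cup J_S$ into an intersection of injective classes, substitute \cref{IrInj}, and absorb $J_r\Inj$ into $J_S\Inj$. The paper leaves the final absorption implicit in its chain of equalities, whereas you correctly isolate it as the one place the hypothesis $r\in S$ is used.
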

\begin{proof}
  In the following the third equality follows from \cref{IrInj}.
  \begin{align*}
    I_S\Inj&=\left(I_r\cup\bigcup_{s\in S}J_s\right)\Inj\\
           &=I_r\Inj\cap\bigcap_{s\in S}J_s\Inj\\
           &= \left(J_r\Inj\cap\Er\right)\cap\bigcap_{s\in S}J_s\Inj\\
           &=J_S\Inj\cap\Er\,.\qedhere
  \end{align*}
\end{proof}
\begin{theo}
  \label{filtChainsS}
  For each $S$ there is a right proper cofibrantly generated model structure on $\fCh$ whose weak equivalences are $\Er$ with generating cofibrations $I_S$ and generating acyclic cofibrations $J_S$.
  The fibrations are those morphisms that are $Z_s$-bidegreewise surjective for each $s\in S$.
\end{theo}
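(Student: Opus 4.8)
The plan is to verify the five conditions of Kan's recognition theorem, \cref{KanModCat}, for the data $(\fCh,\Er,I_S,J_S)$, and then to transfer right properness from the existing model structure $\fChr$. Conditions (1) and (2) come essentially for free. The class $\Er$ satisfies two-out-of-three because it is already the class of weak equivalences of the model structures of \cite{CELW} (it is detected by isomorphisms of $(r+1)$-pages, and isomorphisms satisfy two-out-of-three). The smallness in (2) follows from \cref{filtChainsAreSmall}: every object of $\fCh$ is small relative to all of $\fCh$, hence relative to the subclasses $I_S\Cell$ and $J_S\Cell$.

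Conditions (4) and (5) I would dispatch directly from \cref{ISInj}, which gives $I_S\Inj=J_S\Inj\cap\Er$. This immediately yields $I_S\Inj\subseteq\Er\cap J_S\Inj$, which is condition (4). For condition (5) I would use its second alternative: again by \cref{ISInj} one has $\Er\cap J_S\Inj=I_S\Inj$, so $\W\cap J_S\Inj\subseteq I_S\Inj$ holds, with equality.

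The only condition requiring genuine work is (3), namely $J_S\Cell\subseteq\Er\cap I_S\Cof$. The inclusion $J_S\Cell\subseteq I_S\Cof$ is immediate from $J_S\subseteq I_S$, since cellular maps are always cofibrations. For $J_S\Cell\subseteq\Er$ I would observe that $J_S\subseteq\bigcup_{k=0}^{r}J_k$, the generating acyclic cofibrations of the model structure $\fChrp$ of \cref{filtChainsrp}. Every map of $J_S$ is therefore an acyclic cofibration of $\fChrp$, and acyclic cofibrations in any model category are closed under coproducts, pushouts and transfinite composition; hence every map of $J_S\Cell$ is an acyclic cofibration of $\fChrp$, in particular an $r$-weak equivalence. (The underlying input here is \cref{JkCof}, which powers the existence of $\fChrp$.) This is the step where I expect the only real subtlety: bare weak equivalences are \emph{not} closed under pushout, so the argument must be phrased in terms of the acyclic cofibrations of an already-constructed model structure rather than in terms of $\Er$ directly.

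Finally, with the model structure established, its fibrations are $J_S\Inj=\bigcap_{s\in S}J_s\Inj$; since each $J_s\Inj$ consists of the $Z_s$-bidegreewise surjections, by representability of $Z_s$ as in \cite{CELW}, the fibrations are exactly those morphisms that are $Z_s$-bidegreewise surjective for each $s\in S$. For right properness I would note that, because $r\in S$, every $\fChS$-fibration is in particular $Z_r$-bidegreewise surjective, hence a fibration of $\fChr$. As $\fChS$ and $\fChr$ have the same weak equivalences $\Er$ and $\fChr$ is right proper by \cref{filtChainsr}, any base change of an $\Er$-map along an $\fChS$-fibration is a base change along an $\fChr$-fibration and thus lies in $\Er$, giving right properness of $\fChS$.
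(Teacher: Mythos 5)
Your proposal is correct, and its core is the same as the paper's: the paper's entire proof is that existence follows from \cref{KanModCat} together with \cref{JkCof,ISInj}, and that right properness holds because every object of $\fChS$ is fibrant. You differ in two sub-steps, both legitimately. For condition (3), where the paper simply cites \cref{JkCof}, you route through the already-established model structure $\fChrp$ of \cref{filtChainsrp} and the closure of its acyclic cofibrations (a class defined by a left lifting property) under pushout and transfinite composition. This is a tidy way to handle the point you correctly flag: $J_S$ mixes cells from several $J_s$, so no single inclusion $J_s\Cof\subseteq\Er$ applies verbatim. The alternative left implicit in the paper is the direct observation that a $J_S$-cellular map is a direct-sum inclusion $A\rightarrow A\oplus\bigoplus_i\Z_{s_i}(p_i,n_i)$ with each $\Z_{s_i}(p_i,n_i)$ being $s_i$-acyclic, hence $r$-acyclic since $s_i\leq r$ --- the same computation the paper uses later for the monoid axiom. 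For right properness, the paper invokes the standard fact that a model category in which every object is fibrant is right proper, whereas you transfer right properness from $\fChr$ using $r\in S$, so that $J_S\Inj\subseteq J_r\Inj$ and every $\fChS$-fibration is an $\fChr$-fibration; both arguments are valid, the paper's being a one-liner and yours requiring only \cref{filtChainsr} rather than the all-objects-fibrant criterion.
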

\begin{proof}
  Existence of these model structures follows from \cref{KanModCat} and \cref{JkCof,ISInj}.
  They are right proper since every object is fibrant.
\end{proof}

Given such a model structure on $\fCh$ determined by the set $S$ we shall denote the model structure by $\fChS$.
$S$-(acyclic) cofibrations, $S$-(acyclic) fibrations and $S$-cofibrant objects will denote the (acyclic) cofibrations, (acyclic) fibrations and cofibrant objects in $\fChS$.
When $S=\left\{r\right\}$, we replace $S$ in these notations with $r$.


\section{Cofibrations}\label{cofibrations}
Recall in \cref{Projective} we gave an outline for the proof of the classification of cofibrations in the projective model structure on chain complexes.
We follow a similar method here to give a subclass of cofibrations in the $r$-model structure $\fChr$.
Our subclass of cofibrant objects and cofibrations will have conditions analogous to those in chain complexes, including projectivity and lifts against morphisms similar to $C(K)\rightarrow K$ for $K$ acyclic, in addition to new ones taking account of the $r$-homotopy nature of the model structure.
\subsection{A subclass of cofibrant objects}
The known properties of a cofibrant $A\in\fChr$ can be summarised in the following lemma.
\begin{prop}\label{cofibrantConditions}
  Let $A$ be a cofibrant object in $\fChr$. Then $A$ satisfies the following conditions:
  \begin{enumerate}
  \item $A^n$ and $A^n/F_pA^n$ are projective for all $p,n\in\Zbb$,
  \item the filtration on $A$ is exhaustive,
  \item for $a\in F_pA^n$ we have $da\in F_{p-r}A^{n+1}$, and
  \item for any morphism $A\rightarrow \rSusp K$ of $A$ into (the $r$-suspension of) an $r$-acyclic $K$ there is a lift against $C_r(K)\rightarrow \rSusp K$.
  \end{enumerate}
\end{prop}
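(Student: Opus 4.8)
The plan is to mirror the treatment of the projective model structure recalled in \cref{Projective}. Since the initial object of $\fCh$ is $0$, a cofibrant $A$ is precisely a retract of an $I_r$-cell complex, that is, of a transfinite composite of pushouts of the generating cofibrations $\phi_{r+1}\colon\Z_{r+1}(p,n)\to\B_{r+1}(p,n)$. I would prove conditions 1--3 by showing that each holds for the objects $\Z_{r+1}(p,n)$ and $\B_{r+1}(p,n)$, that each is stable under the pushouts, transfinite composites and retracts occurring in such a presentation, and invoking \cref{filteredChainsLimits} for the colimits involved. Condition 4 I would instead deduce directly from the lifting property of the cofibration $0\to A$.

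For conditions 1 and 2 this is quick. Writing out a single attaching pushout along $\phi_{r+1}$, one sees that as a filtered graded module the target is obtained from the source by adjoining free summands of the shape $R_{(q)}^m$ (concretely the summands $R_{(p+r)}^{n-1}$ and $R_{(p-1)}^n$ coming from $\B_{r+1}(p,n)$, the two cycle generators of $\Z_{r+1}(p,n)$ being identified away by $\phi_{r+1}$). Hence the underlying filtered graded module of any $I_r$-cell complex is a direct sum of objects $R_{(q)}^m$, and for such a module both $A^n$ and $A^n/F_pA^n$ are free; since a retract of a free module is projective, condition 1 follows. Likewise each $R_{(q)}^m$ is exhaustively filtered, direct sums and filtered colimits of exhaustive objects are exhaustive, and a retract of an exhaustive object is exhaustive (apply the retraction to a filtration witness), giving condition 2.

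The main work is condition 3, that the differential suppresses the filtration by $r$. I would check that the full subcategory of $\fCh$ of objects with $dF_pA\subseteq F_{p-r}A$ is closed under the cellular constructions. Both $\Z_{r+1}(p,n)$, whose differential in fact drops filtration by $r+1$, and $\B_{r+1}(p,n)$, whose two differentials drop filtration by exactly $r$, lie in this subcategory, so the real content is a single pushout. Given $X$ in the subcategory and an attaching map $\Z_{r+1}(p,n)\to X$ sending the cycle generator to $a\in F_pX^n$, filtration-preservation forces $da\in F_{p-r-1}X^{n+1}$, and a direct computation of the new differentials on the adjoined generators $u\in F_{p+r}$ and $v\in F_{p-1}$, namely $du=a-v$ and $dv=da$, shows each still drops filtration by $r$. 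The delicate point is that colimits in $\fCh$ are computed through the image functor $\rho$ of \cref{filteredChainsLimits}, so one must confirm that the filtration on the pushout really is the expected direct-sum filtration and not something larger; this bookkeeping, together with its propagation through the transfinite composites, is where I expect the only genuine difficulty to lie.

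Finally, condition 4 is formal once one identifies $C_r(K)\to\rSusp K$ as an acyclic fibration. It is a fibration because, by \cref{filtChainsr}, fibrations are exactly the $Z_r$-bidegreewise surjections, and an $r$-cycle $\sigma$ of $\rSusp K$ lifts to $(\sigma,0)$, which a short check shows is an $r$-cycle of $C_r(K)$. It is a weak equivalence because $C_r(K)$ is always $r$-acyclic, as noted after \cref{rConeIsrAcyclic}, while $\rSusp K$ is $r$-acyclic whenever $K$ is. Since $A$ is cofibrant, $0\to A$ has the left lifting property against this acyclic fibration, which is exactly the assertion of condition 4.
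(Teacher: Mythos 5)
Your proposal is correct, and your treatment of condition 4 is exactly the paper's: identify $C_r(K)\rightarrow\rSusp K$ as an $r$-acyclic fibration and lift the cofibration $0\rightarrow A$ against it. For conditions 1--3, however, you take a genuinely different route. The paper proves conditions 1 and 2 by soft lifting arguments against tailor-made acyclic fibrations, with no cell-structure analysis at all: projectivity (\cref{AFpAProjective,AProjective}) follows by lifting against $\Z_0(p,n-1)\otimes N\rightarrow\Z_0(p,n-1)\otimes M$ for a surjection $N\twoheadrightarrow M$ of $R$-modules, using that a morphism $A\rightarrow\Z_0(p,n-1)\otimes N$ is the same as an $R$-module map $A^n/F_{p-1}A^n\rightarrow N$; exhaustiveness (\cref{CofibrantExhaustive}) follows by lifting $\id_A$ against the $r$-acyclic fibration $\cup_pF_pA\hookrightarrow A$. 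Those arguments are short and completely bypass the colimit bookkeeping through $\rho$ (\cref{filteredChainsLimits}) that you rightly single out as the real burden of the cellular approach. For condition 3 the paper does run the cellular-plus-retract induction you describe (\cref{APushoutWithGenCofib,CellularKPage,cofibrantkPageDifferentialIsZero}), but it tracks a different invariant, namely vanishing of the $k$-page differentials, which is the right notion for all the structures $\fChS$ and is reused elsewhere in the paper; condition 3 is then quoted from \cref{rCofibrantPages}. Passing from vanishing of the page differentials for $k<r$ to the strict statement $dF_pA^n\subseteq F_{p-r}A^{n+1}$ actually needs one more small induction on $k$ (write $da=dz+w$ with $z\in F_{p-1}A^n$ and $w\in F_{p-k-1}A^{n+1}$, and apply the inductive hypothesis to $z$), which the paper leaves implicit; your version, which pushes the strict filtration-drop condition itself through the cells, is more elementary and sidesteps this step, though it is specific to $\fChr$. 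Finally, the one computation you defer --- that the pushout along $\phi_{r+1}$ has underlying filtered graded module $X\oplus R_{(p+r)}^{n-1}\oplus R_{(p-1)}^n$ with the expected filtration --- is precisely the paper's \cref{APushoutWithGenCofib} (itself stated there without proof), so this is not a substantive gap, but a complete write-up should carry out that computation once and cite it for both conditions 1 and 3.
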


The next few lemmas demonstrate these properties.
\begin{defi}
  Let $\pi\colon N\twoheadrightarrow M$ be a surjection of $R$-modules.
  We define $\sigma_s^{p,p+n}$ by:
  \begin{equation*}
    \sigma_s^{p,p+n}\colon \Z_s(p+s,n-1)\otimes N\longrightarrow
    \Z_s(p+s,n-1)\otimes M
  \end{equation*}
  to be the tensor product of the identity on the $s$-cycle with $\pi$, where we interpret $\pi$ as a morphism of filtered chain complexes concentrated in cohomological degree 0 and of pure filtration degree $0$.
\end{defi}
\begin{lemm}
  The morphisms $\sigma_r^{p,p+n}$ are $r$-acyclic fibrations.\qed
\end{lemm}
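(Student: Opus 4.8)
The plan is to verify the two defining properties of an acyclic fibration in $\fChr$ separately: that $\sigma_r^{p,p+n}$ is a fibration, which by \cref{filtChainsr} means it is $Z_r$-bidegreewise surjective, and that it is an $r$-weak equivalence, i.e.\ lies in $\Er$. Since $\sigma_r^{p,p+n}=\id_{\Z_r(p+r,n-1)}\otimes\pi$ is a tensor product of two morphisms of filtered chain complexes it is automatically a morphism of filtered chain complexes, so no separate compatibility check is needed.

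The key structural observation is that the representing object is itself an $r$-cone. Unwinding \cref{rSuspension,rCone} one computes $\rSusp R_{(p)}^n\cong R_{(p+r)}^{n-1}$ and hence $C_r(R_{(p)}^n)\cong\big(R_{(p+r)}^{n-1}\xrightarrow{1}R_{(p)}^n\big)=\Z_r(p+r,n-1)$. Because $-\otimes N$ commutes with $\rSusp=R_{(r)}^{-1}\otimes-$ and with the direct sum underlying the cone, tensoring this identification with $N$ gives $\Z_r(p+r,n-1)\otimes N\cong C_r(R_{(p)}^n\otimes N)$, which is again an $r$-cone and therefore $r$-acyclic by the remark following \cref{rConeIsrAcyclic}; the identical argument applies with $M$ in place of $N$. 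Consequently both the source and the target of $\sigma_r^{p,p+n}$ have vanishing $(r+1)$-page, so the induced map on $(r+1)$-pages is the isomorphism $0\to0$ and $\sigma_r^{p,p+n}\in\Er$.

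For the fibration property I would compute the $r$-cycles bidegreewise. Writing $A\coloneqq\Z_r(p+r,n-1)\otimes N$, this is concentrated in cohomological degrees $n-1$ and $n$ with differential $\id_N$, and a direct calculation from $Z_r^{p',p'+n'}(A)=F_{p'}A^{n'}\cap d^{-1}F_{p'-r}A^{n'+1}$ shows that $Z_r^{p',p'+n'}(A)$ is a copy of $N$ exactly when $n'=n$ with $p'\geq p$, or $n'=n-1$ with $p'\geq p+r$, and is $0$ otherwise; the same description holds for the target with $M$. In each bidegree $\sigma_r^{p,p+n}$ restricts to either $\pi$ or the zero map, so surjectivity of $\pi$ gives $Z_r$-bidegreewise surjectivity. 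Together with the previous paragraph this shows $\sigma_r^{p,p+n}$ is an $r$-acyclic fibration.

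I expect no serious obstacle: the argument reduces both parts to the single identification $\Z_r(p+r,n-1)\cong C_r(R_{(p)}^n)$ and the fact that $-\otimes N$ commutes with $\rSusp$ and with direct sums. The only place requiring care is the index and filtration-weight bookkeeping—keeping the weights $p$ and $p+r$ attached to the correct cohomological degrees—but this is routine once the cone identification is in hand.
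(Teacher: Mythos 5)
Your proof is correct, and since the paper states this lemma as immediate (with no written proof), your argument supplies exactly the intended details: identifying $\Z_r(p+r,n-1)\otimes N$ with the $r$-cone $C_r(R_{(p)}^n\otimes N)$, which is $r$-acyclic by \cref{rConeIsrAcyclic}, is precisely the device the paper itself uses later in the proof of \cref{JSOtimesFCHIsWeakEquivalence}. Your bidegreewise computation of $Z_r$ and the observation that $\sigma_r^{p,p+n}$ restricts to $\pi$ or $0$ in each bidegree matches the fibration characterisation of \cref{filtChainsr}, so both halves are in order.
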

\begin{lemm}\label{AFpAProjective}
  Let $A\in\fChr$ be cofibrant. Then $A^n/F_pA^n$ is projective for each $p,n\in\Zbb$.
\end{lemm}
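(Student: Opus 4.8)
The plan is to read off the projectivity of $A^n/F_pA^n$ from a single lifting problem against the acyclic fibration supplied by the previous lemma. Recall that an $R$-module $P$ is projective precisely when $\Hom_R(P,-)$ carries surjections of $R$-modules to surjections. So I would fix a surjection $\pi\colon N\twoheadrightarrow M$ and show that every $R$-linear $g\colon A^n/F_pA^n\to M$ lifts through $\pi$. The idea is to encode such a $g$ as a morphism of filtered chain complexes into $\Z_r(p+r+1,n-1)\otimes M$, lift it against $\sigma_r^{p+1,p+1+n}=\id_{\Z_r(p+r+1,n-1)}\otimes\pi$ using cofibrancy of $A$, and recover a lift of $g$ from the degree-$n$ component of the lift.

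First I would set up the dictionary between $R$-module maps and morphisms in $\fCh$. Writing the two generators of $\Z_r(p+r+1,n-1)$ as $x$ in degree $n-1$ and $y=dx$ in degree $n$, with $y$ of pure weight $p+1$ and $x$ of pure weight $p+r+1$, a morphism $f\colon A\to \Z_r(p+r+1,n-1)\otimes M$ is completely determined by the coefficient map $\beta\colon A^n\to M$ of $y$, since the chain-map relation forces the coefficient map of $x$ to be $\beta\circ d$ and all remaining components to vanish. Compatibility with the filtration on $y$ says exactly that $\beta$ annihilates $F_pA^n$, i.e.\ that $\beta$ factors through $A^n/F_pA^n$. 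Thus, given $g$, the composite of $g$ with the quotient $A^n\to A^n/F_pA^n$ is the natural candidate for $\beta$. Since $\sigma_r^{p+1,p+1+n}$ is an acyclic fibration and $A$ is cofibrant, the lifting square with $\emptyset\to A$ on the left admits a lift $\tilde f\colon A\to \Z_r(p+r+1,n-1)\otimes N$, whose degree-$n$ component is an $R$-map $\tilde g\colon A^n/F_pA^n\to N$ satisfying $\pi\circ\tilde g=g$, as required.

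The one point that needs care — and which I expect to be the main obstacle — is checking that the candidate $\beta$ genuinely assembles into a morphism of filtered chain complexes, so that the lifting square exists in the first place. The forced coefficient map of $x$ is $\beta\circ d\colon A^{n-1}\to M$, and filtration-compatibility on $x$ (of weight $p+r+1$) demands that $\beta\circ d$ annihilate $F_{p+r}A^{n-1}$; as $\beta$ only sees $A^n$ modulo $F_pA^n$, this holds for every $g$ exactly when $d\,F_{p+r}A^{n-1}\subseteq F_pA^n$. In other words, the construction goes through once one knows that the differential of a cofibrant $A$ lowers filtration by $r$ (condition (3) of \cref{cofibrantConditions}). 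This filtration-lowering is the genuinely structural input, and I would establish it independently by a cellular induction: the generating cofibrations $\phi_{r+1}\colon \Z_{r+1}(p,n)\to\B_{r+1}(p,n)$ manifestly have differentials dropping filtration by $r$, and this property is stable under the pushouts, transfinite compositions and retracts out of which every cofibrant object is built. With that in hand, every $\beta$ killing $F_pA^n$ extends to a chain map, the lift $\tilde f$ exists, and projectivity of $A^n/F_pA^n$ follows.
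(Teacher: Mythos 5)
Your proof is correct, but it takes a genuinely different route from the paper's. The paper tests $A$ against $\Z_0(p,n-1)\otimes N$ rather than $\Z_r(p+r+1,n-1)\otimes N$: since both generators of $\Z_0(p,n-1)$ sit in the \emph{same} filtration weight $p$, a morphism of filtered chain complexes $A\rightarrow\Z_0(p,n-1)\otimes N$ is equivalent to an $R$-linear map $A^n/F_{p-1}A^n\rightarrow N$ for an \emph{arbitrary} filtered chain complex $A$ --- the degree-$(n-1)$ component is forced to be $\beta\circ d$, and it kills $F_{p-1}A^{n-1}$ automatically because the differential preserves filtration, with no need for it to lower filtration. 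Lifting $A$ against $\sigma_0^{p,p+n}$ then finishes the proof immediately, with no structural input about cofibrant objects whatsoever. Your choice of the weight-shifted test object $\Z_r(p+r+1,n-1)$ places the two generators in different weights, and, as you correctly identify, the dictionary then only works once one knows $dF_{p+r}A^{n-1}\subseteq F_pA^n$, i.e.\ condition (3) of \cref{cofibrantConditions}; your cellular-induction proof of that condition is sound (it is essentially the argument the paper gives later, via \cref{APushoutWithGenCofib,CellularKPage,CofibrantRetract}, en route to \cref{rCofibrantPages}), so nothing is wasted globally, but it makes this particular lemma rest on a much heavier prerequisite than necessary. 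What your version buys in exchange is that it invokes the preceding lemma exactly as stated: the paper only records that $\sigma_r^{p,p+n}$ is an $r$-acyclic fibration, whereas its own proof uses $\sigma_0^{p,p+n}$, which requires the same easy verification but is not literally covered by that statement.
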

\begin{proof}
  A morphism $A\rightarrow \Z_0(p,n-1)\otimes N$ is equivalently a morphism $A^n/F_{p-1}A^n\rightarrow N$.
  Since $\sigma_0^{p,p+n}$ is an $r$-acyclic fibration and $A$ is cofibrant there is then a lift of $A^n/F_{p-1}A^n$ against $N\twoheadrightarrow M$, hence $A^n/F_{p-1}A^n$ is projective.
\end{proof}
One can also take $p=-\infty$ in the previous taking this to mean the $R$-module $N$ is in all filtration degrees in which case one obtains the following lemma.
\begin{lemm}\label{AProjective}
  Let $A\in\fChr$ be cofibrant. Then $A^n$ is projective for all $n\in\Zbb$.\qed
\end{lemm}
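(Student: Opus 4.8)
The plan is to mimic the proof of \cref{AFpAProjective} essentially verbatim, replacing the representing object $\Z_0(p,n-1)\otimes N$ by its ``$p=-\infty$'' degeneration, in which the module $N$ is placed in \emph{every} filtration degree rather than only in degrees $\geq p$. Concretely, for a surjection $\pi\colon N\twoheadrightarrow M$ of $R$-modules I would form the filtered chain complex $D_N$ whose underlying complex is a copy of $N$ in cohomological degrees $n-1$ and $n$ with identity differential, filtered so that $F_qD_N=D_N$ for all $q\in\Zbb$, and likewise $D_M$; write $\sigma\colon D_N\to D_M$ for $\id\otimes\pi$. One may realise $D_N$ as the sequential colimit $\colim_p\bigl(\Z_0(p,n-1)\otimes N\bigr)$ taken as $p\to-\infty$, which is how the phrase ``take $p=-\infty$'' should be read.

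The first step is the bookkeeping identification: a morphism $A\to D_N$ in $\fCh$ is the same data as an $R$-module map $A^n\to N$. Indeed such a morphism is determined by its component $f^n\colon A^n\to N$, with the component in degree $n-1$ forced to equal $f^n\circ d^A$ by the chain condition, while the filtration constraint $f(F_qA)\subseteq F_qD_N$ is now vacuous because $F_qD_N=D_N$ for every $q$. This is exactly the $p=-\infty$ analogue of the identification ``morphism $A\to\Z_0(p,n-1)\otimes N$ $\cong$ map $A^n/F_{p-1}A^n\to N$'' used in \cref{AFpAProjective}, with the quotient now disappearing.

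The second step is to check that $\sigma$ is an $r$-acyclic fibration, so that cofibrancy of $A$ supplies lifts against it. Because $F_qD_N=D_N$ for all $q$, every associated graded piece $F_q/F_{q-1}$ vanishes, so the $0$-page, and hence every page, of the spectral sequence of $D_N$ (and of $D_M$) is zero; thus both objects are $r$-acyclic and $\sigma$ is an $r$-equivalence. For the fibration condition I would observe that $Z_r^{q,q+k}(D_N)=D_N^k$ in each bidegree, the defining intersection being the whole module since the filtration is constant, and that $\sigma$ acts on these $r$-cycles as $\pi$, which is surjective; hence $\sigma$ is $Z_r$-bidegreewise surjective, i.e.\ an $r$-fibration by \cref{filtChainsr}. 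This is the same computation showing $\sigma_0^{p,p+n}$ is an $r$-acyclic fibration in the proof of \cref{AFpAProjective}, now in the degenerate filtration.

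With these two steps in place the conclusion is immediate: given any surjection $\pi\colon N\twoheadrightarrow M$ and any map $A^n\to M$, I reinterpret the latter as a morphism $A\to D_M$, lift it through the $r$-acyclic fibration $\sigma$ using cofibrancy of $A$ to obtain $A\to D_N$, and read off the resulting $A^n\to N$ as the desired lift of $A^n\to M$ along $\pi$. As this works for every $\pi$ and every target map, $A^n$ is projective. The only point requiring genuine care, and hence the \emph{main obstacle}, is the verification in the second step that the constant-filtration disc $D_N$ really is $r$-acyclic and that $\sigma$ is a fibration; everything else is the formal transcription of the bounded case of \cref{AFpAProjective} with $p$ pushed to $-\infty$.
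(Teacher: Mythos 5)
Your proposal is correct and is precisely the argument the paper intends: the paper's entire proof of \cref{AProjective} is the remark that one takes $p=-\infty$ in \cref{AFpAProjective}, meaning $N$ is placed in all filtration degrees, and your construction of $D_N$, the identification of morphisms $A\to D_N$ with $R$-module maps $A^n\to N$, and the verification that $\sigma$ is an $r$-acyclic fibration are exactly the details being elided there.
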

\begin{lemm}\label{CofibrantExhaustive}
  Let $A\in\fChr$ be cofibrant. Then the filtration on $A$ is exhaustive.
\end{lemm}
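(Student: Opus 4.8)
The plan is to isolate the exhaustive part of $A$, show that its inclusion is an acyclic fibration, and then split that inclusion using cofibrancy of $A$. Concretely, set $A'\coloneqq\bigcup_p F_pA$. Since each $F_pA$ is a subcomplex of $A$, so is their union, and equipping $A'$ with the inherited filtration $F_pA'=F_pA$ makes it a filtered chain complex for which the inclusion $\iota\colon A'\hookrightarrow A$ is a morphism of $\fCh$. By construction $A'$ is exhaustive, $\bigcup_p F_pA'=\bigcup_p F_pA=A'$, so it is enough to prove that $\iota$ is an isomorphism; I would do this by producing a section.

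First I would check that $\iota$ is an acyclic fibration in $\fChr$. The key point is that the associated spectral sequence is built entirely from the finite filtration stages (\cref{associatedSS}): each $Z_s^{p,p+n}(-)$ and $B_s^{p,p+n}(-)$ is expressed through the modules $F_q(-)$ with $q$ finite. Because $F_qA'=F_qA$ for every $q$ and $\iota$ is the identity on each stage, the induced maps on all $Z_s^{p,p+n}$ and $B_s^{p,p+n}$ are identities; hence $\iota$ induces the identity on every page, so in particular it is an isomorphism on the $(r+1)$-page and $\iota\in\Er$. The induced maps on $r$-cycles being (identities, hence) surjective, $\iota$ is moreover a fibration by the characterisation of \cref{filtChainsr}, and therefore an acyclic fibration.

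Since $A$ is cofibrant, $0\to A$ is a cofibration, so the lifting problem
\begin{tikzcd}
  0 \arrow[r] \arrow[d] & A' \arrow[d, "\iota"] \\
  A \arrow[r, "\id_A"'] \arrow[ur, dashed, "g"] & A
\end{tikzcd}
admits a solution $g\colon A\to A'$ with $\iota\circ g=\id_A$. As $\iota$ is the inclusion of $A'$, every $a\in A$ equals $\iota(g(a))\in A'$, so $A=A'$ is exhaustive. I expect the only genuine content to be the observation in the middle paragraph -- that material lying in no finite filtration degree is invisible to the spectral sequence, which is exactly what makes $\iota$ a weak equivalence. The remaining ingredients are formal: the lift against an acyclic fibration is the same device used for \cref{AFpAProjective,AProjective}, and deducing surjectivity of a monomorphism from a section is immediate.
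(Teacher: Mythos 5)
Your proof is correct and follows essentially the same route as the paper: form the union $\bar{A}=\cup_pF_pA$, observe its inclusion into $A$ is an $r$-acyclic fibration because all cycles, boundaries and pages are determined by the finite filtration stages, and then lift the identity of $A$ through this inclusion using cofibrancy. The extra detail you give in verifying the acyclic fibration claim matches the paper's (more terse) justification exactly.
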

\begin{proof}
  Denote by $\bar{A}$ the union of the filtered pieces $\cup_pF_pA$. It is a filtered chain complex with the obvious filtration and there's an inclusion $i\colon \bar{A}\hookrightarrow A$.
  In fact the morphism $i$ is an $r$-acyclic fibration: all conditions on fibrations and weak equivalences are given by the data of finite filtration indexing and the $r$-cycles and $r$-boundaries are unaffected by the operation $A\mapsto \bar{A}$.

  The lifting problem:
  \begin{equation*}
    \begin{tikzcd}
      &\bar{A}\arrow[d,"i", "\sim"', two heads]\\
      A\arrow[ur,dashed]\arrow[r,"id"]&A
    \end{tikzcd}
  \end{equation*}
  must then have a solution since $A$ is cofibrant and $i$ is an $r$-acyclic fibration.
  This shows any element of $A$ lives in finite filtration degree so that $A$ is exhaustive.
\end{proof}
\begin{lemm}\label{APushoutWithGenCofib}
  Let $A\in\fCh$ and $f\colon\Z_{r+1}(p,n)\rightarrow A$ be an $(r+1)$-cycle of $A$. The pushout of $f$ along $i_{r+1}\colon\Z_{r+1}(p,n)\rightarrow\B_{r+1}(p,n)$ is given by the filtered chain complex we denote $A'$ with underlying filtered graded module given by $A'\coloneqq A\oplus R_{(p+r)}^{n-1}\{\gamma\}\oplus R_{(p-1)}^n\{\alpha\}$. The differential $d^{A'}$ is given on elements of $A\subset A'$ by $d^A$ and on $\gamma$ by $d\gamma=a-\alpha$ so that $d\alpha=da$.\qed
\end{lemm}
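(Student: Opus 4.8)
The plan is to compute the pushout directly from the description of colimits in $\fCh$ given in \cref{filteredChainsLimits}, namely $\colim_{\mc{J}}X=\rho\,\colim_{\mc{J}}iX$. First I would record the concrete data. A morphism $f\colon\Z_{r+1}(p,n)\to A$ is exactly the choice of the image $a\in F_pA^n$ of the generator $1_{(p)}^n$ together with its forced image $da\in F_{p-r-1}A^{n+1}$ of $d1_{(p)}^n$; this is the assertion that $a$ is an $(r+1)$-cycle. I would then name the generators of $\B_{r+1}(p,n)$: write $\gamma$ for the generator of $R_{(p+r)}^{n-1}$, write $x,\alpha$ for the generators of the middle term $R_{(p)}^n\oplus R_{(p-1)}^n$, and $y$ for that of $R_{(p-r-1)}^{n+1}$, so that the internal differential is $d\gamma=x$, $d\alpha=y$ and $dx=dy=0$. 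With these names the map $\phi_{r+1}$ sends $1_{(p)}^n\mapsto x+\alpha$ (the diagonal $\Delta$) and $d1_{(p)}^n\mapsto y$.

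Next I would compute the pushout of the underlying (unfiltered) chain complexes, which is $\bigl(A\oplus\B_{r+1}(p,n)\bigr)$ modulo the relations $(f(z),-\phi_{r+1}(z))$ for $z\in\Z_{r+1}(p,n)$. Applied to $1_{(p)}^n$ and $d1_{(p)}^n$ these identify $x+\alpha$ with $a$ and $y$ with $da$, so $x$ and $y$ are eliminated and $\{A,\gamma,\alpha\}$ may be taken as generators, yielding the claimed underlying graded module $A\oplus R_{(p+r)}^{n-1}\{\gamma\}\oplus R_{(p-1)}^n\{\alpha\}$. The differential on $A'$ is inherited from $A$ and $\B_{r+1}(p,n)$: it is $d^A$ on $A$, while rewriting via the relations gives $d\gamma=x=a-\alpha$ and $d\alpha=y=da$, which is precisely the asserted formula; the identity $d^2\gamma=da-d\alpha=0$ serves as a consistency check, and one verifies the differential preserves the filtration since $a\in F_pA'$ and $\alpha\in F_{p-1}A'$ both lie in $F_{p+r}A'$.

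The main obstacle will be checking that the filtration produced by $\rho\,\colim_{\mc{J}}iX$ is the claimed one, i.e.\ that $\gamma$ and $\alpha$ enter in filtration degrees exactly $p+r$ and $p-1$ and that the copy of $A$ retains its original filtration. For this I would evaluate the pushout filtration-degreewise: at level $\infty$ one recovers the graded module above, while for finite $q$ the level-$q$ pushout is built from $F_qA$, $F_q\B_{r+1}(p,n)$ and $F_q\Z_{r+1}(p,n)$, and $F_qA'=\im\bigl((\colim iX)(q)\to(\colim iX)(\infty)\bigr)$. Two points need care. At $q=p-1$ the generator $1_{(p)}^n$ has left $F_{p-1}\Z_{r+1}(p,n)$, so no relation forces $a$ down in filtration, whereas $\alpha$ (filtration $p-1$) is present and the relation $y=da$ is valid since $da\in F_{p-r-1}A\subseteq F_{p-1}A$; descending to $q=p-2$ shows neither $\alpha$ nor $\gamma$ appears, so $\alpha$ enters at exactly $p-1$, and the analogous count places $\gamma$ at exactly $p+r$. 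Moreover the relation $x=a-\alpha$ lowers the filtration of no element of $A$, so $F_qA'\cap A=F_qA$. Assembling these observations gives that $F_qA'$ consists of $F_qA$ together with $R\{\gamma\}$ when $q\geq p+r$ and $R\{\alpha\}$ when $q\geq p-1$, which is exactly the filtration of $A\oplus R_{(p+r)}^{n-1}\{\gamma\}\oplus R_{(p-1)}^n\{\alpha\}$, completing the identification.
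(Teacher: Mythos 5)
Your proof is correct, and it matches the paper's approach: the paper asserts this lemma without a written proof, treating it as the direct computation you carry out, and elsewhere (\cref{monoidal-proof}) it computes such pushouts by exactly your method, namely passing to $\Ch^{\Zbb_\infty}$ via \cref{filteredChainsLimits}, imposing the relations $x+\alpha\sim a$, $y\sim da$, and then reading off $F_qA'$ filtration-degreewise as the image $\im\bigl((\colim iX)(q)\to(\colim iX)(\infty)\bigr)$. Your filtration bookkeeping (that $\alpha$ enters at exactly $p-1$, $\gamma$ at exactly $p+r$, and $F_qA'\cap A=F_qA$ so the inclusion $A\rightarrow A'$ is strict) is precisely the content the paper leaves implicit.
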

We consider the first $r$-pages of the spectral sequence associated to a cofibrant $A\in\fChS$.
The following is shown by a simple retract argument.
\begin{lemm}\label{CofibrantRetract}
  Let $A,B\in\fCh$ be such that $A$ is a retract of $B$ and the $k$-page differential of $B$ is $0$. Then the $k$-page differential of $A$ is also $0$.\qed
\end{lemm}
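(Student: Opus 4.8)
The plan is to exploit the functoriality of the associated spectral sequence and run the standard retract argument at the level of the $k$-page. First I would recall that the construction $A\mapsto E_k^{\bullet,\bullet}(A)$ of \cref{associatedSS} is functorial: a morphism $f\colon A\rightarrow B$ of filtered chain complexes preserves the filtration and commutes with the differential, hence carries the $k$-cycles $Z_k^{p,p+n}(A)$ into $Z_k^{p,p+n}(B)$ and the $k$-boundaries into the $k$-boundaries, thereby inducing a map $f_*\colon E_k^{p,p+n}(A)\rightarrow E_k^{p,p+n}(B)$ on the quotients. Since the page differential $d_k$ is induced by the underlying differential $d$, which $f$ respects, the induced map $f_*$ commutes with the page differentials, i.e.\ $d_k^B\circ f_* = f_*\circ d_k^A$.

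Next I would apply this to the retract data. By hypothesis $A$ is a retract of $B$, so there are morphisms $s\colon A\rightarrow B$ and $p\colon B\rightarrow A$ in $\fCh$ with $p\circ s=\id_A$. Applying $E_k$ yields induced maps $s_*\colon E_k(A)\rightarrow E_k(B)$ and $p_*\colon E_k(B)\rightarrow E_k(A)$ with $p_*\circ s_*=\id_{E_k(A)}$, each commuting with the page differentials as above. In other words, $(E_k(A),d_k^A)$ is a retract of $(E_k(B),d_k^B)$ in the category of bigraded differential modules.

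Finally I would compute directly. Using that $p_*$ commutes with $d_k$ together with the hypothesis $d_k^B=0$, we obtain $d_k^A\circ p_* = p_*\circ d_k^B = 0$. Inserting the identity $p_*\circ s_*$ then gives
\[
  d_k^A = d_k^A\circ(p_*\circ s_*) = (d_k^A\circ p_*)\circ s_* = 0,
\]
so the $k$-page differential of $A$ vanishes, as required.

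There is no serious obstacle here; the content is entirely formal once functoriality is in hand, and indeed this is why the statement is flagged as provable ``by a simple retract argument''. The only point warranting a word of care is the verification that the induced maps on the $k$-page commute with the page differentials $d_k$. This is immediate from the construction of $d_k$, which proceeds by choosing a representing $k$-cycle for a class, applying the honest differential $d$, and projecting back to $E_k$ --- all operations that a morphism of filtered chain complexes respects, so no well-definedness or compatibility issue can arise.
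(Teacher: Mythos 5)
Your proof is correct and is exactly the argument the paper intends: the paper omits the proof, noting only that the lemma ``is shown by a simple retract argument,'' and your write-up spells out precisely that argument (functoriality of $E_k$, so $(E_k(A),d_k^A)$ is a retract of $(E_k(B),d_k^B)$, whence $d_k^A = d_k^A\circ p_*\circ s_* = p_*\circ d_k^B\circ s_* = 0$). Nothing further is needed.
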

\begin{lemm}\label{APushoutGenCofibHasKPageZero}
  Let $A\in\fCh$ be such that the $k$-page differential of $A$ is $0$ with $k<r$ and $k\notin S$. Then the pushout of $A$ along a morphism of $I_S$ also has $k$-page differential being $0$.
\end{lemm}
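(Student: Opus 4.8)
The plan is to reduce to the two kinds of generating morphisms comprising $I_S=I_r\cup\bigcup_{s\in S}J_s$ and to treat a pushout along each separately; the hypothesis $k\notin S$ will be needed only for the morphisms of the $J_s$, while $k<r$ carries the harder case. First I would dispose of a pushout along a morphism $0\to\Z_s(p,n)$ of some $J_s$ with $s\in S$. The attaching map out of the initial object is forced, so by \cref{filteredChainsLimits} the pushout is the coproduct $A'=A\oplus\Z_s(p,n)$. The spectral sequence of a direct sum of filtered chain complexes is the direct sum of the spectral sequences, so $d_k^{A'}=d_k^A\oplus d_k^{\Z_s(p,n)}$. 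A direct computation for $\Z_s(p,n)=\big(R_{(p)}^n\to R_{(p-s)}^{n+1}\big)$ shows its only nonzero page differential is $d_s$: the degree-$n$ generator is a permanent cycle on pages $0,\dots,s-1$, and $d_s$ carries its class onto that of the degree-$(n+1)$ generator. As $s\in S$ and $k\notin S$ we have $k\neq s$, whence $d_k^{\Z_s(p,n)}=0$; with $d_k^A=0$ this gives $d_k^{A'}=0$. This is the only step using $k\notin S$.

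The substantive case is a pushout along $\phi_{r+1}\colon\Z_{r+1}(p,n)\to\B_{r+1}(p,n)$ in $I_r$, attached along an $(r+1)$-cycle $a$ of $A$. By \cref{APushoutWithGenCofib} the pushout is $A'=A\oplus R_{(p+r)}^{n-1}\{\gamma\}\oplus R_{(p-1)}^n\{\alpha\}$ with $d\gamma=a-\alpha$ and $d\alpha=da$. Writing $N=A'/A$ for the quotient complex, spanned by the images of $\gamma$ and $\alpha$ with induced differential $\gamma\mapsto-\alpha$, one recognises $N\cong\Z_{r+1}(p+r,n-1)$ and sees that $A'$ is the twisted direct sum $A\toplus N$ (\cref{defi:twistedDirectSum}) for the twisting $\tau\colon N\to A$ with $\tau\gamma=a$ and $\tau\alpha=da$. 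Because $a$ is an $(r+1)$-cycle, $\tau$ is $r$-suppressive, $\tau F_qN\subseteq F_{q-r}A$ (\cref{rSupressive}), and $d^{A'}\circ d^{A'}=0$ gives the twisting-cochain identity $d^A\tau=-\tau d^N$. I would then compare $A'=A\toplus N$ with the honest direct sum $\hat A=A\oplus N$, for which $d_k^{\hat A}=d_k^A\oplus d_k^N$ and $d_k^N=0$ (as $N\cong\Z_{r+1}(p+r,n-1)$ has only $d_{r+1}$ nonzero and $k<r+1$).

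The crux is a comparison sublemma asserting that, since $\tau$ lowers filtration by exactly $r>k$, twisting by $\tau$ leaves the first $r$ pages untouched. For $t\leq r$ the extra summand $\tau$ cannot change membership in the $t$-cycles, so $Z_t(A')=Z_t(\hat A)$; and using $d^A\tau=-\tau d^N$ one checks that the $\tau$-contribution to $dZ_{t-1}$ already lies in the summand $Z_{t-1}^{\,\bullet-1}(A)$ of $B_t$, giving $B_t(A')=B_t(\hat A)$ and hence $E_t(A')=E_t(\hat A)$. For the differential on the page $k\leq r-1$, the discrepancy $d^{A'}w-d^{\hat A}w=\tau(w_N)$ of a $k$-cycle $w=w_A+w_N$ lies in filtration $\leq p'-r$ and, by the twisting-cochain identity, satisfies $d^A\tau(w_N)\in F_{p'-k-r}A$; the inequalities $p'-r\leq p'-k-1$ and $p'-k-r\leq p'-2k$ (equivalently $k\leq r-1$ and $k\leq r$) then place $\tau(w_N)$ in $Z_{k-1}^{\,p'-k-1}(A)\subseteq B_k^{p'-k}(A')$, so it represents zero on the $k$-page. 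Hence $d_k^{A'}=d_k^{\hat A}=d_k^A\oplus d_k^N=0$.

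I expect the main obstacle to be precisely this last bookkeeping in the $\phi_{r+1}$ case: one must track exactly how far $\tau$ drops filtration and verify that every $\tau$-term is absorbed into the $Z_{k-1}$-part of the $k$-boundaries. It succeeds because $r$-suppressiveness yields a filtration drop strictly greater than $k$, so the twisting first becomes visible on the $r$-page, beyond the page $k<r$ under consideration; the easy $J_s$ case, by contrast, is where the combinatorial condition $k\notin S$ does its work.
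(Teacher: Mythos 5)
Your proof is correct, and your treatment of the $J_s$ case coincides with the paper's (the pushout is $A\oplus\Z_s(p,n)$, and the summand contributes nothing on page $k\neq s$). For the substantive case of a pushout along $\phi_{r+1}$, however, you take a genuinely different, more structural route. The paper works directly inside the pushout $A'$ of \cref{APushoutWithGenCofib}: naturality of $d_k$ (the commutative square comparing $E_k(A)$ and $E_k(A')$) handles classes represented by $k$-cycles coming from $A$, and an explicit check shows $d\alpha$ and $d\gamma$ land in $Z_{k-1}^{\bullet,\bullet}(A')\subseteq B_k^{\bullet,\bullet}(A')$, so the ``new'' classes also die. You instead recognise $A'$ as the twisted direct sum $A\toplus N$ with $N\cong\Z_{r+1}(p+r,n-1)$ and $r$-suppressive twist $\tau$ (\cref{defi:twistedDirectSum,rSupressive}), and prove a comparison sublemma: an $r$-suppressive twist changes neither $Z_t$ nor $B_t$ for $t\leq r$, nor the page differential for $k\leq r-1$, so $E_k(A')=E_k(A\oplus N)$ compatibly with $d_k$, which is then $d_k^A\oplus d_k^N=0$. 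The filtration bookkeeping underneath is the same in both proofs (everything reduces to the twist dropping filtration by $r>k$, so $\tau$-contributions are absorbed into the $Z_{k-1}^{\bullet-1}$ summand of $B_k$), but your packaging buys two things: it makes explicit a step the paper leaves implicit, namely that every class of $E_k(A')$ decomposes as a class from $A$ plus multiples of $[\alpha]$ and $[\gamma]$ (your identity $Z_k(A')=Z_k(\hat A)$, and likewise for boundaries, is exactly what justifies this), and it isolates a reusable statement --- suppressive twists are invisible below the $r$-page --- which meshes with the twisted-sum description of cofibrations in \cref{cofibrationsAreInclusionsOfTwistedSums,subclassOfCofibrations}. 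The paper's direct check on generators is shorter; yours is more self-contained and closer to a general principle.
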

\begin{proof}
  For those morphisms of $I_S$ of the form $0\rightarrow\Z_s(p,n)$ the statement follows since the pushout is $A\rightarrow A\oplus\Z_s(p,n)$.
  Consider then the pushout of an $A\in\fCh$ along a generating cofibration of the form $\Z_{r+1}(p,n)\rightarrow\B_{r+1}(p,n)$.
  The pushout is given by the $A'$ of \cref{APushoutWithGenCofib}.
  By commutativity of
  \begin{equation*}
    \begin{tikzcd}
      E_k^{p,p+n}(A)\arrow[r,"d_k^{A}"]\arrow[d]&E_k^{p-k,p-k+n+1}(A)\arrow[d]\\
      E_k^{p,p+n}(A')\arrow[r,"d_k^{A'}"]&E_k^{p-k,p-k+n+1}(A')
    \end{tikzcd}
  \end{equation*}
  any element of $E_k(A')$ represented by a $k$-cycle coming from $A$ will still be $0$ under $d_k^{A'}$.
  The new $k$-cycles of the pushout are $\alpha$ and $\gamma$.
  But note that $d\alpha\in Z_{k-1}^{p-k-1,p-k-1+n+1}(A')\subset B_k^{p-k,p-k+n+1}(A')$ hence $d^{A'}_k([\alpha])=0$ and similarly for $\gamma$.
\end{proof}
\begin{lemm}\label{CellularKPage}
  Let $A\in\fCh$ be such that the $k$-page differential of $A$ is $0$ with $k<r$ and $k\notin S$. Then the $k$-page differential of the codomain $B$ of any relative $I_S\Cell$ complex whose domain is $A$ is also $0$.
\end{lemm}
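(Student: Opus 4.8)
The plan is to prove \cref{CellularKPage} by transfinite induction on the construction of the relative $I_S\Cell$ complex, leveraging \cref{APushoutGenCofibHasKPageZero} for the successor step and the behaviour of colimits in $\fCh$ for the limit step. Recall that a relative $I_S\Cell$ complex with domain $A$ is by definition a transfinite composition $A = B_0 \to B_1 \to \cdots \to B_\lambda = B$ where at each successor ordinal $B_{\beta+1}$ is obtained from $B_\beta$ by a pushout along a coproduct of morphisms in $I_S$, and at each limit ordinal $B_\beta = \colim_{\gamma<\beta}B_\gamma$. The induction hypothesis is that each $B_\gamma$ has vanishing $k$-page differential, and the goal is to conclude the same for $B$.

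First I would dispatch the successor step. Given that $B_\beta$ has $k$-page differential zero, I would apply \cref{APushoutGenCofibHasKPageZero} to conclude $B_{\beta+1}$ does too. Strictly, \cref{APushoutGenCofibHasKPageZero} is stated for a pushout along a single morphism of $I_S$, whereas a cellular step allows pushing out along a coproduct $\coprod_i (f_i\colon X_i \to Y_i)$ of such morphisms at once. I would handle this either by observing that a pushout along a coproduct can be rewritten as an iterated (possibly transfinite) composite of pushouts along the individual $f_i$, reducing to the single-morphism case, or by noting that the proof of \cref{APushoutGenCofibHasKPageZero} goes through verbatim when the new generators $\alpha,\gamma$ are indexed over a set rather than being a single pair, since the argument about new $k$-cycles is local to each generating cell.

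The main obstacle will be the limit step, where I must show that $\colim_{\gamma<\beta} B_\gamma$ inherits vanishing $k$-page differential from the $B_\gamma$. By \cref{filteredChainsLimits}, the colimit in $\fCh$ is computed as $\rho\,\colim i X$ where the underlying colimit is a filtered colimit of chain complexes, computed degreewise and filtration-degreewise. The key point is that for the maps in a relative $I_S$-cell complex the structure maps $B_\gamma \to B_{\beta}$ are (degreewise split) inclusions, so that the relevant $r$-cycle and $r$-boundary $R$-modules, and hence the $E_k$-terms, are computed as filtered colimits; since filtered colimits are exact and commute with the formation of subquotients defining $E_k^{p,p+n}(-)$, any element of $E_k(B_\beta)$ is represented by a $k$-cycle already living in some $B_\gamma$. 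Applying the induction hypothesis and the compatibility of $d_k$ with the maps $B_\gamma \to B_\beta$ (exactly the naturality square used in \cref{APushoutGenCofibHasKPageZero}) then forces $d_k^{B_\beta}$ to vanish on that class.

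I would therefore organise the limit-step argument as follows: fix a class in $E_k^{p,p+n}(B_\beta)$, represent it by a $k$-cycle $z$, use that $z$ lives in finite filtration and cohomological degree together with the filtered-colimit description to find $\gamma<\beta$ and a $k$-cycle $z'\in Z_k^{p,p+n}(B_\gamma)$ mapping to $z$, invoke the induction hypothesis $d_k^{B_\gamma}[z']=0$, and conclude $d_k^{B_\beta}[z]=0$ by naturality of $d_k$ along $B_\gamma\to B_\beta$. Once both the successor and limit steps are established, transfinite induction yields that every $B_\gamma$, and in particular the terminal $B=B_\lambda$, has $k$-page differential equal to $0$, completing the proof. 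The one technical care needed is to justify that the $E_k$-terms commute with these filtered colimits of inclusions, which follows from exactness of filtered colimits of $R$-modules applied to the cycle and boundary subquotients of \cref{associatedSS}.
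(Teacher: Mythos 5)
Your proposal is correct and takes essentially the same approach as the paper: transfinite induction where the successor case is \cref{APushoutGenCofibHasKPageZero} and the limit case is an element-chase representing a $k$-cycle of the colimit by a $k$-cycle at some earlier stage (using that the structure maps are inclusions and filtration degrees are witnessed at finite stages), then concluding via naturality of $d_k$. The paper's limit-ordinal argument is precisely your element-level version, tracking both the element and its differential back to earlier stages with matching filtration degrees and passing to the larger of the two indexing ordinals.
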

\begin{proof}
  Recall an $I_S\Cell$ with domain $A$ is a transfinite composition of pushouts of the generating cofibrations.
  Denote the stages of this composition by $A_\bullet$.
  We prove the codomain of such an $I_S\Cell$ has $k$-page differential also $0$ when the $k$-page differential of $A$ is.
  The base case is provided by the hypothesis of the lemma, the successor ordinal case was proven in \cref{APushoutGenCofibHasKPageZero} so it remains to prove the limit ordinal case.

  Let $\lambda$ be a limit ordinal of the transfinite composition and
  let $[x]$ be an element of the $k$-page of the $A_\lambda$ represented by a $k$-cycle $x$ of $A_\lambda$.
  In the transfinite composition there are the elements $y$ and $z$ in some stages of the composition given by ordinals $\eta$ and $\theta$ less than $\lambda$ whose image in $A_\lambda$ is $x$ and $dx$ respectively and such that the filtration degrees of $y$ and $x$ agree as do those of $z$ and $dx$.
  Taking the image of $y$ in the stage of the composition indexed by the larger of $\eta$ and $\theta$ gives another $k$-cycle whose image in $A_\lambda$ is $x$ and such that, by induction, $d_k[y]=0$.
  We also then have $d_k[y]=0$ in $A_\lambda$ which by transfinite induction proves the claim.
\end{proof}
\begin{coro}\label{cofibrantkPageDifferentialIsZero}
  Let $A\in\fChS$ be cofibrant. Then the $k$-page differential of $A$ with $k<r$ and $k\notin S$ is $0$.
\end{coro}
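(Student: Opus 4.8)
The plan is to deduce the corollary directly from the two preceding lemmas, \cref{CellularKPage} and \cref{CofibrantRetract}, together with the standard description of cofibrant objects in a cofibrantly generated model category. The point is that an $S$-cofibrant object is, up to retract, a cellular object built from the initial object, and both the cellular construction and the passage to retracts are known to preserve vanishing of the $k$-page differential.

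First I would record the base observation that the initial object of $\fCh$, namely the zero filtered chain complex, has every page of its associated spectral sequence equal to $0$; in particular its $k$-page differential vanishes. This is exactly the input required by \cref{CellularKPage}. Next, since $A$ is $S$-cofibrant, the canonical morphism $0\to A$ is an $S$-cofibration, that is, it lies in $I_S\Cof$. By the retract argument for cofibrantly generated model categories, whose underlying small object argument is available here because all objects of $\fCh$ are small by \cref{filtChainsAreSmall}, any morphism of $I_S\Cof$ is a retract of a relative $I_S\Cell$ complex. Hence there is a relative $I_S\Cell$ complex $0\to B$ whose domain is the initial object and for which $A$ is a retract of $B$.

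I would then finish by chaining the two lemmas. Applying \cref{CellularKPage} with the domain taken to be the initial object, whose $k$-page differential is $0$ and for which $k<r$ and $k\notin S$, shows that the codomain $B$ of the cell complex also has vanishing $k$-page differential. Finally, since $A$ is a retract of $B$, \cref{CofibrantRetract} gives that the $k$-page differential of $A$ is $0$ as well, which is the claim.

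There is no substantial obstacle here: the corollary is essentially a formal consequence of the two lemmas once one knows how cofibrant objects are built. The only step needing care is the invocation of the retract characterisation of cofibrations, ensuring that one really does obtain a cell complex with domain the \emph{initial} object (so that the base case of \cref{CellularKPage} applies), rather than a relative cell complex over some other object.
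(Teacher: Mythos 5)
Your proposal is correct and follows essentially the same route as the paper's own proof: both deduce the result by writing the cofibrant object $A$ as a retract of the codomain of a relative $I_S\Cell$ complex over the initial object (whose $k$-page differential trivially vanishes), then applying \cref{CellularKPage} followed by \cref{CofibrantRetract}. Your version merely spells out the standard retract/small-object-argument justification that the paper leaves implicit in the phrase ``any cofibrant object is the retract of a cellular one.''
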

\begin{proof}
  Any cofibrant object is the retract of a cellular one. The cellular objects have $k$-page differential being $0$ for $k<r$ and $k\notin S$ by \cref{CellularKPage} and the result for all cofibrant objects follows from \cref{CofibrantRetract}.
\end{proof}
This then gives another description of how, for a fixed $r$, the model structures $\fChS$ differ.
They can be seen to differ on their cofibrant objects where for $k<r$ the $k$-page differential is necessarily $0$ if $k\notin S$.
There is no analogous result for $k>r$ by definition of the weak equivalences.
A consequence of this corollary is the following corollary for the $\fChr$ model structures.
\begin{coro}\label{rCofibrantPages}
  Let $A\in\fChr$ be cofibrant. Then the $k$-page differential is $0$ for $0\leq k<r$. In particular we have isomorphisms:
  \begin{equation*}
    E_0^{p,p+n}(A)\cong E_1^{p,p+n}(A)\cong \ldots \cong E_r^{p,p+n}(A)\;.\pushQED{\qed}\qedhere
  \end{equation*}
\end{coro}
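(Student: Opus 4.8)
The plan is to specialize the already-proven \cref{cofibrantkPageDifferentialIsZero} to the case at hand and then read off the page isomorphisms directly from the defining property of a spectral sequence. The model structure $\fChr$ is precisely $\fChS$ for the choice $S=\left\{r\right\}$, as noted just after \cref{ISandJS}. For this $S$, every integer $k$ in the range $0\leq k<r$ satisfies $k\notin S$, since the only element of $S$ is $r$ itself. Thus the two hypotheses $k<r$ and $k\notin S$ of \cref{cofibrantkPageDifferentialIsZero} hold simultaneously for all such $k$, and we conclude that the $k$-page differential of any cofibrant $A\in\fChr$ vanishes for every $0\leq k<r$.

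For the displayed isomorphisms I would invoke the defining relation $E_{k+1}^{p,q}(A)\cong H^{p,q}(E_k^{\bullet,\bullet}(A),d_k)$ recorded in \cref{associatedSS}. When $d_k=0$ the kernel of $d_k$ is all of $E_k^{\bullet,\bullet}(A)$ while the image of $d_k$ is zero, so this homology is canonically isomorphic to $E_k^{\bullet,\bullet}(A)$ itself; hence $E_k^{p,p+n}(A)\cong E_{k+1}^{p,p+n}(A)$ for each $0\leq k<r$ and each bidegree $(p,p+n)$. Composing these isomorphisms across $k=0,1,\ldots,r-1$ produces the chain $E_0^{p,p+n}(A)\cong\cdots\cong E_r^{p,p+n}(A)$.

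There is no genuine obstacle here: the corollary is an immediate specialisation of \cref{cofibrantkPageDifferentialIsZero} together with the elementary observation that a page with vanishing differential coincides with its successor page. The only point worth flagging is the bookkeeping that the constraint $S=\left\{r\right\}$ forces $k\notin S$ throughout the entire relevant range, which is exactly what upgrades the conclusion of \cref{cofibrantkPageDifferentialIsZero} from ``those $k$ outside an arbitrary $S$'' to the full interval $0\leq k<r$.
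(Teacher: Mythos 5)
Your proof is correct and matches the paper's reasoning exactly: the paper states \cref{rCofibrantPages} as an immediate consequence of \cref{cofibrantkPageDifferentialIsZero} via the specialisation $S=\{r\}$, so that every $k$ with $0\leq k<r$ lies outside $S$, and the page isomorphisms follow because a page with vanishing differential is canonically isomorphic to its successor. Nothing is missing.
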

Results of this form have appeared before in the thesis of Cirici, \cite[Lemma 4.3.14]{Cirici} for their \textit{$E_r$-cofibrant dgas} constructed by iteratively tensoring the base field with exterior algebras.
\begin{proof}[Proof of \cref{cofibrantConditions}]
  Condition 1 was shown in \cref{AFpAProjective,AProjective}, condition 2 in \cref{CofibrantExhaustive}, condition 3 follows from \cref{rCofibrantPages} and condition 4 follows since $C_r(K)\rightarrow \rSusp K$ is an $r$-acyclic fibration.
\end{proof}
We now wish to give a partial converse to this lemma, i.e.\ conditions on a filtered chain complex that ensure cofibrancy in $\fChr$.
The same conditions 1--4 feature in addition to an extra boundedness assumption on the filtration.
This additional assumption is not a necessary condition, \cref{lemm:QrIIsCofibrant} provides a counterexample.
\begin{prop}\label{subclassOfCofibrantObjects}
  Let $A\in\fCh$ be such that it satisfies the conditions of \cref{cofibrantConditions} listed again here:
    \begin{enumerate}
  \item $A^n$ and $A^n/F_pA^n$ is projective for all $p,n\in\Zbb$,
  \item the filtration on $A$ is exhaustive,
  \item for $a\in F_pA^n$ we have $da\in F_{p-r}A^{n+1}$,
  \item for any morphism $A\rightarrow \rSusp K$ of $A$ into (the shift of) an $r$-acyclic $K$ there is a lift against $C_r(K)\rightarrow \rSusp K$,
  \end{enumerate}
  in addition to a fifth assumption:
  \begin{enumerate}
    \setcounter{enumi}{4}
  \item for any cohomological degree $n$ there exists a $p(n)$ such that $F_{p(n)}A^n=0$,
  \end{enumerate}
  then $A$ is cofibrant in $\fChr$.
\end{prop}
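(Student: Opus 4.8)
The plan is to verify that $\emptyset\to A$ has the left lifting property against every acyclic fibration, which by \cref{IrInj} is exactly the class $I_r\Inj$; this shows $\emptyset\to A\in I_r\Cof$, i.e.\ that $A$ is cofibrant. So I fix an acyclic fibration $\pi\colon E\twoheadrightarrow B$ with kernel $K$ and a morphism $f\colon A\to B$, and construct a lift $A\to E$. The whole argument runs parallel to the sketch for the projective model structure recalled in \cref{Projective}: first lift $f$ as a map of filtered graded modules, then measure and correct its failure to commute with the differential using condition~4.

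First I would record the key consequence of condition~3: since $d(F_pA^n)\subseteq F_{p-r}A^{n+1}$ we have $Z_r^{p,p+n}(A)=F_pA^n$, so \emph{every} element of $A$ is an $r$-cycle in its own filtration degree. Because $f$ preserves filtration and commutes with $d$, this forces $f(F_pA^n)\subseteq Z_r^{p,p+n}(B)$, and the fibration condition is precisely that $Z_r^{p,p+n}(E)\twoheadrightarrow Z_r^{p,p+n}(B)$ is surjective; this is the point at which the $Z_r$-surjectivity built into fibrations in $\fChr$ becomes usable. To build the graded lift $g\colon A\to E$ I would induct on filtration degree in each cohomological degree $n$: condition~5 gives a base case $F_{p(n)}A^n=0$, and condition~1 makes each graded piece $F_pA^n/F_{p-1}A^n$ projective (it is a summand of the projective module $A^n/F_{p-1}A^n$). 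At the inductive step I split off a projective complement and lift $f$ along $Z_r^{p,p+n}(E)\twoheadrightarrow Z_r^{p,p+n}(B)$, arranging $g(F_pA^n)\subseteq Z_r^{p,p+n}(E)$; exhaustiveness (condition~2) then assembles these into a filtration-preserving module map $g$ with $\pi g=f$.

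Next I would study the defect $\delta\coloneqq d^Eg-gd^A$. Since $f$ is a chain map, $\pi\delta=0$, so $\delta$ lands in $K$; and since $g(F_pA^n)\subseteq Z_r^{p,p+n}(E)$ together with condition~3 force both $d^Eg$ and $gd^A$ to drop filtration by $r$, the map $\delta$ lowers filtration by $r$ and raises degree by one. By \cref{rSuspension} this says exactly that $\delta$ is a filtration-preserving chain map $A\to\rSusp K$, the chain-map identity $d^{\rSusp K}\delta=\delta d^A$ being a direct computation from $(d^E)^2=(d^A)^2=0$. To invoke condition~4 I need $K$ to be $r$-acyclic: this follows cleanly because acyclic fibrations, being characterised by a right lifting property, are stable under pullback, and $K\to 0$ is the pullback of $\pi$ along $0\to B$; hence $K\to 0$ is an acyclic fibration and $E_{r+1}(K)=0$. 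Condition~4 then yields a lift $h=(\delta,s)\colon A\to C_r(K)$ of $\delta$ along the projection $C_r(K)\to\rSusp K$, and unwinding the cone differential of \cref{rCone} shows $\delta=sd^A-d^Es$. Setting $\tilde f\coloneqq g+s$ (with $s$ viewed in $K\subseteq E$) gives a filtration-preserving chain map with $\pi\tilde f=f$, the required lift.

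The main obstacle is the inductive construction of the graded lift $g$ in the second paragraph: one must check that the choices of splitting and of projective lifts can be made filtration-degree by filtration-degree so as to both stay inside $Z_r^{p,p+n}(E)$ and glue into an honest morphism of $\fCh$. This is exactly where the boundedness hypothesis (condition~5) is indispensable, since it provides the base of the induction in each cohomological degree; without it the analogue of the chain-complex argument breaks down, which is consistent with condition~5 failing to be necessary (cf.\ \cref{lemm:QrIIsCofibrant}). The differential correction via the $r$-cone, by contrast, is routine once the chain-complex template of \cref{Projective} is in hand, modulo bookkeeping of the signs coming from the conventions in \cref{rSuspension,rCone}.
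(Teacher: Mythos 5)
Your proposal is correct and takes essentially the same approach as the paper's proof: conditions 1, 2 and 5 are used to split $A$ filtration-degreewise into projective graded pieces which by condition 3 consist of $r$-cycles, these pieces are lifted through the $Z_r$-bidegreewise surjective acyclic fibration to get a lift of filtered graded modules, and condition 4 together with the $r$-cone corrects the discrepancy with the differential. The only difference is expository: you spell out the correction step (the chain-map property of $\delta$, the sign bookkeeping, and the $r$-acyclicity of the kernel via pullback stability of acyclic fibrations), details the paper delegates to the chain-complex template of \cref{Projective}.
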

\begin{proof}
  Given conditions 1 and 5 one can inductively split any $A^n$ into its graded pieces and condition 2 ensures the final isomorphism in the limit of the following:
  \begin{align*}
    A^n&\cong \frac{F_{p(n)+1}A^n}{F_{p(n)}A^n}\oplus\frac{A^n}{F_{p(n)+1}A^n}\\
       &\cong \frac{F_{p(n)+1}A^n}{F_{p(n)}A^n}\oplus\frac{F_{p(n)+2}A^n}{F_{p(n)+1}A^n}\oplus\frac{A^n}{F_{p(n)+2}A^n}\\
       &\cong\ldots\\
       &\cong\bigoplus_{p(n)\leq p}\frac{F_{p+1}A^n}{F_pA^n}\;.
  \end{align*}
  We drop the $p(n)$ from the sum as we no longer have need for it.
  The induced filtration is the obvious one $F_pA^n\cong\bigoplus_{q\leq p}F_qA^n/F_{q-1}A^n$.
  Consider a summand of $F_pA^n/F_{p-1}A^n$ of this splitting of $A^n$.
  Condition 3 ensures that its image under the differential is in the subobject $\bigoplus_{q\leq p-r}F_qA^{n+1}/F_{q-1}A^{n+1}$ of the splitting for $A^{n+1}$.
  We then have that any element of ${F_pA^n}/{F_{p-1}A^n}$ is in $Z_r^{p,p+n}(A)$.

  Consider then a lifting problem of the form:
  \begin{equation*}
    \begin{tikzcd}
      &E\arrow[d,two heads,"f","\sim"']\\
      A\arrow[r]\arrow[ur,dashed]&B
    \end{tikzcd}
  \end{equation*}
  where $f$ is an $r$-acyclic fibration.
  For a splitting of $A^n$ for each $n$ one can then lift each graded piece, irrespective of compatibility with the differential, using $r$-cycle surjectivity of $f$ and projectivity of the graded pieces giving a lift of graded filtered modules.
  Finally one uses condition 4 to correct for the discrepancy with the differential in the same way as in the projective model structure on chain complexes.
\end{proof}
\subsection{A subclass of cofibrations}
Using the description of a subclass of cofibrant objects just obtained we give a similar one for a subclass of cofibrations.
Recall for analogy in the projective model structure of chain complexes these are the degree-wise split monomorphisms with cofibrant cokernel.
\begin{defi}
  Let $f\colon A\rightarrow B$ be a morphism in $\fCh$. We say that $f$ is \textit{strict} if whenever $f(a)\in F_pB$ we have $a\in F_pA$.
\end{defi}
Note the computation of cokernels of strict morphisms is done degreewise with respect to both filtration and cohomological degree by the construction of colimits given in \cref{filteredChainsLimits}.
\begin{lemm}
  Let $f\colon A\rightarrow B$ be a cofibration in $\fChS$, then $f$ is a strict morphism.
\end{lemm}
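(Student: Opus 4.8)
The plan is to show that strictness propagates through the constructions that build cofibrations out of the generating set $I_S$. Recall that in a cofibrantly generated model category the cofibrations are exactly $I_S\Cof$, and every element of $I_S\Cof$ is a retract of a relative $I_S\Cell$ complex; since all objects of $\fCh$ are small (\cref{filtChainsAreSmall}) the small object argument applies, so this reduction is available. Hence it suffices to establish three closure properties: (a) each attachment of a single generating cofibration is a strict (split) monomorphism, (b) strict monomorphisms are closed under transfinite composition, and (c) strictness is inherited by retracts. Combining these with the retract-of-cell-complex description then forces every cofibration to be strict.

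For (a) I would treat the two kinds of generating cofibrations separately. A generating acyclic cofibration $0\to\Z_s(p,n)$ ($s\in S$) pushed out along any $0\to A$ gives the coproduct inclusion $A\to A\oplus\Z_s(p,n)$, whose filtration is the direct-sum filtration, so $F_qA$ is exactly the intersection of $F_q(A\oplus\Z_s(p,n))$ with the $A$-summand: a strict split mono. For $\phi_{r+1}\colon\Z_{r+1}(p,n)\to\B_{r+1}(p,n)$, an attaching map is an $(r+1)$-cycle of $A$ and \cref{APushoutWithGenCofib} computes the pushout explicitly: the inclusion $A\to A'$ exhibits $A$ as a direct summand of the underlying filtered graded module of $A'$, so again the filtration on the summand is exactly $F_qA$, giving a strict split mono. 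Here I would only track enough of the filtration and cohomological-degree bookkeeping to see that the new generators $\gamma,\alpha$ contribute no elements of $F_qA'$ outside the $A$-summand.

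For (b) I would argue by transfinite induction that for a $\lambda$-sequence of strict monomorphisms with colimit $A_\lambda$ each composite $A_0\to A_\beta$ is strict. The successor case is closure of strict maps under composition: if $hg(a)\in F_q$ then strictness of $h$ gives $g(a)\in F_q$, then strictness of $g$ gives $a\in F_q$. For a limit ordinal $\gamma$ I would use \cref{filteredChainsLimits}, computing the colimit as $\rho\colim i(-)$: for a chain of monomorphisms one has $F_qA_\gamma=\bigcup_{\delta<\gamma}\im(F_qA_\delta\to A_\gamma)$, since filtered colimits of $R$-modules are exact at each filtration level and so the image defining $\rho$ is just the union. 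Thus if the image of $a$ lies in $F_qA_\gamma$ it already lies in the image of some $F_qA_\delta$, and injectivity of $A_\delta\to A_\gamma$ lets me pull this back to stage $\delta$, where the inductive hypothesis yields $a\in F_qA_0$. Property (c) is a short diagram chase: if $f$ is a retract of the strict $g$ and $f(a)\in F_q$, I transport $a$ into the middle object via the section, use that the structure maps preserve filtration and that $g$ is strict to land in $F_q$ there, and transport back along the retraction, again using filtration-preservation.

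I expect the only genuine obstacle to be the limit-ordinal step of (b): one must verify that the image construction $\rho$ of \cref{filteredChainsLimits} does not enlarge the filtration past $\bigcup_{\delta<\gamma}F_qA_\delta$. This is precisely where it matters that the cell attachments are (split) monomorphisms, since that reduces the relevant colimit to an exact filtered colimit of $R$-modules at each filtration degree; for a general morphism $\rho$ could a priori introduce elements not coming from any finite stage. Once this is secured, properties (a)--(c) assemble to show that every cofibration of $\fChS$ is strict.
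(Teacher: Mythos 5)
Your proof is correct, but it takes a genuinely different route from the paper's. The paper dispatches the lemma with a single lifting problem: setting $N\coloneqq A^n/F_pA^n$, viewed as a filtered chain complex concentrated in cohomological degree $0$ and pure filtration degree $0$, there is a map $A\rightarrow N\otimes\Z_0(p+1,n-1)$ given by the quotient map in degree $n$ (and the quotient precomposed with $d$ in degree $n-1$); the target is $0$-acyclic, hence $r$-acyclic and fibrant, so the cofibration $f$ lifts against $N\otimes\Z_0(p+1,n-1)\rightarrow 0$, and if $f(a)\in F_pB^n$ then the lift sends $f(a)$ to $0$, which forces $a\in F_pA^n$. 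Your argument instead runs a cellular induction: attachments of generating cofibrations are strict split monomorphisms (by \cref{APushoutWithGenCofib} and the direct-sum filtration on coproducts), strictness is closed under composition, transfinite composition (via the colimit description of \cref{filteredChainsLimits} and exactness of filtered colimits of $R$-modules) and retracts, and every cofibration is a retract of a relative $I_S\Cell$ complex by smallness (\cref{filtChainsAreSmall}) and the small object argument. This is sound, and it has a side benefit: injectivity of cofibrations drops out simultaneously, whereas the paper proves that as a separate lemma by rerunning its lifting argument with $N=A^n$. What you pay is length and colimit bookkeeping; what the paper's choice of acyclic fibration buys is a two-line proof needing no structure theory of cell complexes. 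One small inaccuracy in your closing paragraph: elements of a filtered colimit always come from some stage, so $\rho$ never enlarges $F_q$ beyond the union of images even for non-monomorphic transition maps; the place where injectivity is genuinely needed is, as you also note, pulling the witnessing element of $F_qA_\delta$ back along $A_\delta\rightarrow A_\gamma$ to compare it with the image of $a$.
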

\begin{proof}
  Let $N\coloneqq A^n/F_{p}A^n$ considered as a filtered chain complex concentrated in cohomological degree $0$ and of pure filtration degree $0$.
  Note there is a morphism $A\rightarrow N\otimes\Z_0(p+1,n-1)$ which in cohomological degree $n$ is the quotient morphism $A^n\rightarrow A^n/F_{p}A^n$ and  in degree $n-1$ is this quotient precomposed by the differential $d\colon A^{n-1}\rightarrow A^n$.
  Since $N\otimes\Z_0(p+1,n-1)$ is $0$-acyclic, and hence $r$-acyclic as well as being $S$-fibrant there is a lift in the diagram:
  \begin{equation*}
    \begin{tikzcd}
      A\arrow[d,tail,"f"]\arrow[r]&N\otimes\Z_0(p+1,n-1)\arrow[d,"\sim",two heads]\\
      B\arrow[r]\arrow[ur,dotted,"h"]&0
    \end{tikzcd}
  \end{equation*}
  and given an element $f(a)\in F_pB^n$, its image under $h$ must be $0$ which shows $a\in F_pA^n$ and so $f$ is a strict morphism.
\end{proof}
Replacing $N$ with $N\coloneqq A^n$ in the preceding proof also demonstrates the following lemma.
\begin{lemm}
  Let $f\colon A\rightarrow B$ be a cofibration in $\fChS$, then $f$ is an inclusion.\qed 
\end{lemm}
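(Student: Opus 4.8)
The plan is to run the argument of the preceding strictness lemma almost verbatim, replacing the quotient $N\coloneqq A^n/F_pA^n$ there by $N\coloneqq A^n$ itself. In the strictness lemma the comparison map was a \emph{quotient} in cohomological degree $n$, which detected filtration degrees; here the comparison map will be the \emph{identity} in degree $n$, and so it will detect membership of the kernel of $f$, yielding that $f$ is injective. Together with the strictness already established, injectivity exhibits $f$ as the inclusion of a filtered subcomplex, which is the assertion of the lemma.

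Concretely, for each $n$ I would regard $N\coloneqq A^n$ as an $R$-module placed in cohomological degree $0$, equipped with the filtration it inherits from $A$, and form the object $N\otimes\Z_0(0,n-1)$. This object is $0$-acyclic: its associated graded is $(\mathrm{gr}\,N)\otimes\Z_0(0,n-1)$ with $d_0$ the identity differential of $\Z_0$, so $E_1=0$ and hence $E_{r+1}=0$, i.e.\ it is $r$-acyclic. Since every object of $\fChS$ is fibrant, the map $N\otimes\Z_0(0,n-1)\to 0$ is therefore an $r$-acyclic fibration. There is a morphism $g\colon A\to N\otimes\Z_0(0,n-1)$ in $\fCh$ which in degree $n$ is the identity $A^n\to A^n$ and in degree $n-1$ is the differential $d\colon A^{n-1}\to A^n$; one checks it is a chain map (using $d^2=0$ in the degrees where the target vanishes) and is filtration preserving.

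Since $f$ is a cofibration, the lifting problem with $g$ along the top and $N\otimes\Z_0(0,n-1)\to 0$ on the right admits a solution $h\colon B\to N\otimes\Z_0(0,n-1)$ with $h\circ f=g$. If $a\in A^n$ satisfies $f(a)=0$, then $a=g(a)=h(f(a))=0$, so $f$ is injective in each cohomological degree. Combined with the strictness of $f$ this shows $f$ is an inclusion.

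The one step requiring care, and the only genuine deviation from the preceding proof, is the compatibility of $g$ with the filtrations. In the strictness lemma the quotient map automatically sent low filtration degrees to $0$, matching the vanishing of the target there; the identity map cannot do this, so one must instead endow $N=A^n$ with its own filtration (equivalently, use the representing object $\Z_0(0,n-1)$ rather than $\Z_0(p+1,n-1)$) so that the identity $A^n\to (N\otimes\Z_0(0,n-1))^n$ is genuinely filtration preserving. Once the target is set up this way, $r$-acyclicity, fibrancy and the lift are routine. Alternatively, one can sidestep this bookkeeping entirely by mapping $A$ into its $r$-cone $C_r(A)$, which is $r$-acyclic and fibrant, via the canonical injective filtration-preserving inclusion $b\mapsto(0,b)$; the same lifting argument then yields injectivity directly.
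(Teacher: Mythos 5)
Your proof is correct and is essentially the paper's own argument: the paper proves this lemma in one line by ``replacing $N$ with $N\coloneqq A^n$ in the preceding proof,'' which is exactly your main construction, and your filtration fix (giving $N=A^n$ its inherited filtration, equivalently the paper's earlier ``$p=-\infty$'' convention of placing $N$ in all filtration degrees) is the intended way to make the identity map filtration-preserving. Your alternative via the canonical inclusion into the $r$-cone $C_r(A)$ is a clean variant of the same lifting argument, but not a genuinely different route.
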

\begin{defi}\label{defi:twistedDirectSum}
  let $A,C\in\fCh$. A \textit{twisted direct sum} of $A$ and $C$, denoted $A\toplus C$, is a filtered chain complex with underlying filtered graded module given by $A\oplus C$ and with differential given by:
  \begin{equation*}
    d^{A\toplus C}\coloneqq \begin{pmatrix}d^A&\tau\\0&d^C\end{pmatrix}\colon
    A\toplus C\rightarrow A\toplus C
  \end{equation*}
  for some degree $1$ morphism $\tau$ (we call the \textit{twist morphism}).
  In particular $d^A\tau+\tau d^C=0$.
\end{defi}
\begin{defi}\label{rSupressive}
  For $A\toplus C$ a twisted direct sum of filtered chain complexes we will say the twist morphism $\tau$ is \textit{$r$-suppressive} if for any $p,n\in\Zbb$ and $c\in F_pC^n$ then $\tau(c)\in F_{p-r}A^{n+1}$.
\end{defi}
Since pushouts of cofibrations are cofibrations, the cokernel $C$ of an $r$-cofibration $A\rightarrow B$ must be $r$-cofibrant, in particular $C^n$ is projective by \cref{cofibrantConditions} so by a standard splitting argument we have the following lemma.
\begin{lemm}\label{cofibrationsAreInclusionsOfTwistedSums}
  Let $f\colon A\rightarrow B$ be a cofibration of $\fChr$ and $C$ its cokernel, then $f$ is isomorphic to a morphism of filtered chain complexes of the form $i\colon A\rightarrow A\toplus C$ which is the inclusion onto the first summand (with $C$ cofibrant).\qed
\end{lemm}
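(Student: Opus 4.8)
The plan is to reduce the statement to splitting, as filtered graded modules, the short exact sequences underlying $f$, and then to read off the twisted differential. First I would record the two inputs already available: by the two preceding lemmas $f$ is a strict inclusion, and its cokernel $C$ is cofibrant (it is the pushout of $f$ along $A\rightarrow 0$, and cofibrations are stable under pushout), so by \cref{cofibrantConditions} each $C^n$ and each quotient $C^n/F_pC^n$ is projective and the filtration on $C$ is exhaustive. Because $f$ is strict, its cokernel is computed both filtration- and cohomological-degreewise (\cref{filteredChainsLimits}), so for every $p,n\in\Zbb$ there is a short exact sequence of $R$-modules
\[
  0\longrightarrow F_pA^n\longrightarrow F_pB^n\overset{\pi}{\longrightarrow} F_pC^n\longrightarrow 0.
\]

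Next I would construct, in each cohomological degree, a filtration-preserving $R$-linear section $s\colon C^n\rightarrow B^n$ of the projection $\pi\colon B^n\rightarrow C^n$; this is the standard splitting argument. One first checks that each graded piece $\mathrm{gr}_pC^n=F_pC^n/F_{p-1}C^n$ is projective: it is the kernel of the surjection $C^n/F_{p-1}C^n\twoheadrightarrow C^n/F_pC^n$ between projectives, so that sequence splits and $\mathrm{gr}_pC^n$ is a direct summand of a projective. Each associated graded sequence therefore splits, and one assembles these splittings into a single section $s$ respecting the filtration, using projectivity of the quotients $C^n/F_pC^n$. I expect this assembly to be the main obstacle: a section exists at each filtration stage by projectivity, but producing one section compatible with the entire filtration requires care, since a general cofibrant $C$ need not satisfy the boundedness condition~5 of \cref{subclassOfCofibrantObjects}, so $C^n$ need not decompose as the direct sum of its graded pieces and there is no bottom filtration degree from which to induct.

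With such an $s$ in hand, the map $A\oplus C\rightarrow B$ sending $(a,c)\mapsto f(a)+s(c)$ is an isomorphism of filtered graded modules restricting to $f$ on the first summand. Transporting $d^B$ across this isomorphism and writing it in block form with respect to the decomposition $A\oplus C$ yields $\begin{pmatrix}d^A&\tau\\0&d^C\end{pmatrix}$: the bottom-left entry vanishes because $f(A)$ is a subcomplex and hence $d^B$-invariant, the bottom-right entry is $d^C$ because $\pi$ is a chain map, and $(d^B)^2=0$ forces the relation $d^A\tau+\tau d^C=0$. This exhibits $B$ as the twisted direct sum $A\toplus C$ of \cref{defi:twistedDirectSum}, with $f$ identified with the inclusion of the $A$ summand, as required.
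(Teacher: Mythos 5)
Your outline follows the paper's route exactly: strictness and injectivity of $f$ from the two preceding lemmas, cofibrancy of the cokernel via pushout along $A\rightarrow 0$, projectivity and exhaustiveness from \cref{cofibrantConditions}, a degreewise splitting, and then the block-matrix transport of $d^B$ (your last paragraph, including the use of strictness to see that $(a,c)\mapsto f(a)+s(c)$ has filtration-preserving inverse, is correct). However, the obstacle you flag yourself is a genuine gap, and your proof as written does not close it: projectivity of $C^n$, of the quotients $C^n/F_pC^n$ and of the graded pieces, together with exhaustiveness, is \emph{not} sufficient to assemble the stagewise sections into a single section respecting the whole filtration. Your upward induction has no base case when the filtration is unbounded below, and cofibrant objects of this type exist ($Q_r\I$ of \cref{rCofibrantUnit}). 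Worse, for a general strict surjection of filtered modules satisfying exactly those hypotheses a filtration-compatible section can fail to exist outright: take $C=R$ placed in every filtration degree ($F_pC=C$ for all $p$, so all graded data are projective and the filtration is exhaustive), and $B=\bigoplus_{i\leq 0}R\{b_i\}$ with $F_pB$ spanned by the $b_i$ with $i\leq p$ and $\pi(b_i)=1$ for all $i$; any filtration-compatible section would have to send $1$ into $\bigcap_pF_pB=0$. (This $C$ is of course not cofibrant, but nothing in your argument distinguishes it from a cofibrant one, which is precisely the problem.)

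The missing idea is to use the structure of $f$ as a retract of a cellular map, for which the splitting holds by construction rather than by abstract projectivity. Factor $f=p\circ g$ with $g\colon A\rightarrow B'$ in $I_r\Cell$ and $p\in I_r\Inj$; since $f$ is a cofibration it lifts against $p$, exhibiting $f$ as a retract of $g$ under $A$. By \cref{APushoutWithGenCofib} and transfinite iteration, the underlying filtered graded module of $B'$ is $A\oplus M$ with $M$ free on generators of pure filtration degree, so the projection $\pi'\colon B'\rightarrow C'\coloneqq\operatorname{coker}(g)$ admits an evident filtration-compatible section $s'$. Writing $i_C\colon C\rightarrow C'$ and $r_C\colon C'\rightarrow C$ for the maps induced on cokernels by the lift $B\rightarrow B'$ and by $p$, the composite $s\coloneqq p\circ s'\circ i_C$ is filtration-preserving and satisfies $\pi s=r_C\pi's'i_C=r_Ci_C=\id_C$. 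With this $s$ in hand, your concluding paragraph goes through verbatim and produces the twisted direct sum decomposition of \cref{defi:twistedDirectSum}.
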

\begin{defi}
  Let $f\colon A\rightarrow B$ be a strict inclusion isomorphic to the inclusion $i\colon A\rightarrow A\toplus C$ of $A$ into the twisted direct sum of $A$ with its cokernel.
  We say $f$ is an \textit{$r$-suppressive inclusion} if the twist map $\tau$ is $r$-suppressive.
\end{defi}
We can now state our result giving a subclass of cofibrations which is a partial converse to \cref{cofibrationsAreInclusionsOfTwistedSums}.
The limitation is similar to that for the subclass of cofibrant objects, i.e.\ we will require the same boundedness condition on the the cokernel of our cofibrations.
\begin{prop}\label{subclassOfCofibrations}
  Let $f\colon A\rightarrow B$ be an $r$-suppressive inclusion with cofibrant cokernel $C$.
  Suppose further that $C$ satisfies condition 5 of \cref{subclassOfCofibrantObjects}, then $f$ is an $r$-cofibration.
\end{prop}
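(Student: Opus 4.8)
The plan is to verify directly that $f$ has the left lifting property against every $r$-acyclic fibration, which by \cref{IrInj} characterises the $r$-cofibrations as $I_r\Cof=(I_r\Inj)\Proj$. By hypothesis and \cref{cofibrationsAreInclusionsOfTwistedSums} we may replace $f$ by the inclusion $i\colon A\to A\toplus C$ of the first summand of a twisted direct sum (\cref{defi:twistedDirectSum}) with $r$-suppressive twist $\tau$ and cofibrant cokernel $C$ satisfying conditions 1--5 of \cref{subclassOfCofibrantObjects}. So fix a commuting square with an $r$-acyclic fibration $g\colon E\to X$ on the right, top map $u\colon A\to E$ and bottom map $v\colon B\to X$, so that $gu=v|_A$. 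Writing a candidate lift $h\colon A\toplus C\to E$ on underlying graded modules as $h=(u,k)$, the conditions $hi=u$ and $gh=v$ together with the twisted differential reduce the problem to producing a filtration-preserving graded map $k\colon C\to E$ with $gk=v|_C$ and satisfying the twisted chain-map relation $d^Ek-kd^C=u\tau$.

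First I would build a first approximation $k_0$ ignoring the differential, exactly as in the proof of \cref{subclassOfCofibrantObjects}. Conditions 1 and 5 let me split each $C^n$ as a direct sum of its projective graded pieces $F_pC^n/F_{p-1}C^n$, and condition 3 shows each such piece lies in the $r$-cycles $Z_r^{p,p+n}(C)$. Using that $\tau$ is $r$-suppressive (\cref{rSupressive}) and that $v$ is a chain map, one checks $v|_C$ carries these pieces into $Z_r^{p,p+n}(X)$; since $g$ is a fibration it is $Z_r$-bidegreewise surjective (\cref{filtChainsr}), so projectivity of the graded pieces yields a filtration-preserving $k_0\colon C\to E$ with $gk_0=v|_C$ and with each graded piece mapped into $Z_r^{p,p+n}(E)$.

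Next I would measure the failure of $k_0$ to be a chain map by the defect $\delta\coloneqq d^Ek_0-k_0d^C-u\tau\colon C\to E$, a degree $+1$ map, and correct it using condition 4. A short computation using $gu=v|_A$ and that $u,v$ are chain maps gives $g\delta=0$, so $\delta$ factors through $K\coloneqq\ker g$; tracking filtration degrees through the three terms (here $k_0$ lands in $r$-cycles, $\tau$ is $r$-suppressive, and $d^Cc\in F_{p-r}C$) shows $\delta$ sends $F_pC^n$ into $F_{p-r}K^{n+1}$, i.e.\ $\delta$ is a filtration-preserving map $C\to\rSusp K$. Using $(d^E)^2=0$, $d^Eu=ud^A$ and the relation $d^A\tau+\tau d^C=0$ of \cref{defi:twistedDirectSum} one verifies $d^{\rSusp K}\delta=\delta d^C$, so $\delta$ is genuinely a morphism of filtered chain complexes $C\to\rSusp K$. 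Moreover $K$ is $r$-acyclic: it is the pullback of the $r$-acyclic fibration $g$ along $0\to X$, which by \cref{filteredChainsLimits} is computed filtration- and degreewise and so is $\ker g$ with its subspace filtration, and $r$-acyclic fibrations are stable under base change, whence $K\to 0$ is a weak equivalence.

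Condition 4 for the cofibrant $C$ then provides a lift of $\delta$ against $C_r(K)\to\rSusp K$, i.e.\ a morphism $\tilde{\delta}=(\delta,\beta)\colon C\to C_r(K)$; unwinding the cone differential of \cref{rCone} shows $\beta\colon C\to K$ is filtration-preserving and satisfies $d^K\beta-\beta d^C=-\delta$. Setting $k\coloneqq k_0+\beta$ then gives $gk=v|_C$ (as $g\beta=0$), $k$ filtration-preserving, and $d^Ek-kd^C=u\tau$, so that $h=(u,k)$ is the required lift. I expect the main obstacle to be the bookkeeping of the third step: verifying simultaneously that $\delta$ is filtration-compatible as a map into $\rSusp K$ and is a genuine chain map, where the twisted differential and the $r$-suppressiveness of $\tau$ must interact correctly. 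The whole argument is the relative analogue of \cref{subclassOfCofibrantObjects}, recovered by taking $A=0$ and $\tau=0$.
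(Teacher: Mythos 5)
Your proposal is correct and follows essentially the same route as the paper's own proof: split the cofibrant cokernel $C$ into its projective graded pieces (which consist of $r$-cycles), lift these through the $Z_r$-bidegreewise surjective fibration to get a filtered graded-module lift, and then repair the defect $d^Ek_0-k_0d^C-u\tau$, which is a filtered chain map into $\rSusp K$ for the $r$-acyclic kernel $K$, by lifting against the cone fibration $C_r(K)\rightarrow\rSusp K$ supplied by condition 4. The only difference is one of detail: the paper states this argument as a sketch (``similar to \cref{subclassOfCofibrantObjects}''), whereas you carry out the verifications (that $g\delta=0$, that $\delta$ respects filtration and the differential, and that $K$ is $r$-acyclic) explicitly.
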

\begin{proof}
  The proof is similar to that for \cref{subclassOfCofibrantObjects}.
  Given an $r$-acyclic morphism $\pi\colon Y\rightarrow B$ and a lifting problem of the form :
  \begin{equation*}
    \begin{tikzcd}
      A\arrow[d,"i"]\arrow[r]&Y\arrow[d,"\sim"',two heads, "\pi"]\\
      A\toplus C\arrow[r]&X
    \end{tikzcd}
  \end{equation*}
  one first writes $C$ as a direct sum of its graded pieces which are each projective and consist of $r$-cycles so can be lifted on $C$ by the $r$-cycle surjectivity of $\pi$ as a lift of filtered graded modules $h$ so not necessarily compatibly with the differential.
  The lift on the $A$ summand is already determined by the morphism $A\rightarrow Y$.
  This incompatibility can then be rectified by considering the difference $hd^{A\toplus C}-d^Yh\colon C\rightarrow \rSusp K$ where $K$ is the $r$-acyclic kernel of $\pi\colon Y\rightarrow X$ and using existence of a lift in
  \begin{equation*}
    \begin{tikzcd}
      &C_r(\rSusp K)\arrow[d,"\pi_r", two heads, "\sim"']\\
      C\arrow[r]\arrow[ur, dotted]&\rSusp K
    \end{tikzcd}
  \end{equation*}
  to provide a homotopical correction for the discrepancy of the lift $h$ with the differential.
\end{proof}
\subsection{Remarks on cofibrations}
We have already remarked that \cref{subclassOfCofibrations} only provides a subclass of the cofibrations of $\fChr$, indeed the $Q_r\I$ of \cref{rCofibrantUnit} will give an example of a cofibrant object not satisfying the boundedness condition on the filtration.
We list a few other remarks concerning the authors expectation of properties of cofibrations.

\cref{cofibrationsAreInclusionsOfTwistedSums} exhibited an $r$-cofibration as the inclusion $A\rightarrow A\toplus C$ into a twisted direct sum.
The author suspects that for any cofibration of $\fChr$ written in this form that the twist morphism $\tau$ must be $r$-suppressive (\cref{rSupressive}).

Much of the focus of this section was on cofibrations of $\fChr$ specifically as this is sufficient for our later proof of a monoidal model structure on $\fChS$.
Note that for cofibrant objects of $\fChS$ conditions 1, 2 and 4 still hold in \cref{cofibrantConditions} however a weaker form of condition 3 is true: for $A\in\fChS$ cofibrant and $a\in F_pA^n$ we have $da\in F_{p-s_0}A^n$ where $s_0=\min S$.
The filtered chain complex $\Z_{s_0}(0,0)$ is such a cofibrant object of $\fChS$.
\subsection{Shift-d\'ecalage adjunction}
Recall the shift-d\'ecalage adjunction of \cref{shiftDecalageAdjunction} on $\fCh$.
The functors satisfy the identity $\Dec\circ\Shift = \id$ however in general $\Shift\circ\Dec \neq \id$, see \cite[Lemma 3.20]{CELW}, so they are not inverse to each other on $\fCh$.
Indeed $\Z_0(p,n)\ncong\Z_1(p+1,n)$ however $\Dec(\Z_0(p,n))\cong\Dec(\Z_1(p+1,n))$.
We will show that when restricted to the subcategories of cofibrant objects in $S$ and $S+1$ that these functors become inverse to each other.
A similar result has appeared in \cite{Cirici}.
Cirici defines a notion of an \textit{$E_r$-cofibrant dga}, \cite[Definition 4.3.14]{Cirici}, and showed in \cite[Lemma 4.3.16]{Cirici} that when restricted to the subcategory of these objects, $E_r\mathdash\mathrm{cof}$, there are inverse functors:
\begin{equation*}
  \inadj{\Shift}{E_r\mathdash\mathrm{cof}}{E_{r+1}\mathdash\mathrm{cof}}{\Dec}\,.
\end{equation*}
\begin{defi}\label{suppressiveObject}
  Let $A\in\fCh$. If $A$ has the property that for all $p,n\in\Zbb$ and $a\in F_pA^n$ then $da\in F_{p-k}A^{n+1}$ we say that $A$ is \textit{$k$-suppressive}.
  We denote by $\Supp{k}$ the full subcategory of $\fCh$ whose objects are $k$-suppressive.
\end{defi}
Equivalently \cref{suppressiveObject} says that every element of $A$ is a $k$-cycle.
We first show that $\Shift$ and $\Dec$ are inverse to each other on categories of suppressive objects.
\begin{lemm}\label{suppressiveEquivalence}
  There is an equivalence of categories:
  \begin{equation*}
    \adj{\Shift^l}{\Supp{k}}{\Supp{k+l}}{\Dec^l}
  \end{equation*}
  and furthermore they are inverse to each other.
\end{lemm}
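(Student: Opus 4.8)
The plan is to exploit that $\Shift^l$ and $\Dec^l$ are already endofunctors of $\fCh$ by \cref{shiftDecalageAdjunction}, so that since $\Supp{k}$ and $\Supp{k+l}$ are full subcategories, the only things requiring proof are that the two functors restrict to these subcategories and that the two composites are the identity. Because the unit of the adjunction $\Shift^l\dashv\Dec^l$ is the identity natural transformation, the relation $\Dec^l\circ\Shift^l=\id$ already holds on all of $\fCh$; this can also be checked directly from the filtration formulas of \cref{shiftDecalage}, since $F_p(\Dec^l\Shift^l A)^n = F_pA^n\cap d^{-1}F_pA^{n+1} = F_pA^n$ because the differential of a filtered chain complex preserves filtration. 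Thus the genuine content is the two restriction claims together with the identity $\Shift^l\circ\Dec^l=\id$ on $\Supp{k+l}$.

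First I would check that $\Shift^l$ maps $\Supp{k}$ into $\Supp{k+l}$. For $A\in\Supp{k}$ and $a\in F_p(\Shift^l A)^n = F_{p+ln}A^n$, $k$-suppressiveness gives $da\in F_{p+ln-k}A^{n+1}$; rewriting the relevant filtration of $\Shift^l A$ as $F_{p-(k+l)}(\Shift^l A)^{n+1} = F_{(p-(k+l))+l(n+1)}A^{n+1} = F_{p+ln-k}A^{n+1}$ shows that $\Shift^l A$ is $(k+l)$-suppressive. This is a routine index manipulation.

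The crux is the behaviour of $\Dec^l$ on $\Supp{k+l}$. For $B\in\Supp{k+l}$ the defining filtration unwinds to $F_p(\Dec^l B)^n = Z_l^{p-ln,p-ln+n}(B) = F_{p-ln}B^n\cap d^{-1}F_{p-ln-l}B^{n+1}$. Because $B$ is $(k+l)$-suppressive and $k\geq 0$, any $b\in F_{p-ln}B^n$ has $db\in F_{p-ln-(k+l)}B^{n+1}\subseteq F_{p-ln-l}B^{n+1}$, so the cycle condition is vacuous and the intersection collapses to $F_{p-ln}B^n$. Hence on $\Supp{k+l}$ the functor $\Dec^l$ is literally the inverse reindexing $F_p(\Dec^l B)^n = F_{p-ln}B^n$. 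From this it is immediate that $\Dec^l B$ is $k$-suppressive (the same inequality gives $db\in F_{p-ln-(k+l)}B^{n+1} = F_{p-k-l(n+1)}B^{n+1} = F_{p-k}(\Dec^l B)^{n+1}$) and that $F_p(\Shift^l\Dec^l B)^n = F_{p+ln}(\Dec^l B)^n = F_pB^n$, i.e.\ $\Shift^l\circ\Dec^l=\id$ on $\Supp{k+l}$. Since both functors act as the identity on underlying complexes and morphisms, and the two subcategories are full, no separate verification on morphisms is needed.

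I expect the only real obstacle to be the collapse of the décalage cycles in the previous paragraph. The general failure of $\Shift\circ\Dec=\id$ noted after \cref{shiftDecalageAdjunction} (for instance the identification $\Dec(\Z_0(p,n))\cong\Dec(\Z_1(p+1,n))$) comes precisely from the nontrivial cycle condition defining $Z_l$, and the point is that suppressiveness forces every element to be an $l$-cycle so that this condition disappears. Once that observation is in place, both composites are strictly the identity, giving an isomorphism of categories and in particular the claimed equivalence with mutually inverse functors.
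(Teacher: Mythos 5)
Your proposal is correct and follows essentially the same route as the paper: the key point in both is that suppressiveness makes the $l$-cycle condition defining $\Dec^l$ vacuous, so that on $\Supp{k+l}$ the d\'ecalage functor reduces to pure reindexing of the filtration, from which $k$-suppressiveness of $\Dec^l B$ and $\Shift^l\circ\Dec^l=\id$ both follow. The only cosmetic difference is that you extract the clean formula $F_p(\Dec^l B)^n = F_{p-ln}B^n$ once and deduce both claims from it, whereas the paper runs the element-wise filtration-degree computations separately for $b$, for $db$, and for $\Shift^l\Dec^l B$.
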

\begin{proof}
  Given a $k$-suppressive object $A$ we first show that $\Shift^lA$ is $(k+l)$-suppressive.
  Let $a\in F_pA^n$ so that $da\in F_{p-k}A^{n+1}$, these elements in $\Shift^lA$ are in filtration degrees $a\in F_pA^n=F_{p-ln}\Shift^lA^n$ and $da\in F_{p-k}A^{n+1}=F_{p-k-l(n+1)}A^{n+1}=F_{p-ln-(k+l)}A^{n+1}$ so that every element of $\Shift^l A$ is a $(k+l)$-cycle and hence $(k+l)$-suppressive.
  We already know that $\Dec^l\circ\Shift^l=\id$ for any object of $\fCh$.

  It remains to show for a $(k+l)$-suppressive object $B$ that $\Dec^l B$ is $k$-suppressive and that $\Shift^l\circ\Dec^l(B)=B$.

  Let $B$ be $(k+l)$-suppressive so that for $b\in F_pB^n$ we have $db\in F_{p-(k+l)}B^{n+1}$.
  We then have:
  \begin{align*}
    b\in F_pB^n & = Z_l^{p,p+n}(B)\\
                & = F_{p+ln}(\Dec^lB)^n
  \end{align*}
  where the first equality follows since every element of $B$ is a $(k+l)$-cycle and so in particular an $l$-cycle.
  Similarly we have:
  \begin{align*}
    db\in F_{p-(k+l)}B^{n+1}&=Z_l^{p-(k+l), p-(k+l)+n+1}(B)\\
                            & =F_{p-(k+l)+l(n+1)}(\Dec^lB)^{n+1}\\
                            & =F_{p+ln-k}(\Dec^lB)^n
  \end{align*}
  and together the filtration degrees of the elements $b$ and $db$ in $\Dec^lB$ show that $b$ is a $k$-cycle so that $\Dec^l$ sends $(k+l)$-suppressive objects to $k$-suppressive objects.
  Lastly we check for $B$ a $(k+l)$-suppressive object that $S^l\circ\Dec^l (B) = B$:
  \begin{align*}
    F_p(S^l\Dec^lB)^n&=F_{p+ln}(\Dec^lB)^n\\
                     &=Z_l^{p+ln-ln,p+ln-ln+n}B^n\\
                     &=Z_l^{p,p+n}B^n\\
                     &=F_pB^n
  \end{align*}
  where the last equality follows since every element of $B$ is a $(k+l)$-cycle so in particular an $l$-cycle.
\end{proof}
\begin{defi}
  For $S$ a subset of $\mathbb{N}$ and $l\in\mathbb{N}$ we denote by $S+k$ the set $\{s+k\;|\; s\in S\}$.
\end{defi}
Recall from \cref{cofibrantConditions} that every cofibrant object of $\fChS$ was $s_0$-suppressive where $s_0=\min S$ and that in \cite[Theorem 3.22]{CELW} the authors proved Quillen equivalences $\fChr\Quill\left(\fCh\right)_{r+l}$ for all $r,l\geq0$.
This theorem is easily seen to generalise to our setting with $S$-model structures so that we have Quillen equivalences $\fChS\Quill\left(\fCh\right)_{S+l}$ for all finite subsets $S\subset\mathbb{N}$ and $l\in\mathbb{N}$ which are induced by shift-d\'ecalage adjunctions.
In particular the shift functor $\Shift^l$ sends an $S$-cofibrant object to an $(S+l)$-cofibrant object.
\begin{lemm}\label{shiftPreservesAcyclicFibrations}
  Let $\pi$ be a morphism of filtered chain complexes.
  If $\pi$ is $Z_s$-bidegreewise surjective then $\Shift^l\pi$ is $(s+l)$-bidegreewise surjective.
  If $\pi$ is an $r$-quasi-isomorphism then $\Shift^l\pi$ is an $(s+l)$-quasi-isomorphism.
\end{lemm}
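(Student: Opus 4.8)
The plan is to handle the two assertions separately, in each case reducing a statement about $\Shift^l\pi$ at index $s+l$ (resp.\ $r+l$) to the corresponding statement about $\pi$ at index $s$ (resp.\ $r$) via the reindexing of the filtration effected by $\Shift^l$. Throughout I would use that $\Shift^l$ is the identity on underlying chain complexes (\cref{shiftDecalage}), so that $A^n$, the differential of $\Shift^lA$, and the underlying map of $\Shift^l\pi$ all coincide with those of $A$ and $\pi$; only the filtration changes, by $F_p(\Shift^lA)^n=F_{p+ln}A^n$.

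For the first assertion I would unwind \cref{associatedSS} and compute, for all $p,n\in\Zbb$,
\begin{align*}
  Z_{s+l}^{p,p+n}(\Shift^lA)
  &= F_p(\Shift^lA)^n\cap d^{-1}F_{p-(s+l)}(\Shift^lA)^{n+1}\\
  &= F_{p+ln}A^n\cap d^{-1}F_{p+ln-s}A^{n+1}\\
  &= Z_s^{p+ln,\,p+ln+n}(A)\,,
\end{align*}
an equality of submodules of the common module $A^n$. Since $\Shift^l\pi$ has the same underlying map as $\pi$, the induced map $Z_{s+l}^{p,p+n}(\Shift^l\pi)$ is literally the map $Z_s^{p+ln,\,p+ln+n}(\pi)$. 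Hence $Z_s$-bidegreewise surjectivity of $\pi$, which holds in every bidegree and in particular in bidegree $(p+ln,\,p+ln+n)$, gives $Z_{s+l}$-bidegreewise surjectivity of $\Shift^l\pi$.

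For the second assertion (read as: $\Shift^l\pi$ is an $(r+l)$-quasi-isomorphism, consistent with $r=\max S$ being the index of the weak equivalences) I would instead invoke Deligne's comparison of pages recalled after \cref{shiftDecalageAdjunction}, namely the natural isomorphisms $E_{m+1}(\Shift A)\cong E_m(A)$ for all $m\geq 0$. Iterating these $l$ times yields a natural isomorphism
\begin{equation*}
  E_{r+l+1}(\Shift^lA)\cong E_{r+l}(\Shift^{l-1}A)\cong\cdots\cong E_{r+1}(A)\,.
\end{equation*}
By naturality the map $E_{r+l+1}(\Shift^l\pi)$ is carried to $E_{r+1}(\pi)$ under these isomorphisms. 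Since $\pi$ being an $r$-quasi-isomorphism means precisely that $E_{r+1}(\pi)$ is an isomorphism, it follows that $E_{r+l+1}(\Shift^l\pi)$ is an isomorphism, i.e.\ that $\Shift^l\pi$ is an $(r+l)$-quasi-isomorphism.

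I do not expect a serious obstacle: both parts are bookkeeping, the first with filtration indices and the second with the page index of Deligne's isomorphism. The only points requiring care are the index arithmetic $F_{(p-(s+l))+l(n+1)}A^{n+1}=F_{p+ln-s}A^{n+1}$, which is exactly what makes the boundary constraint of $Z_{s+l}(\Shift^lA)$ match that of $Z_s(A)$ after the shift $p\mapsto p+ln$, and the observation that the cited page isomorphisms are \emph{natural}, so that the property of being an isomorphism (resp.\ the property of being the identity underlying map) transfers along them.
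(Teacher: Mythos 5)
Your proposal is correct and takes essentially the same approach as the paper: the first statement is the same filtration-index bookkeeping that the paper invokes by pointing to its proof of \cref{suppressiveEquivalence}, and your computation $Z_{s+l}^{p,p+n}(\Shift^lA)=Z_s^{p+ln,p+ln+n}(A)$ is exactly that argument. For the second statement the paper simply cites \cite[Lemma 3.21]{CELW}, and your direct argument via the natural isomorphisms $E_{m+1}(\Shift A)\cong E_m(A)$ of Deligne is precisely the content behind that citation (including your reading of the paper's index typo as $(r+l)$).
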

\begin{proof}
  The proof of the first statement is similar to the proof of \cref{suppressiveEquivalence}.
  The proof of the second is \cite[Lemma 3.21]{CELW}.
\end{proof}
In particular note that $\Shift^l$ sends $S$-acyclic fibrations to $(S+l)$-fibrations.
For a model category $\mc{M}$ we denote the subcategory of its cofibrant objects by $\mc{M}^{\mathrm{Cof}}$.
We can now prove the following proposition.
\begin{prop}
  There is an equivalence of categories:
  \begin{equation*}
    \adj{\Shift^l}{\fChCof_S}{\fChCof_{S+l}}{\Dec^l}
  \end{equation*}
  and furthermore they are inverse to each other.
\end{prop}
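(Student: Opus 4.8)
The plan is to assemble the equivalence from three ingredients already in place: the suppressive equivalence of \cref{suppressiveEquivalence}, the fact noted just above (via the Quillen equivalence $\fChS\Quill(\fCh)_{S+l}$, in which $\Shift^l$ is left Quillen) that $\Shift^l$ carries $S$-cofibrant objects to $(S+l)$-cofibrant objects, and the fact from \cref{shiftDecalageAdjunction} that the unit $\id\Rightarrow\Dec^l\Shift^l$ is the \emph{identity}, so that $\Dec^l\Shift^l=\id$ strictly on objects and morphisms. Writing $s_0=\min S$, every $S$-cofibrant object is $s_0$-suppressive and (applying the same reasoning to the set $S+l$, whose minimum is $s_0+l$) every $(S+l)$-cofibrant object is $(s_0+l)$-suppressive. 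Hence $\fChCof_S$ and $\fChCof_{S+l}$ sit as full subcategories of $\Supp{s_0}$ and $\Supp{s_0+l}$, on which \cref{suppressiveEquivalence} already makes $\Shift^l$ and $\Dec^l$ mutually inverse. It therefore suffices to check that these two functors restrict to functors between the cofibrant subcategories; the inverse relations are then inherited from \cref{suppressiveEquivalence} together with $\Dec^l\Shift^l=\id$.

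Since $\Shift^l$ already lands in $\fChCof_{S+l}$, the only remaining point, and the crux of the proof, is that $\Dec^l$ sends an $(S+l)$-cofibrant $B$ to an $S$-cofibrant object. This is not automatic, because $\Dec^l$ is a \emph{right} adjoint whereas cofibrancy is a left lifting property. I would prove it by transporting lifting problems across the adjunction. Given an $S$-acyclic fibration $\pi\colon Y\twoheadrightarrow X$ and a map $f\colon\Dec^lB\to X$, apply $\Shift^l$: since $B$ is $(s_0+l)$-suppressive we have $\Shift^l\Dec^lB=B$ by \cref{suppressiveEquivalence}, so $\Shift^lf$ is a genuine map $B\to\Shift^lX$, while $\Shift^l\pi$ is an $(S+l)$-acyclic fibration by \cref{shiftPreservesAcyclicFibrations}. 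As $B$ is $(S+l)$-cofibrant there is a lift $g\colon B\to\Shift^lY$ with $\Shift^l\pi\circ g=\Shift^lf$. Applying $\Dec^l$ and using that the unit is the identity (so $\Dec^l\Shift^lY=Y$, $\Dec^l\Shift^l\pi=\pi$ and $\Dec^l\Shift^lf=f$) produces $\Dec^lg\colon\Dec^lB\to Y$ with $\pi\circ\Dec^lg=f$. Thus $\Dec^lB$ has the left lifting property against every $S$-acyclic fibration, i.e.\ it is $S$-cofibrant.

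With both functors restricting correctly, the conclusion follows: for $A\in\fChCof_S$ we have $\Dec^l\Shift^lA=A$ strictly, and for $B\in\fChCof_{S+l}$ we have $\Shift^l\Dec^lB=B$ by \cref{suppressiveEquivalence}, so the restricted functors are mutually inverse and in particular give an equivalence. The main obstacle is precisely the preservation of cofibrancy by the right adjoint $\Dec^l$; everything else is bookkeeping with the suppressive equivalence and the degreewise filtration shifts. The one point I would verify carefully is that $\Shift^l$ preserves \emph{both} halves of an $S$-acyclic fibration, i.e.\ both the $Z_s$-surjectivity and the weak-equivalence statements of \cref{shiftPreservesAcyclicFibrations}, since the lifting argument requires $\Shift^l\pi$ to be an $(S+l)$-acyclic fibration rather than merely an $(S+l)$-fibration.
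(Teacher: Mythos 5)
Your proposal is correct and follows essentially the same route as the paper: both reduce everything to \cref{suppressiveEquivalence} plus the observation that the only non-trivial point is $\Dec^l$ preserving cofibrancy, and both prove that point by applying $\Shift^l$ to a lifting problem (using $\Shift^l\Dec^l B=B$ for suppressive $B$ and \cref{shiftPreservesAcyclicFibrations}), lifting via $(S+l)$-cofibrancy, and transporting the lift back with $\Dec^l$. Your explicit unpacking of why $\Dec^l$ of the lift solves the original problem, and your flag about needing both halves of \cref{shiftPreservesAcyclicFibrations}, are just more careful statements of steps the paper leaves implicit.
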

\begin{proof}
  Most of this proposition follows from \cref{suppressiveEquivalence}.
  It only remains to check that $\Dec^l$ sends an $(S+l)$-cofibrant object to an $S$-cofibrant object.
  Consider then a lifting problem of the form:
  \begin{equation*}
    \begin{tikzcd}
      &Y\arrow[d,"\pi",two heads, "\sim"']\\
      \Dec^lA\arrow[r]&X
    \end{tikzcd}
  \end{equation*}
  where $A$ is an $(S+l)$-cofibrant object and $\pi$ an $S$-acyclic fibration.
  Applying $\Shift^l$ to this diagram and using $\Shift^l\circ\Dec^l(B)=B$ since $B$ is $(S+l)$-cofibrant and hence $(s_0+l)$-suppressive so $l$-suppressive we obtain the lifting problem:
  \begin{equation*}
    \begin{tikzcd}
      &\Shift^lY\arrow[d,"\Shift^l\pi",two heads, "\sim"']\\
      A\arrow[r]\arrow[ru,dotted,"h"]&\Shift^lX
    \end{tikzcd}
  \end{equation*}
  which has a lift: the morphism $\Shift^l\pi$ is an $(S+l)$-acyclic fibration by \cref{shiftPreservesAcyclicFibrations} and since $A$ is $(S+l)$-cofibrant there is a lift $h$.
  A lift in the original diagram is then given by $\Dec^lh$.
\end{proof}

\section{Monoidal model structures on $\fChS$}\label{monoidal}
In this section we construct a cofibrant replacement of the monoidal unit, verify it satisfies the (very) strong unit axiom and the pushout-product axioms thereby yielding that the $\fChS$ are monoidal model categories.
\subsection{A cofibrant replacement of the unit}
In the model category structures on multicomplexes of \cite{FGLW} the authors show the unit $R^{0,0}$ is not cofibrant and give a cofibrant replacement, which for bicomplexes is of the form of an `infinite staircase', \cite[Example 6.6 and Proposition 6.7]{FGLW}.
Similarly in $\fChS$ we show the monoidal unit $R_{(0)}^0$ is not cofibrant and as in the case for multicomplexes our cofibrant replacement resembles an infinite staircase.
We first construct a weakly equivalent filtered chain complex and show it is cofibrant in $\fChS$.
Their cofibrant replacement is independent of the value $r$ defining the weak equivalences however ours will depend on it.

\begin{defi}\label{rCofibrantUnit}
  The filtered chain complex $Q_r\I$ is given by
  \begin{equation*}
    Q_r\I \coloneqq \left(\bigoplus_{i=0}^\infty R_{(-i)}^0 \longrightarrow
      \bigoplus_{j=1}^\infty R_{(-r-j)}^1\right)
  \end{equation*}
  where the differential is given for $i\geq 1$ by mapping each $R_{(-i)}^0$ diagonally onto the copies of $R$ indexed by $R_{(-i-r)}^1$ and $R_{(-i-r-1)}^1$ and for $i=0$ by mapping $R_{(0)}^0$ identically onto $R_{(-r-1)}^1$.
\end{defi}
This is more easily pictured as in: 
\begin{equation*}
  \begin{tikzcd}[row sep=0.5em]
    R_{(0)}^0 \arrow[r] & R_{(-r-1)}^1 \\ 
    R_{(-1)}^0 \arrow[r] \arrow[ur] & R_{(-r-2)}^1 \\
    R_{(-2)}^0 \arrow[r] \arrow[ur] & R_{(-r-3)}^1 \\
    {\makesamewidth[c]{$_{(-1)}R_0$}{$\vdots$}} \arrow[r] \arrow[ur] &
    {\makesamewidth[c]{$_{(-r-1)}R_{-1}$}{$\vdots$}}
  \end{tikzcd}
\end{equation*}
where all arrows are identity morphisms and gives a resemblance to the infinite staircase cofibrant replacement of \cite{FGLW}.
We denote by $1_{(p)}^n$ a generator of the $R_{(p)}^n$ component of $Q_r\I$ for the appropriate indices.

Intuition for $Q_r\I$ being a cofibrant replacement of the unit is as follows: $R_{(0)}^0$ is not cofibrant so we replace it with $R_{(0)}^0\rightarrow R_{(-r-1)}^1$ which is cofibrant, however this is no longer weakly equivalent to $R_{(0)}^0$.
We need to kill the element introduced by the $R_{(-r-1)}^1$ term so we form:
\begin{equation*}
  \begin{tikzcd}[row sep=0.5em]
    R_{(0)}^0 \arrow[r] & R_{(-r-1)}^1 \\ 
    R_{(-1)}^0 \arrow[ru] &
  \end{tikzcd}
\end{equation*}
which is again weakly equivalent to $R_{(0)}^0$ but now not cofibrant.
We can correct for this by introducing the $R_{(-r-2)}^1$ term yielding something cofibrant but not weakly equivalent to $R_{(0)}^0$.
Iterating these two procedures ad infinitum will fix both these issues as we will show.

\begin{defi}\label{rCofibrantUnitReplacement}
  We denote by $\pi_r\colon Q_r\I\rightarrow R_{(0)}^0$ the morphism of filtered chain complexes which is the projection of $Q_r\I$ onto its $R_{(0)}^0$ component.
\end{defi}
\begin{exam}\label{QrISS}
  We consider the spectral sequence associated to $Q_r\I$.
  The $0$-page is given by $E_0^{p,p+n}=F_pQ_r\I^n/F_{p-1}Q_r\I^n$ and so there is a copy of $R$ in bidegrees $(p,p)$ for $p\leq 0$ and $(p-r,p-r+1)$ for $p\leq-1$.
  Consider then the $k$-page differential $d_k$ of bidegree $(-k,-k+1)$.
  For $k<r$ these must all be $0$ for degree reasons.
  For $k=r$ there are potentially non-zero differentials between the $R$ in bidegrees $(p,p)$ and $(p-r,p-r+1)$ for $p\leq -1$.
  We show these are all identity differentials.

  The elements $1_{(p)}^0$ are representatives for the copies of $R$ in bidegrees $(p,p)$ for $p\leq-1$ on the $r$-page.
  Its differential is given by $1_{(p-r)}^1+1_{(p-r-1)}^1$ whose image class in $E_r^{p-r,p-r+1}\cong E_0^{p-r,p-r+1}=F_{p-r}Q_r\I^1/F_{p-r-1}Q_r\I^1$ is the same as that of $1_{(p-r)}^1$ hence the $r$-page differentials from $(p,p)$ with $p\leq -1$ are identities.
\end{exam}
\begin{lemm}\label{rCofibrantUnitReplacementAcyclicFibration}
  The morphism $\pi_r$ is an $S$-acyclic fibration.
\end{lemm}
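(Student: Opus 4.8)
The plan is to verify directly that $\pi_r$ is both an $S$-fibration and an $r$-quasi-isomorphism, since by \cref{filtChainsS} the $S$-acyclic fibrations are precisely those morphisms that are $Z_s$-bidegreewise surjective for each $s\in S$ and lie in $\Er$.

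For the fibration condition I would examine the $s$-cycles of the target. Since $R_{(0)}^0$ is concentrated in cohomological degree $0$ with zero differential, $Z_s^{p,p+n}(R_{(0)}^0)$ vanishes unless $n=0$ and $p\geq 0$, in which case it is the copy of $R$ generated by the unit. It therefore suffices to produce, for each $s\in S$ and each $p\geq 0$, a preimage in $Z_s^{p,p}(Q_r\I)$ of this generator. The natural candidate is $1_{(0)}^0$, which maps to the generator under $\pi_r$; one checks $1_{(0)}^0\in F_0\subseteq F_p$ and $d(1_{(0)}^0)=1_{(-r-1)}^1\in F_{-r-1}\subseteq F_{p-s}$, the last inclusion holding because $s\leq r\leq p+r+1$, so that $1_{(0)}^0$ is indeed an $s$-cycle in the required bidegree. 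This establishes $Z_s$-surjectivity for every $s\in S$ and hence that $\pi_r$ is an $S$-fibration.

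For the weak equivalence I would appeal to the spectral sequence computation of \cref{QrISS}. There the $r$-page of $Q_r\I$ was found to consist of copies of $R$ generated by the classes $[1_{(p)}^0]$ in bidegrees $(p,p)$ for $p\leq 0$ and by $[1_{(p)}^1]$ in bidegrees $(p,p+1)$ for $p\leq -r-1$, with the only nonzero $d_r$ being the isomorphisms $[1_{(p)}^0]\mapsto[1_{(p-r)}^1]$ for $p\leq -1$. Passing to homology, every class with $p\leq -1$ is either injected as a source or surjected onto as a target, so all such entries die; the edge class $[1_{(0)}^0]$ survives, since its $d_r$-image is represented by $1_{(-r-1)}^1$, which lies in $F_{-r-1}$ and so represents $0$ in $E_r^{-r,-r+1}$, and there is no incoming differential for degree reasons. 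Thus $E_{r+1}^{p,p+n}(Q_r\I)$ is $R$ in bidegree $(0,0)$ and $0$ otherwise, matching $E_{r+1}(R_{(0)}^0)$. Since $\pi_r$ carries the surviving generator $[1_{(0)}^0]$ to the generator of the unit, it induces an isomorphism on $(r+1)$-pages and so lies in $\Er$.

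The filtration-degree chasing in the fibration step is routine bookkeeping; the only genuinely delicate point is the boundary case $p=0$ in the weak equivalence. Here I expect the main obstacle to be arguing carefully that $[1_{(0)}^0]$ is neither hit by any $d_r$ nor emits a nonzero one, because its representative $1_{(-r-1)}^1$ already sits one filtration step too low to be nonzero in $E_r^{-r,-r+1}$, so that this single class persists to the $(r+1)$-page while all others cancel in pairs.
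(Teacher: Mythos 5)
Your proof is correct and takes essentially the same approach as the paper: the fibration half is verified by exhibiting $1_{(0)}^0$ as an $s$-cycle preimage of the generator of $Z_s^{p,p}(R_{(0)}^0)$ for every $s\in S$ (the paper phrases this as the $R_{(0)}^0$ component of $Q_r\I$ being a $k$-cycle for all $k\leq r+1$), and the weak equivalence half is deduced from the spectral sequence computation of \cref{QrISS}. You merely spell out the $(r+1)$-page bookkeeping (cancellation of the paired classes and survival of $[1_{(0)}^0]$) that the paper leaves implicit in its citation of that example.
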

\begin{proof}
  The morphism is an $S$-fibration since the component $R_{(0)}^0$ of $Q_r\I$ is a $k$-cycle for all $k\leq r+1$.
  The preceding example shows that it is in addition an $r$-weak equivalence.
\end{proof}
We now need to show that $Q_r\I$ is cofibrant.
Note first there is a change of basis on the filtered chain complexes $\left(R_{(p)}^n\overset{\Delta}{\rightarrow} R_{(p)}^{n+1}\oplus R_{(p-1)}^{n+1}\right)$ compatible with the filtration.
By this we mean that the isomorphism exhibiting the change of basis and its inverse are morphisms of filtered chain complexes.
The change of basis is depicted as the vertical morphisms in Diagram \ref{eq:ChangeOfBasis}.
\begin{equation}
  \label{eq:ChangeOfBasis}
  \begin{tikzcd}[ampersand replacement=\&]
    \left(R_{(p)}^n\arrow[r,"\Delta"]\arrow[d,"1"',dashed]\right.\&
    \left.R_{(p)}^{n+1}\oplus R_{(p-1)}^{n+1}\right)\arrow[d,"{\begin{pmatrix}1&0\\1&-1\end{pmatrix}}",dashed]\\
    \left(R_{(p)}^n\arrow[r,"{\begin{pmatrix}1\\0\end{pmatrix}}"]\right.\&
    \left.R_{(p)}^{n+1}\oplus R_{(p-1)}^{n+1}\right)
  \end{tikzcd}
\end{equation}
\begin{rema}
  If one writes $M\coloneqq R_{(p_1)}\oplus R_{(p_2)}\oplus\ldots\oplus R_{(p_k)}$ for some filtered $R$-module with $p_1> p_2> \ldots> p_k$ then a change of basis compatible with filtration is precisely a lower triangular invertible matrix.
\end{rema}
We will apply this change of basis in the proof of the following.
\begin{lemm}\label{lemm:QrIIsCofibrant}
  The filtered chain complex $Q_r\I$ is cofibrant in $\fChS$.
\end{lemm}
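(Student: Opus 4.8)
The plan is to show directly that $0\rightarrow Q_r\I$ has the left lifting property against every $S$-acyclic fibration, reusing the strategy from the proof of \cref{subclassOfCofibrantObjects}. Three of the four conditions of \cref{cofibrantConditions} hold immediately: each $Q_r\I^n$ and each quotient $Q_r\I^n/F_pQ_r\I^n$ is a free $R$-module, giving condition~1; the filtration is manifestly exhaustive, giving condition~2; and since $d\,1_{(-i)}^0=1_{(-i-r)}^1+1_{(-i-r-1)}^1$ lies in $F_{-i-r}$ (and $d\,1_{(0)}^0=1_{(-r-1)}^1$ in $F_{-r}$), the complex $Q_r\I$ is $r$-suppressive, so condition~3 holds and every element is an $r$-cycle. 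The key observation is that, although condition~5 fails because the filtration on $Q_r\I^0$ is unbounded below, the one place condition~5 enters the proof of \cref{subclassOfCofibrantObjects}---the splitting of each $Q_r\I^n$ into its graded pieces---is here automatic, since $Q_r\I^n$ is by construction already a direct sum of pure-weight objects. The whole lifting argument therefore goes through (the graded lift coming from $Z_r$-surjectivity of the fibration, which is available because $r\in S$), reducing cofibrancy of $Q_r\I$ to the single remaining point: that $Q_r\I$ satisfies condition~4.

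To verify condition~4, I would fix an $r$-acyclic $K$ and a morphism $\theta\colon Q_r\I\rightarrow\rSusp K$. Lifting $\theta$ through the $r$-acyclic fibration $C_r(K)\rightarrow\rSusp K$ is equivalent to producing a filtration-preserving $\mu\colon Q_r\I\rightarrow K$ with $\theta=\mu d-d\mu$, which on generators is a system of equations. In this system each degree-$0$ generator $1_{(-i)}^0$ is coupled, through the diagonal differential of $Q_r\I$, to the two consecutive degree-$1$ generators one step down the staircase. The plan is to use the filtration-compatible change of basis of Diagram~\ref{eq:ChangeOfBasis} to normalise each such diagonal $\Delta$ to the form $\left(\begin{smallmatrix}1\\0\end{smallmatrix}\right)$, so that each $1_{(-i)}^0$ is tied to a single degree-$1$ generator and the coupled system is reorganised into a triangular recursion down the staircase.

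The recursion is then solved using the $r$-acyclicity of $K$ in the sharp form $Z_{r+1}^{p,q}(K)=B_{r+1}^{p,q}(K)$ (vanishing of the $(r+1)$-page). At each stage one must realise a cycle of $K$ as a boundary; a priori such a cycle is a boundary only of an element of too high a filtration degree, and it is precisely the vanishing of the $(r+1)$-page that upgrades it to a boundary of an element in the filtration degree demanded by the $r$-suspension and the $r$-cone $C_r(K)$. Solving for $\mu$ establishes condition~4, and combining this with the reduction of the previous paragraph shows that $Q_r\I$ is cofibrant in $\fChS$.

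I expect the main obstacle to be exactly the failure of condition~5: because the filtration on $Q_r\I^0$ is unbounded below there is no lowest filtration degree from which to anchor the correction, so the recursion cannot be run by a naive induction upward from the bottom. The point to get right is that, after the normalisation of Diagram~\ref{eq:ChangeOfBasis}, each generator still lives in a finite filtration degree and the normalised differential connects it to a single generator one rung down, so the system remains well-founded bidegree by bidegree; this is what lets the identity $Z_{r+1}^{p,q}(K)=B_{r+1}^{p,q}(K)$ be applied one bidegree at a time despite the absence of a global lower bound on the filtration.
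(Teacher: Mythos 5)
Your proof is correct in substance but follows a genuinely different route from the paper's. The paper never passes through condition~4 of \cref{cofibrantConditions}: it solves the lifting problem against an arbitrary $S$-acyclic fibration directly, inducting down the staircase, and at each step uses the local change of basis of Diagram~\ref{eq:ChangeOfBasis} to identify the extension problem with a pushout of the generating cofibration $\phi_{r+1}\colon\Z_{r+1}(-p-r,1)\rightarrow\B_{r+1}(-p-r,1)$ (with the degree-two components mapped to zero), so the lift exists by the lifting property of $I_S$ alone. You instead reduce to condition~4 by rerunning the proof of \cref{subclassOfCofibrantObjects} --- correctly observing that condition~5 is used there only to split each $A^n$ into its graded pieces, a splitting which $Q_r\I$ has by construction --- and then verify condition~4 by solving $\theta=\mu d-d\mu$ with an explicit recursion that invokes $r$-acyclicity as $Z_{r+1}^{p,q}(K)=B_{r+1}^{p,q}(K)$. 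That recursion does close up: at step $i$ the element $v_i\coloneqq\theta(1_{(-i)}^0)-\mu(1_{(-i-r)}^1)$ lies in $Z_{r+1}$ in filtration degree $-i-r$ (using the chain-map property of $\theta$ together with the equation $d\mu(1_{(-i-r)}^1)=-\theta(1_{(-i-r)}^1)$ carried from the previous step), and writing $v_i=dz_i+w_i$ with $z_i\in Z_r$ in filtration degree $-i$ and $w_i\in Z_r$ in filtration degree $-i-r-1$ lets you set $\mu(1_{(-i)}^0)\coloneqq-z_i$ and $\mu(1_{(-i-r-1)}^1)\coloneqq w_i$, which land in exactly the filtration degrees required, with $dw_i=dv_i$ providing the input to step $i+1$. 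Your route is more computational where the paper's is more formal; it also gives a mild sharpening of \cref{subclassOfCofibrantObjects} (condition~5 can be weakened to splittability of the filtration), though it inherits the informal steps of that proposition's proof, e.g.\ the fact that the kernel of an $S$-acyclic fibration is $r$-acyclic, which the paper's direct lifting argument never needs.

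One step of your plan should be reformulated: the normalisation of all the diagonals cannot be effected by a single filtration-compatible change of basis. The vectors $d1_{(0)}^0,d1_{(-1)}^0,d1_{(-2)}^0,\dots$ do form a basis of the underlying graded module $Q_r\I^1$, but the relabelling $1_{(-r-j)}^1\mapsto d1_{(-j+1)}^0$ is not a filtered isomorphism, since $d1_{(-1)}^0=1_{(-r-1)}^1+1_{(-r-2)}^1$ has filtration degree $-r-1$ rather than $-r-2$; were this a filtered change of basis, $Q_r\I$ would decompose as a coproduct of objects of the form $\Z_{r+1}(-i,0)$ and cofibrancy would be immediate, which it is not. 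Diagram~\ref{eq:ChangeOfBasis} is filtration-compatible only locally, on each rank-two step, which is exactly how the paper deploys it. Fortunately your recursion never needs a global normalisation: solving the equations in the order $i=0,1,2,\dots$ is already triangular (each equation involves one datum from the previous step and two new unknowns), and the filtration bookkeeping is carried entirely by the shape of $B_{r+1}=dZ_r+Z_r$, not by any change of basis. State the induction in that form and drop the normalisation step, and the proof is complete.
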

\begin{proof}
  Consider a lifting problem of $Q_r\I$ against an $S$-acyclic fibration $f\colon E\rightarrow B$ as in
  \begin{equation*}
    \begin{tikzcd}
      &E\arrow[d,"f",two heads, "\sim"']\\
      Q_r\I\arrow[ur, dashed, "h"]\arrow[r]&B
    \end{tikzcd}\;.
  \end{equation*}
  We will begin by lifting the subcomplex $R_{(0)}^0\rightarrow R_{(-r-1)}^1$ of $Q_r\I$ and then inductively lift the subcomplexes $\Delta\colon R_{(-p)}^0\rightarrow R_{(-p-r)}^1\oplus R_{(-p-r-1)}^1$ subject to already having a lift for the $R_{(-p-r)}^1$ component.
  This inductive stage will make use of the change of basis.

  The subcomplex $R_{(0)}^0\rightarrow R_{(-r-1)}^1$ is a copy of $\Z_{r+1}(0,0)$, which is cofibrant by \cite[Lemma 3.2]{CELW} since pushouts of cofibrations are cofibrations.
  Hence since $f$ is an $S$-acyclic fibration there exists a lift of this $(r+1)$-cycle. Consider now the lifting problem:
  \begin{equation*}
    \begin{tikzcd}[row sep=large]
      R_{(-p-r)}^1\arrow[d,"i"']\arrow[r]
      &E\arrow[d,"f",two heads, "\sim"']\\
      \left(R_{(-p)}^0\overset{\Delta}{\rightarrow} R_{(-p-r)}^1\oplus R_{(-p-r-1)}^1\right)
      \arrow[ur, dashed, "h_p"]\arrow[r]&B
    \end{tikzcd}
  \end{equation*}
  where $i$ is the inclusion of the $R_{(-p-r)}^1$ component.
  Applying the filtered change of basis outlined above converts this diagram to an isomorphic one of the form:
  \begin{equation*}
    \begin{tikzcd}[row sep=large]
      R_{(-p-r)}^1\arrow[d,"\Delta"']\arrow[r]
      &E\arrow[d,"f",two heads, "\sim"']\\
      \left(R_{(-p)}^0\overset{i_1}{\rightarrow} R_{(-p-r)}^1\oplus R_{(-p-r-1)}^1\right)
      \arrow[ur, dashed, "h_p"]\arrow[r]&B
    \end{tikzcd}
  \end{equation*}
  for which a lift $h_p$ exists: the morphism $f$ is an $S$-acyclic cofibration and the morphism $\Delta$ is a particular case of $\phi\colon \Z_{r+1}(-p-r,1)\rightarrow\B_{r+1}(-p-r,1)$ in which the images of the $R_{(-p-2r-1)}^2$ components are $0$.
  One then inducts ``down the staircase step by step'' using these change of bases to construct a lift $h$ of $Q_r\I$ from the individual lifts $h_p$ of the subcomplexes of $Q_r\I$.
\end{proof}
Our cofibrant replacement $Q_r\I$ of the unit then does not actually depend on all elements of $S$, only its maximum.
Note too that $Q_r\I$ being cofibrant demonstrates our subclass of cofibrant objects of $\fChr$ of \cref{subclassOfCofibrantObjects} is indeed a strict subclass as it does not satisfy the bounded filtration condition.

We lastly check that the unit $R_{(0)}^0$ is indeed not cofibrant.
\begin{lemm}\label{UnitIsNotCofibrant}
  The unit $R_{(0)}^0$ of the monoidal structure on $\fCh$ is not cofibrant.
\end{lemm}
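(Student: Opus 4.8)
The plan is to argue by contradiction using the cofibrant replacement $Q_r\I$ of \cref{rCofibrantUnit} together with the $S$-acyclic fibration $\pi_r\colon Q_r\I\to R_{(0)}^0$ of \cref{rCofibrantUnitReplacement,rCofibrantUnitReplacementAcyclicFibration}. If $R_{(0)}^0$ were cofibrant then the lifting problem
\begin{equation*}
  \begin{tikzcd}
    & Q_r\I \arrow[d, "\pi_r", two heads, "\sim"'] \\
    R_{(0)}^0 \arrow[ur, dashed, "s"] \arrow[r, "\id"] & R_{(0)}^0
  \end{tikzcd}
\end{equation*}
would admit a solution $s$, that is, a section of $\pi_r$ in $\fCh$. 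I would then show directly that no such section can exist.

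A morphism $s\colon R_{(0)}^0\to Q_r\I$ of filtered chain complexes is completely determined by the image $x\coloneqq s(1_{(0)}^0)$ of the generator. Since $1_{(0)}^0$ lies in $F_0$ and is a cycle, $s$ being a filtered chain map forces $x\in F_0(Q_r\I)^0$ with $dx=0$, while $\pi_r s=\id$ forces the $R_{(0)}^0$-component of $x$ to equal $1_{(0)}^0$. As $(Q_r\I)^0=\bigoplus_{i\geq 0}R_{(-i)}^0$ and every generator $1_{(-i)}^0$ lies in $F_0$, the filtration constraint is vacuous, so $x$ is simply a finite sum $x=\sum_{i=0}^N c_i 1_{(-i)}^0$ with $c_0=1$. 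Thus it remains to show no such $x$ is a cycle.

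Writing $e_j\coloneqq 1_{(-r-j)}^1$ for the degree-one generators, the staircase differential of \cref{rCofibrantUnit} gives $d(1_{(0)}^0)=e_1$ and $d(1_{(-i)}^0)=e_i+e_{i+1}$ for $i\geq1$. Hence $dx=0$ is equivalent to the relations $c_0+c_1=0$, then $c_{j-1}+c_j=0$ for $2\leq j\leq N$, and finally $c_N=0$. Reading these from the top index downwards forces $c_N=c_{N-1}=\cdots=c_0=0$, contradicting $c_0=1$; so no section exists and $R_{(0)}^0$ is not cofibrant. The only point that requires care is the observation that $x$ is genuinely a finite sum, so that there is a well-defined largest index $N$ from which the telescoping induction can start and terminate; this is immediate since elements of a direct sum have finite support, and it is precisely the growing tail of the infinite staircase that obstructs the existence of a bounded cycle lifting the unit.
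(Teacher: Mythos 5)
Your proof is correct and takes essentially the same approach as the paper: both set up the lifting problem of $R_{(0)}^0$ against the $S$-acyclic fibration $\pi_r\colon Q_r\I\to R_{(0)}^0$ and derive a contradiction from the fact that a degree-zero cycle in $Q_r\I$ hitting $1_{(0)}^0$ would need infinitely many nonzero components, which the coproduct forbids. Your telescoping coefficient computation simply makes explicit the paper's one-line observation that the image ``would have to land in infinitely many summands for its differential to remain $0$.''
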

\begin{proof}
  Consider the lifting problem:
  \begin{equation*}
    \begin{tikzcd}
      & Q_r\I\arrow[d,"\pi_r", two heads, "\sim"']\\
      R_{(0)}^0\arrow[r, "1"]& R_{(0)}^0
    \end{tikzcd}
  \end{equation*}
  in which the morphism $\pi_r$ is that of \cref{rCofibrantUnitReplacement} and so by \cref{rCofibrantUnitReplacementAcyclicFibration} is an $S$-acyclic fibration.
  If a lift were to exist in this diagram then the image of $R_{(0)}^0$ in $Q_r\I$ would have to land in infinitely many summands of the cohomological degree $0$ part $\bigoplus_{p\leq 0} R_{(-p)}^0$ of $Q_r\I$ for its differential to remain $0$.
  This is not possible by definition of the coproduct.
  Hence no lift can exist and so $R_{(0)}^0$ is not cofibrant. 
\end{proof}
\subsection{The unit axiom}
We will demonstrate the unit axiom holds in $\fChS$, in fact we demonstrate the very strong unit axiom holds, recall \cref{veryStrongUnitAxiom}.
We begin by describing cycle and boundary objects in $Q_r\I\otimes A$ for an $A\in\fCh$.
Recall the notation $1_{(p)}^n$ for the generators of the summands $R_{(p)}^n$ of $Q_r\I$ of \cref{rCofibrantUnit}.
An element $q\in F_p\left(Q_r\I\otimes A\right)^n$ can be written as a finite sum of tensors:
\begin{equation}\label{qOfProduct}
  q = \sum_{k\geq 0}1_{(-k)}^0 \otimes a_{(p+k)}^n +
  \sum_{j\geq 0} 1_{(-j-r-1)}^1 \otimes a_{(p+j+r+1)}^{n-1}
\end{equation}
where $a_{(p)}^n\in F_pA^n$.
The differential $d=d^{Q_r\I\otimes A}$ of the monoidal product of $Q_r\I$ and $A$ applied to $q$ is given by:
\begin{equation*}
  dq = \sum_{k\geq 0}1_{(-k)}^0\otimes da_{(p+k)}^n + \sum_{j\geq 0}1_{(-j-r-1)}^1\otimes
  \left(-da_{(p+j+r+1)}^{n-1}+a_{(p+j)}^n+a_{(p+j+1)}^n\right)\,.
\end{equation*}
We now give a description of the $(r+1)$-cycles and boundaries  of $Q_r\I\otimes A$.
The following two lemmas follow from the definitions of cycles, boundaries and the tensor product in $\fCh$.
\begin{lemm}\label{QrIACycles}
  Let $q\in F_p(Q_r\I\otimes A)$ be given as in \cref{qOfProduct}, then $q\in Z_{r+1}^{p,p+n}(Q_r\I\otimes A)$ if and only if the following hold:
  \begin{enumerate}
  \item $a_{(p)}^n\in Z_{r+1}^{p,p+n}(A)$, 
  \item $a_{(p+k)}^n\in B_{r+1}^{p+k,p+k+n}(A)$, for each $k\geq 1$, and
  \item $a_{(p+k)}^n-da_{(p+r+k)}^{n-1}\in F_{p-1+k}A^n$, for each $k\geq1$.\qed
  \end{enumerate}
\end{lemm}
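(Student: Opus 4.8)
The plan is to read the cycle condition directly off the formula for $dq$ recorded just above the statement. Since $q$ is assumed to lie in $F_p(Q_r\I\otimes A)^n$, membership in $Z_{r+1}^{p,p+n}(Q_r\I\otimes A)=F_p(Q_r\I\otimes A)^n\cap d^{-1}F_{p-r-1}(Q_r\I\otimes A)^{n+1}$ reduces to the single requirement $dq\in F_{p-r-1}(Q_r\I\otimes A)^{n+1}$. First I would make this target filtration explicit. Writing $Q_r\I$ as the direct sum of its pure-weight summands (\cref{rCofibrantUnit}) and using that the tensor filtration of \cref{tensor} restricts on a summand $R_{(w)}^d$ to $1_{(w)}^d\otimes F_{s-w}A$, together with the fact --- immediate from the colimit description of \cref{filteredChainsLimits} --- that the filtration of a coproduct is the coproduct of the filtrations, one obtains
\begin{equation*}
  F_s(Q_r\I\otimes A)=\bigoplus_{k\geq 0}1_{(-k)}^0\otimes F_{s+k}A\ \oplus\ \bigoplus_{j\geq 0}1_{(-j-r-1)}^1\otimes F_{s+j+r+1}A.
\end{equation*}

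Comparing the expression for $dq$ with this decomposition at $s=p-r-1$ coefficient by coefficient (the summands being indexed by distinct generators of $Q_r\I$, so each constraint is separate), the condition $dq\in F_{p-r-1}$ becomes the two families:
\begin{align*}
  \text{(A)}\quad & da_{(p+k)}^n\in F_{p-r-1+k}A^{n+1}\quad\text{for all }k\geq 0,\\
  \text{(B)}\quad & a_{(p+j)}^n+a_{(p+j+1)}^n-da_{(p+j+r+1)}^{n-1}\in F_{p+j}A^n\quad\text{for all }j\geq 0.
\end{align*}
Two of the three stated conditions then follow by inspection: (A) at $k=0$ reads $da_{(p)}^n\in F_{p-r-1}A^{n+1}$, which with the hypothesis $a_{(p)}^n\in F_pA^n$ is exactly condition 1; and since $a_{(p+j)}^n\in F_{p+j}A^n$ automatically, (B) is equivalent to $a_{(p+j+1)}^n-da_{(p+j+r+1)}^{n-1}\in F_{p+j}A^n$, which upon reindexing $k=j+1$ is condition 3.

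It remains to match the leftover conditions (A) for $k\geq 1$ with condition 2, and this is where the real content lies. I would use condition 3 to decompose $a_{(p+k)}^n=da_{(p+k+r)}^{n-1}+y_k$ with $y_k\coloneqq a_{(p+k)}^n-da_{(p+k+r)}^{n-1}\in F_{p+k-1}A^n$. Condition 3 also gives $da_{(p+k+r)}^{n-1}=a_{(p+k)}^n-y_k\in F_{p+k}A^n$, so that $a_{(p+k+r)}^{n-1}$ is an $r$-cycle and $da_{(p+k+r)}^{n-1}$ lies in the first summand $dZ_r^{\bullet}(A)$ of $B_{r+1}^{p+k,p+k+n}(A)$ (\cref{associatedSS}). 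Since $d^2=0$ one has $dy_k=da_{(p+k)}^n$, so (A) at index $k$ says precisely that $dy_k\in F_{p+k-1-r}A^{n+1}$, i.e.\ that $y_k$ is an $r$-cycle lying in the second summand $Z_r^{\bullet}(A)$ of $B_{r+1}^{p+k,p+k+n}(A)$. Thus (A) at $k\geq 1$ together with condition 3 exhibits $a_{(p+k)}^n=da_{(p+k+r)}^{n-1}+y_k$ as an $(r+1)$-boundary, giving condition 2; conversely, any boundary decomposition $a_{(p+k)}^n=dx+y$ with $y$ an $r$-cycle forces $da_{(p+k)}^n=dy\in F_{p+k-1-r}A^{n+1}$ by $d^2=0$, recovering (A) at $k$.

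I expect the main obstacle to be almost entirely bookkeeping: keeping the weight indices of the summands of $Q_r\I$ separate from the filtration indices of the coefficients $a_{(\bullet)}^{\bullet}$ in $A$ throughout the comparison of filtrations, and correctly pinning down the two internal bidegrees of $B_{r+1}^{p+k,p+k+n}(A)$. The only genuinely mathematical point is the last paragraph, where $d^2=0$ is what converts the filtration bound (A) into the $r$-cycle condition defining the relevant summand of the boundary object $B_{r+1}$.
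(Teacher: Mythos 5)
Your proof is correct and is exactly the argument the paper has in mind: the paper omits the proof entirely, saying only that the lemma ``follows from the definitions of cycles, boundaries and the tensor product in $\fCh$'', and your coefficient-by-coefficient comparison of $dq$ against the decomposition of $F_{p-r-1}(Q_r\I\otimes A)$ is precisely that unwinding, with conditions (A) and (B) correctly extracted and matched to conditions 1--3. Your handling of where $d^2=0$ enters --- converting the filtration bound (A) on $da_{(p+k)}^n$ into the $r$-cycle condition on $y_k=a_{(p+k)}^n-da_{(p+k+r)}^{n-1}$, which is what places $y_k$ in the second summand of $B_{r+1}^{p+k,p+k+n}(A)$ --- is the right way to obtain condition 2, and is in fact more precise than the paper's own subsequent remark that ``$3\Rightarrow 2$'', which is only valid in the presence of that filtration bound.
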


\begin{lemm}\label{QrIABoundaries}
  Let $q\in F_p(Q_r\I\otimes A)$ be given as in \cref{qOfProduct}, then $q\in B_{r+1}^{p,p+n}(Q_r\I\otimes A)$ if and only if $q$ satisfies:
  \begin{enumerate}
  \item $a_{(p)}^n\in B_{r+1}^{p,p+n}(A)$, 
  \end{enumerate}
  in addition to conditions 2 and 3 of \cref{QrIACycles}.\qed
\end{lemm}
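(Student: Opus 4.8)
The strategy is to read off the $(r+1)$-boundaries directly from the definition $B_{r+1}^{p,p+n}(X)=dZ_{r}^{p+r,p+r+n+1}(X)+Z_{r}^{p-1,p-1+n}(X)$ of \cref{associatedSS} applied to $X=Q_r\I\otimes A$, working alongside the description of $(r+1)$-cycles from \cref{QrIACycles}. Since $B_{r+1}\subseteq Z_{r+1}$, any $(r+1)$-boundary is in particular an $(r+1)$-cycle, so conditions 2 and 3 of \cref{QrIACycles} are automatic for both directions and the only new content is the strengthening of condition 1 from $a_{(p)}^n\in Z_{r+1}^{p,p+n}(A)$ to $a_{(p)}^n\in B_{r+1}^{p,p+n}(A)$.

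For the forward implication I would argue functorially. The projection $\pi_r\colon Q_r\I\rightarrow R_{(0)}^0$ of \cref{rCofibrantUnitReplacement} is a morphism of filtered chain complexes, hence so is $\pi_r\otimes\id_A\colon Q_r\I\otimes A\rightarrow R_{(0)}^0\otimes A\cong A$; as $Z_r$ is functorial and $d$ commutes with chain maps, $B_{r+1}$ is functorial too, so $\pi_r\otimes\id_A$ carries $(r+1)$-boundaries to $(r+1)$-boundaries. Reading off \cref{qOfProduct}, the map $\pi_r\otimes\id_A$ annihilates every summand $1_{(-k)}^0\otimes(-)$ with $k\geq 1$ and every summand $1_{(-j-r-1)}^1\otimes(-)$, and sends $1_{(0)}^0\otimes a_{(p)}^n$ to $a_{(p)}^n$; thus $(\pi_r\otimes\id_A)(q)=a_{(p)}^n$. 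Consequently $q\in B_{r+1}^{p,p+n}(Q_r\I\otimes A)$ forces $a_{(p)}^n\in B_{r+1}^{p,p+n}(A)$, which is condition 1.

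For the converse no functorial shortcut is available: injectivity of the induced map on $(r+1)$-pages is precisely what is at stake, and $A$ is not assumed cofibrant. Instead I would construct explicit primitives $z_1\in Z_{r}^{p+r,p+r+n-1}(Q_r\I\otimes A)$ and $z_2\in Z_{r}^{p-1,p-1+n}(Q_r\I\otimes A)$ with $q=dz_1+z_2$, building them one rung at a time down the staircase of $Q_r\I$. At the top rung condition 1 lets one write $a_{(p)}^n=d\alpha+\beta$ with $\alpha\in Z_{r}^{p+r,p+r+n-1}(A)$ and $\beta\in Z_{r}^{p-1,p-1+n}(A)$, and a direct check of filtration degrees shows that $1_{(0)}^0\otimes\alpha$ and $1_{(0)}^0\otimes\beta$ are genuine $r$-cycles in the required bidegrees whose contribution $d(1_{(0)}^0\otimes\alpha)+1_{(0)}^0\otimes\beta$ has leading coefficient $a_{(p)}^n$. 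Subtracting this kills the $1_{(0)}^0$-coefficient of $q$ while leaving the tail coefficients intact, and condition 2 supplies the analogous boundary decomposition of $a_{(p+k)}^n$ at the $k$-th rung via the generator $1_{(-k)}^0$; since $q$ is a finite sum the process terminates.

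The main obstacle is the filtration bookkeeping in this construction, and in particular the role of condition 3. The differential $d(1_{(-k)}^0)$ spreads across two adjacent cohomological-degree-$1$ generators, so each rung produces corrections to the degree-$1$ part of $q$, and the coefficients of those generators sit one filtration degree too high to be absorbed directly into $z_2$. Condition 3, which records exactly that $a_{(p+k)}^n-da_{(p+r+k)}^{n-1}$ drops by one filtration degree, is what reconciles this discrepancy between the cohomological-degree-$0$ and degree-$1$ parts and allows the rung-by-rung primitives to assemble into honest $r$-cycles $z_1$ and $z_2$. Verifying that all the filtration estimates hold simultaneously, and that the residual supported on the degree-$1$ generators is itself realised as an $(r+1)$-boundary, is the only genuinely delicate part; each individual step is a routine consequence of the definitions of the cycles, boundaries, and the tensor product in $\fCh$.
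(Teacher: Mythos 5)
Your forward implication is complete and correct: $\pi_r\otimes\id_A$ is a morphism of filtered chain complexes sending $q$ to $a_{(p)}^n$, any such morphism preserves $Z_r$ and commutes with $d$, hence preserves $B_{r+1}$, which gives condition 1; and conditions 2 and 3 follow from $B_{r+1}\subseteq Z_{r+1}$ together with \cref{QrIACycles}. This functorial argument is, if anything, tidier than what the paper intends (the paper records this lemma as following from the definitions with no written proof; the substantive computation only surfaces inside the proof of \cref{VeryStrongUnitAxiomForFCHS}, and your backward-direction skeleton is that same staircase computation).

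The backward direction, however, has a genuine gap, sitting exactly where you flag it. You take the rung decompositions $a_{(p+k)}^n=d\alpha_k+\beta_k$, $k\geq 1$, from condition 2, i.e.\ with arbitrary witnesses. For such a witness the correction term $-1_{(-k-r)}^1\otimes\alpha_k$ lies in filtration $p$ (one degree too high for $z_2$, as you note) and need not cancel against the degree-one coefficients of $q$, so after exhausting the rungs you retain a residual $\sum_j 1_{(-r-1-j)}^1\otimes c_j$ in filtration $p$ supported on the degree-one generators. Your final sentence asserts this residual is an $(r+1)$-boundary; that assertion is true, but it is itself an instance of the lemma being proved (the case where all $a_{(p+k)}^n=0$), so it cannot be waved through as routine -- it requires its own explicit primitive construction. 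The missing idea is that condition 3 already prescribes the correct witnesses: take $\alpha_k\coloneqq a_{(p+r+k)}^{n-1}$, the actual degree-$(n-1)$ coefficients of $q$, and $\beta_k\coloneqq a_{(p+k)}^n-da_{(p+r+k)}^{n-1}$, which condition 3 places in $F_{p+k-1}A^n$; conditions 2 and 3 then make $\alpha_k$ and $\beta_k$ $r$-cycles of $A$ in filtrations $p+r+k$ and $p+k-1$. With this choice the troublesome terms $-1_{(-k-r)}^1\otimes\alpha_k$ cancel the degree-one part of $q$ identically, and the leftover terms $-1_{(-k-r-1)}^1\otimes a_{(p+r+k)}^{n-1}$ each lie in $Z_r^{p-1,p-1+n}(Q_r\I\otimes A)$, so they are absorbed directly into $z_2$ and no residual ever arises; this exact cancellation is \cref{BoundaryElementApkn} in the paper's proof of \cref{VeryStrongUnitAxiomForFCHS}. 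In short: right skeleton, but condition 3, not condition 2, must be the source of the rung decompositions, and the ``reconciliation'' you attribute to condition 3 is an exact cancellation rather than a bookkeeping adjustment.
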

Note that in the above lemmas condition $3$ is giving an explicit representation of $a_{(p+k)}^n$ as an $(r+1)$-boundary of $A$ so that in fact $3 \Rightarrow 2$.
\begin{prop}\label{VeryStrongUnitAxiomForFCHS}
  The morphism $\pi_r\colon Q_r\I\overset{\sim}{\twoheadrightarrow} R_{(0)}^0$ of \cref{rCofibrantUnitReplacement} satisfies the very strong unit axiom in $\fChS$.
\end{prop}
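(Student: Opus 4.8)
The plan is to unwind the very strong unit axiom (\cref{veryStrongUnitAxiom}) directly for the cofibrant replacement $\pi_r$. Since $R_{(0)}^0$ is the monoidal unit, for each $A\in\fCh$ the composite $Q_r\I\otimes A\to R_{(0)}^0\otimes A\cong A$ required by the unit axiom is, up to the left unitor isomorphism, just $\pi_r\otimes\id_A$; as the unitor lies in $\Er$, by two-out-of-three it suffices to show $\pi_r\otimes\id_A\in\Er$ for \emph{every} $A$, with no cofibrancy hypothesis. Reading off \cref{rCofibrantUnit,rCofibrantUnitReplacement}, the map $\pi_r\otimes\id_A$ sends an element $q$ written as in \cref{qOfProduct} to its $1_{(0)}^0$-coefficient $a_{(p)}^n$, annihilating every other summand.

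The first step is to reduce the claim to a bidegreewise isomorphism $E_{r+1}^{p,p+n}(Q_r\I\otimes A)\to E_{r+1}^{p,p+n}(A)$, which is precisely the assertion $\pi_r\otimes\id_A\in\Er$. The two lemmas \cref{QrIACycles,QrIABoundaries} are tailored to this: they state that $q$ is an $(r+1)$-cycle iff its $k=0$ coefficient $a_{(p)}^n$ lies in $Z_{r+1}^{p,p+n}(A)$ together with the auxiliary conditions 2 and 3 on the higher coefficients, and that such a cycle $q$ is moreover an $(r+1)$-boundary iff in addition $a_{(p)}^n\in B_{r+1}^{p,p+n}(A)$. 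In particular the projection $q\mapsto a_{(p)}^n$ carries $(r+1)$-cycles to $Z_{r+1}^{p,p+n}(A)$ and $(r+1)$-boundaries to $B_{r+1}^{p,p+n}(A)$, so it descends to a well-defined map on $(r+1)$-pages.

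From there the three remaining checks are short. For surjectivity on the page I would lift a class $[a]$ with $a\in Z_{r+1}^{p,p+n}(A)$ to $q=1_{(0)}^0\otimes a$, which is an $(r+1)$-cycle by \cref{QrIACycles} (conditions 2 and 3 being vacuous once all higher coefficients vanish) and maps to $a$. For injectivity I would take a cycle $q$ with image $a_{(p)}^n\in B_{r+1}^{p,p+n}(A)$; since $q$ already satisfies conditions 2 and 3, \cref{QrIABoundaries} then forces $q\in B_{r+1}^{p,p+n}(Q_r\I\otimes A)$, whence $[q]=0$. Assembling these gives the isomorphism in every bidegree, so $\pi_r\otimes\id_A$ is an $r$-weak equivalence for all $A$ and the very strong unit axiom holds; note this is uniform in $S$, as the weak equivalences $\Er$ do not depend on $S$.

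I do not expect a genuine obstacle, since \cref{QrIACycles,QrIABoundaries} absorb the real work. The only care needed is the bookkeeping of filtration indices and confirming that the $1_{(0)}^0$-coefficient is exactly the summand surviving under $\pi_r$, so that the induced map on cycles and boundaries is precisely the projection $q\mapsto a_{(p)}^n$ to be matched against the cycle and boundary characterisations.
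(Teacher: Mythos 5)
Your proposal is correct, and its skeleton coincides with the paper's: reduce the very strong unit axiom to showing $\pi_r\otimes\id_A\in\Er$ for every $A$, i.e.\ a bidegreewise isomorphism $E_{r+1}^{p,p+n}(Q_r\I\otimes A)\rightarrow E_{r+1}^{p,p+n}(A)$; get surjectivity by lifting an $(r+1)$-cycle $a$ of $A$ to $1_{(0)}^0\otimes a$; and get injectivity from the characterisation of $(r+1)$-boundaries of $Q_r\I\otimes A$. The one genuine point of divergence is the injectivity step. You invoke the ``if'' direction of \cref{QrIABoundaries} wholesale: a cycle $q$ whose coefficient $a_{(p)}^n$ lies in $B_{r+1}^{p,p+n}(A)$ satisfies condition 1 of that lemma together with conditions 2 and 3 of \cref{QrIACycles}, hence is a boundary. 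The paper does \emph{not} use the lemma this way; instead it carries out an explicit computation: it chooses boundary representatives $a_{(p+k)}^n=da_{(p+k+r)}^{n-1}+b_{(p+k-1)}^n$ (condition 3 supplying these for $k\geq 1$, the hypothesis supplying $k=0$), establishes the identity \ref{BoundaryElementApkn} for each $1_{(-k)}^0\otimes a_{(p+k)}^n$, sums over $k$, and identifies the resulting expression for $q$ as an element of $dZ_r^{p+r,p+r+n-1}(Q_r\I\otimes A)+Z_r^{p-1,p-1+n}(Q_r\I\otimes A)$. That computation is precisely a proof of the ``if'' direction of \cref{QrIABoundaries}, which the paper itself states with only a bare \emph{``follows from the definitions''} justification. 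So the trade-off is this: your route is shorter and cleanly modular, and is formally valid given the lemmas as stated; the paper's route is self-contained at exactly the point where the content lives. Your own remark that \cref{QrIACycles,QrIABoundaries} ``absorb the real work'' is the accurate diagnosis -- if a reader does not grant the hard direction of \cref{QrIABoundaries} (the direction producing a boundary decomposition, as opposed to the easy direction that a boundary has these coefficients), your proof has a hole exactly where the paper's proof does its only nontrivial work, and filling it would force you to reproduce the paper's rearrangement of $q$ into $d(\mathrm{cycles})+\mathrm{cycles}$.
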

\begin{proof}
  We must show the composite morphism $Q_r\I\otimes A\rightarrow R_{(0)}^0\otimes A\rightarrow A$ is a weak equivalence for all (not necessarily cofibrant) $A$, i.e.\ that the map:
  \begin{equation}\label{QrIAMaprPage}
    E_{r+1}^{p,p+n}(Q_r\I\otimes A)= \frac{Z_{r+1}^{p,p+n}(Q_r\I\otimes A)}{B_{r+1}^{p,p+n}(Q_r\I\otimes A)}
    \longrightarrow\frac{Z_{r+1}^{p,p+n}(A)}{B_{r+1}^{p,p+n}(A)}=E_{r+1}^{p,p+n}(A)
  \end{equation}
  is an isomorphism.
  Note that Morphism \ref{QrIAMaprPage} is surjective by surjectivity of $Z_{r+1}^{p,p+n}(Q_r\I\otimes A)\rightarrow Z_{r+1}^{p,p+n}(A)$: for any $(r+1)$-cycle $a$ of $A$ it is the image of $1_{(0)}^0\otimes a$ which is itself an $(r+1)$-cycle of $Q_r\I\otimes A$.

  For injectivity let $q\in Z_{r+1}^{p,p+n}(Q_r\I\otimes A)$ be of the form of \cref{qOfProduct} and such that its image under $Q_r\I\otimes A\rightarrow A$ is a boundary, i.e.\ $a_{(p)}^n\in B_{r+1}^{p,p+n}(A)$ by \cref{QrIABoundaries}.
  We pick an explicit representation of $a_{(p)}^n$ as a boundary, say $da_{(p+r)}^{n-1}+b_{(p-1)}^n$.
  Recall too from \cref{QrIACycles} that condition $3$ gave explicit representatives, for $k\geq 1$, for $a_{(p+k)}^n$ as $(r+1)$-boundaries: $a_{(p+k)}^n=da_{(p+r+k)}^{n-1}+b_{(p-1+k)}^n$.
  For convenience write set $a_{(p+k+r)}^{n-1}=0$.

  For $k\geq 0$ we have the following equality:
  \begin{align}\label{BoundaryElementApkn}
    1_{(-k)}^0\otimes a_{(p+k)}^n &= d\left(1_{(-k)}^0\otimes a_{(p+k+r)}^{n-1}\right) +1_{(-k)}^0\otimes b_{(p+k-1)}^n\nonumber\\
                                  & \phantom{--}-1_{(-k-r)}^1\otimes a_{(p+k+r)}^{n-1}-1_{(-k-r-1)}^1\otimes a_{(p+k+r)}^{n-1}
  \end{align}
  where the penultimate term is interpreted as $0$ if $k=0$.
  Rearranging \cref{BoundaryElementApkn} and summing over $k\geq 0$ gives:
  \begin{align*}
    q &= \sum_{k\geq0}\left(1_{(-k)}^0\otimes a_{(p+k)}^n+1_{(-k-r)}^1\otimes a_{(p+k+r)}^{n-1}\right)\\
      &= \sum_{k\geq0}\left(d\left(1_{(-k)}^0\otimes a_{(p+k+r)}^{n-1}\right)+1_{(-k)}^0\otimes b_{(p+k-1)}^n
        -1_{(-k-r-1)}^1\otimes a_{(p+k+r)}^{n-1}\right)\;.
  \end{align*}
  We now identify terms in this second expression for $q$. The terms of the form $d\left(1_{(-k)}^0\otimes a_{(p+k+r)}^{n-1}\right)$ are $d$ applied to an element of $Z_{r}^{p+r,p+r+n-1}(Q_r\I\otimes A)$ and those of the form $1_{(-k)}^0\otimes b_{(p+k-1)}^n-1_{(p+k-1)}^1\otimes a_{(p+k+r)}^{n-1}$ are elements of $Z_{r}^{p-1,p-1+n}(Q_r\I\otimes A)$. We have then written $q$ as an element of $B_{r+1}^{p,p+n}(Q_r\I\otimes A)$. This shows injectivity of Morphism \ref{QrIAMaprPage}.
\end{proof}
\subsection{The pushout-product axiom}
We turn our attention now to verifying the pushout-product axiom of \cref{monoidalModelCategory}.
Recall from \cref{PushoutProductOnGeneratingCofibrations} that, since the $\fChS$ are symmetric monoidal (as categories) and cofibrantly generated, we need only check that $I_S\boxtimes I_S\subseteq I_S\Cof$, $I_S\boxtimes J_S\subseteq J_S\Cof$ and $J_S\boxtimes J_S\subseteq J_S\Cof$.
The latter two conditions will be easily checked the first however is a somewhat involved computation.
The following lemmas prove the two conditions involving $J_S$.

\begin{lemm}\label{PushoutProductOfJAndJIsAcyclicCofibration}
  Let $s\leq t$. There is an isomorphism of filtered chain complexes:
  \begin{equation*}
    \Z_t(q,m)\otimes\Z_s(p,n)\cong \Z_s(p+q,n+m)\oplus\Z_s(p+q-t,n+m+1)\;\pushQED{\qed}\qedhere
  \end{equation*}
\end{lemm}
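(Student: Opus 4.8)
The plan is to compute both sides explicitly as filtered graded modules, observe that they already agree, and then correct the differential by a single filtration-compatible change of basis of exactly the type recorded in Diagram \ref{eq:ChangeOfBasis}. Writing $x,dx$ for the generators of $\Z_t(q,m)$ in cohomological degrees $m,m+1$ (of pure filtration $q$ and $q-t$) and $y,dy$ for those of $\Z_s(p,n)$ in degrees $n,n+1$ (of pure filtration $p$ and $p-s$), the four generators $x\otimes y$, $x\otimes dy$, $dx\otimes y$, $dx\otimes dy$ of the tensor product lie in cohomological degrees $m+n$, $m+n+1$, $m+n+1$, $m+n+2$ and, by the filtration formula of \cref{tensor}, in pure filtration degrees $p+q$, $p+q-s$, $p+q-t$, $p+q-t-s$ respectively. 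Comparing this with the standard generators of $\Z_s(p+q,n+m)\oplus\Z_s(p+q-t,n+m+1)$ shows the two sides have identical underlying filtered graded modules, so the entire content is in matching the differentials. Conceptually, the tensor is a priori a twisted direct sum (\cref{defi:twistedDirectSum}) of the two copies of $\Z_s$, and the change of basis will untwist it.

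Next I would compute the differential by the Leibniz rule of \cref{tensor}: $d(x\otimes y)=dx\otimes y+(-1)^m x\otimes dy$, $d(x\otimes dy)=dx\otimes dy$, and $d(dx\otimes y)=(-1)^{m+1}dx\otimes dy$. The only obstruction to the tensor being the asserted direct sum is the cross term $dx\otimes y$ appearing in $d(x\otimes y)$, which mixes the two would-be summands. I would remove it by retaining $x\otimes y$ and $dx\otimes y$ while replacing the middle-degree basis $\{x\otimes dy,\ dx\otimes y\}$ by $\{d(x\otimes y),\ dx\otimes y\}$; this is precisely the passage from a diagonal map to a coordinate inclusion effected by the change of basis of Diagram \ref{eq:ChangeOfBasis}. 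In the new basis one checks immediately that $x\otimes y\mapsto d(x\otimes y)\mapsto 0$ and $dx\otimes y\mapsto (-1)^{m+1}dx\otimes dy\mapsto 0$ with no surviving cross terms, so the complex splits as the subcomplexes generated by $x\otimes y$ and by $dx\otimes y$. The first has differential dropping filtration from $p+q$ to $p+q-s$, hence is $\Z_s(p+q,n+m)$; the second drops filtration from $p+q-t$ to $p+q-t-s$, hence is $\Z_s(p+q-t,n+m+1)$, giving the two asserted indices.

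The step requiring genuine care — and where the hypothesis $s\leq t$ enters — is verifying that this change of basis is compatible with the filtration, i.e.\ that both it and its inverse preserve filtration degrees rather than merely being an isomorphism of graded modules. Since $s\leq t$ we have $p+q-s\geq p+q-t$, so the new generator $d(x\otimes y)=dx\otimes y+(-1)^m x\otimes dy$ genuinely lives in filtration degree exactly $p+q-s$ (its summand $dx\otimes y$ lying in the strictly lower filtration $p+q-t$ causes no obstruction), and the change-of-basis matrix on the middle degree is lower triangular with unit diagonal in the sense of the remark following Diagram \ref{eq:ChangeOfBasis}; it is therefore a filtered isomorphism, as is its inverse. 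This triangularity is the one place the ordering $s\leq t$ is used, and it is the only nontrivial point; the remaining identifications of generators, degrees, and filtrations are the routine bookkeeping indicated above.
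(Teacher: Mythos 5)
Your proof is correct: the generator-by-generator computation of degrees and filtration weights, the Leibniz-rule differentials, and the lower-triangular (hence filtration-compatible, which is exactly where $s\leq t$ enters) change of basis replacing $x\otimes dy$ by $d(x\otimes y)$ all check out, including the identification of the induced filtrations on the two resulting summands. The paper states this lemma without proof, but your untwisting-by-change-of-basis argument is precisely the technique it employs in \cref{ProofOfPushoutProductOfIAndJIsAcyclicCofibration} for the analogous \cref{PushoutProductOfIAndJIsAcyclicCofibration} (and in Diagram \ref{eq:ChangeOfBasis}), so this is essentially the paper's own approach.
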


There is a similar decomposition for the pushout product of $\Z_{r+1}(p,n)\rightarrow\B_{r+1}(p,n)$ and $0\rightarrow \Z_s(q,m)$ given in the following lemma.

\begin{lemm}\label{PushoutProductOfIAndJIsAcyclicCofibration}
  Let $s\leq r$. There is an isomorphism of morphisms of filtered chain complexes between the pushout-product:
  \begin{equation*}
    \begin{pmatrix}
      \Z_{r+1}(p,n)\\\downarrow\\\B_{r+1}(p,n)
    \end{pmatrix}\boxtimes
    \begin{pmatrix}
      0\\\downarrow\\\Z_s(q,m)
    \end{pmatrix}\cong
    \begin{pmatrix}
      \Z_{r+1}(p,n)\otimes\Z_s(q,m)\\
      \downarrow\\
      \B_{r+1}(p,n)\otimes\Z_s(q,m)
    \end{pmatrix}
  \end{equation*}
  and the direct sum of morphisms (where the latter two are identity morphisms):
  \begin{align*}
    \begin{pmatrix}
      0\\\downarrow\\Z_s(p+q+r,n+m-1)
    \end{pmatrix}&\oplus
                   \begin{pmatrix}
                     0\\\downarrow\\\Z_s(p+q-1,n+m)
                   \end{pmatrix}\\
                 &\oplus
                   \begin{pmatrix}
                     \Z_s(p+q-r-1,n+m+1)\\\downarrow\\\Z_s(p+q-r-1,n+m+1)
                   \end{pmatrix}\\
                 &\oplus
                   \begin{pmatrix}
                     \Z_s(p+q,n+m)\\\downarrow\\\Z_s(p+q,n+m)
                   \end{pmatrix}\;.
  \end{align*}
\end{lemm}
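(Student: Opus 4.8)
The plan is to first reduce the pushout-product to an ordinary tensor of morphisms. Since the domain $C$ of $g=(0\rightarrow\Z_s(q,m))$ is the initial object, both $A\otimes C$ and $B\otimes C$ are $0$, so the pushout $A\otimes D\coprod_{A\otimes C}B\otimes C$ collapses to $A\otimes D=\Z_{r+1}(p,n)\otimes\Z_s(q,m)$ and the pushout-product becomes $\phi_{r+1}\otimes\id_{\Z_s(q,m)}$, which is exactly the right-hand morphism in the statement. It therefore suffices to decompose this single morphism.

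For the two objects I would apply \cref{PushoutProductOfJAndJIsAcyclicCofibration}. Since $s\leq r<r+1$, one application (with $t=r+1$) gives $\Z_{r+1}(p,n)\otimes\Z_s(q,m)\cong\Z_s(p+q,n+m)\oplus\Z_s(p+q-r-1,n+m+1)$, the two domains appearing in the statement. For the codomain I would use $\B_{r+1}(p,n)\cong\Z_r(p+r,n-1)\oplus\Z_r(p-1,n)$ together with distributivity of $\otimes$ over $\oplus$, and then apply \cref{PushoutProductOfJAndJIsAcyclicCofibration} (now with $t=r$) to each factor. A short check of the indices shows the codomain splits as the four summands $\Z_s(p+q+r,n+m-1)$, $\Z_s(p+q,n+m)$, $\Z_s(p+q-1,n+m)$ and $\Z_s(p+q-r-1,n+m+1)$ claimed.

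The substantive step is tracking $\phi_{r+1}\otimes\id$ through these identifications. Writing $u$, $v=du$ for the generators of $\Z_{r+1}(p,n)$, using $w$ and $z$ for the bottom-degree generators of the two summands $\Z_r(p+r,n-1)$ and $\Z_r(p-1,n)$ of $\B_{r+1}(p,n)$, and $e$, $f=de$ for those of $\Z_s(q,m)$, the defining diagram of $\phi_{r+1}$ reads $\phi_{r+1}(u)=dw+z$ (the diagonal $\Delta$) and $\phi_{r+1}(v)=dz$. On the preferred generators of the domain summands this gives $u\otimes e\mapsto dw\otimes e+z\otimes e$ and $v\otimes e\mapsto dz\otimes e$. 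The second of these carries the generator $v\otimes e$ of $\Z_s(p+q-r-1,n+m+1)$ to the generator $dz\otimes e$ of the corresponding codomain summand, producing the identity morphism there; and since no domain generator maps into the $\Z_s(p+q+r,n+m-1)$ or $\Z_s(p+q-1,n+m)$ summands, these account for the two morphisms out of $0$.

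The one point requiring care is the summand $\Z_s(p+q,n+m)$: the decomposition of \cref{PushoutProductOfJAndJIsAcyclicCofibration} presents its codomain generator as $dw\otimes e$, whereas $\phi_{r+1}\otimes\id$ sends $u\otimes e$ to $dw\otimes e+z\otimes e$, so the morphism is not yet block-diagonal. I would remedy this with a further change of basis in the codomain replacing the generator $dw\otimes e$ by $(dw+z)\otimes e$. The key observation making this legitimate is that $z\otimes e$ lies in filtration $p+q-1$, strictly below the filtration $p+q$ of $dw\otimes e$, so the substitution is of the lower-triangular, filtration-preserving type of \cref{eq:ChangeOfBasis} and hence yields a genuine direct-sum decomposition in $\fCh$; this is precisely where the hypothesis $s\leq r$ enters, guaranteeing that the relevant cross-terms always fall into strictly lower filtration. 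After this adjustment $u\otimes e$ maps by the identity onto the new generator, and I would finish by verifying the chain-map identities on $d(u\otimes e)$ and $d(v\otimes e)$ to confirm that the splitting is one of morphisms of filtered chain complexes and not merely of graded modules.
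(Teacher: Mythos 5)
Your proposal is correct and is essentially the paper's own argument: the paper proves the lemma by exactly this kind of explicit, filtration-compatible (lower-triangular) change of basis on generators, and your corrected generator $(dw+z)\otimes e$ is precisely the generator $e+f$ appearing in the paper's decomposition diagram, with the same justification that the cross-term lies in strictly lower filtration. The only difference is organizational --- you pre-split the domain and codomain using \cref{PushoutProductOfJAndJIsAcyclicCofibration} and then repair the single off-diagonal term, whereas the paper writes down the full change of basis on all generators at once and checks filtration compatibility globally.
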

\begin{proof}
  The proof is in \cref{ProofOfPushoutProductOfIAndJIsAcyclicCofibration}.
\end{proof}
These lemmas prove the following.
\begin{coro}\label{PushoutProductWithAJIsCofibration}
  In the model category $\fChS$ morphisms of the pushout-products $I_S\boxtimes J_S$ and $J_S\boxtimes J_S$ are elements of $J_S\Cof$.\qed
\end{coro}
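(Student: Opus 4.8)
The plan is to reduce the corollary to the two decomposition lemmas just stated, so that nothing computational remains and everything follows from the formal closure properties of the class $J_S\Cof$. The two preliminary observations I would record are: first, that $\boxtimes$ distributes over unions of classes of morphisms, so that from $I_S=I_r\cup J_S$ one obtains $I_S\boxtimes J_S=(I_r\boxtimes J_S)\cup(J_S\boxtimes J_S)$, and it suffices to treat the two pieces separately; second, that the pushout-product of two morphisms out of the initial object degenerates. Concretely, for $f\colon 0\to X$ and $g\colon 0\to Y$ every term $A\otimes D$, $A\otimes C$, $B\otimes C$ appearing in $f\boxtimes g$ vanishes, so $f\boxtimes g$ is just the morphism $0\to X\otimes Y$.

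For $J_S\boxtimes J_S$ a typical element is the pushout-product of $0\to\Z_s(p,n)$ and $0\to\Z_t(q,m)$ with $s,t\in S$. Since the tensor product is symmetric, $f\boxtimes g\cong g\boxtimes f$, and as $J_S\Cof$ is closed under isomorphism I may assume $s\le t$. The degeneracy observation together with \cref{PushoutProductOfJAndJIsAcyclicCofibration} then identifies this pushout-product with the single morphism $0\to\Z_s(p+q,n+m)\oplus\Z_s(p+q-t,n+m+1)$. Because $s=\min(s,t)\in S$, each summand map $0\to\Z_s(\cdots)$ lies in $J_s\subseteq J_S$, and the displayed morphism is their coproduct in the arrow category. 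For $I_r\boxtimes J_S$ a typical element is the pushout-product of $\Z_{r+1}(p,n)\to\B_{r+1}(p,n)$ with $0\to\Z_s(q,m)$, where $s\in S$ forces $s\le r$; here \cref{PushoutProductOfIAndJIsAcyclicCofibration} applies directly and exhibits the pushout-product as a direct sum of four morphisms, namely two of the form $0\to\Z_s(\cdots)$ (again in $J_s\subseteq J_S$) and two identity morphisms on objects $\Z_s(\cdots)$.

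To conclude in both cases I would invoke the standard closure properties of a class of the form $K\Cof=(K\Inj)\Proj$: it is closed under coproducts in the arrow category and it contains every isomorphism, since an isomorphism has the left lifting property against every morphism, in particular against each element of $J_S\Inj$, and hence lies in $J_S\Cof$. Thus the identity morphisms arising above are in $J_S\Cof$, and each direct sum assembled in the previous paragraph, being a finite coproduct of morphisms each lying in $J_S\Cof$, itself lies in $J_S\Cof$. This yields $J_S\boxtimes J_S\subseteq J_S\Cof$ and $I_r\boxtimes J_S\subseteq J_S\Cof$, whence $I_S\boxtimes J_S\subseteq J_S\Cof$, which is the claim. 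I do not expect any genuine obstacle here: all the real work has been absorbed into the two decomposition lemmas, and the only points that merit being stated carefully are the degeneration of a pushout-product with initial source into a plain tensor morphism and the coproduct-closure of $J_S\Cof$.
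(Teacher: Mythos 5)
Your proposal is correct and is essentially the paper's own argument: the paper deduces this corollary immediately from \cref{PushoutProductOfJAndJIsAcyclicCofibration,PushoutProductOfIAndJIsAcyclicCofibration}, giving no further proof. The details you supply --- the collapse of a pushout-product of morphisms out of $0$ into a plain tensor morphism, and the closure of $J_S\Cof$ under coproducts in the arrow category and under isomorphisms --- are precisely the formal steps the paper leaves implicit.
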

The case $I_S\boxtimes I_S\subseteq I_S$ involves computing pushouts in filtered chains.
Recall there is an adjunction $\inadj{\rho}{\Ch^{\Zbb_\infty}}{\fCh}{i}$ and by \cref{filteredChainsLimits} we can instead compute colimits in the functor category $\Ch^{\Zbb_\infty}$.
Given a diagram $X\colon \mc{J}\rightarrow\fCh$ its colimit is given by $\rho\colim_{\mc{J}}iX$.
\begin{prop}\label{pushoutProductOfGeneratingCofibrations}
  The pushout-product of $i\colon\Z_{r+1}(p,n)\rightarrow\B_{r+1}(p,n)$ and $j\colon\Z_{r+1}(q,m)\rightarrow\B_{r+1}(q,m)$ is given as in \cref{pushoutProduct} with named generators given in \cref{ABCPushoutDiagram}, differentials on generators described in \cref{namedDifferentialsOfPushoutroduct} and the map $i\boxtimes j$ described on generators given in \cref{mapOnGeneratorInducedBy}.
\end{prop}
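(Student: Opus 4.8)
The plan is to treat the pushout-product as an entirely explicit computation in $\fCh$, organised by the colimit formula of \cref{filteredChainsLimits}. First I would record the pieces: $\Z_{r+1}(p,n)=\left(R_{(p)}^n\to R_{(p-r-1)}^{n+1}\right)$ is two-term, $\B_{r+1}(p,n)$ is three-term with rows in cohomological degrees $n-1,n,n+1$, and $i=\phi_{r+1}$ is the diagonal $\Delta$ on the $R_{(p)}^n$ summand together with the identity on $R_{(p-r-1)}^{n+1}$. The structural point I would extract immediately is that $\phi_{r+1}$ is a split monomorphism of the underlying graded $R$-modules and is moreover strict. Consequently both legs $i\otimes\id$ and $\id\otimes j$ out of $\Z_{r+1}(p,n)\otimes\Z_{r+1}(q,m)$ are split and strict, so at the level of underlying modules the pushout splits as
\begin{equation*}
 \Z_{r+1}(p,n)\otimes\Z_{r+1}(q,m)\ \oplus\ \mathrm{coker}(i)\otimes\Z_{r+1}(q,m)\ \oplus\ \Z_{r+1}(p,n)\otimes\mathrm{coker}(j),
\end{equation*}
using that $\otimes$ carries the two split short exact sequences to split ones. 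This identifies the named generators of \cref{ABCPushoutDiagram} as coming from a single diagonal copy of the common object together with the two cokernel factors.

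Next I would expand the four tensor products $\Z\otimes\Z$, $\B\otimes\Z$, $\Z\otimes\B$ and $\B\otimes\B$ using \cref{tensor}, keeping the tensor-factor data attached to each generator via the bracketing notation of \cref{tensorProductBracketingNotation}; each pairing of a row of the first factor with a row of the second contributes a copy of $R_{(p')+(q')}^{(n')+(m')}$ of filtration degree $p'+q'$, and the Leibniz differential $d(a\otimes b)=da\otimes b+(-1)^{\abs{a}}a\otimes db$ supplies the internal differentials together with their Koszul signs. I would then pass to $\Ch^{\Zbb_\infty}$, form the pushout filtration-degreewise, and apply $\rho$. Because the two legs are strict split monomorphisms the comparison map from each finite filtration stage into the value at $\infty$ is injective, so $\rho$ introduces no filtration jumps and the named generators acquire exactly the filtration degrees listed. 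The module-level splitting is not a splitting of complexes, however, so the induced differential couples the three summands; restricting the Leibniz differential of $\B_{r+1}(p,n)\otimes\B_{r+1}(q,m)$ and recording the identifications forced by the pushout relation is what produces the nontrivial off-diagonal entries of \cref{namedDifferentialsOfPushoutroduct}.

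Finally the map $i\boxtimes j$ is assembled componentwise into $\B_{r+1}(p,n)\otimes\B_{r+1}(q,m)$: it is $i\otimes\id$ and $\id\otimes j$ on the two cokernel-derived families of generators and $i\otimes j$ on the diagonal copy of $\Z_{r+1}(p,n)\otimes\Z_{r+1}(q,m)$, and reading these off on the chosen generators gives \cref{mapOnGeneratorInducedBy}, completing the data of \cref{pushoutProduct}. I expect the genuine difficulty to be purely combinatorial: the grid of row pairings produces many summands (up to nine cells for $\B\otimes\B$) with interleaving differentials and shifting filtration indices, and one must verify both that the pushout relation correctly merges the overlapping $R_{(p)+(q)}^{(n)+(m)}$ cells and that no filtration is enlarged by $\rho$. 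Since every map in sight is a strict split monomorphism this verification is routine but long, which is precisely why the full enumeration of generators, differentials and the induced map is deferred to \cref{monoidal-proof}.
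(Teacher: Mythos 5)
Your proposal is correct and lands on the same explicit data, but it organises the computation genuinely differently from the paper. The paper's proof (\cref{ProofOfpushoutProductOfGeneratingCofibrations}) is a direct, pointwise computation: it draws $\B_{r+1}(p,n)\otimes\Z_{r+1}(q,m)$, $\Z_{r+1}(p,n)\otimes\Z_{r+1}(q,m)$ and $\Z_{r+1}(p,n)\otimes\B_{r+1}(q,m)$ as $\Zbb_\infty$-indexed diagrams, computes the pushout degreewise in $\Ch^{\Zbb_\infty}$ while recording identifications such as $\ol{N}+\ol{P}=\ol{B}+\ol{C}$, and only then applies $\rho$ to land back in $\fCh$; no splitting or strictness lemma is invoked, since $\rho$ (passing to images) absorbs any failure of the filtration stages to inject. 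Your route replaces that pointwise work by the structural observations that $\phi_{r+1}$ is a split monomorphism of filtered graded modules, hence both legs $i\otimes\id$ and $\id\otimes j$ are split and strict, so the pushout is $(\Z\otimes\Z)\oplus(\mathrm{coker}\,i\otimes\Z)\oplus(\Z\otimes\mathrm{coker}\,j)$ as filtered graded modules, $\rho$ collapses nothing, and $i\boxtimes j$ is the inclusion of three summands of the four-fold splitting of $\B_{r+1}(p,n)\otimes\B_{r+1}(q,m)$, with cokernel $\mathrm{coker}\,i\otimes\mathrm{coker}\,j$. That is a real gain: injectivity of $i\boxtimes j$, the filtration degrees of the generators, and the twisted-direct-sum shape $X\rightarrow X\toplus Y$ that \cref{PushoutProductOfIAndIIsCofibration} needs all come for free, whereas the paper extracts them from the figures. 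What the paper's approach buys is precisely those figures, which \cref{PushoutProductOfIAndIIsCofibration} also consults to see that the twist $\tau$ is $r$-suppressive; on your route the Leibniz-plus-identification bookkeeping producing \cref{namedDifferentialsOfPushoutroduct} and \cref{mapOnGeneratorInducedBy} must still be carried out generator by generator, so the combinatorial core is deferred rather than removed. One step you should justify rather than assert with ``consequently'': strictness is not preserved by $-\otimes M$ for an arbitrary filtered $M$, so strictness of the legs does not follow formally from strictness of $\phi_{r+1}$ alone; it does hold here because the splitting of $\phi_{r+1}$ can be chosen filtration-preserving (all objects involved are free on pure-weight generators), and a filtered splitting passes through $-\otimes\Z_{r+1}(q,m)$, making the tensored map filtered-split and hence strict.
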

\begin{proof}
  The proof is in \cref{ProofOfpushoutProductOfGeneratingCofibrations}.
\end{proof}
\begin{coro}\label{PushoutProductOfIAndIIsCofibration}
  \sloppy The pushout-product of generating cofibrations $i\colon\Z_{r+1}(p,n)\rightarrow\B_{r+1}(p,n)$ and $j\colon\Z_{r+1}(q,m)\rightarrow\B_{r+1}(q,m)$ is a cofibration in $\fChS$.
\end{coro}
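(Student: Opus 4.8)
The plan is to reduce the statement to the model structure $\fChr$ and then invoke the partial converse \cref{subclassOfCofibrations}. First I would note that $I_r\subseteq I_S$ by \cref{ISandJS}, whence $I_S\Inj\subseteq I_r\Inj$ and therefore $I_r\Cof\subseteq I_S\Cof$; thus every $r$-cofibration is in particular an $S$-cofibration, and it suffices to show that $i\boxtimes j$ is a cofibration in $\fChr$. By \cref{subclassOfCofibrations} this in turn reduces to exhibiting $i\boxtimes j$ as an $r$-suppressive inclusion $A\rightarrow A\toplus C$ whose cokernel $C$ is cofibrant and satisfies condition~5 of \cref{subclassOfCofibrantObjects}.

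The twisted-direct-sum form is read off from \cref{pushoutProductOfGeneratingCofibrations}. Its explicit list of generators splits the underlying filtered graded module of the target $\B_{r+1}(p,n)\otimes\B_{r+1}(q,m)$ as a filtered direct sum of (the image of) the pushout source with a complementary summand $C$, and the differential on the target then takes precisely the block form of \cref{defi:twistedDirectSum}, realising $i\boxtimes j$ as the inclusion $A\rightarrow A\toplus C$ onto the first factor. I emphasise that here I cannot appeal to \cref{cofibrationsAreInclusionsOfTwistedSums}, as that presupposes the map is a cofibration; the splitting must instead come from the explicit computation.

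To treat $C$ I would use that $\otimes$ is closed and hence preserves colimits in each variable, so that $C\cong\mathrm{coker}(\phi_{r+1})\otimes\mathrm{coker}(\phi_{r+1})$ for the two generating cofibrations. A short computation in the spirit of \cref{APushoutWithGenCofib} identifies $\mathrm{coker}\bigl(\Z_{r+1}(p,n)\rightarrow\B_{r+1}(p,n)\bigr)\cong\Z_{r+1}(p+r,n-1)$, and then \cref{PushoutProductOfJAndJIsAcyclicCofibration} gives
\[
  C\cong\Z_{r+1}(p+q+2r,\,n+m-2)\oplus\Z_{r+1}(p+q+r-1,\,n+m-1).
\]
Each summand is cofibrant by \cite[Lemma~3.2]{CELW}, so $C$ is cofibrant as a coproduct of cofibrant objects; and each is concentrated in finitely many filtration degrees in each cohomological degree, so $C$ satisfies condition~5.

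Finally, for $r$-suppressiveness I would observe that the whole target is $r$-suppressive in the sense of \cref{suppressiveObject}: each factor $\B_{r+1}$ has every element an $r$-cycle, and by the description of the tensor filtration in \cref{tensor} the differential $d\otimes\id\pm\id\otimes d$ lowers filtration by at least $r$. Since the twist $\tau$ is the off-diagonal component of this differential under the filtered splitting $A\oplus C$, it follows that $\tau(F_pC)\subseteq F_{p-r}A$, i.e.\ $\tau$ is $r$-suppressive in the sense of \cref{rSupressive}. Then \cref{subclassOfCofibrations} applies, giving that $i\boxtimes j$ is an $r$-cofibration and hence an $S$-cofibration. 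The main obstacle is extracting from \cref{pushoutProductOfGeneratingCofibrations} that the splitting of the target into $A$ and $C$ is genuinely one of filtered graded modules (so that the filtration on the summand $C$ coincides with the ambient tensor filtration); once this is in place, the cofibrancy and suppressiveness verifications are purely formal.
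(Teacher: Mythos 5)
Your proposal is correct and follows essentially the same route as the paper's proof: reduce to $\fChr$ via $I_r\subseteq I_S$, use \cref{pushoutProductOfGeneratingCofibrations} to exhibit $i\boxtimes j$ as an inclusion into a twisted direct sum, identify the cokernel as a direct sum of two $(r+1)$-cycle objects via \cref{PushoutProductOfJAndJIsAcyclicCofibration} (cofibrant and bounded below in filtration), and conclude by \cref{subclassOfCofibrations}. The only deviations are in how two sub-steps are justified --- you compute the cokernel formally as $\mathrm{coker}(\phi_{r+1})\otimes\mathrm{coker}(\phi_{r+1})$ using that $\otimes$ preserves colimits, where the paper reads it off the appendix figures, and you deduce $r$-suppressiveness of the twist from $r$-suppressiveness of the ambient complex $\B_{r+1}(p,n)\otimes\B_{r+1}(q,m)$ under the filtered splitting, where the paper inspects \cref{pushoutProduct} --- and both refinements are valid.
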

\begin{proof}
  We demonstrate the pushout-product is a cofibration by appealing to our partial classification \cref{subclassOfCofibrations}.
  We must then demonstrate that $i\boxtimes j$ is an $r$-suppressive inclusion with cofibrant cokernel which also satisfies condition 5 of \cref{subclassOfCofibrantObjects}.

  Observe that the morphism $i\boxtimes j$ is equivalently a morphism of the form $X\rightarrow X\toplus Y$ where $Y$ is given, up to signs, by the filtered chain complex:
  \begin{equation*}
    \begin{tikzcd}
      R_{(p+r)+(q+r)}^{(n-1)+(m-1)}\arrow[r,"-1"]\arrow[d]&
      R_{(p+r)+(q-1)}^{(n-1)+(m)}\arrow[d]\\
      R_{(p-1)+(q+r)}^{(n)+(m-1)}\arrow[r]&R_{(p-1)+(q-1)}^{(n)+(m)}
    \end{tikzcd}
  \end{equation*}
  which decomposes into the direct sum of two $r$-cycles by \cref{PushoutProductOfJAndJIsAcyclicCofibration}.
  This is then a cofibrant object in $\fChr$ with bounded below filtration.
  Further by studying \cref{pushoutProduct} one sees that the twist differential $\tau$ suppresses filtration by $r$.

  Together these show the conditions needed to apply \cref{subclassOfCofibrations} and so the pushout-product $i\boxtimes j$ is a cofibration in $\fChr$.
  Finally note that since $I_r\subseteq I_S$ we have $I_r\Cof\subseteq I_S\Cof$.
\end{proof}
\subsection{Monoidal model categories}
\begin{theo}\label{fChSIsMonoidal}
  The model categories $\fChS$ are closed symmetric monoidal model categories.
\end{theo}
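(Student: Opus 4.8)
The plan is to verify the two homotopical axioms of \cref{monoidalModelCategory} — the pushout-product axiom and the unit axiom — for the closed symmetric monoidal structure on $\fCh$ recalled in \cref{tensor,internalHom}. Since that monoidal product, its symmetry, and its closedness are all properties of the underlying category $\fCh$ and are therefore independent of $S$, and since each $\fChS$ is already known to be a model category by \cref{filtChainsS}, the entire statement reduces to checking these two compatibility conditions for each admissible $S$.

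For the pushout-product axiom I would appeal to \cref{PushoutProductOnGeneratingCofibrations}: as each $\fChS$ is cofibrantly generated and symmetric monoidal, it suffices to establish the three inclusions $I_S\boxtimes I_S\subseteq I_S\Cof$, $I_S\boxtimes J_S\subseteq J_S\Cof$ and $J_S\boxtimes J_S\subseteq J_S\Cof$. The latter two are exactly the content of \cref{PushoutProductWithAJIsCofibration}. For the first, I would decompose $I_S=I_r\cup\bigcup_{s\in S}J_s$ and run the case analysis on the two factors of a pushout-product $f\boxtimes g$ with $f,g\in I_S$: when both lie in $I_r$, \cref{PushoutProductOfIAndIIsCofibration} shows $f\boxtimes g$ is a cofibration, hence in $I_S\Cof$; when at least one factor lies in some $J_s$, the product lies in $I_S\boxtimes J_S$ or $J_S\boxtimes J_S$, so \cref{PushoutProductWithAJIsCofibration} places it in $J_S\Cof$, and since every acyclic cofibration is a cofibration one has $J_S\Cof\subseteq I_S\Cof$. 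These three cases are exhaustive, giving $I_S\boxtimes I_S\subseteq I_S\Cof$ and thus the pushout-product axiom.

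For the unit axiom I would take the cofibrant replacement $\pi_r\colon Q_r\I\to R_{(0)}^0$ of the monoidal unit from \cref{rCofibrantUnit}. By \cref{lemm:QrIIsCofibrant} the object $Q_r\I$ is $S$-cofibrant, and by \cref{rCofibrantUnitReplacementAcyclicFibration} the map $\pi_r$ is an $S$-acyclic fibration and in particular a weak equivalence, so $\pi_r$ is genuinely a cofibrant replacement of $R_{(0)}^0$. Then \cref{VeryStrongUnitAxiomForFCHS} shows $\pi_r$ satisfies the \emph{very strong} unit axiom of \cref{veryStrongUnitAxiom}, namely that $Q_r\I\otimes A\to R_{(0)}^0\otimes A\to A$ is a weak equivalence for \emph{every} $A\in\fCh$ and not merely for cofibrant $A$; this implies in particular the ordinary unit axiom. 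Combining the pushout-product axiom, the unit axiom, and the existing closed symmetric monoidal structure via \cref{monoidalModelCategory} then yields the theorem.

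At this stage there is essentially no obstacle remaining, since the proof is purely an assembly of the preceding results. All the genuine difficulty is absorbed earlier: principally the explicit computation of the pushout-product of two $I_r$-generating cofibrations underlying \cref{PushoutProductOfIAndIIsCofibration} (deferred to \cref{monoidal-proof}), together with the injectivity argument on $(r+1)$-pages establishing the very strong unit axiom in \cref{VeryStrongUnitAxiomForFCHS}. The only point demanding care within the theorem's own proof is ensuring that the case analysis for $I_S\boxtimes I_S$ is genuinely exhaustive and correctly routes each mixed pushout-product through $J_S\Cof\subseteq I_S\Cof$.
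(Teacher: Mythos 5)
Your proof is correct and follows essentially the same route as the paper: the unit axiom via the very strong unit axiom of \cref{VeryStrongUnitAxiomForFCHS} (with the cofibrant replacement $\pi_r\colon Q_r\I\to R_{(0)}^0$), and the pushout-product axiom via the reduction of \cref{PushoutProductOnGeneratingCofibrations} together with \cref{PushoutProductOfIAndIIsCofibration,PushoutProductWithAJIsCofibration}. If anything, your explicit case analysis for $I_S\boxtimes I_S$ — routing the mixed products through $J_S\Cof\subseteq I_S\Cof$ — is slightly more careful than the paper's own wording, which attributes that entire inclusion to \cref{PushoutProductOfIAndIIsCofibration} even though that corollary only treats pushout-products of two morphisms of $I_r$.
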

\begin{proof}
  We must show the unit and pushout-product axioms of \cref{monoidalModelCategory}.
  The unit axiom was proved in \cref{VeryStrongUnitAxiomForFCHS}.
  The pushout-product axiom was proved for $I_S\boxtimes I_S\subseteq I_S\Cof$ in \cref{PushoutProductOfIAndIIsCofibration} and for $I_S\boxtimes J_S\subseteq J_S\Cof$  and $J_S\boxtimes J_S\subseteq J_S\Cof$ in \cref{PushoutProductWithAJIsCofibration}.
\end{proof}
Write $Q\colon \fChS\rightarrow \fChS$ for a cofibrant replacement functor.
\begin{coro}\label{homotopyCategoryfChSIsMonoidal}
  The homotopy category $\Ho(\fChS)$ has an induced closed symmetric monoidal structure with monoidal product given by:
  \begin{equation*}
    A\Lotimes B\coloneqq QA\otimes QB\,,
  \end{equation*}
  and internal hom object functor:
  \begin{equation*}
    R\Hom(A,B) \coloneqq \Hom(QA,B)\,.
  \end{equation*}
  We need not fibrantly replace $B$ in the hom objects since all objects of $\fChS$ are fibrant. The unit in the homotopy category is given by any cofibrant replacement of $R_{(0)}^0$, in particular we can take $Q_r\I$ from \cref{rCofibrantUnit}.
\end{coro}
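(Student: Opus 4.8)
The plan is to obtain this as a formal consequence of \cref{fChSIsMonoidal} by appealing to the general theory of monoidal model categories, specifically \cite[Theorem 4.3.2]{H}, which asserts that the homotopy category of a (symmetric) monoidal model category inherits a closed symmetric monoidal structure. Since \cref{fChSIsMonoidal} establishes that $\fChS$ is a closed symmetric monoidal model category, the existence of such a structure on $\Ho(\fChS)$ is immediate, and the remaining task is to identify the derived bifunctors and the unit in the explicit forms asserted.

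First I would identify the monoidal product. The pushout-product axiom makes $\otimes$ a left Quillen bifunctor, so both $QA\otimes(-)$ and $(-)\otimes QB$ are left Quillen functors and the total left derived functor of $\otimes$ is computed by cofibrantly replacing in each variable, giving $A\Lotimes B = QA\otimes QB$. Well-definedness up to natural isomorphism and homotopy invariance in each variable follow from Ken Brown's lemma applied to these left Quillen functors. The associativity and symmetry coherence isomorphisms of $\otimes$ on $\fCh$ descend to natural isomorphisms on $\Ho(\fChS)$ by naturality of the cofibrant replacement $Q$; this is precisely the symmetric content of \cite[Theorem 4.3.2]{H}.

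Next I would identify the internal hom. Closedness of the monoidal structure together with the pushout-product axiom makes $\eHom$ a right Quillen bifunctor, whose total right derived functor is computed by cofibrantly replacing the first variable and fibrantly replacing the second. The simplification special to our setting is that every object of $\fChS$ is fibrant, as noted in the proof of \cref{filtChainsS}; hence no fibrant replacement is required and one obtains $R\Hom(A,B)=\eHom(QA,B)$.

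Finally, \cite[Theorem 4.3.2]{H} identifies the unit of $\Ho(\fChS)$ with the image of $\I=R_{(0)}^0$, represented by any cofibrant replacement $Q\I$; the unit axiom guarantees the comparison map $Q\I\otimes X\to\I\otimes X\to X$ is a weak equivalence, so $Q\I$ indeed serves as the tensor unit in the homotopy category. Since we verified the very strong form of this axiom in \cref{VeryStrongUnitAxiomForFCHS}, and since by \cite[Lemma 7]{Muro} the axiom then holds for every cofibrant replacement, we may in particular take the explicit model $Q_r\I$ of \cref{rCofibrantUnit} as the unit. I do not anticipate a genuine obstacle: the substantive hypotheses, namely the pushout-product and unit axioms, were already discharged in proving \cref{fChSIsMonoidal}, so the only care required is the bookkeeping for the derived bifunctors, with the pleasant simplification that universal fibrancy removes fibrant replacements from the internal hom.
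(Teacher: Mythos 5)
Your proposal is correct and follows exactly the paper's route: the paper's proof is simply a one-line application of \cite[Theorem 4.3.2]{H} to the closed symmetric monoidal model category $\fChS$ established in \cref{fChSIsMonoidal}. Your additional elaborations (the derived bifunctor bookkeeping, the fibrancy observation removing the need for fibrant replacement, and the use of \cite[Lemma 7]{Muro} to justify taking $Q_r\I$ as the unit) are accurate unpackings of what that citation delivers, matching the remarks the paper makes in the corollary statement itself.
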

\begin{proof}
  This follows from \cite[Theorem 4.3.2]{H} applied to the closed symmetric monoidal model category $\fChS$ of \cref{fChSIsMonoidal}.
\end{proof}

\section{Model categories of algebras and modules}\label{algebras}
The last section established closed symmetric monoidal model category structures on filtered chain complexes given by $\fChS$.
We now construct model structures on algebras and modules in $\fCh$ from the $\fChS$.
For this we demonstrate Schwede and Shipley's monoid axiom given in \cref{MonoidAxiom}.
Recall by \cref{MonoidAxiomFromJ} that it suffices to check the monoid axiom for cellular morphisms formed from $J_S\otimes \fCh$.
\begin{lemm}\label{JSOtimesFCHIsWeakEquivalence}
  Any morphism of $J_S\otimes\fCh$ is an $r$-weak equivalence.
\end{lemm}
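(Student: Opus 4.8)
The plan is to first unwind what a morphism of $J_S\otimes\fCh$ actually is. By definition such a morphism is of the form $g\otimes\id_M$ for some $g\in J_S=\bigcup_{s\in S}J_s$ and some $M\in\fCh$, so it has the shape $0\to\Z_s(p,n)\otimes M$ for some $s\in S$ and $p,n\in\Zbb$. Since the $(r+1)$-page of the zero object vanishes, the comparison map on $(r+1)$-pages is an isomorphism exactly when the $(r+1)$-page of $\Z_s(p,n)\otimes M$ is zero; that is, the morphism lies in $\Er$ precisely when $\Z_s(p,n)\otimes M$ is $r$-acyclic. The whole lemma therefore reduces to showing that $\Z_s(p,n)\otimes M$ is $r$-acyclic for every $s\in S$ and every $M\in\fCh$.

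The structural observation I would exploit is that $\Z_s(p,n)$ is itself an $s$-cone and that the cone functor is given by tensoring with a fixed object. Indeed $C_s(R_{(p-s)}^{n+1})$, the $s$-cone of the identity on $R_{(p-s)}^{n+1}$, has underlying module $\rSusp[s] R_{(p-s)}^{n+1}\oplus R_{(p-s)}^{n+1}=R_{(p)}^n\oplus R_{(p-s)}^{n+1}$ with identity differential, so $\Z_s(p,n)\cong C_s(R_{(p-s)}^{n+1})$. More generally there is a natural isomorphism of filtered chain complexes $C_s(A)\cong\Z_s(s,-1)\otimes A$, where $\Z_s(s,-1)=\bigl(R_{(s)}^{-1}\overset{1}{\longrightarrow}R_{(0)}^0\bigr)$ is the $s$-cone of the identity on the unit $R_{(0)}^0$. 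Granting this, associativity of $\otimes$ immediately yields
\begin{equation*}
  \Z_s(p,n)\otimes M\cong \Z_s(s,-1)\otimes\left(R_{(p-s)}^{n+1}\otimes M\right)\cong C_s\!\left(R_{(p-s)}^{n+1}\otimes M\right).
\end{equation*}
The advantage of routing everything through associativity of the monoidal product is that the tensor-product filtration of \cref{tensor} then takes care of itself, and I never have to manipulate it by hand.

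It remains to record two acyclicity facts. First, every $s$-cone of an identity is $s$-acyclic, as noted immediately after \cref{rConeIsrAcyclic}; applied to $N\coloneqq R_{(p-s)}^{n+1}\otimes M$ this shows that the $(s+1)$-page of $\Z_s(p,n)\otimes M$ vanishes. Second, since each page $E_{k+1}$ is a subquotient of $E_k$, once a page is zero all subsequent pages are zero, so vanishing of the $(s+1)$-page forces vanishing of the $(r+1)$-page whenever $r\geq s$. As $s\in S\subseteq\{0,1,\ldots,r\}$ we have $s\leq r$, and hence $\Z_s(p,n)\otimes M$ is $r$-acyclic, which is exactly what was needed.

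The step I expect to require the most care is the natural isomorphism $C_s(A)\cong\Z_s(s,-1)\otimes A$: one must check that the suspension sign convention $d^{\rSusp[s] A}=-d^A$ of \cref{rSuspension} is reproduced exactly by the Koszul sign in the differential of the tensor product of \cref{tensor}, so that the comparison is genuinely an isomorphism of filtered chain complexes and not merely of underlying filtered graded modules. A direct check on the two summands shows the signs match, so the generator $R_{(s)}^{-1}\{c\}$ of $\Z_s(s,-1)$ acts as the suspension coordinate and the identification is strict. Everything after this point is formal, relying only on associativity of $\otimes$ and on the general $s$-acyclicity of cones.
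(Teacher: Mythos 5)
Your proof is correct and takes essentially the same approach as the paper: both arguments identify $\Z_s(p,n)\otimes M$, up to a shift, as an $s$-cone $C_s(-)$, invoke the $s$-acyclicity of cones from \cref{rConeIsrAcyclic}, and conclude $r$-acyclicity because $s\in S$ forces $s\leq r$. Your explicit isomorphism $C_s(A)\cong\Z_s(s,-1)\otimes A$ and the sign check simply make precise the ``up to a shift of cohomological and filtration degree'' step that the paper's proof leaves implicit.
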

\begin{proof}
  Any morphism of $J_S\otimes\fCh$ is of the form $\left(0\rightarrow\Z_s(p,n)\right)\otimes A$ for some $A\in\fCh$.
  Up to a shift of cohomological and filtration degree this is of the form $0\rightarrow C_s(A)$ where $C_s(-)$ is the $s$-cone of \cref{rCone}. By \cref{rConeIsrAcyclic} the $s$-cone is $s$-acyclic, and hence $r$-acyclic.
\end{proof}
\begin{lemm}
  Any pushout of a filtered chain complex $A$ by a morphism of $J_S\otimes\fCh$ is an $r$-weak equivalence.
\end{lemm}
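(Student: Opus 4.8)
The plan is to observe that every morphism of $J_S\otimes\fCh$ has the zero object as its domain, so that the relevant pushouts are in fact coproducts. Indeed, a morphism of $J_S\otimes\fCh$ has the form $\left(0\to\Z_s(p,n)\right)\otimes B$ for some $s\in S$, some $p,n\in\Zbb$ and some $B\in\fCh$, which is the map $0\to\Z_s(p,n)\otimes B$ since $0\otimes B=0$. Its domain is the initial object, so pushing out along the unique map $0\to A$ yields the coproduct, and by \cref{filteredChainsLimits} this coproduct is the direct sum $A\oplus\left(\Z_s(p,n)\otimes B\right)$ with $A$ included as the first summand. Thus it suffices to show that the inclusion $A\to A\oplus C$ is an $r$-weak equivalence, where I abbreviate $C\coloneqq\Z_s(p,n)\otimes B$.

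First I would record that the associated spectral sequence is additive on finite direct sums. Since the filtration of a direct sum is computed summand-wise, $F_p(A\oplus C)^m=F_pA^m\oplus F_pC^m$, and the differential of $A\oplus C$ is the direct sum of the two differentials, so each of the $R$-modules $Z_r^{p,p+n}$ and $B_r^{p,p+n}$ of \cref{associatedSS}, being built from filtration pieces, intersections, preimages under $d$ and sums of these, splits as the direct sum of the corresponding modules for $A$ and $C$. Hence there is a natural isomorphism
\begin{equation*}
  E_{r+1}^{p,p+n}(A\oplus C)\cong E_{r+1}^{p,p+n}(A)\oplus E_{r+1}^{p,p+n}(C)
\end{equation*}
under which the map induced by the inclusion $A\to A\oplus C$ is the inclusion of the first summand.

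Finally I would invoke \cref{JSOtimesFCHIsWeakEquivalence}: the morphism $0\to C$ lies in $J_S\otimes\fCh$ and is therefore an $r$-weak equivalence, which is to say $C$ is $r$-acyclic and $E_{r+1}^{p,p+n}(C)=0$ for all $p,n$. Combined with the displayed decomposition this shows the inclusion $A\to A\oplus C$ induces an isomorphism on every $(r+1)$-page, so it is an $r$-weak equivalence as required.

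There is essentially no hard part here; the only points to treat with care are the identification of the pushout with the direct sum via \cref{filteredChainsLimits} and the summand-wise computation of cycles and boundaries. Since these computations are filtration- and degree-wise and manifestly additive, the argument is routine, and the substantive content has already been isolated in the $r$-acyclicity of cones established in \cref{JSOtimesFCHIsWeakEquivalence} (via \cref{rConeIsrAcyclic}).
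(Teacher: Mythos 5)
Your proposal is correct and follows essentially the same route as the paper: the pushout is identified as the inclusion $A\rightarrow A\oplus\left(\Z_s(p,n)\otimes B\right)$, and the conclusion is drawn from \cref{JSOtimesFCHIsWeakEquivalence}. The paper's proof is a one-liner that leaves implicit the additivity of the associated spectral sequence on direct sums, which you spell out explicitly; this is a fair (and welcome) elaboration of the same argument rather than a different one.
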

\begin{proof}
  Such a morphism is of the form $A\rightarrow A\oplus\left(\Z_s(p,n)\otimes B\right)$ which is a weak equivalence by \cref{JSOtimesFCHIsWeakEquivalence}.
\end{proof}
\begin{prop}
  Every morphism of $\left(J_S\otimes\fCh\right)\Cell$ is an $r$-weak equivalence.
\end{prop}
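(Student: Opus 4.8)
The plan is to argue by transfinite induction along the defining presentation of a cellular morphism. Recall that every morphism of $J_S$ has domain the initial object $0$, so every morphism of $J_S\otimes\fCh$ again has domain $0$; consequently a pushout of such a morphism along $0\to X$ is simply a coproduct, and by \cref{filteredChainsLimits} this coproduct is the direct sum $X\mapsto X\oplus\left(\Z_s(p,n)\otimes B\right)$ with block-diagonal differential. A morphism of $\left(J_S\otimes\fCh\right)\Cell$ is therefore a transfinite composition $X_0\to\colim_{\beta}X_\beta$ in which each successor step $X_\beta\to X_{\beta+1}$ is the inclusion of a direct summand whose complement $\Z_{s}(p,n)\otimes B$ is $r$-acyclic, and in which the limit stages are the colimits $X_\lambda=\colim_{\beta<\lambda}X_\beta$. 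I would show by induction on $\beta$ that the structure map $X_0\to X_\beta$ induces an isomorphism on $(r+1)$-pages.

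For the successor step, the map $X_\beta\to X_{\beta+1}$ is exactly a pushout of a morphism of $J_S\otimes\fCh$, hence an $r$-weak equivalence by the preceding lemma; composing with the inductive isomorphism $X_0\to X_\beta$ on $(r+1)$-pages, and using that isomorphisms are closed under composition, gives the claim for $\beta+1$.

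The main obstacle is the limit stage, for which I would establish that the functors $Z_{r+1}^{p,p+n}(-)$ and $B_{r+1}^{p,p+n}(-)$, and hence $E_{r+1}^{p,p+n}(-)$, commute with the filtered colimits occurring here. The key point is that each transition map $X_\beta\to X_{\beta+1}$ is a monomorphism, so $X_\lambda$ is a filtered colimit of monomorphisms; by \cref{filteredChainsLimits} and the $\rho\colim i(-)$ description of colimits in $\fCh$ this colimit is computed degreewise and filtration-degreewise as the ordinary filtered colimit of $R$-modules, with $F_pX_\lambda^n=\colim_{\beta<\lambda}F_pX_\beta^n$ (the image occurring in the $\rho$-construction being identified with the union since the maps are injective). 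Since filtered colimits of $R$-modules are exact, they commute with the kernels, preimages, intersections, images and quotients out of which $Z_{r+1}^{p,p+n}$ and $B_{r+1}^{p,p+n}$ are built in \cref{associatedSS}, giving a natural isomorphism $E_{r+1}^{p,p+n}(X_\lambda)\cong\colim_{\beta<\lambda}E_{r+1}^{p,p+n}(X_\beta)$. By the inductive hypothesis every transition map in this directed system is an isomorphism, so the canonical map $E_{r+1}^{p,p+n}(X_0)\to E_{r+1}^{p,p+n}(X_\lambda)$ is an isomorphism as well, completing the limit stage.

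Putting the cases together, the transfinite induction shows that $X_0\to\colim_\beta X_\beta$ is an isomorphism on every $(r+1)$-page, i.e.\ an $r$-weak equivalence. The only genuinely delicate verification is the interchange of $E_{r+1}$ with filtered colimits, and in particular checking that the filtration passes correctly to the colimit through the $\rho$-construction of \cref{filteredChainsLimits}; everything else reduces to the preceding lemma together with the stability of isomorphisms under composition and directed colimit.
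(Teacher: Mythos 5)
Your proof is correct, but it takes a genuinely different route from the paper's. The paper exploits the fact that every morphism of $J_S\otimes\fCh$ has domain $0$ one step further than you do: since each cell attachment is a coproduct inclusion $X\rightarrow X\oplus\left(\Z_s(p,n)\otimes B\right)$, the \emph{entire} transfinite composition collapses at once to a single inclusion $A\rightarrow A\oplus\bigoplus_i C_{s(i)}(B_i)$, where each summand $\Z_{s(i)}(p,n)\otimes B_i$ is identified (up to shift) with the $s(i)$-cone $C_{s(i)}(B_i)$; by \cref{rConeIsrAcyclic} each cone is $s(i)$-acyclic, hence $r$-acyclic, and the inclusion is an $r$-quasi-isomorphism in one stroke, with no induction and no limit-ordinal analysis. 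You instead run the standard transfinite induction for cellular morphisms (in the same spirit as the paper's own proof of \cref{CellularKPage}), which forces you to verify at limit ordinals that $E_{r+1}^{p,p+n}(-)$ commutes with the directed colimits in question; your verification is sound --- the filtration does pass through the $\rho$-construction of \cref{filteredChainsLimits} as $F_pX_\lambda^n=\colim_{\beta<\lambda}F_pX_\beta^n$ (exactness of filtered colimits of $R$-modules makes $\colim_\beta F_pX_\beta\rightarrow\colim_\beta X_\beta$ injective, so the image in $\rho$ is the colimit itself; in fact this does not even require the transition maps to be monomorphisms), and the intersections, preimages, images, sums and quotients defining $Z_{r+1}$, $B_{r+1}$ and $E_{r+1}$ in \cref{associatedSS} all commute with filtered colimits. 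What each approach buys: the paper's argument is essentially a one-liner tailored to the special shape of these cells, while your colimit-commutation lemma is more work here but is a reusable general-purpose tool that would survive in situations where the attached cells do not have initial domain.
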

\begin{proof}
  Such a morphism is of the form $A\rightarrow A\oplus\bigoplus_i C_{s(i)}(B_i)$ where $B_i\in\fCh$ and $s(i)\in S$.
  By \cref{rConeIsrAcyclic} the cones are $s(i)$-acyclic and hence also $r$-acyclic so that the morphism is an $r$-quasi-isomorphism.
\end{proof}
\begin{coro}\label{MonoidAxiomInfChS}
  The model categories $\fChS$ satisfy the monoid axiom of \cref{MonoidAxiom} of Schwede and Shipley.\qed
\end{coro}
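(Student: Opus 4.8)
The plan is to obtain the monoid axiom as a direct application of Schwede and Shipley's criterion \cref{MonoidAxiomFromJ}, which says that in a cofibrantly generated closed symmetric monoidal model category it suffices to check that every morphism of $\left(J_S\otimes\fCh\right)\Cell$ is a weak equivalence. Since $\fChS$ is such a category by \cref{filtChainsS,fChSIsMonoidal}, the hypotheses of the criterion are in place, and the corollary will follow by citing it against the proposition immediately above.

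First I would record that the preceding three results assemble the needed input in increasing generality: \cref{JSOtimesFCHIsWeakEquivalence} handles the generators of $J_S\otimes\fCh$, identifying each, up to a shift of degrees, with an inclusion $0\rightarrow C_s(A)$ whose target is $s$-acyclic and hence $r$-acyclic by \cref{rConeIsrAcyclic}; the subsequent lemma propagates this through a single pushout; and the preceding proposition extends it through arbitrary transfinite compositions, exhibiting the codomain as $A\oplus\bigoplus_i C_{s(i)}(B_i)$, a direct sum of $r$-acyclic cones. Feeding this into \cref{MonoidAxiomFromJ} yields the monoid axiom of \cref{MonoidAxiom} at once.

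Accordingly there is no real obstacle at the level of the corollary itself; it is a formal consequence of the apparatus already built. The only genuinely delicate point lies upstream, in the limit-ordinal stage of the preceding proposition: there one must know that the transfinite composite really is the displayed coproduct, which rests on the image-description of colimits in \cref{filteredChainsLimits}, together with the fact that $E_{r+1}$ commutes with direct sums so that an arbitrary coproduct of $r$-acyclic complexes is again $r$-acyclic. With those facts secured earlier, the present corollary needs nothing further.
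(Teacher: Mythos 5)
Your proposal matches the paper exactly: the corollary carries a \qed in the paper precisely because it is the formal combination of \cref{MonoidAxiomFromJ} with the proposition immediately preceding it, which shows every morphism of $\left(J_S\otimes\fCh\right)\Cell$ is an $r$-weak equivalence via the cone decomposition $A\rightarrow A\oplus\bigoplus_i C_{s(i)}(B_i)$ and \cref{rConeIsrAcyclic}. Your identification of the only delicate point as lying upstream (in the cellular/colimit analysis, not in the corollary itself) is also how the paper structures the argument.
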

Given the model categories $\fChS$ are closed symmetric monoidal model categories which are cofibrantly generated and satisfy the monoid axiom the following theorems are all immediate consequences of \cref{SSModelStructures} of Schwede and Shipley since all objects of $\fCh$ are small relative to $\fCh$, \cref{filtChainsAreSmall}.

Recall the definitions of filtered differential graded (commutative) algebras, their (left) modules and categories from \cref{fdgas,filteredDGModules,categoryOfFDGAS}.
The set $S$ will continue to denote a subset of $\{0,1,2,\ldots,r\}$ containing $r$.
Denote by $U$ the forgetful functor from one of the categories of modules or algebras below to the category $\fCh$.
\begin{theo}\label{leftAModulesModCat}
  For $A\in\fdga$ there is a cofibrantly generated model category structure on the category of left $A$-modules whose weak equivalences are those morphisms that are $r$-quasi-isomorphisms after applying $U$ and fibrations are those morphisms that are surjective on $s$-cycles for all $s\in S$ after applying $U$.

  The generating cofibrations and acyclic cofibrations are given by $A\otimes I_S$ and $A\otimes J_S$ respectively.\qed
\end{theo}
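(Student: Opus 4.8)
The plan is to obtain this model structure as a direct application of Schwede and Shipley's \cref{SSModelStructures}(1). By \cref{fdgas}, a filtered differential graded algebra $A$ is precisely a monoid object in the monoidal category $\fCh$, so it suffices to verify that $\fChS$ meets the hypotheses of \cref{SSModelStructures} and then to read off its conclusion in the case of left modules over $A$.

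First I would collect the required hypotheses, all of which are already in place from the preceding sections. By \cref{fChSIsMonoidal} the category $\fChS$ is a cofibrantly generated closed symmetric monoidal model category; by \cref{filtChainsAreSmall} every object of $\fCh$ is small relative to $\fCh$; and by \cref{MonoidAxiomInfChS} the monoid axiom holds. These are exactly the three standing hypotheses of \cref{SSModelStructures}, so no further verification is needed before invoking it.

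With these in hand, part (1) of \cref{SSModelStructures} applied to $\mc{M}=\fChS$ and the monoid $A$ yields a cofibrantly generated model structure on the category of left $A$-modules, with generating cofibrations $A\otimes I_S$ and generating acyclic cofibrations $A\otimes J_S$, exactly as stated. The descriptions of the weak equivalences and fibrations then follow from the final clause of \cref{SSModelStructures}: a morphism $f$ of left $A$-modules is a weak equivalence (respectively a fibration) precisely when $Uf$ is one in $\fChS$. Since the weak equivalences of $\fChS$ are the $r$-quasi-isomorphisms and its fibrations are the morphisms that are $Z_s$-bidegreewise surjective for each $s\in S$ by \cref{filtChainsS}, this recovers the stated characterisations.

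Since every hypothesis has already been established, there is no genuine obstacle at this stage; the substantive work lies entirely in the monoidal structure of \cref{fChSIsMonoidal} and the monoid axiom of \cref{MonoidAxiomInfChS} proved earlier, and the present statement is a formal consequence of those results together with \cref{SSModelStructures}.
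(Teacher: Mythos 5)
Your proposal is correct and follows exactly the paper's own route: the paper likewise derives this theorem as an immediate consequence of \cref{SSModelStructures}, citing \cref{fChSIsMonoidal}, \cref{MonoidAxiomInfChS} and the smallness of all objects from \cref{filtChainsAreSmall}, with the descriptions of weak equivalences, fibrations and generating (acyclic) cofibrations read off from the final clauses of Schwede and Shipley's theorem. Nothing is missing.
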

\begin{theo}\label{AModulesModCat}
  For $A\in\fdgc$ there is a cofibrantly generated model category structure on the category of $A$-modules whose weak equivalences are those morphisms that are $r$-quasi-isomorphisms after applying $U$ and fibrations are those morphisms that are surjective on $s$-cycles for all $s\in S$ after applying $U$.

  The generating cofibrations and acyclic cofibrations are given by $A\otimes I_S$ and $A\otimes J_S$ respectively.\qed
\end{theo}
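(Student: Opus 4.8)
The plan is to obtain this model structure as a direct application of Schwede and Shipley's \cref{SSModelStructures}(2) to the monoidal model category $\fChS$. First I would assemble the three standing hypotheses that theorem requires of its ambient category. By \cref{fChSIsMonoidal} each $\fChS$ is a cofibrantly generated closed symmetric monoidal model category; by \cref{MonoidAxiomInfChS} it satisfies the monoid axiom; and by \cref{filtChainsAreSmall} every object of $\fCh$ is small relative to $\fCh$. These are exactly the conditions imposed on $\mc{M}$ in \cref{SSModelStructures}.

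The second step is the conceptual identification that a filtered differential graded commutative algebra $A\in\fdgc$, in the sense of \cref{fdgas}, is precisely a commutative monoid object in the symmetric monoidal category underlying $\fChS$, and that its category of $A$-modules (\cref{filteredDGModules}) is the category of module objects over this commutative monoid. With $R=A$, part (2) of \cref{SSModelStructures} then immediately produces a cofibrantly generated model structure on $A$-modules, with generating cofibrations $A\otimes I_S$ and generating acyclic cofibrations $A\otimes J_S$ read directly off the final sentence of that theorem.

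It remains only to translate the transferred classes of morphisms back into the explicit language of the statement. Schwede and Shipley's theorem defines the weak equivalences and fibrations of the module category to be those morphisms that become weak equivalences, respectively fibrations, of $\fChS$ after applying the forgetful functor $U$. Unwinding the characterisations recorded in \cref{filtChainsS}, a morphism in $\fChS$ is a weak equivalence exactly when it is an $r$-quasi-isomorphism, and a fibration exactly when it is $Z_s$-bidegreewise surjective for every $s\in S$; this reproduces the stated descriptions verbatim.

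There is no genuine obstacle at this point, since the substantive work has already been carried out in showing that $\fChS$ is a cofibrantly generated monoidal model category satisfying the monoid axiom. The only point demanding a moment's care is the bookkeeping that identifies $\fdgc$-objects with commutative monoids and $A$-modules with module objects, together with the routine matching of the transferred fibration and weak-equivalence conditions to the cycle-surjectivity and $r$-quasi-isomorphism formulations in the theorem; I expect this matching, rather than any deeper argument, to be the sole content of the proof.
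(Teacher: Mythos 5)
Your proposal is correct and follows exactly the paper's own route: the paper derives \cref{AModulesModCat} as an immediate consequence of \cref{SSModelStructures}(2), citing \cref{fChSIsMonoidal}, \cref{MonoidAxiomInfChS} and \cref{filtChainsAreSmall} for the hypotheses, with the descriptions of weak equivalences and fibrations read off from \cref{filtChainsS} just as you do. There is nothing to add.
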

For $A\in\fdgc$ denote by $T_A$ the free $A$-algebra functor $T_A\colon\fCh\rightarrow\fdga$.
\begin{theo}\label{AAlgebrasModCat}
    For $A\in\fdgc$ there is a cofibrantly generated model category structure on the category of $A$-algebras whose weak equivalences are those morphisms that are $r$-quasi-isomorphisms after applying $U$ and fibrations are those morphisms that are surjective on $s$-cycles for all $s\in S$ after applying $U$.

  The generating cofibrations and acyclic cofibrations are given by $T_AI_S$ and $T_AJ_S$ respectively.\qed
\end{theo}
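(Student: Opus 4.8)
The plan is to obtain this model structure as a direct application of part (3) of \cref{SSModelStructures}, the theorem of Schwede and Shipley, taking $\mc{M}=\fChS$ and the commutative monoid to be $A$. The substantive work has already been carried out in the preceding sections: I would first recall that $\fChS$ is a cofibrantly generated closed symmetric monoidal model category by \cref{fChSIsMonoidal}, that every object of $\fCh$ is small relative to $\fCh$ by \cref{filtChainsAreSmall}, and that $\fChS$ satisfies the monoid axiom by \cref{MonoidAxiomInfChS}. These are exactly the standing hypotheses demanded by \cref{SSModelStructures}.

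The one point requiring attention is the identification of inputs. A commutative monoid in the symmetric monoidal category $\fCh$ is, unwinding the definitions in \cref{fdgas,categoryOfFDGAS}, precisely a filtered differential graded commutative algebra; thus the hypothesis $A\in\fdgc$ is exactly the assertion that $A$ is a commutative monoid in $\mc{M}$. With this identification in place, part (3) of \cref{SSModelStructures} yields a cofibrantly generated model category of $A$-algebras whose generating cofibrations and generating acyclic cofibrations are $T_AI_S$ and $T_AJ_S$, with $T_A$ the free $A$-algebra functor, as claimed.

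It then remains only to read off the weak equivalences and fibrations. The final (``furthermore'') clause of \cref{SSModelStructures} characterises them as those morphisms which, after applying the forgetful functor $U$, are weak equivalences (respectively fibrations) of $\fChS$. Combining this with the explicit description of $\fChS$ in \cref{filtChainsS}---whose weak equivalences are the $r$-quasi-isomorphisms and whose fibrations are the morphisms that are $Z_s$-bidegreewise surjective for each $s\in S$---gives precisely the stated characterisation.

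I do not expect a genuine obstacle, since every nontrivial hypothesis of \cref{SSModelStructures} has already been established; the result is a formal consequence. The only subtlety worth flagging is that the commutativity of $A$ is \emph{essential}: it is what makes the category of $A$-modules symmetric monoidal under $\otimes_A$, and hence what allows $A$-algebras to be formed as monoids therein and the free functor $T_A$ to be defined. By contrast, the $A$-algebras produced need not themselves be commutative; indeed, as Schwede and Shipley observe in \cite[Remark 4.5]{SS}, no analogous model structure is available for \emph{commutative} $A$-algebras with these weak equivalences and fibrations.
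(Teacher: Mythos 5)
Your proposal is correct and follows exactly the paper's route: the paper deduces this theorem (together with \cref{leftAModulesModCat,AModulesModCat}) as an immediate consequence of part (3) of \cref{SSModelStructures}, citing \cref{fChSIsMonoidal}, \cref{MonoidAxiomInfChS} and \cref{filtChainsAreSmall}, and reading off the weak equivalences and fibrations from the ``furthermore'' clause together with \cref{filtChainsS}. Your closing remarks on the necessity of commutativity and on \cite[Remark 4.5]{SS} match observations the paper itself makes in the preliminaries.
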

In particular setting $A$ to be the filtered differential graded commutative algebra $R_{(0)}^0$ in \cref{AAlgebrasModCat} we obtain a model category of filtered differential graded algebras.
Denote by $T$ the free algebra functor $T_{R_{(0)}^0}$.
\begin{coro}\label{fdgaModCat}
  The category $\fdga$ admits a model category structure whose weak equivalences are those morphisms that are $r$-quasi-isomorphisms after applying $U$ and fibrations those morphisms that are surjective on $s$-cycles after applying $U$.

  The generating cofibrations and acyclic cofibrations are given by $TI_S$ and $TJ_S$ respectively.\qed
\end{coro}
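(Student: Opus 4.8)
The plan is to obtain \cref{fdgaModCat} as the special case of \cref{AAlgebrasModCat} in which the base commutative algebra $A$ is taken to be the monoidal unit $R_{(0)}^0$. First I would check that $R_{(0)}^0$ genuinely lies in $\fdgc$: its multiplication is the canonical unit isomorphism $R_{(0)}^0\otimes R_{(0)}^0\xrightarrow{\sim}R_{(0)}^0$ and its unit map is the identity, and these are evidently associative, unital and graded commutative, so $R_{(0)}^0$ is a filtered differential graded commutative algebra. Hence \cref{AAlgebrasModCat} applies to it and produces a cofibrantly generated model structure on the category of $R_{(0)}^0$-algebras.

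The key identification is that the category of $R_{(0)}^0$-algebras is precisely $\fdga$. Since $R_{(0)}^0$ is the unit for $\otimes$, an $R_{(0)}^0$-algebra structure on an object is exactly the data of a monoid structure in $\fCh$ (the action of the base is forced to be the unit isomorphism), so the two categories coincide as described in \cref{fdgas,categoryOfFDGAS}. Under this identification the free $R_{(0)}^0$-algebra functor $T_{R_{(0)}^0}$ is the free (tensor) algebra functor, which is exactly the $T$ of the statement, and the forgetful functor from $R_{(0)}^0$-algebras to $\fCh$ agrees with $U$.

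Finally I would transport the conclusion of \cref{AAlgebrasModCat} across this identification. That theorem detects weak equivalences (resp.\ fibrations) as the $r$-quasi-isomorphisms (resp.\ the $s$-cycle surjections for all $s\in S$) after applying $U$, and takes $T_{R_{(0)}^0}I_S$ and $T_{R_{(0)}^0}J_S$ as generating cofibrations and generating acyclic cofibrations; rewriting $T_{R_{(0)}^0}=T$ gives exactly the data claimed in \cref{fdgaModCat}. There is no real obstacle here beyond this bookkeeping: the only point deserving any care is verifying that the isomorphism of categories between $R_{(0)}^0$-algebras and $\fdga$ intertwines the two free functors and the two forgetful functors, so that the model-categorical data of \cref{AAlgebrasModCat} is carried precisely to the asserted structure on $\fdga$.
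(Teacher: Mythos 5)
Your proposal is correct and takes essentially the same route as the paper: the paper deduces \cref{fdgaModCat} precisely by setting $A=R_{(0)}^0$ in \cref{AAlgebrasModCat}, identifying $R_{(0)}^0$-algebras with $\fdga$, and writing $T$ for the free algebra functor $T_{R_{(0)}^0}$. The bookkeeping you flag (that $R_{(0)}^0\in\fdgc$ and that the identification intertwines the free and forgetful functors) is exactly what the paper leaves implicit.
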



\section{Monoidal model structures on $\fCh$ with cofibrant unit}\label{cofibrant-unit}
Recall from \cref{UnitIsNotCofibrant} that the unit $R_{(0)}^0$ of the monoidal model category $\fChS$ is not cofibrant.
A cofibrant unit is often desirable so we construct here, using the results of Muro outlined previously in \cref{MurosResults}, further new model structures on filtered chain complexes for which the unit becomes cofibrant at the cost of making the generating (acyclic) cofibrations more difficult to contend with.

Recall the morphism $\pi_r\colon Q_r\I\rightarrow R_{(0)}^0$ of \cref{rCofibrantUnitReplacement} which is an $S$-acyclic fibration by \cref{rCofibrantUnitReplacementAcyclicFibration} with $Q_r\I$ a cofibrant replacement for the unit in the $S$-model structure.
In Muro's setup we must find a factorisation:
\begin{equation*}
  Q_r\I\oplus R_{(0)}^0 \rightarrowtail C \overset{\sim}{\rightarrow} R_{(0)}^0
\end{equation*}
of $(\pi_r,id)$ into a cofibration followed by a weak equivalence.

\begin{defi}
  We denote by $\rho_r$ the degree $1$ morphism of filtered chain complexes defined as the composite of the $\rSusp\pi_r$ and the morphism $id_{-r}^1$:
  \begin{equation*}
    \rho_r\coloneqq id_{-r}^1\circ \rSusp\pi_r
        \colon \rSusp Q_r\I \overset{\sim}{\rightarrow}
        \rSusp R_{(0)}^0 \simeq R_{(r)}^{-1}
        \rightarrow R_{(0)}^0
      \end{equation*}
      where the latter is the identity on the underlying $R$-modules between degrees $-1$ and $0$ decreasing filtration by $r$.
\end{defi}

\begin{defi}
  We define the filtered chain complex $D$ by:
  \begin{equation*}
    D\coloneqq \left(Q_r\I\oplus R_{(0)}^0\right)\toplus
    \rSusp Q_r\I
  \end{equation*}
  where the twist differential $\tau$ maps $\rSusp Q_r\I$ identically onto the $Q_r\I$ summand (decreasing filtration by $r$ in the process) and onto $R_{(0)}^0$ by $\rho_r$.
\end{defi}

We can then give our factorisation of $Q_r\I\oplus R_{(0)}^0\rightarrow R_{(0)}^0$ into the composite of an $r$-cofibration and $r$-weak equivalence as
\begin{equation*}
  \begin{tikzcd}
    Q_r\I\oplus R_{(0)}^0\arrow[r,"j"]
    & D \arrow[r,"q"]
    &R_{(0)}^0
  \end{tikzcd}
\end{equation*}
where $j$ is the inclusion of $Q_r\I\oplus R_{(0)}^0$ into the first component of the twisted direct sum of $D$.
The morphism $q$ is given by $\nabla\circ (\pi_r\oplus id)$ on the first component of the twisted direct sum and by $0$ on the second.

\begin{lemm}
  The morphism $j\colon Q_r\I\oplus R_{(0)}^0\rightarrow D$ is an $r$-cofibration.
\end{lemm}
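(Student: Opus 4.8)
The plan is to recognise $j$ as the inclusion of a twisted direct sum and then to re-run the lifting argument behind \cref{subclassOfCofibrations}, the key point being that the one place the boundedness condition~5 of \cref{subclassOfCofibrantObjects} was used is automatically satisfied here. First I would unwind \cref{defi:twistedDirectSum} to see that $j$ is exactly the inclusion
\begin{equation*}
  j\colon Q_r\I\oplus R_{(0)}^0 \longrightarrow \bigl(Q_r\I\oplus R_{(0)}^0\bigr)\toplus \rSusp Q_r\I = D
\end{equation*}
onto the first summand, so that its cokernel is $\rSusp Q_r\I$ and its twist morphism $\tau$ is the sum of the identity onto the $Q_r\I$ summand and of $\rho_r$ onto $R_{(0)}^0$. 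Since both of these decrease filtration by $r$, the twist $\tau$ is $r$-suppressive in the sense of \cref{rSupressive}, so $j$ has precisely the shape treated in \cref{subclassOfCofibrations}. The obstruction to quoting that proposition verbatim is that its proof needs the cokernel to satisfy condition~5 of \cref{subclassOfCofibrantObjects}, whereas $\rSusp Q_r\I$ has filtration unbounded below, inherited from $Q_r\I$ (cf.\ \cref{lemm:QrIIsCofibrant}).

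Next I would show that the cokernel $\rSusp Q_r\I$ is nonetheless cofibrant. The functor $\rSusp = R_{(r)}^{-1}\otimes(-)$ preserves all colimits, being a left adjoint with right adjoint $\rLoops = R_{(-r)}^{1}\otimes(-)$, and on representing objects it acts by $\Z_s(p,n)\mapsto \Z_s(p+r,n-1)$ and $\B_s(p,n)\mapsto\B_s(p+r,n-1)$. Hence it carries the generating sets $I_S$ and $J_S$ of \cref{ISandJS} bijectively onto themselves, so it sends relative $I_S\Cell$ complexes to relative $I_S\Cell$ complexes and retracts to retracts. As $Q_r\I$ is cofibrant by \cref{lemm:QrIIsCofibrant}, it follows that $\rSusp Q_r\I$ is cofibrant. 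In particular it satisfies conditions~1, 2 and~4 of \cref{cofibrantConditions}; moreover $Q_r\I$ is $r$-suppressive by direct inspection of its differential in \cref{rCofibrantUnit} (which lowers filtration by at least $r$), and suspension preserves $k$-suppressiveness, so $\rSusp Q_r\I$ is $r$-suppressive as well.

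Finally I would run the lifting argument of \cref{subclassOfCofibrations} for $j$. Given an $S$-acyclic fibration $\pi\colon Y\twoheadrightarrow X$ and a lifting problem with $j$ on the left, the lift on the summand $Q_r\I\oplus R_{(0)}^0$ is fixed by the given data, and it remains to lift over $\rSusp Q_r\I$. The sole role of condition~5 in the proof of \cref{subclassOfCofibrations} was to split the cokernel into pure-weight graded pieces; here $\rSusp Q_r\I$ is by construction a direct sum of pure-weight rank-one free modules, so this splitting is already present and condition~5 is unnecessary. Each such piece is projective and, by $r$-suppressiveness, an $r$-cycle, hence lifts through $\pi$ by $r$-cycle surjectivity, giving a lift $h$ of filtered graded modules. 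The failure of $h$ to commute with the differential is a morphism $\rSusp Q_r\I\to\rSusp K$ into the $r$-acyclic suspended kernel of $\pi$, and condition~4 for $\rSusp Q_r\I$ supplies a homotopical correction via a lift against the $r$-acyclic fibration $C_r(\rSusp K)\to\rSusp K$ from \cref{rCone}. The corrected map is the desired lift, so $j$ has the left lifting property against all $S$-acyclic fibrations and is an $r$-cofibration.

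The hard part is exactly this replacement of condition~5: one cannot cite \cref{subclassOfCofibrations} directly, and must instead check by hand that its proof survives the unbounded filtration. The crucial structural inputs making this possible are that $\rSusp Q_r\I$ is manifestly a pure-weight coproduct (so the splitting step is free) and that it remains cofibrant — hence retains condition~4 — because $\rSusp$ merely permutes the generating cofibrations.
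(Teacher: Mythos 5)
Your proposal is correct and takes essentially the same route as the paper: identify $j$ as the inclusion into the twisted direct sum with cokernel $\rSusp Q_r\I$ and $r$-suppressive twist, then observe that the only role of the boundedness condition~5 in \cref{subclassOfCofibrations} was to produce a splitting into pure-weight projective pieces, which $\rSusp Q_r\I$ already has by construction. Your argument that $\rSusp Q_r\I$ is cofibrant (via $\rSusp$ permuting the generating sets $I_S$ and $J_S$) is a detail the paper asserts without proof, so this is a welcome addition rather than a departure.
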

\begin{proof}
  The cokernel of $j$ is $\rSusp Q_r\I$ which is a cofibrant object in $\fChr$.
  The twisting differential $\tau$ is $r$-suppressive by construction and so our conditions of \cref{subclassOfCofibrations} hold with the exception of that of a bounded filtration.
  Note however that the object $\rSusp Q_r\I$ can be written (degree wise) as a direct sum, as how it was defined in \cref{rCofibrantUnit}, and that the proof of \cref{subclassOfCofibrations} follows through in the same way with this decomposition.
\end{proof}

\begin{lemm}
  The morphism $q$ is an $r$-weak equivalence.
\end{lemm}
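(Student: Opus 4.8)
The plan is to exploit the fact that $q$ has an obvious section, so that $D$ splits as a genuine direct sum of filtered chain complexes and the whole question collapses to the $r$-acyclicity of a single summand. Let $s\colon R_{(0)}^0\to D$ be the inclusion $y\mapsto(0,y,0)$ of the untwisted $R_{(0)}^0$-summand. This is a morphism of filtered chain complexes, since the twist differential vanishes on it and $R_{(0)}^0$ carries the zero differential, and it satisfies $q\circ s=\id$ because $q(0,y,0)=\pi_r(0)+y=y$. Thus $R_{(0)}^0$ is a retract of $D$, and setting $p\coloneqq\id_D-s\circ q$ (again a morphism of filtered chain complexes) produces a splitting $D\cong R_{(0)}^0\oplus K$ in $\fCh$, where $K\coloneqq\ker q=\im p$ and under which $q$ is the projection onto the first factor.

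Because $r$-cycles and $r$-boundaries are computed summand-wise on an honest direct sum of filtered chain complexes, the associated spectral sequence is additive, so that $E_{r+1}(D)\cong E_{r+1}(R_{(0)}^0)\oplus E_{r+1}(K)$ with $q$ inducing the projection. Hence $q\in\Er$ if and only if $K$ is $r$-acyclic, and it remains only to identify $K$. Computing $p$ on elements gives $K=\{(x,-x_0,z)\;:\;x\in Q_r\I,\ z\in\rSusp Q_r\I\}$, where $x_0=\pi_r(x)$ is the coefficient of $1_{(0)}^0$. The assignment $\Phi\colon(x,z)\mapsto(x,-x_0,z)$ is then a filtration-preserving isomorphism $Q_r\I\oplus\rSusp Q_r\I\overset{\sim}{\rightarrow}K$: the class $x_0$ has pure weight $0$, so whenever $x$ lies in filtration degree $p<0$ it has $x_0=0$, and no filtration is lost either way.

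I would finish by showing that $\Phi$ identifies $K$ with the $r$-cone $C_r(Q_r\I)=C_r(\id_{Q_r\I})$. Since $K$ is a subcomplex of $D$, the differential transported through $\Phi$ automatically lands back in $K$, and a direct computation shows it is exactly the $r$-cone differential $(z,x)\mapsto(-d^{Q_r\I}z,\ z+d^{Q_r\I}x)$, the $\rho_r$-twist onto $R_{(0)}^0$ being absorbed precisely by the $-x_0$ shift while the diagonal twist onto $Q_r\I$ supplies the identity term of the cone. By the remark following \cref{rConeIsrAcyclic}, the $r$-cone $C_r(Q_r\I)$ is $r$-acyclic; hence $K$ is $r$-acyclic and $q\in\Er$.

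The main obstacle is this last identification $K\cong C_r(Q_r\I)$, i.e.\ the bookkeeping that shows the two twists of $D$ recombine after the change of variables $\Phi$ into a single identity-cone differential (with the correct signs). One should resist the temptation to argue instead from the short exact sequence $0\to Q_r\I\to D\to C_r(\pi_r)\to 0$, whose cokernel is $r$-acyclic by \cref{rConeIsrAcyclic} because $\pi_r\in\Er$ by \cref{rCofibrantUnitReplacementAcyclicFibration}: a short exact sequence of filtered chain complexes does not induce a long exact sequence of $(r+1)$-pages, so $r$-acyclicity of the cokernel does not by itself force $q$ to be a weak equivalence. The retract splitting is what makes the spectral sequence genuinely additive and thereby circumvents this difficulty.
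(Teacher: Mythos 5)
Your proof is correct, and it takes a genuinely different route from the paper's. The paper identifies $D$ with the $r$-cone of $(\id,\pi_r)\colon Q_r\I\rightarrow Q_r\I\oplus R_{(0)}^0$ and then argues at the level of pages: it splits $E_r^{p,q}(D)\cong E_r^{p-r,q+1-r}(Q_r\I)\oplus E_r^{p,q}(Q_r\I)\oplus E_r^{p,q}(R_{(0)}^0)$ and checks that the $r$-page differential cancels the two $Q_r\I$ summands against each other, so that passing to homology leaves exactly $E_{r+1}(R_{(0)}^0)$, realised by $q$. You instead split $D$ in $\fCh$ itself: the section $s$ of $q$ yields filtration-preserving orthogonal idempotents $sq$ and $\id_D-sq$, hence an honest direct sum $D\cong R_{(0)}^0\oplus K$ with $K=\ker q\cong C_r(\id_{Q_r\I})$, and you finish with additivity of the associated spectral sequence over direct sums plus the $r$-acyclicity of identity cones. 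Your version is the more economical one: it needs no computation of page differentials and reuses only facts already recorded in the paper, though it exploits the special feature that $q$ admits a section, whereas the paper's page-level cone computation is the technique that would apply to cones of arbitrary morphisms. Two small remarks. First, your explicit formulas depend on sign conventions that the paper leaves slightly inconsistent: with the twist literally equal to $\tau=(\id,\rho_r)$ and $q=\nabla\circ(\pi_r\oplus\id)$ one gets $q\circ d^D\neq 0$, so a sign must be inserted somewhere for $q$ to be a morphism at all; depending on where it goes, your $K$ comes out as the cone of $\id$ or of $-\id$, but both are $r$-acyclic (e.g.\ $-\id\in\Er$ together with \cref{rConeIsrAcyclic}), so your argument is unaffected. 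Second, your closing caution about short exact sequences is exactly right, and it is the reason both proofs must manufacture a splitting somewhere --- of pages in the paper's case, of filtered complexes in yours.
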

\begin{proof}
  The filtered chain complex $D$ is the cone of the morphism $(id,\pi_r)\colon Q_r\I \rightarrow Q_r\I\oplus R_{(0)}^0$.
  Hence using \cite[Remark 3.6]{CELW}:
  \begin{equation*}
    E_r^{p,q}(D) \cong E_r^{p-r,q+1-r}(Q_r\I)\oplus E_r^{p,q}(Q_r\I)
    \oplus E_r^{p,q}(R_{(0)}^0)
  \end{equation*}
  and the map $E_r^{p,q}(q)\colon E_r^{p,q}(D)\rightarrow E_r^{p,q}(R_{(0)}^0)$ is $0$ except on $E_r^{0,0}(D)\cong E_r^{p-r,q+1-r}(Q_r\I)\oplus R\oplus R$ where it is the identity map from the second and third components to $E_r^{0,0}(R_{(0)}^0)\cong R$.
  The $r$-page differential on $E_r(D)$ can be checked to kill the $E_r^{\ast,\ast}(Q_r\I)$ terms leaving only the $E_r^{\ast,\ast}(R_{(0)}^0)$ term yielding an isomorphism on the $(r+1)$-page.
\end{proof}

We can then give our cofibrantly generated monoidal model structures on $\fCh$ which have a cofibrant unit.

\begin{theo}
  For every $r\geq 0$ and every subset $S\subseteq \{0,1,\ldots,r\}$ containing $r$, the category $\fCh$ admits a right proper cofibrantly generated monoidal model structure denoted $\fChHatS$ satisfying the monoid axiom with:
  \begin{itemize}
  \item weak equivalences given by the $r$-quasi-isomorphisms,
  \item generating cofibrations and acyclic cofibrations given by:
    \begin{align*}
      \tilde{I}_S&\coloneqq I_S\cup\left\{0\rightarrow R_{(0)}^0\right\}\,,\\
      \tilde{J}_S&\coloneqq J_S\cup \left\{j\circ i_1\colon Q_r\I
                   \rightarrow D\right\}
    \end{align*}
    respectively.
  \end{itemize}
\end{theo}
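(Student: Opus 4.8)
The plan is to deduce this theorem as a direct application of Muro's \cref{MurosTheorem} to the monoidal model category $\mc{M} = \fChS$, reading off $\fChHatS = \wt{\mc{M}}$ together with its generating sets. First I would assemble the hypotheses required by that theorem. By \cref{fChSIsMonoidal} the category $\fChS$ is a cofibrantly generated closed symmetric monoidal model category; by \cref{filtChainsAreSmall} all its objects are small relative to $\fCh$; and by \cref{rCofibrantUnitReplacementAcyclicFibration} together with \cref{VeryStrongUnitAxiomForFCHS} the morphism $\pi_r\colon Q_r\I \to R_{(0)}^0$ is a cofibrant replacement of the unit satisfying the very strong unit axiom of \cref{veryStrongUnitAxiom}. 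These are precisely the inputs of \cref{MurosTheorem}, which then yields a cofibrantly generated monoidal model structure with the same underlying category, monoidal product and weak equivalences (the $r$-quasi-isomorphisms) as $\fChS$.

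Next I would check that Muro's generating sets specialise to the ones in the statement. In $\fCh$ the initial object is $0$ and the tensor unit is $R_{(0)}^0$, so Muro's $\wt{I} = I \cup \{\emptyset \to \I\}$ becomes $\tilde{I}_S = I_S \cup \{0 \to R_{(0)}^0\}$. For the acyclic cofibrations, \cref{MurosFactorisation} asks for a factorisation of $(\pi_r, \id)\colon Q_r\I \oplus R_{(0)}^0 \to R_{(0)}^0$ into a cofibration followed by a weak equivalence, where the coproduct $Q_r\I \coprod R_{(0)}^0$ is the direct sum $Q_r\I \oplus R_{(0)}^0$. The two lemmas immediately preceding this theorem supply exactly such a factorisation $Q_r\I \oplus R_{(0)}^0 \overset{j}{\rightarrowtail} D \overset{q}{\underset{\sim}{\to}} R_{(0)}^0$, with $j$ an $r$-cofibration and $q$ an $r$-weak equivalence. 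Taking Muro's object $C$ to be $D$ and $i_1\colon Q_r\I \to Q_r\I \oplus R_{(0)}^0$ the inclusion of the first summand, his $\wt{J} = J \cup \{j \circ i_1\}$ becomes $\tilde{J}_S = J_S \cup \{j \circ i_1\colon Q_r\I \to D\}$, as claimed.

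Finally the remaining structural assertions are the corresponding preservation clauses of \cref{MurosTheorem}: right properness of $\fChHatS$ follows from right properness of $\fChS$ (\cref{filtChainsS}), and the monoid axiom for $\fChHatS$ follows since $\fChS$ is symmetric monoidal satisfying the monoid axiom by \cref{MonoidAxiomInfChS}. I do not expect a genuine obstacle beyond the bookkeeping of matching Muro's abstract factorisation data with the explicit complex $D$; the only substantive point handled by the two preceding lemmas is the verification that $(j, q)$ is a valid cofibration/weak-equivalence factorisation of $(\pi_r, \id)$, which is exactly what they establish.
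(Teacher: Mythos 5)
Your proposal is correct and follows essentially the same route as the paper: the paper's proof is a direct application of Muro's \cref{MurosTheorem} to $\mc{M}=\fChS$, citing \cref{fChSIsMonoidal} and \cref{VeryStrongUnitAxiomForFCHS}, with the two lemmas preceding the theorem supplying the factorisation $Q_r\I\oplus R_{(0)}^0\rightarrowtail D\rightarrow R_{(0)}^0$ that plays the role of Muro's $C$. Your write-up is simply more explicit about the bookkeeping (smallness of objects, matching of the generating sets, and the right-properness and monoid-axiom preservation clauses), all of which the paper leaves implicit.
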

\begin{proof}
  This follows from Muro's theorem, \cref{MurosTheorem}, noting that the conditions are satisfied in our setup: $\mc{M}=\fChS$ is a cofibrantly generated monoidal model category with cofibrant replacement of the unit given by $q\coloneqq \pi_r\colon Q_r\I\rightarrow R_{(0)}^0$ satisfying the very strong unit axiom, \cref{VeryStrongUnitAxiomForFCHS}.
\end{proof}

\begin{rema}
  Note this only makes $R_{(0)}^0$ cofibrant but the $R_{(p)}^n$ in general are not made cofibrant in this procedure.
  One can add in the relevant shifts of the morphisms $0\rightarrow R_{(0)}^0$ and $Q_r\I\rightarrow D$ into $\tilde{I}_S$ and $\tilde{J}_S$ respectively (i.e.\ tensor these by $R_{(p)}^n$) to make all $R_{(p)}^n$ cofibrant too.
  One need only check the proof of Muro's theorem still applies in this case.
\end{rema}


\appendix
\section{Proof of the pushout-product axiom}\label{monoidal-proof}
This appendix contains the proof of \cref{PushoutProductOfIAndJIsAcyclicCofibration}, that decomposes elements of $I_{r}\boxtimes J_s$, and of \cref{pushoutProductOfGeneratingCofibrations}, that computes the pushout-product of two generating cofibrations in $I_{r}$ of $\fChS$ and shows it is a cofibration.

\subsection{Proof of \cref{PushoutProductOfIAndJIsAcyclicCofibration}}\label{ProofOfPushoutProductOfIAndJIsAcyclicCofibration}

We give a change of basis that exhibits the isomorphism of morphisms of filtered chain complexes give in \cref{PushoutProductOfIAndJIsAcyclicCofibration}.
The morphism $\Z_{r+1}(p,n)\otimes\Z_s(q,m)\rightarrow\B_{r+1}(p,n)\otimes\Z_s(q,m)$ can be depicted as in \cref{pushoutProductOfGenCofibAndAcyclicGenCofib} in which names have been given to the generators of each component $R_{(\bullet)}^\bullet$.

One can check that this morphism of filtered chain complexes decomposes as:
\begin{equation*}
  \begin{tikzcd}[column sep=1.8em]
    0\arrow[d,dashed]\arrow[r]&0\arrow[d,dashed]
    &0\arrow[d,dashed]\arrow[r]&0\arrow[d,dashed]\\
    R\left\{h\right\}
    \arrow[r]& R\left\{f+(-1)^{n-1}g\right\}
    &R\left\{e\right\}\arrow[r]
    &R\left\{b+(-1)^nc\right\}\\
    R\left\{B\right\}\arrow[r]\arrow[d,dashed]
    &R\left\{(-1)^{n+1}A\right\}\arrow[d,dashed]
    &R\left\{D\right\}\arrow[r]\arrow[d,dashed]
    &R\left\{B+(-1)^nC\right\}\arrow[d,dashed]\\
    R\left\{b\right\}\arrow[r]
    &R\left\{(-1)^{n+1}a\right\}
    &R\left\{e+f\right\}\arrow[r]
    &R\left\{b+(-1)^nc+(-1)^nd\right\}
  \end{tikzcd}
\end{equation*}
where we have omitted any cohomological or filtration indexing for brevity.
These are the four morphisms give in the lemma.
One must also check that any of the elements $a,b,c,d,e,f,g,h,A,\allowbreak B,C$ and $D$, in cohomological degree $n_1$ and filtration degree $p_1$, can be written as a linear combination of the elements in the diagram whose cohomological and filtration degrees do not exceed $n_1$ and $p_1$ respectively, so that this change of basis is compatible with the filtration.

This completes the proof.

\afterpage{%
  \begin{landscape}\centering
    \vspace*{\fill}
    \begin{figure}[htpb]
      \begin{equation*}
        \begin{tikzcd}[ampersand replacement=\&, column sep=0.5cm]
          \&\&\&{R}\vphantom{R}_{(p)+(q)}^{(n)+(m)}\{D\}\arrow[rr,"1"]\arrow[dd,dashed,"\begin{pmatrix}1\\1\end{pmatrix}", near start]
          \arrow[ld,"(-1)^n"]\&\&
          {R}\vphantom{R}_{(p-r-1)+(q)}^{(n+1)+(m)}\{B\}\arrow[dd,dashed,"1"]
          \arrow[dl,"(-1)^{n+1}"]\\
          \&\&{R}\vphantom{R}_{(p)+(q-s)}^{(n)+(m+1)}\{C\}
          \arrow[rr, near start, crossing over,"1"']
          \&\&{R}\vphantom{R}_{(p-r-1)+(q-s)}^{(n+1)+(m+1)}\{A\}\&\\
          \&{R}\vphantom{R}_{(p+r)+(q)}^{(n-1)+(m)}\{h\}
          \arrow[rr, "i_1", near start]
          \arrow[dl,"(-1)^{n-1}"]
          \&\&{R}\vphantom{R}_{(p)+(q)}^{(n)+(m)}\{f\}\oplus {R}\vphantom{R}_{(p-1)+(q)}^{(n)+(m)}\{e\}
          \arrow[dl,"(-1)^n\id"]
          \arrow[rr,"\pi_2", near start]\&\&
          {R}\vphantom{R}_{(p-r-1)+(q)}^{(n+1)+(m)}\{b\}\arrow[dl,"(-1)^{n+1}"]\\
          {R}\vphantom{R}_{(p+r)+(q-s)}^{(n-1)+(m+1)}\{g\}\arrow[rr,"i_1"']
          \&\&{R}\vphantom{R}_{(p)+(q-s)}^{(n)+(m+1)}\{d\}\oplus{R}\vphantom{R}_{(p-1)+(q+s)}^{(n)+(m+1)}\{c\}\arrow[rr,"\pi_2"']
          \arrow[uu, dashed, crossing over, leftarrow,"\begin{pmatrix}1\\1\end{pmatrix}", near end]
          \&\&{R}\vphantom{R}_{(p-r-1)+(q-s)}^{(n+1)+(m+1)}\{a\}\arrow[uu, leftarrow, dashed, crossing over,"1",near end]
        \end{tikzcd}
      \end{equation*}
      \caption{Pushout-product of $\Z_{r+1}(p,n)\rightarrow\B_{r+1}(p,n)$ and $0\rightarrow\Z_s(q,m)$}
      \label{pushoutProductOfGenCofibAndAcyclicGenCofib}
    \end{figure}
    \vfill
  \end{landscape}%
}


\subsection{Proof of \cref{pushoutProductOfGeneratingCofibrations}}
\label{ProofOfpushoutProductOfGeneratingCofibrations}

To compute the pushout-product of two generating cofibrations of the form $i\colon\Z_{r+1}(p,n)\rightarrow \B_{r+1}(p,n)$ and $j \colon \Z_{r+1}(q,m) \rightarrow \B_{r+1}(q,m)$ we will make use of the method outlined in the discussion preceding \cref{filteredChainsLimits}: recall we have an adjunction $\inadj{\rho}{\Ch^{\Zbb_\infty}}{\fCh}{i}$ and that a colimit in $\fCh$ can instead be computed in $\Ch^{\Zbb_\infty}$ via this adjunction.
Once we have identified the pushout-product in this way we realise $i\boxtimes j$ as a morphism of the form of \cref{subclassOfCofibrations} which shows it is an $r$-cofibration.

We begin by computing the domain of the pushout-product:
\begin{equation}\label{PushoutOfDomain}
  \begin{tikzcd}
    \Z_{r+1}(p,n)\otimes\Z_{r+1}(q,m)\arrow[r,"k_2"]\arrow[d,"k_1"']
    \arrow[dr,phantom,very near end, "\ulcorner"]
    &\Z_{r+1}(p,n)\otimes\B_{r+1}(q,m)\arrow[d,dashed]\\
    \B_{r+1}(p,n)\otimes\Z_{r+1}(q,m)\arrow[r,dashed]
    &\dom(i\boxtimes j)
  \end{tikzcd}\,.
\end{equation}

The filtered chain complexes of the lower left, upper left and upper right are depicted in \cref{domainPushoutProductComponents} where $i_k$ and $\pi_k$ are inclusions and projections into or off of the $k$-component.
We have used the convention of \cref{tensorProductBracketingNotation} grouping subscript and superscript terms together to help keep track of the components from the tensor products.

We make a general notational remark about how to interpret \cref{domainPushoutProductComponents,namedDomainPushoutProductComponents,namedCodomainPushoutProduct,codomainPushoutProduct,pushoutProduct}.
\begin{nota}
  Since we are working with tensor products of filtered chain complexes we are displaying these diagrams so that the constituent $R$-modules summands in a fixed cohomological degrees can be read off along a diagonal and similarly for filtration degrees along the horizontal direction.
  We label differentials which for the most part are either a sign times the identity, inclusion of a summand or projection from a summand, e.g.\ in \cref{namedDomainPushoutProductComponents} the differential of $M$ is given by $d(M)=(-1)^nN+O$.

  Unlabelled arrows in these diagrams denote an identity matrix and should be interpreted via adjacency, e.g.\ in \cref{namedCodomainPushoutProduct} there is an arrow from the sum of $R\left\{\ul{C}\right\}$ and $R\left\{\ul{E}\right\}$ to the sum of $R\left\{\ul{F}\right\}$,$R\left\{\ul{I}\right\}$,$R\left\{\ul{H}\right\}$ and $R\left\{\ul{K}\right\}$.
  Along the arrow $R\left\{\ul{C}\right\}$ is adjacent to $R\left\{\ul{F}\right\}$ and $R\left\{\ul{E}\right\}$ is adjacent to $R\left\{\ul{I}\right\}$ so this arrow encodes a differential sending $\ul{C}\mapsto\ul{F}$ and $\ul{E}\mapsto\ul{I}$.

  Other more complicated names are given explicit names and described within this appendix.
\end{nota}

The boxes demarcating a copy of a filtered graded $R$-module in  \cref{domainPushoutProductComponents} illustrate the maps of the pushout Diagram \ref{PushoutOfDomain}: a filtered graded $R$-module of the component corresponding to $\Z_{r+1}(p,n)\otimes\Z_{r+1}(q,m)$ demarcated by a dashed box is mapped identically or diagonally into the boxes of the same dashed type in the other two diagrams.

We provide names for the generators of these filtered graded $R$-modules in \cref{namedDomainPushoutProductComponents} where the filtration and cohomological indexing has also been dropped.
In this notation the map $k_1\colon\Z_{r+1}(p,n)\otimes\Z_{r+1}(q,m)\rightarrow\B_{r+1}(p,n)\otimes\Z_{r+1}(q,m)$ of Diagram \ref{PushoutOfDomain} is determined on generators by:
\begin{align}
  k_1(I)=B+C
  && k_1(J)=E
  && k_1(K)=F+G
  && k_1(L)=H\,,\label{PushoutMaps1}
\end{align}
and the map $k_2\colon\Z_{r+1}(p,n)\otimes\Z_{r+1}(q,m)\rightarrow\Z_{r+1}(p,n)\otimes\B_{r+1}(q,m)$ of \cref{PushoutOfDomain} is determined on generators by:
\begin{align}
  k_2(I)= N+ P
  && k_2(J)=Q+T
  && k_2(K)=S
  && k_2(L)=U\,.\label{PushoutMaps2}
\end{align}
One can write down similar equations for the differentials of these generators, e.g.\ $d(M)=O+(-1)^nN$ that can be read off from \cref{namedDomainPushoutProductComponents}.

We now need to compute the pushout $\dom(i\boxtimes j)$.
For this we must use the description of colimits given by \cref{filteredChainsLimits}, i.e.\ to compute a colimit of a diagram $X\colon \mc{J}\rightarrow \fCh$ we view $X$ as an object of $\Ch^{\Zbb_\infty}$ via an inclusion functor, compute the colimit in this category instead and then apply the functor $\rho$ to yield an object of $\fCh$.

Diagrams for the filtered chain complexes $\Z_{r+1}(p,n)\otimes\Z_{r+1}(q,m)$, $\Z_{r+1}(p,n)\otimes\B_{r+1}(q,m)$ and $\B_{r+1}(p,n)\otimes\Z_{r+1}(q,m)$ realised as $\Zbb_\infty$-indexed diagrams are depicted in \cref{ZCrossZInZ+Chains,ZCrossBInZ+Chains,BCrossZInZ+Chains} respectively.
The colimit in $\Zbb_\infty$-indexed chains can then be computed pointwise from these three diagrams along with the data of the maps of \cref{PushoutMaps1,PushoutMaps2}.
The result is shown in \cref{PushoutInZ+Chains} where some identifications have taken place, so that for example we now write $\ol{N}$ for the class of the image of $N$ in the pushout.
The equation $\ol{N}+\ol{P}=\ol{B}+\ol{C}$ is in an example of an identification that takes place in the pushout.
The pushout can now be identified in $\fCh$ by applying the functor $\rho$ to \cref{PushoutInZ+Chains}.
This yields the filtered chain complex given by the upper half of \cref{pushoutProduct} where the the filtered graded $R$-modules correspond to those of \cref{ABCPushoutDiagram}.

The notation $R_{(\ast)+(\ast)}^{(\ast)+(\ast)}$ of \cref{tensorProductBracketingNotation} is somewhat meaningless after these identifications have taken place however we maintain it for clarity of the diagrams.

The differentials labelled in \cref{ABCPushoutDiagram} can also be read off from \cref{PushoutInZ+Chains}.
Those labelled $\alpha,\beta,\gamma$ and $\epsilon$ are given by:
\begin{equation}\label{namedDifferentialsOfPushoutroduct}
  \begin{aligned}
    \alpha(\ol{A})&=\ol{B} & \beta(\ol{M})&=(-1)^n\ol{B}+(-1)^n\ol{C}+(-1)^{n+1}\ol{P}\\
    \gamma(\ol{B})&= 0 & \gamma(\ol{C})&= \ol{Q}+\ol{T}\\
    \gamma(\ol{P})&= \ol{T} & \epsilon(\ol{B})&= (-1)^n\ol{F}\\
    \epsilon(\ol{C})&= (-1)^n\ol{G} & \epsilon(\ol{P})&= (-1)^n\ol{F}+(-1)^n\ol{G}
  \end{aligned}
\end{equation}

We next turn our attention to the map from this pushout into $\B_{r+1}(p,n)\otimes\B_{r+1}(q,m)$.
This codomain $\B_{r+1}(p,n)\otimes\B_{r+1}(q,m)$ is given in \cref{codomainPushoutProduct} and as before we give names to the filtered graded $R$-modules in \cref{namedCodomainPushoutProduct}.
The map $i\boxtimes j$ is induced from the map on the generators of \cref{namedDomainPushoutProductComponents} with values in the generators of 
\cref{namedCodomainPushoutProduct} given by:
\begin{equation}\label{mapOnGeneratorInducedBy}
  \begin{aligned}
    A&\mapsto \ul{C}+\ul{E}
    & B&\mapsto \ul{F}+\ul{I}
    & C&\mapsto \ul{H}+\ul{K}
    & D&\mapsto \ul{J}\\
    E&\mapsto \ul{L}+\ul{N}
    & F&\mapsto \ul{M}
    & G&\mapsto \ul{O}
    & H&\mapsto \ul{P}\\
    M&\mapsto \ul{B}+\ul{D}
    & N&\mapsto \ul{F}+\ul{H}
    & O&\mapsto \ul{G}
    & P&\mapsto \ul{I}+\ul{K}\\
    Q&\mapsto \ul{L}
    & S&\mapsto \ul{M}+\ul{O}
    & T&\mapsto \ul{N}
    & U&\mapsto \ul{P}\\
  \end{aligned}
\end{equation}
which can be checked to be compatible with the maps from the generators $I,J,K$ and $K$ into either component of the pushout.
These maps then induce maps from the pushout in $\Zbb_\infty$-indexed chains to $\B_{r+1}(p,n)\otimes\B_{r+1}(q,m)$ viewed as a $\Zbb_\infty$-indexed chain complex.

This completes the description of the pushout-product $i\boxtimes j$ and the proof of \cref{pushoutProductOfGeneratingCofibrations}.
\begin{equation}\label{greek_i}
  \begin{aligned}
    \alpha_1&=
              \begin{pmatrix}
                1\\(-1)^n
              \end{pmatrix}
    &
      \alpha_2&=
                \begin{pmatrix}
                  (-1)^{n+1}&1
                \end{pmatrix}\\
    \beta_1&=
             \begin{pmatrix}
               (-1)^n\\1\\0
             \end{pmatrix}
    &
      \beta_2&=
               \begin{pmatrix}
                 1&(-1)^{n+1}&0\\
                 0&0&(1)^n\\
                 0&0&1
               \end{pmatrix}\\
    \beta_3&=
             \begin{pmatrix}
               (-1)^{n+1}&0\\
               0&(-1)^n\\
               0&1
             \end{pmatrix}
    &
      \beta_4&=
               \begin{pmatrix}
                 0&1&(-1)^{n+1}
               \end{pmatrix}\\
    \beta_5&=(-1)^{n+1}
    &
      \gamma_1&=
                \begin{pmatrix}
                  1\\0\\(-1)^{n-1}
                \end{pmatrix}\\
    \gamma_2&=
              \begin{pmatrix}
                0&1&0\\
                (-1)^n&0&1\\
                0&(-1)^n&0
              \end{pmatrix}
    &
      \gamma_3&=
                \begin{pmatrix}
                  1&0\\
                  0&1\\
                  (-1)^n&0
                \end{pmatrix}\\
    \gamma_4&=
              \begin{pmatrix}
                (-1)^{n+1}&0&1
              \end{pmatrix}
                            &
                              \gamma_5&=1
  \end{aligned}
\end{equation}
\begin{figure}
  \centering
  \begin{equation*}
    \begin{tikzcd}[column sep=2.4cm,
      every matrix/.append style={name=mat},
      execute at end picture={
        \draw[dashed] (mat-5-2.south west) |- (mat-4-2.north east)
        -| (mat-5-2.south east) -- (mat-5-2.south west);
        \draw[dashed] (mat-11-2.north west) -| (mat-11-2.south east)
        -| (mat-11-2.north west);
        \draw[dashed] (mat-16-2.north west) -| (mat-16-2.south east)
        -| (mat-16-2.north west);
        \draw[dotted] (mat-4-3.north west) -| (mat-5-3.south east)
        -- (mat-5-3.south west) |- (mat-4-3.north west);
        \draw[dotted] (mat-11-3.north west) -| (mat-11-3.south east)
        -| (mat-11-3.north west);
        \draw[dotted] (mat-16-3.north west) -| (mat-16-3.south east)
        -| (mat-16-3.north west);
        \draw[dash dot dot] (mat-8-2.north west) -| (mat-8-2.south east)
        -| (mat-8-2.north west);
        \draw[dash dot dot] (mat-13-2.north west) -| (mat-13-2.south east)
        -| (mat-13-2.north west);
        \draw[dash dot dot] (mat-18-2.north west) -| (mat-18-2.south east)
        -- (mat-18-2.south west) |- (mat-18-2.north west);
        \draw[] (mat-8-3.north west) -| (mat-8-3.south east)
        -| (mat-8-3.north west);
        \draw[] (mat-13-3.north west) -| (mat-13-3.south east)
        -| (mat-13-3.north west);
        \draw[] (mat-18-3.north west) -| (mat-18-3.south east)
        -| (mat-18-3.north west);
      }]
      & R_{(p+r)+(q)}^{(n-1)+(m)} \arrow[r, "(-1)^{n-1}"]
      \arrow[ddd,"i_1"']
      & R_{(p+r)+(q-r-1)}^{(n-1)+(m+1)} \arrow[ddd, "i_1"]\\ \\ \\
      &R_{(p)+(q)}^{(n)+(m)} \arrow[d,phantom, "\oplus"{name=A}]
      & R_{(p)+(q-r-1)}^{(n)+(m+1)} \arrow[d, phantom, "\oplus"{name=B}]\\
      &R_{(p-1)+(q)}^{(n)+(m)} \arrow[ddd,"\pi_2"']
      & R_{(p-1)+(q-r-1)}^{(n)+(m+1)} \arrow[ddd, "\pi_2"] \\ \\ \\
      &R_{(p-r-1)+(q)}^{(n+1)+(m)} \arrow[r, "(-1)^{n+1}"]
      & R_{(p-r-1)+(q-r-1)}^{(n+1)+(m+1)} \\ \\ \\
      \arrow[from=A,to=B, "(-1)^n\id"]
      &R_{(p)+(q)}^{(n)+(m)} \arrow[r, "(-1)^n"] \arrow[dd]
      &R_{(p)+(q-r-1)}^{(n)+(m+1)} \arrow[dd]
      \\ \\
      &R_{(p-r-1)+(q)}^{(n+1)+(m)} \arrow[r, "(-1)^{n+1}"]
      &R_{(p-r-1)+(q-r-1)}^{(n+1)+(m+1)}
      \\ \\ \\
      R_{(p)+(q+r)}^{(n)+(m-1)} \arrow[r, "(-1)^ni_1"] \arrow[dd]
      &R_{(p)+(q)}^{(n)+(m)} \oplus\ R_{(p)+(q-1)}^{(n)+(m)}
      \arrow[r, "(-1)^n\pi_2"] \arrow[dd, "\id"]
      &R_{(p)+(q-r-1)}^{(n)+(m+1)}  \arrow[dd]\\ \\
      R_{(p-r-1)+(q+r)}^{(n+1)+(m-1)} \arrow[r, "(-1)^{n+1}i_1"]
      &R_{(p-r-1)+(q)}^{(n+1)+(m)} \oplus\ R_{(p-r-1)+(q-1)}^{(n+1)+(m)}
      \arrow[r, "(-1)^{n+1}\pi_2"]
      &R_{(p-r-1)+(q-r-1)}^{(n+1)+(m+1)}
    \end{tikzcd}
  \end{equation*}\caption{Components of the pushout: $\B_{r+1}(p,n)\otimes\Z_{r+1}(q,m)$, $\Z_{r+1}(p,n)\otimes\Z_{r+1}(q,m)$ and $\Z_{r+1}(p,n)\otimes\B_{r+1}(q,m)$. The central diagram maps via $k_1$ into the top diagram and via $k_2$ into the bottom.}
  \label{domainPushoutProductComponents}
\end{figure}
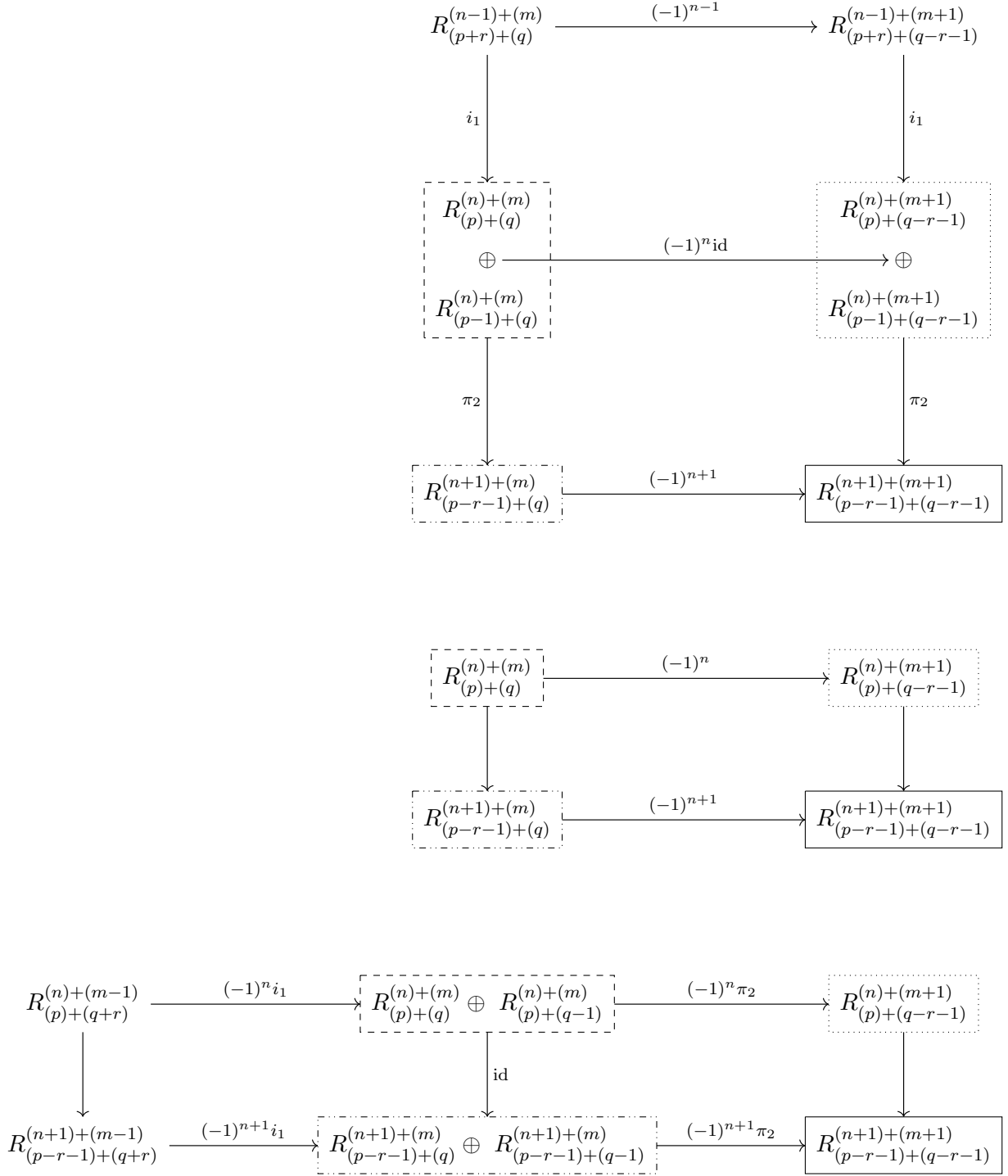
\begin{figure}[t]
  \begin{equation*}
    \begin{tikzcd}[ampersand replacement=\&, column sep=2cm]
      \& R\left\{A\right\} \arrow[r, "(-1)^{n-1}"] \arrow[d] \& R\left\{D\right\} \arrow[d]\\
      \&\begin{matrix}R\left\{B\right\}\\\oplus\\R\left\{C\right\}\end{matrix} \arrow[d]\arrow[r,"(-1)^n\id"] \&
      \begin{matrix}R\left\{F\right\}\\\oplus\\R\left\{G\right\}\end{matrix}\arrow[d]\\
      \& R\left\{E\right\}\arrow[r,"(-1)^{n+1}"]\&R\left\{H\right\}\\
      \& R\left\{I\right\} \arrow[r,"(-1)^n"] \arrow[d] \& R\left\{K\right\} \arrow[d]\\
      \& R\left\{J\right\} \arrow[r,"(-1)^{n+1}"] \& R\left\{L\right\}\\
      R\left\{M\right\} \arrow[r,"(-1)^ni_1"]
      \arrow[d] \& R\left\{N\right\} \oplus R\left\{P\right\} \arrow[d] \arrow[r,"(-1)^n\pi_2"]
      \& R\left\{S\right\} \arrow[d]\\
      R\left\{O\right\} \arrow[r,"(-1)^{n+1}i_1"']
      \& R\left\{Q\right\} \oplus R\left\{T\right\} \arrow[r,"(-1)^{n+1}\pi_2"'] \& R\left\{U\right\}
    \end{tikzcd}
  \end{equation*}\caption{Named components of the pushout corresponding to \cref{domainPushoutProductComponents}}\label{namedDomainPushoutProductComponents}
\end{figure}

\begin{figure}[h]
  \begin{equation*}
    \begin{tikzcd}[column sep=2cm]
      R\left\{\ul{A}\right\}\arrow[r,"(-1)^{n-1}i_1"]\arrow[d,"i_1"']&
      \begin{matrix}R\left\{\ul{C}\right\}\amsamp\oplus\amsamp R\left\{\ul{E}\right\}\end{matrix}
      \arrow[d]\arrow[r,"(-1)^{n-1}\pi_2"] & R\left\{\ul{J}\right\}\arrow[d,"i_1"]\\
      \begin{matrix}R\left\{\ul{B}\right\}\\\oplus\\R\left\{\ul{D}\right\}\end{matrix}\arrow[d,"\pi_2"']\arrow[r]&
      \begin{matrix}R\left\{\ul{F}\right\}\amsamp \oplus \amsamp R\left\{\ul{I}\right\}\\\oplus\amsamp\amsamp\oplus\\
        R\left\{\ul{H}\right\}\amsamp\oplus\amsamp R\left\{\ul{K}\right\}\end{matrix}\arrow[r]\arrow[d]
      &\begin{matrix}R\left\{\ul{M}\right\}\\\oplus\\R\left\{\ul{O}\right\}\end{matrix}\arrow[d,"\pi_2"]\\
      R\left\{\ul{G}\right\}\arrow[r,"(-1)^{n+1}i_1"']&
      \begin{matrix}R\left\{\ul{L}\right\}\amsamp\oplus\amsamp R\left\{\ul{N}\right\}\end{matrix}
      \arrow[r,"(-1)^{n+1}\pi_2"']&R\left\{\ul{P}\right\}
    \end{tikzcd}
  \end{equation*}\caption{Named codomain of the pushout-product $i\boxtimes j$}\label{namedCodomainPushoutProduct}
\end{figure}
\begin{figure}
  \centering
  \begin{tikzpicture}[auto, xscale=3, yscale=0.8]
    \foreach \name/\x in {n+1+m+1/2, n+1+m/1, n+m/0} {
      \node at (\x,-1.2) [rotate=45] {\name};
    }
    \foreach \name/\y in {p-r-1+q-r-2/0, p-r-1+q-r-1/1,
      $\vdots$/2,p+q-r-1/3, $\vdots$/4,
      p+q/5,$\vdots$/6} {
      \node at (-1,\y) {\name};
    }
    \draw (-0.5,7.5) -- (-0.5,-0.3) -- (2.5,-0.3);
    \foreach \name/\y in {0/0, $\vdots$/1, $\vdots$/4,
      {$I$}/5, $\vdots$/6} {
      \node (0c\y) at (0,\y) {\name};
    }
    \foreach \name/\y in {0/0, $\vdots$/1, $\vdots$/2,
      {$J\oplus K$}/3, $\vdots$/4,
      $J \oplus K$/5, $\vdots$/6} {
      \node (1c\y) at (1,\y) {\name};
    }
    \foreach \name/\y in {0/0, {$L$}/1, $\vdots$/2, $L$/3,
      $\vdots$/6} {
      \node (2c\y) at (2,\y) {\name};
    }
    \draw[->] (0c5) to node
    {$\alpha_1$} (1c5);
    \draw[->] (1c3) to node
    {$\alpha_2$} (2c3);
  \end{tikzpicture}
  \caption{Representation in $\Zbb_\infty$-chains of $\Z_{r+1}(p,n)\otimes\Z_{r+1}(q,m)$ with differentials described in \cref{greek_i}}
  \label{ZCrossZInZ+Chains}
\end{figure}

\begin{figure}
  \centering
  \begin{tikzpicture}[auto, xscale=3, yscale=1]
    \foreach \name/\x in {n+1+m+1/2, n+1+m/1, n+m/0, n-1+m/-1} {
      \node at (\x,-1) [rotate=45] {\name};
    }
    \foreach \name/\y in {p-r-1+q-r-2/0, p-r-1+q-r-1/1,
      $\vdots$/2, p-r-1+q-1/3,p+q-r-1/4, $\vdots$/5,
      p+q-1/6,p+q/7,$\vdots$/8,p+q+r/9, $\vdots$/10} {
      \node at (-2,\y) {\name};
    }
    \draw (-1.5,11) -- (-1.5,-0.3) -- (2.5,-0.3);
    \foreach \name/\y in {0/0, $\vdots$/1, $\vdots$/8,
      $M$/9, $\vdots$/10} {
      \node (-1c\y) at (-1,\y) {\name};
    }
    \foreach \name/\y in {0/0, $\vdots$/1, $\vdots$/5,
      $\phantom{N\oplus\ }O\oplus P$/6,
      $N\oplus O\oplus P$/7, $\vdots$/8,
      $N\oplus O\oplus P$/9, $\vdots$/10} {
      \node (0c\y) at (0,\y) {\name};
    }
    \foreach \name/\y in {0/0, $\vdots$/1, $\vdots$/2,
      $\phantom{Q\oplus S\oplus\ }T$/3,
      $Q\oplus S\oplus T$/4,
      $\vdots$/5, $Q\oplus S\oplus T$/6, $Q\oplus S\oplus T$/7, $\vdots$/8,
      $\vdots$/10} {
      \node (1c\y) at (1,\y) {\name};
    }
    \foreach \name/\y in {0/0, $U$/1, $\vdots$/2, $U$/3,
      $U$/4, $\vdots$/5, $\vdots$/10} {
      \node (2c\y) at (2,\y) {\name};
    }
    \draw[->] (-1c9) to node
    {$\beta_1$} (0c9);
    \draw[->] (0c7) to node
    {$\beta_2$} (1c7);
    \draw[->] (0c6) to node [swap]
    {$\beta_3$} (1c6);
    \draw[->] (1c4) to node
    {$\beta_4$} (2c4);
    \draw[->] (1c3) to node [swap]
    {$\beta_5$} (2c3);
  \end{tikzpicture}
  \caption{Representation in $\Zbb_\infty$-chains of $\Z_{r+1}(p,n)\otimes\B_{r+1}(q,m)$ with differentials described in \cref{greek_i}}
  \label{ZCrossBInZ+Chains}
\end{figure}

\begin{figure}
  \centering
  \begin{tikzpicture}[auto, xscale=3, yscale=1]
    \foreach \name/\x in {n+1+m+1/2, n+1+m/1, n+m/0, n-1+m/-1} {
      \node at (\x,-1) [rotate=45] {\name};
    }
    \foreach \name/\y in {p-r-1+q-r-2/0, p-r-1+q-r-1/1,
      $\vdots$/2, p-1+q-r-1/3,p+q-r-1/4, $\vdots$/5,
      p+q-1/6,p+q/7,$\vdots$/8,p+r+q/9, $\vdots$/10} {
      \node at (-2,\y) {\name};
    }
    \draw (-1.5,11) -- (-1.5,-0.3) -- (2.5,-0.3);
    \foreach \name/\y in {0/0, $\vdots$/1, $\vdots$/8,
      $A$/9, $\vdots$/10} {
      \node (-1c\y) at (-1,\y) {\name};
    }
    \foreach \name/\y in {0/0, $\vdots$/1, $\vdots$/5,
      $\phantom{B\oplus\ }C\oplus D$/6,
      $B\oplus C\oplus D$/7, $\vdots$/8,
      $B\oplus C\oplus D$/9, $\vdots$/10} {
      \node (0c\y) at (0,\y) {\name};
    }
    \foreach \name/\y in {0/0, $\vdots$/1, $\vdots$/2,
      $\phantom{E\oplus F\oplus\ }G$/3,
      $E\oplus F\oplus G$/4,
      $\vdots$/5, $E\oplus F\oplus G$/6, $E\oplus F\oplus G$/7, $\vdots$/8,
      $\vdots$/10} {
      \node (1c\y) at (1,\y) {\name};
    }
    \foreach \name/\y in {0/0, $H$/1, $\vdots$/2, $H$/3,
      $H$/4, $\vdots$/5, $\vdots$/10} {
      \node (2c\y) at (2,\y) {\name};
    }
    \draw[->] (-1c9) to node
    {$\gamma_1$} (0c9);
    \draw[->] (0c7) to node
    {$\gamma_2$} (1c7);
    \draw[->] (0c6) to node [swap]
    {$\gamma_3$} (1c6);
    \draw[->] (1c4) to node
    {$\gamma_4$} (2c4);
    \draw[->] (1c3) to node [swap]
    {$\gamma_5$} (2c3);
  \end{tikzpicture}
  \caption{Representation in $\Zbb_\infty$-chains of $\B_{r+1}(p,n)\otimes\Z_{r+1}(q,m)$ with differentials described in \cref{greek_i}}
  \label{BCrossZInZ+Chains}
\end{figure}

\begin{landscape}
  \begin{figure}[hp]
    \centering
    \begin{tikzpicture}[auto, xscale=5, yscale=1.2]
      \foreach \name/\x in {n+1+m+1/2, n+1+m/1, n+m/0, n-1+m/-1} {
        \node at (\x,-1) [rotate=45] {\name};
      }
      \foreach \name/\y in {p-r-1+q-r-2/0, p-r-1+q-r-1/1,
        $\vdots$/2, p-1+q-r-1/3,p+q-r-1/4, $\vdots$/5,
        p+q-1/6,p+q/7,$\vdots$/8,p+r+q/9, $\vdots$/10} {
        \node at (-2,\y) {\name};
      }
      \draw (-1.5,11) -- (-1.5,-0.3) -- (2.5,-0.3);
      \foreach \name/\y in {0/0, $\vdots$/1, $\vdots$/8,
        $\ol{A}\oplus \ol{M}$/9, $\vdots$/10} {
        \node (-1c\y) at (-1,\y) {\name};
      }
      \foreach \name/\y in {0/0, $\vdots$/1, $\vdots$/5,
        $\phantom{\ol{B}\oplus\ }\ol{C}\oplus \ol{D}\oplus \ol{O}\oplus \ol{P}$/6,
        $\ol{B}\oplus \ol{C}\oplus \ol{D}\oplus \ol{O}\oplus \ol{P}$/7, /8,
        $\ol{B}\oplus \ol{C}\oplus \ol{D}\oplus \ol{O}\oplus \ol{P}$/9, $\vdots$/10} {
        \node (0c\y) at (0,\y) {\name};
      }
      \foreach \name/\y in {0/0, $\vdots$/1, $\vdots$/2,
        $\phantom{\ol{F}\oplus \ol{Q}} \ol{G}\oplus \ol{T}$/3,
        $\ol{F}\oplus \ol{Q}\oplus \ol{G}\oplus \ol{T}$/4,
        $\vdots$/5, $\ol{F}\oplus \ol{Q}\oplus \ol{G}\oplus \ol{T}$/6,
        $\ol{F}\oplus \ol{Q}\oplus \ol{G}\oplus \ol{T}$/7, /8,
        $\vdots$/10} {
        \node (1c\y) at (1,\y) {\name};
      }
      \foreach \name/\y in {0/0, $\ol{L}$/1, $\vdots$/2, $\ol{L}$/3,
        $\ol{L}$/4, $\vdots$/5, $\vdots$/10} {
        \node (2c\y) at (2,\y) {\name};
      }
      \draw[->] (-1c9) to node
      {\scriptsize${\scriptstyle \begin{pmatrix}1&(-1)^n\\0&(-1)^n\\(-1)^{n-1}&0\\0&1\\0&(-1)^{n+1}\end{pmatrix}}$} (0c9);
      \draw[->] (0c7) to node [label={[label distance=0.02cm]90:
      {\scriptsize${\scriptstyle \begin{pmatrix}(-1)^n&0&1&0&(-1)^n\\0&1&0&(-1)^{n+1}&0\\0&(-1)^n&0&0&(-1)^n\\0&1&0&0&1\end{pmatrix}}$}}] {} (1c7);
      \draw[->] (0c6) to node [swap,label={[label distance=0.02cm]270:
      {\scriptsize${\scriptstyle
 \begin{pmatrix}0&1&0&(-1)^n\\1&0&(-1)^{n+1}&0\\(-1)^n&0&0&(-1)^n\\1&0&0&1\end{pmatrix}}$}}] {} (1c6);
      \draw[->] (1c4) to node
      {\scriptsize${\scriptstyle \begin{pmatrix}0&0&1&(-1)^{n+1}\end{pmatrix}}$} (2c4);
      \draw[->] (1c3) to node [swap]
      {\scriptsize${\scriptstyle \begin{pmatrix}1&(-1)^{n+1}\end{pmatrix}}$} (2c3);
    \end{tikzpicture}
    \caption{Representation in $\Zbb_\infty$-chains of the pushout}
    \label{PushoutInZ+Chains}
  \end{figure}
\end{landscape}%


\begin{figure}
  \begin{equation*}
    \begin{tikzcd}
      R_{(p+r)+(q+r)}^{(n-1)+(m-1)} \arrow[rrr,"(-1)^{n-1}i_1"] \arrow[ddd,"i_1"] &&& R_{(p+r)+(q)}^{(n-1)+(m)} \arrow[r, phantom, "\oplus"{name=A}] & R_{(p+r)+(q-1)}^{(n-1)+(m)} \arrow[rrr, "(-1)^{n-1}\pi_2"] &&&
      R_{(p+r)+(q-r-1)}^{(n-1)+(m+1)} \arrow[ddd,"i_1"] \\ \\ \\
      R_{(p)+(q+r)}^{(n)+(m-1)} \arrow[d, phantom, "\oplus"{name=C}] &&& R_{(p)+(q)}^{(n)+(m)} \arrow[r, phantom, "\oplus"{name=B}] \arrow[d, phantom, "\oplus"{name=D}] & R_{(p)+(q-1)}^{(n)+(m)} \arrow[d, phantom, "\oplus"{name=E}]&&&
      R_{(p)+(q-r-1)}^{(n)+(m+1)} \arrow[d, phantom, "\oplus"{name=F}]\\
      R_{(p-1)+(q+r)}^{(n)+(m-1)} \arrow[ddd,"\pi_2"] &&& R_{(p-1)+(q)}^{(n)+(m)} \arrow[r, phantom, "\oplus"{name=G}] &
      R_{(p-1)+(q-1)}^{(n)+(m)} &&& R_{(p-1)+(q-r-1)}^{(n)+(m+1)} \arrow[ddd, "\pi_2"] \\ \\ \\
      R_{(p-r-1)+(q+r)}^{(n+1)+(m-1)} \arrow[rrr, "(-1)^{n+1}i_1"] &&& R_{(p-r-1)+(q)}^{(n+1)+(m)} \arrow[r, phantom, "\oplus"{name=H}] & R_{(p-r-1)+(q-1)}^{(n+1)+(m)} \arrow[rrr, "(-1)^{n+1}\pi_2"] &&& R_{(p-r-1)+(q-r-1)}^{(n+1)+(m+1)}
      \arrow[from=A, to=B, "id"]
      \arrow[from=C, to=D, "(-1)^nid"]
      \arrow[from=E, to=F, "(-1)^nid"]
      \arrow[from=G, to=H, "id"]
    \end{tikzcd}
  \end{equation*}\caption{Codomain of the pushout-product $i\boxtimes j$}\label{codomainPushoutProduct}
\end{figure}
\begin{figure}
  \begin{equation*}
    \begin{tikzcd}[column sep=2cm]
      &R_{(p+r)+(q)}^{n+m-1} \arrow[r,"(-1)^{n-1}"]\arrow[d,"\alpha"]&R_{(p+r)+(q-r-1)}^{n+m}\arrow[d,"i_1"] \\
      R_{(p)+(q+r)}^{n+m-1}\arrow[d,"1"]\arrow[r,"\beta"]&
      \begin{matrix}R_{(p)+(q)}^{n+m}\amsamp\oplus\amsamp R_{(p)+(q-1)}^{n+m}\\
        \oplus\amsamp\amsamp\\R_{(p-1)+(q)}^{n+m}\end{matrix}\arrow[r,"\epsilon"]\arrow[d,"\gamma"]&
      \begin{matrix}R_{(p)+(q-r-1)}^{n+m+1}\\\oplus\\ R_{(p-1)+(q-r-1)}^{n+m+1}\end{matrix}\arrow[d,"\pi_2"]\\
      R_{(p-r-1)+(q+r)}^{n+m}\arrow[r,"(-1)^{n+1}i_1"]&
      \begin{matrix}R_{(p-r-1)+(q)}^{n+m+1}\amsamp\oplus\amsamp R_{(p-r-1)+(q-1)}^{n+m+1}\end{matrix}\arrow[r,"(-1)^{n+1}\pi_2"]
      \arrow[ddd, "i\boxtimes j", dashed]
      &R_{(p-r-1)+(q-r-1)}^{n+m+2}\\\\\\
      R_{(p+r)+(q+r)}^{(n-1)+(m-1)}\arrow[r,"(-1)^{n-1}i_1"]\arrow[d,"i_1"]
      &\begin{matrix}R_{(p+r)+(q)}^{(n-1)+(m)}
        \amsamp\oplus\amsamp R_{(p+r)+(q-1)}^{(n-1)+(m)}\end{matrix}
      \arrow[r,"(-1)^{n-1}\pi_2"]\arrow[d]
      &R_{(p+r)+(q-r-1)}^{(n-1)+(m+1)}\arrow[d,"i_1"]\\
      \begin{matrix}R_{(p)+(q+r)}^{(n)+(m-1)}\\\oplus\\R_{(p-1)+(q+r)}^{(n)+(m-1)}\end{matrix}
      \arrow[r,"(-1)^n"]\arrow[d,"\pi_2"]&
      \begin{matrix}R_{(p)+(q)}^{(n)+(m)}\amsamp\oplus\amsamp R_{(p)+(q-1)}^{(n)+(m)}
        \\\oplus\amsamp\amsamp\oplus\\
        R_{(p-1)+(q)}^{(n)+(m)}\amsamp\oplus\amsamp R_{(p-1)+(q-1)}^{(n)+(m)}\end{matrix}
      \arrow[r,"(-1)^n"]\arrow[d]
      &\begin{matrix}R_{(p)+(q-r-1)}^{(n)+(m+1)}\\\oplus\\R_{(p-1)+(q-r-1)}^{(n)+(m+1)}\end{matrix}
      \arrow[d,"\pi_2"]\\
      R_{(p-r-1)+(q+r)}^{(n+1)+(m-1)}\arrow[r,"(-1)^{n+1}i_1"]&
      \begin{matrix}R_{(p-r-1)+(q)}^{(n+1)+(m)}\amsamp\oplus\amsamp R_{(p-r-1)+(q-1)}^{(n+1)+(m)}\end{matrix}
      \arrow[r,"(-1)^{n+1}\pi_2"]&
      R_{(p-r-1)+(q-r-1)}^{(n+1)+(m+1)}
    \end{tikzcd}
  \end{equation*}\caption{The pushout-product of generating cofibrations}\label{pushoutProduct}
\end{figure}
\begin{figure}[h]
  \begin{equation*}
    \begin{tikzcd}[column sep=3cm]
      & R\left\{\ol{A}\right\} \arrow[r,"(-1)^{n-1}"]\arrow[d,"\alpha"]& R\left\{\ol{D}\right\}\arrow[d,"i_1"]\\
      R\left\{\ol{M}\right\}\arrow[r,"\beta"]\arrow[d,"1"']
      &\begin{matrix}R\left\{\ol{B}\right\}\amsamp\oplus\amsamp R\left\{\ol{P}\right\}\\
         \oplus\amsamp\amsamp\\
         R\left\{\ol{C}\right\} \amsamp\amsamp
       \end{matrix}\arrow[r,"\epsilon"]\arrow[d,"\gamma"]
       &\begin{matrix}R\left\{\ol{F}\right\}\\\oplus\\R\left\{\ol{G}\right\}\end{matrix}\arrow[d,"\pi_2"]\\
       R\left\{\ol{O}\right\}\arrow[r,"(-1)^{n+1}i_1"']&
       \begin{matrix}R\left\{\ol{Q}\right\}\amsamp\oplus\amsamp R\left\{\ol{T}\right\}\end{matrix}\arrow[r,"(-1)^{n+1}\pi_2"']
       \arrow[ddd, dashed, "i\boxtimes j"]& R\left\{\ol{L}\right\}\\\\\\
       R\left\{\ul{A}\right\}\arrow[r,"(-1)^{n-1}i_1"]\arrow[d,"i_1"']&
       \begin{matrix}R\left\{\ul{C}\right\}\amsamp\oplus\amsamp R\left\{\ul{E}\right\}\end{matrix}
       \arrow[d]\arrow[r,"(-1)^{n-1}\pi_2"] & R\left\{\ul{J}\right\}\arrow[d,"i_1"]\\
       \begin{matrix}R\left\{\ul{B}\right\}\\\oplus\\R\left\{\ul{D}\right\}\end{matrix}\arrow[d,"\pi_2"']\arrow[r,"(-1)^n"]&
       \begin{matrix}R\left\{\ul{F}\right\}\amsamp \oplus \amsamp R\left\{\ul{I}\right\}\\\oplus\amsamp\amsamp\oplus\\
         R\left\{\ul{H}\right\}\amsamp\oplus\amsamp R\left\{\ul{K}\right\}\end{matrix}\arrow[r,"(-1)^n"]\arrow[d]
       &\begin{matrix}R\left\{\ul{M}\right\}\\\oplus\\R\left\{\ul{O}\right\}\end{matrix}\arrow[d,"\pi_2"]\\
       R\left\{\ul{G}\right\}\arrow[r,"(-1)^{n+1}i_1"']&       \begin{matrix}R\left\{\ul{L}\right\}\amsamp\oplus\amsamp R\left\{\ul{N}\right\}\end{matrix}
       \arrow[r,"(-1)^{n+1}\pi_2"']&R\left\{\ul{P}\right\}
     \end{tikzcd}
   \end{equation*}\caption{Named generators of the pushout-product with differentials described in \cref{namedDifferentialsOfPushoutroduct} and the map $i\boxtimes j$ induced by \cref{mapOnGeneratorInducedBy}}\label{ABCPushoutDiagram}
 \end{figure}


\FloatBarrier
\bibliographystyle{alpha}
\bibliography{./tex/bibliography}
\end{document}